%

\documentclass{amsart}
\usepackage{amsmath,amsthm,amssymb,amsfonts,latexsym,graphics,enumerate}


\newcommand{\vg}[1]{}
\newcommand{\tu}[1]{\textup{#1}}
\newcommand{\ie}{{\rm i.e.,} }
\newcommand{\cf}{{\rm cf.~}}

\newcommand{\baire}{\ensuremath{\ca{N}}}
\newcommand{\cantor}{\ensuremath{2^\omega}}

\newcommand{\pr}{\textrm{pr}}

\renewcommand{\qedsymbol}{$\dashv$}

\newcommand{\om}{\ensuremath{\omega}}

\newcommand{\ep}{\ensuremath{\varepsilon}}
\newcommand{\ck}{\ensuremath{\om_1^{\rm CK}}}

\newcommand\tboldsymbol[1]{%
\protect\raisebox{0pt}[0pt][0pt]{%
$\underset{\widetilde{}}{\boldsymbol{#1}}$}\mbox{\hskip 1pt}}

\newcommand{\bolds}{\tboldsymbol{\Sigma}}
\newcommand{\boldp}{\tboldsymbol{\Pi}}
\newcommand{\boldd}{\tboldsymbol{\Delta}}

\newcommand{\del}{\ensuremath{\Delta^1_1}}
\newcommand{\sig}{\ensuremath{\Sigma^1_1}}
\newcommand{\pii}{\ensuremath{\Pi^1_1}}

\newcommand{\ca}[1]{\ensuremath{\mathcal{#1}}}
\newcommand{\set}[2]{\ensuremath{\{#1 \hspace{0.3mm} : \hspace{0.3mm} #2\}}}

\newcommand{\bij}{\mbox{\,$\rightarrowtail\kern -.8em \rightarrow$\,}}
\newcommand\surj{\twoheadrightarrow}

\newcommand{\teq}{\ensuremath{\equiv_{\rm T}}}
\newcommand{\tleq}{\ensuremath{\leq_{\rm T}}}
\newcommand{\tgeq}{\ensuremath{\geq_{\rm T}}}

\newcommand{\rfn}[1]{\ensuremath{\{#1\}}}
\newcommand{\tpower}[1]{\ensuremath{^{(#1)}}}

\newcounter{quest}
\setcounter{quest}{1}

\newcommand{\cn}[2]{\ensuremath{#1 \ \hat{} \ #2}}

\newcommand{\univ}{{\rm G}}
\newcommand{\dec}{{\rm dec}}
\newcommand{\eg}{{\rm e.g.} }

\newcommand{\BC}{\mathrm{BC}}

\newcommand{\bcode}{\mathrm{B}}
\newcommand{\ckr}[1]{\ensuremath{\om_1^{#1}}}

\newcommand{\omseq}{\ensuremath{\om^{<\om}}}
\newcommand{\Dom}{{\rm Dom}}


\renewcommand{\ie}{~\text{i.e.,} }

\newcommand{\goodt}{S}

\newcommand{\xx}{\mathcal{X}}
\newcommand{\yy}{\mathcal{Y}}

\newcommand{\fr}{\mbox{}^\smallfrown}

\newcommand{\rs}[1]{\upharpoonright{#1}}
\newcommand{\hyp}{{\sf hyp}}
\newcommand{\nbase}[1]{{\rm Nbase}({#1})}
\newcommand{\partialf}{\rightharpoonup}
\newcommand{\genname}{\dot{\mathbf{r}}_{gen}}

\newcommand{\effuniv}{{\rm H}}

\renewcommand{\cn}[2]{#1 \fr #2}

\newtheorem*{claim}{Claim}

\title[Degrees and Decomposability]{Turing degrees in Polish spaces and decomposability of Borel functions}

\author{Vassilios Gregoriades}
\address[Vassilios Gregoriades]{Department of Mathematics ``"Giuseppe Peano", University of Turin, Italy}
\email{vassilios.gregoriades@unito.it}

\author{Takayuki Kihara}
\address[Takayuki Kihara]{Department of Mathematics, The University of California, Berkeley, United States}
\email{kihara@math.berkeley.edu}

\author{Keng Meng Ng}
\address[Keng Meng Ng]{Division of Mathematical Sciences, School of Physical and Mathematical Sciences, Nanyang Technological University, Singapore}
\email{kmng@ntu.edu.sg}

\date{\today}

\keywords{countably continuous function, Jayne-Rogers Theorem, Decomposability Conjecture, Shore-Slaman Join Theorem, continuous degrees, Martin Conjecture}

\subjclass[2010]{03E15, 54H05, 03D80}

\theoremstyle{plain}
\newtheorem{theorem}{Theorem}[section]
\newtheorem{lemma}[theorem]{Lemma}
\newtheorem{proposition}[theorem]{Proposition}
\newtheorem{corollary}[theorem]{Corollary}
\newtheorem{fact}[theorem]{Fact}
\newtheorem*{decconjecture}{The Decomposability Conjecture}
\newtheorem{conjecture}[theorem]{Conjecture}
\newtheorem{myquestion}[theorem]{Question}

\theoremstyle{definition}
\newtheorem{definition}[theorem]{Definition}
\newtheorem{remark}[theorem]{Remark}
\newtheorem{example}[theorem]{Example}

\begin{document}

\maketitle

\begin{abstract}
We give a partial answer to an important open problem in descriptive set theory, the Decomposability Conjecture for Borel functions on an analytic subset of a Polish space to a separable metrizable space. Our techniques employ deep results from effective descriptive set theory and recursion theory. In fact it is essential to extend several prominent results in recursion theory (\eg the Shore-Slaman Join Theorem) to the setting of Polish spaces. As a by-product we give both positive and negative results on the Martin Conjecture on the degree preserving Borel functions between Polish spaces.
Additionally we prove results about the transfinite version as well as the computable version of the Decomposability Conjecture, and we explore the idea of applying the technique of turning Borel-measurable functions into continuous ones.
\end{abstract}

\section{Introduction}

\label{section introduction}

The study of decomposability of Borel functions originated from Luzin's famous old problem:
Is every Borel function decomposable into countably many continuous functions?
This problem has been solved in negative.
Indeed, even a Baire class one (i.e., $F_\sigma$-measurable) function is not necessarily decomposable.
This result directs our attention to a finer hierarchy of Borel functions than the Baire hierarchy.
A function $f:\ca{X} \to \ca{Y}$ between topological spaces is a {\em Borel function at level $(\eta,\xi)$} (denoted by $f^{-1} \bolds^0_{1+\eta}\subseteq \bolds^0_{1+\xi}$) if for all $\bolds^0_{1+\eta}$ sets $A \subseteq\ca{Y}$ the preimage $f^{-1}[A]$ is a $\bolds^0_{1+\xi}$ subset of $\ca{X}$. Essentially the same notion was introduced by Jayne \cite{Jayne74} to show Baire class variants of the Banach-Stone Theorem and the Gel'fand-Kolmogorov Theorem in functional analysis.
Later, Jayne and Rogers \cite{jayne_rogers_first_level_Borel_functions_and_isomorphisms} discovered a deep connection between the first level of this fine hierarchy and decomposability, now known as the \emph{Jayne-Rogers Theorem}:
A function $f:\ca{X}\to\ca{Y}$ from an analytic subset $\ca{X}$ of a Polish space into a separable metrizable space $\ca{Y}$ is a first-level Borel function (i.e., $f^{-1} \bolds^0_{2}\subseteq \bolds^0_{2}$) if and only if it is decomposable into countably many continuous functions with closed domains.

In recent years, researchers have made remarkable progress on extending the Jayne-Rogers theorem (\cf \cite{MilCar15a,MilCar15b,Debs14,KaMoSe,kihara_decomposing_Borel_functions_using_the_Shore_Slaman_join_theorem,motto_ros_on_the_structure_of_finite_level_and_omega_decomposable_Borel_functions,pauly_debrecht_nondeterministic_computation_and_the_jayne_rogers_theorem,pawlikowski_sabok_decomposing_Borel_functions,semmes_phd_thesis,Solecki98}). One prominent result among them is by Semmes \cite{semmes_phd_thesis}, who showed that a function $f:\baire\to\baire$ on the Baire space $\baire:=\omega^\omega$ is a second-level Borel function (i.e., $f^{-1} \bolds^0_{3}\subseteq \bolds^0_{3}$) if and only if it is decomposable into countably many continuous functions with $G_\delta$ domains, and that a function $f:\baire\to\baire$ is a Borel function at level $(1,2)$ (i.e., $f^{-1} \bolds^0_{2}\subseteq \bolds^0_{3}$) if and only if it is decomposable into countably many $F_\sigma$-measurable functions with $G_\delta$ domains.
These results lead researchers to the conjecture that the Jayne-Rogers Theorem can be extended to all finite levels of the Borel hierarchy.
This remains an open problem, which is receiving increasing attention by researchers in the area.

To be more precise, given a function $f:\ca{X} \to \ca{Y}$ between topological spaces we write $f \in \dec(\bolds^0_m)$ if there exists a partition $(\ca{X}_i)_{i \in \om}$ of $\ca{X}$ such that the restriction $f \upharpoonright\ca{X}_i$ is $\bolds^0_m$-measurable for all $i \in \om$.
We also write $f \in \dec(\bolds^0_m,\boldd^0_n)$ if such a partition $(\ca{X}_i)_{i\in\omega}$ can be $\boldd^0_n$ subsets of $\ca{X}$.
It is clear that $f \in \dec(\bolds^0_m,\boldd^0_n)$ exactly when there exists a cover $(\ca{Z}_i)_{i \in \om}$ of $\ca{X}$ consisting of (not necessarily disjoint) $\boldp^0_{n-1}$ sets such that $f \upharpoonright \ca{Z}_i$ is $\bolds^0_m$-measurable for all $i \in \om$.

It is not hard to verify that if $f \in \dec(\bolds^0_{n-m+1},\boldd^0_n)$ then condition $f^{-1} \bolds^0_m \subseteq \bolds^0_n$ is also satisfied as well.
The problem is whether the converse is also true.

\begin{decconjecture}[\cf \cite{andretta_the_slo_principle_and_the_Wadge_hierarchy,motto_ros_on_the_structure_of_finite_level_and_omega_decomposable_Borel_functions,pawlikowski_sabok_decomposing_Borel_functions}]
\label{conjecture the decomposability conjecture}
Suppose that $\ca{X}$ is an analytic subset of a Polish space and that $\ca{Y}$ is separable metrizable.\footnote{Since one can always replace \ca{Y} with its completion, we may assume without loss of generality that \ca{Y} is Polish.} For every function $f:\ca{X}\to\ca{Y}$ and every $n \geq 1$ it holds
\begin{align*}
\hspace*{-20mm}{\rm(flat \ case)}\hspace*{5mm}f^{-1} \bolds^0_n \subseteq \bolds^0_n \ \iff& \ f \in \dec(\bolds^0_1,\boldd^0_n).
\end{align*}
More generally for all $1 \leq m \leq n$ we have
\begin{align*}
\hspace*{12mm}f^{-1} \bolds^0_m \subseteq \bolds^0_n \ \iff& \ f \in \dec(\bolds^0_{n-m+1},\boldd^0_n).
\end{align*}
\end{decconjecture}

It is clear that the case $n=m=2$ in the Decomposability Conjecture is the Jayne-Rogers Theorem. The case $m\leq n\leq 3$ has been answered positively by Semmes for functions $f: \baire \to \baire$ \cite{semmes_phd_thesis} as mentioned above.
In this article we give the following partial answer.\footnote{At the end of this article in Section \ref{sec:last} we pose a question, which if answered affirmatively, solves the Decomposability Conjecture. An interesting aspect of this question is that it is a purely recursion-theoretic statement, bringing thus the problem closer to many more researchers.}

\begin{theorem}
\label{theorem main}

Suppose that $n \geq m \geq 1$, $\ca{X}$, $\ca{Y}$ are Polish spaces, $\ca{A}$ is an analytic subset of $\ca{X}$, and that $f: \ca{A} \to \ca{Y}$ is such that $f^{-1} \bolds^0_m \subseteq \bolds^0_n$. Then it holds $f \in  \dec(\bolds^0_{n-m+1},\boldd^0_{n+1})$.

If moreover $m \geq 3$ and $f$ is $\bolds^0_{n-1}$-measurable, then $f \in  \dec(\bolds^0_{n-m+1},\boldd^0_n)$, \ie the Decomposability Conjecture is true for functions, whose level of measurabilility is one step below than the one of the assumption. The same conclusion holds if the graph of $f$ is $\bolds^0_m$ with $m \geq 3$.
\end{theorem}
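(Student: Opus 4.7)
The plan is to effectivize, reduce to Baire space, and then apply the Polish-space extension of the Shore-Slaman Join Theorem announced in the abstract. First I would normalise by replacing $\ca{X}$ and $\ca{Y}$ with $\baire$ via standard reductions, pushing $\ca{A}$ forward to an analytic subset of $\baire$, and fixing a real parameter $p$ relative to which the analytic presentation of $\ca{A}$ and the code of $f$ are recursive. All arguments then proceed lightface in $p$, and the boldface conclusion follows by unrelativisation.

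Next I would extract the pointwise recursion-theoretic content of $f^{-1}\bolds^0_m \subseteq \bolds^0_n$. Effectively uniformising the assumption in $x$, for each $x \in \ca{A}$ any $\Sigma^0_m(p)$-code of a basic neighborhood containing $f(x)$ can be turned into a $\Sigma^0_n(p,x)$-code for its preimage. A jump-inversion argument then yields a $\Sigma^0_{n-m+1}$-description of $f(x)$ relative to some side parameter controlled by $p^{(n-1)}$; equivalently, for every $x \in \ca{A}$ there is an $i \in \om$ with $f(x) \in \Sigma^0_{n-m+1}(x \oplus z_i)$ for a uniformly definable sequence $(z_i)_{i \in \om}$.

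I would then invoke the Polish-space version of the Shore-Slaman Join Theorem to partition $\ca{A}$ by the least witnessing index: for each $i$ let $\ca{A}_i$ be the set of those $x \in \ca{A}$ for which $i$ is the least such witness. A quantifier count shows that ``$i$ is a witness'' is $\bolds^0_n$, and singling out the least such index adds one existential quantifier over $\om$, so each $\ca{A}_i$ is $\boldd^0_{n+1}$. Since $f \upharpoonright \ca{A}_i$ is by construction $\bolds^0_{n-m+1}$-measurable, this yields $f \in \dec(\bolds^0_{n-m+1},\boldd^0_{n+1})$.

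For the improved conclusion, the additional hypothesis---either $\bolds^0_{n-1}$-measurability of $f$ or a $\bolds^0_m$ graph with $m \geq 3$---would lower the complexity of the witness-selection predicate by one, placing the pieces at $\boldd^0_n$ and confirming the Decomposability Conjecture for such $f$. The main obstacle is the Polish-space adaptation of the Shore-Slaman Join Theorem itself, on which the whole quantifier count hinges, since Turing degrees on Polish spaces are only well-defined up to choice of representation. Removing the remaining ``$+1$'' in the general case is precisely the gap between the theorem and the full Decomposability Conjecture, and seems to require a genuinely stronger join-style theorem rather than a refinement of the present bookkeeping.
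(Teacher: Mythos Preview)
Your outline has the right skeleton, but two load-bearing steps fail as stated. First, replacing $\ca{X}$ by $\baire$ is not available at finite Borel levels: a continuous surjection $\pi:\baire\to\ca{X}$ pulls the hypothesis back to $f\circ\pi$, but a $\boldd^0_{n+1}$ decomposition of $f\circ\pi$ cannot be pushed forward, since the $\pi$-images of the pieces are merely analytic. Kuratowski's $\boldd^0_\omega$-isomorphism only helps once $n\geq\omega$, and this obstacle is precisely why the paper works on $\ca{X}$ itself via Miller's continuous degrees and proves a Polish-space Shore--Slaman theorem (so your later appeal to the latter is inconsistent with having already passed to $\baire$). Second, the passage from the bare inclusion $f^{-1}\bolds^0_m\subseteq\bolds^0_n$ to any pointwise degree inequality is not an ``effective uniformisation in $x$'': the paper's actual first move is to apply Louveau separation together with $\Pi^1_1$-uniformization to show that the transition in codes (from a $\univ^{\ca{Y}}_m$-code of $A$ to a $\univ^{\ca{X}}_n$-code of $f^{-1}[A]$) is realised by a \emph{Borel} function $u$. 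The Borel rank $\xi$ of $u$ is what produces the side parameter, and one only obtains $(f(x)\oplus q)^{(m)}\leq_M(x\oplus q^{(\xi)})^{(n)}$ for all $q\geq_Tz$, not a direct $\Sigma^0_{n-m+1}$ description.

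Shore--Slaman then enters not to partition but inside a Cancellation Lemma (combined with Friedberg jump inversion and the almost-totality of continuous degrees) to strip the jumps from both sides of that inequality, yielding $f(x)\leq_M(x\oplus z^{(\xi)})^{(n-m)}$ for every $x\in\ca{A}$. The partition is simply by the index $e$ for which $f(x)=\Phi_e((x\oplus z^{(\xi)})^{(n-m)})$; the piece $\ca{B}_e=(f,g_e)^{-1}[\Delta_{\ca{Y}}]$ is $\boldp^0_n$ (hence $\boldd^0_{n+1}$) because $f$ is $\bolds^0_n$-measurable and $g_e:x\mapsto\Phi_e((x\oplus z^{(\xi)})^{(n-m)})$ is $\bolds^0_{n-m+1}$-measurable with $\boldp^0_{n-m+2}$ domain. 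Your closing complexity count is essentially right: under the extra hypotheses one gets $\ca{B}_e\in\boldp^0_{n-1}$, or via the graph $\ca{B}_e=({\rm id},g_e)^{-1}[{\rm Gr}(f)]\in\bolds^0_n$.
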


Besides the preceding theorem, we establish a collection of results connecting the Turing degree theory with Polish spaces. A notable work on this topic is done by Miller with his introduction of \emph{continuous degrees} \cf \cite{miller2}, which extends the notion of the Turing degree. In this article we extend the Turing-jump to all Polish spaces, and we show that this jump operation is well-defined on the degree structure of the continuous degrees.
Each statement in the language of Turing degrees (with the Turing jump operation) has a natural ``translation" to the language of continuous degrees with our jump operation. In other words this yields an extension of the former theory to the latter.

As an example of the preceding extension we prove the \emph{Shore-Slaman Join Theorem} for continuous degrees, which is essential for the proof of Theorem \ref{theorem main}. This is probably the first application of continuous degrees in an area which is not directly related to recursion theory. The Shore-Slaman Join Theorem is also related to a well-known open problem in recursion theory, the \emph{Martin Conjecture}. In the same fashion we state the Martin Conjecture for continuous degrees. We explore the latter in this extended setting and we show that the continuous-degree version of the Shore-Slaman Join Theorem has the analogous connections.

We think that the preceding work has the potential to form a basis for a new branch of recursion theory in Polish spaces.

Going back to Theorem \ref{theorem main}, we note that it generalizes (and gives a new proof of) previous results of Motto Ros \cite{motto_ros_on_the_structure_of_finite_level_and_omega_decomposable_Borel_functions} and Pawlikowski-Sabok \cite{pawlikowski_sabok_decomposing_Borel_functions}. The latter results are the assertions of Theorem \ref{theorem main} in the flat case ($n=m$).\footnote{Note that in the flat case, the hypothesis that the graph of $f$ is $\bolds^0_n$ is weaker than that of $f$ being $\bolds^0_{n-1}$-measurable. The result of Motto Ros in the flat case is actually under this weaker hypothesis. We nevertheless mention that Pawlikowski-Sabok have a separate result of this type, in particular they prove that the flat case of the Decomposability Conjecture holds for the functions, which are open and injective.}
In fact our techniques are substantially different from the ones of \cite{motto_ros_on_the_structure_of_finite_level_and_omega_decomposable_Borel_functions,pawlikowski_sabok_decomposing_Borel_functions}. More specifically we apply tools from {\em effective descriptive set theory} and (as suggested above) {\em recursion theory}.\footnote{It is not unusual for the areas of recursion theory and effective descriptive set theory to have applications in classical theory, \ie they can give results whose statement does not involve any recursion-theoretic notions. The applications of the latter areas are though largely independent. It is perhaps worth noting that --to our best knowledge-- this is the only application, where both of these areas are put together in order to make progress on a problem in classical theory.}

Our motivation is a previous work of Kihara on the Decomposability Conjecture \cite{kihara_decomposing_Borel_functions_using_the_Shore_Slaman_join_theorem}. We nevertheless make significant improvements to the main theorem of the latter in removing its {\em uniformity} and {\em dimension-theoretic} restrictions. Actually our result is the only one in the non-flat case ($n>m$) of the Decomposability Conjecture without any dimension-theoretic restrictions on the given spaces.

The preceding constraints are indeed undesirable from the viewpoint of Jayne's work \cite{Jayne74} in functional analysis. Given a separable metrizable space $\ca{X}$ and a countable ordinal $\xi$ we denote by $\mathcal{B}^*_\xi(\mathcal{X})$ the Banach algebra of bounded real-valued Baire-class $\xi$ functions on $\mathcal{X}$ equipped with the supremum norm and the pointwise operation. Jayne showed that two given separable metrizable spaces $\mathcal{X}$ and $\mathcal{Y}$ are Borel isomorphic\footnote{In fact Jayne showed the result for realcompact spaces $\mathcal{X}$ and $\mathcal{Y}$ and with Baire isomorphisms instead of Borel.} at level $(\eta,\xi)$
if and only if $\mathcal{B}^*_\xi(\mathcal{X})$ and $\mathcal{B}^*_\eta(\mathcal{Y})$ are linearly isometric (ring isomorphic, etc.), so in particular, the classification problem on Banach algebras of the forms $\mathcal{B}^*_\xi(\mathcal{X})$ is equivalent to the {\em $\xi$-th level Borel isomorphism problem} (see \cite{KihPau}).

On the other hand, the above mentioned result of Motto Ros and Pawlikowski-Sabok is essential to show that every {\em $n$-th level Borel isomorphism \ie $\boldd^0_{n+1}$-isomorphism, is covered by countably many partial homeomorphic maps}. Using this, Kihara-Pauly \cite{KihPau} solved the $n$-th level Borel isomorphism problem, and hence, a problem on the linear-isometric (ring-theoretic, etc.)~classification of Banach algebras of the forms $\mathcal{B}^*_n(\mathcal{X})$. We note that it is essential here to consider Polish spaces of not necessarily transfinite inductive dimension, since the Banach algebra $\mathcal{B}^*_\xi(\mathcal{X})$ for $\xi\geq 2$ becomes trivial under linear-isometric (ring-theoretic, etc.)~classification whenever $\mathcal{X}$ is a Polish space of transfinite inductive dimension (see \cite{JayRog79a}).

Since the preceding $n$-th level Borel-isomorphism problem is solved using the decomposability result of Motto Ros and Pawlikowski-Sabok, and since the latter is covered by Theorem \ref{theorem main}, we obtain a \emph{completely computability-theoretic} solution to the problem on the $n$-th level Borel-isomorphism.

Finally we remark that Theorem \ref{theorem main} reduces the Decomposability Conjecture to a simpler one. (The similar remark is made by Motto Ros \cite[Corollary 5.11]{motto_ros_on_the_structure_of_finite_level_and_omega_decomposable_Borel_functions} in the flat case using the analogous result.)

\begin{corollary}
\label{corollary DC reduces to m=2}
Suppose that $\ca{X}$, $\ca{Y}$ are Polish spaces, $n \geq 2$, and that $\ca{A}$ is an analytic subset of $\ca{X}$. If for all functions $f: \ca{A} \to \ca{Y}$ the implication
\[
f^{-1} \bolds^0_2 \subseteq \bolds^0_n \ \Longrightarrow \ f \in \dec(\bolds^0_{n-1},\boldd^0_n)
\]
holds, then for all naturals $m$ with $2 \leq m \leq n$ and all functions $f: \ca{A} \to \ca{Y}$ the implication
\[
f^{-1} \bolds^0_m \subseteq \bolds^0_n \ \Longrightarrow \ f \in \dec(\bolds^0_{n-m+1},\boldd^0_n)
\]
holds as well.
\end{corollary}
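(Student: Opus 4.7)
The plan is to deduce the corollary by bootstrapping the hypothesis with Theorem \ref{theorem main}. The case $m=2$ is literally the assumption, so there is nothing to prove. For $3 \leq m \leq n$, the strategy is a two-step reduction: first produce a coarse $\boldd^0_n$ partition on whose pieces the restriction of $f$ is only one level better than what the main theorem gives automatically, and then invoke the second part of Theorem \ref{theorem main} on each piece separately to improve the measurability all the way down to $\bolds^0_{n-m+1}$.

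For Step 1, since $\bolds^0_2 \subseteq \bolds^0_m$, the hypothesis $f^{-1}\bolds^0_m \subseteq \bolds^0_n$ trivially implies $f^{-1}\bolds^0_2 \subseteq \bolds^0_n$. Applying the assumption of the corollary to $f$ itself therefore yields a partition $(\ca{B}_j)_{j \in \om}$ of $\ca{A}$ into $\boldd^0_n$ sets such that each $f_j := f \upharpoonright \ca{B}_j$ is $\bolds^0_{n-1}$-measurable.

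For Step 2, I would fix $j$ and apply the second clause of Theorem \ref{theorem main} to $f_j$. Note that $\ca{B}_j$ is a Borel (hence analytic) subset of $\ca{X}$, and that for any $\bolds^0_m$ set $A \subseteq \ca{Y}$ the preimage $f_j^{-1}[A] = f^{-1}[A] \cap \ca{B}_j$ is the intersection of a $\bolds^0_n$ set with a $\boldd^0_n$ set, hence $\bolds^0_n$; so $f_j^{-1}\bolds^0_m \subseteq \bolds^0_n$ relative to $\ca{B}_j$. Combined with the standing hypothesis $m \geq 3$ and the $\bolds^0_{n-1}$-measurability of $f_j$ established in Step 1, the second part of Theorem \ref{theorem main} delivers $f_j \in \dec(\bolds^0_{n-m+1}, \boldd^0_n)$, i.e., a partition $(\ca{B}_{j,k})_{k \in \om}$ of $\ca{B}_j$ into $\boldd^0_n$ subsets on which $f$ is $\bolds^0_{n-m+1}$-measurable. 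Since a $\boldd^0_n$ subset of a $\boldd^0_n$ subset of $\ca{A}$ is itself $\boldd^0_n$ in $\ca{A}$, the doubly-indexed family $(\ca{B}_{j,k})_{(j,k) \in \om \times \om}$ is the required decomposition.

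There is no real obstacle here, and the argument is a mild bookkeeping exercise on top of Theorem \ref{theorem main}; the only point to verify with any care is that each $f_j$ meets the side condition of the second clause of Theorem \ref{theorem main} (both the measurability of $f_j$ and the preservation of $f_j^{-1}\bolds^0_m \subseteq \bolds^0_n$ when passing from $\ca{A}$ to $\ca{B}_j$), which is precisely what Step 1 and the $\boldd^0_n$-nature of $\ca{B}_j$ together guarantee.
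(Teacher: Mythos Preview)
Your argument is essentially the paper's own proof: apply the $m=2$ hypothesis to get $\bolds^0_{n-1}$-measurable restrictions on $\boldd^0_n$ pieces, then invoke the second clause of Theorem~\ref{theorem main} on each piece using $m\geq 3$. One small slip: when $\ca{A}$ is merely analytic the pieces $\ca{B}_j$ need not be Borel in $\ca{X}$, only analytic (as intersections of $\ca{A}$ with Borel sets), but analytic is exactly what Theorem~\ref{theorem main} requires, so the argument goes through unchanged.
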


This result suggests that the cases $m=2 \leq n$ in the Decomposability Conjecture form the ``correct'' extension of the Jayne-Rogers Theorem.

\begin{proof}
Let $\emptyset \neq \ca{A} \subseteq \ca{X}$ be analytic and suppose that $f: \ca{A} \to \ca{Y}$ satisfies $f^{-1} \bolds^0_m \subseteq \bolds^0_n$ for some $3 \leq m \leq n$ (the case $m=2$ is covered by our hypothesis). In particular $f$ satisfies the condition $f^{-1} \bolds^0_2 \subseteq \bolds^0_n$. So from our hypothesis there exists a sequence $(\ca{B}_i)_{i \in \om}$ of $\boldp^0_{n-1}$ subsets of $\ca{X}$ such that the restriction $f \upharpoonright \ca{B}_i \cap \ca{A}$ is $\bolds^0_{n-1}$-measurable for all $i \in \om$. (Here we use that the $\boldp^0_\xi$ subsets of $\ca{A}$ are exactly the sets of the form $\ca{B} \cap \ca{A}$ for some $\boldp^0_\xi$ subset $\ca{B}$ of $\ca{X}$.)

Put $g_i = f \upharpoonright \ca{B}_i \cap \ca{A}$ for all $i$. It is clear that every function $g_i$ satisfies the condition $g_i^{-1} \bolds^0_m \subseteq \bolds^0_n$ and that the sets $\ca{B}_i \cap \ca{A}$ are analytic. Moreover every function $g_i$ is $\bolds^0_{n-1}$-measurable. Using that $m \geq 3$ it follows from the second assertion of Theorem \ref{theorem main} that for all $i$ there exists a sequence $(\ca{C}^i_j)_{j \in \om}$ of $\boldp^0_{n-1}$ subsets of $\ca{X}$ such that $g_i \upharpoonright \Dom(g_i) \cap \ca{C}^i_j = f \upharpoonright \ca{A} \cap \ca{B}_i \cap \ca{C}^i_j$ is $\bolds^0_{n-m+1}$-measurable. Clearly every $\ca{A} \cap \ca{B}_i \cap \ca{C}^i_j$ is a $\boldp^0_{n-1}$ subset of $\ca{A}$ and so $f \in \dec(\bolds^0_{n-m+1},\boldd^0_n)$.
\end{proof}

We proceed with the notation and some definitions.

\subsection*{Notation}

We review some basic notations (see also Moschovakis \cite{yiannis_dst} and Cooper \cite{CooperBook}).
We use the symbol $\baire$ to denote Baire space $\omega^\omega$ endowed with the usual product topology.
We will often identify sets with relations, that is, $P(x)$ means $x \in P$ for $P \subseteq X$. For every $P \subseteq X \times Y$ and every $x \in X$ the $x$-section of $P$
 is denoted by $P_x = \set{y \in Y}{P(x,y)}$.
By \omseq \ we mean the set of all finite sequences in \om.
For $i,n \in \om$ and $\alpha \in \baire$ we put $(\alpha)_i(n)=\alpha(\langle i,n \rangle)$, where $\langle \cdot \rangle$ is a recursive injection from $\omseq$ to $\om$.
The \emph{concatenation} of two sequences $u$ and $v$ (where $u$ is finite and $v$ is perhaps infinite) is denoted by $\cn{u}{v}$. In case where $u$ is a sequence of length $1$, say $u=\langle u_0\rangle$, we simply write $\cn{u_0}{v}$.
Given two sequences $u,v$, we write $u\preceq v$ if $u$ is an initial segment of $v$.

For $x,y\in\baire$, the join $x\oplus y$ is defined by $(x\oplus y)(2n)=x(n)$ and $(x\oplus y)(2n+1)=y(n)$ for each $n\in\om$.
We will often identify subsets of the natural numbers with members of the Cantor space.
Under this identification, for instance,  the join $A \oplus B$ for $A, B \subseteq \om$ representes the set $\{2n:n\in A\}\cup\{2n+1:n\in B\}$.
Partial functions will be denoted by $f: X \rightharpoonup Y$. By $f(x) \downarrow$ we mean that $f$ is defined on $x$.
The domain of $f$ is denoted by $\Dom(f)$.

The $e$-th partial recursive function on the natural numbers $\omega$ is denoted by $\Phi_e$.
The $\Phi_e^\alpha(n)$ (also written as $\Phi_e(\alpha)(n)$) is the result of the $\Phi_e$-computation with an oracle $\alpha \in\omega^{<\omega}\cup\baire$ and an input $n\in\omega$.
As usual, $\Phi_e$ can be thought of as a partial function on Baire space $\baire$.
Given $A, B \subseteq \om$ we write $A \tleq B$ to denote that $A$ is $B$-recursive, that is, $A=\Phi_e^B$ for some $e\in\omega$.
The Turing jump of $A$ is defined by $A'=\{e\in\omega:\Phi_e^A(e)\downarrow\}$.
By $\ckr{A}$ we mean the least non $A$-recursive ordinal and by $A\tpower{\xi}$ the $\xi$-th Turing jump of $A$ for $\xi < \ckr{A}$.

We will also employ the {recursively presented metric spaces} (also with respect to some parameter $\ep$) as well as the lightface pointclasses, \eg $\Sigma^0_n(\ep)$ from effective descriptive set theory \cf \cite{yiannis_dst}.
A \emph{recursively presented metric space} is a triple $\xx\equiv(\ca{X},d,\bar{a})$ consisting of  a Polish space $\ca{X}$, a metric $d$ on $\ca{X}$ and a recursive presentation $\bar{a}=(a_n)_{n\in\omega}$ of $(\xx,d)$ (i.e., a countable dense subset of $\ca{X}$ satisfying certain effectivity conditions; see \cite{yiannis_dst}).
A recursive presentation involves a canonical (countable) basis $(B^\xx_{n,r})_{n,r \in \om}$ of balls $B^\xx_{n,r}=\{x\in \ca{X}:d(x,a_n)<q_r\}$, where $(q_r)_{r \in \om}$ is a fixed recursive enumeration of $\mathbb{Q}^+$. We may assume that the balls of a finite product of spaces is a product of balls.

\subsection*{Universal sets and uniformity}

Suppose that \ca{Y} and \ca{Z} are separable metric spaces and that $\Gamma$ is a pointclass.  We denote by $\Gamma \upharpoonright \ca{Y}$ the family of all subsets of \ca{Y} which are in $\Gamma$.
A set $G \subseteq \ca{Z} \times \ca{Y}$ \emph{parametrizes} $\Gamma \upharpoonright \ca{Y}$ if for all $P \subseteq \ca{Y}$ we have that
\[
P \in \Gamma \iff \textrm{exists $z \in \ca{Z}$ such that} \ P = G_z.
\]
We think of $z$ as a \emph{$\Gamma$-code} for $P$.
The set $G \subseteq \ca{Z} \times \ca{Y}$ is \emph{universal} for $\Gamma \upharpoonright \ca{Y}$ if $G$ is in $\Gamma$ and parametrizes $\Gamma \upharpoonright \ca{Y}$.
A \emph{universal system} for $\Gamma$ is an assignment  $\ca{Y} \mapsto G^\ca{Y} \subseteq \baire \times \ca{Y}$ such that the set $G^\ca{Y}$ is universal for $\Gamma \upharpoonright \ca{Y}$ for all Polish spaces $\ca{Y}$.
Similarly we define the notion of a \emph{parametrization system} for $\Gamma$, the only difference being that the sets $G^\ca{Y}$ need not be in $\Gamma$.
See also Moschovakis \cite{yiannis_dst}.

In this article, we will use the {\em canonical} universal system of (lightface) Borel pointclasses in recursively presented metric spaces.
For a rank $n\in\om$, an oracle $\alpha\in\baire$ and an index $e\in\om$ we use the symbol $\effuniv^\om_{1,\alpha,e}$ to denote the $e$-th recursively enumerable subset of $\om$ relative to $\alpha$.
Suppose that a Polish space $\ca{Y}$ admits an $\ep$-recursive presentation $\bar{a}$, and hence, has a canonical basis $(B^\ca{Y}_{n,r})_{n,r\in\om}$.
For each $n\geq 1$ we inductively define $(\effuniv_n^{(\ca{Y},\ep)})_{n\in\om}$ as follows:
\begin{align*}
\effuniv_1^{(\ca{Y},\ep)}&=\{(e,\alpha,y)\in\om\times\baire\times\yy:(\exists\langle m,r\rangle\in \effuniv^\om_{1,\alpha,e})\;y\in B^\yy_{m,r}\},\\
\effuniv_{n+1}^{(\ca{Y},\ep)}&=\{(e,\alpha,y)\in\om\times\baire\times\yy:(\exists i\in\om)\;\neg \effuniv^{(\om\times\ca{Y},\ep)}_n(e,\alpha,i,y)\}.
\end{align*}
It is clear that the $\alpha$-th section $\effuniv^{(\ca{Y},\ep)}_{n,\alpha}$ parametrizes $\Sigma^0_n(\ep,\alpha)\upharpoonright \ca{Y}$.
The $e$-th section $\effuniv^{(\ca{Y},\ep)}_{n,\alpha,e}\subseteq\ca{Y}$ is usually called the {\em $e$-th $\Sigma^0_n(\alpha)$ subset of $\yy$}.
It follows that the set $\univ^{(\ca{Y},\ep)}_n \subseteq \baire \times \ca{Y}$ defined by
\[
\univ^{(\ca{Y},\ep)}_n(e\fr \alpha,y) \iff \effuniv^{(\ca{Y},\ep)}_n(e,\alpha,y)
\]
is $\Sigma^0_n(\ep)$ and universal for $\bolds^0_n \upharpoonright \ca{Y}$.
For every $n \geq 1$ \emph{we fix once and for all systems} $(\effuniv^{(\ca{Y},\ep)}_n)_{(\ca{Y},\ep)}$ as $(\univ^{(\ca{Y},\ep)}_n)_{(\ca{Y},\ep)}$ as above. When $\ep$ is understood from the context we simply write $\univ^{\ca{Y}}_n$ instead of $\univ^{(\ca{Y},\ep)}_n$ and similarly for $\effuniv_n$.

\subsection*{A brief summary of the remaining} We conclude this introductory section with a discussion which summarizes the remaining of this article and with some further results. Let us give first the following basic notion.
Suppose that $\Lambda,\Gamma_0,\Gamma_1$ are given pointclasses, $({\rm E}_i^\mathcal{Z})_{\mathcal{Z}}$ is a parametrization system for $\Gamma_i$ for each $i<2$.
Recall that for a function $f:\xx\to\yy$ between separable metrizable spaces, we write $f^{-1} \Gamma_0 \subseteq \Gamma_1$ to denote that $f^{-1}[A] \in \Gamma_1$ for all $A \in \Gamma_0$.

\begin{definition}\normalfont
\label{definition uniformly in the codes}
For a function $f:\xx\to\yy$, we say that the condition {\em $f^{-1} \Gamma_0 \subseteq \Gamma_1$ holds $\Lambda$-uniformly \tu{(}in the codes with respect to ${\rm E}^\ca{Y}_0, {\rm E}^\ca{X}_1$}\tu{)} if $f^{-1} \Gamma_0 \subseteq \Gamma_1$ holds, and moreover, there exists a $\Lambda$-measurable function $u: \baire \to \baire$ such that given an ${\rm E}^\ca{Y}_0$-code $\alpha$ of a $\Gamma_0$ set $A\subseteq\yy$, $u(\alpha)$ returns an ${\rm E}^\xx_1$-code of the $\Gamma_1$ set $f^{-1}[A]\subseteq\xx$.
More precisely, for all $\alpha$ and all $x \in\ca{X}$ we have that
\[
{\rm E}^{\ca{Y}}_0(\alpha,f(x)) \iff {\rm E}_1^{\ca{X}}(u(\alpha),x).
\]
If $\Lambda = \bolds^0_1$ we say ``continuous-uniformly'', and if $\Lambda$ is the class of all Borel sets we say ``Borel-uniformly''. Moreover we say ``recursive-uniformly" if $u$ can be chosen to be a recursive function.
\end{definition}

In the sequel we will focus on the cases $\Gamma_0 = \bolds^0_m$ and $\Gamma_1 = \bolds^0_n$, where $n,m \in \om$. Kihara characterized the family of functions on Polish spaces of transfinite inductive dimension, which satisfy the Decomposability Conjecture at some instances of $(n,m)$.

\begin{theorem}[Kihara \cite{kihara_decomposing_Borel_functions_using_the_Shore_Slaman_join_theorem}]
\label{theorem Kihara continuous transition}
Let $n\geq m\geq 1$.
For any function $f: \baire \to \baire$, we have the following:
\begin{enumerate}
\item[\tu{(}1\tu{)}] If $f \in \dec(\bolds^0_{n-m+1},\boldd^0_n)$ then $f^{-1} \bolds^0_m \subseteq \bolds^0_n$ holds continuous-uniformly.
\item[\tu{(}2\tu{)}] If $f^{-1} \bolds^0_m \subseteq \bolds^0_n$ holds continuous-uniformly, then $f \in \dec(\bolds^0_{n-m+1},\boldd^0_{n+1})$.
\end{enumerate}

Moreover, if $n < 2m-1$, then the condition $f^{-1} \bolds^0_m \subseteq \bolds^0_n$ holds continuous-uniformly if and only if  $f \in \dec(\bolds^0_{n-m+1},\boldd^0_n)$.
The similar assertion holds for spaces of transfinite inductive dimension.
\end{theorem}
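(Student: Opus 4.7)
For part (1) I would directly track codes through a given decomposition; for part (2) I would extract a uniform jump-reduction from the continuous uniformity witness and apply the Shore-Slaman Join Theorem to produce a decomposition of the domain. The ``moreover'' part and the transfinite-dimension extension follow from refinements of the same argument.

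For (1), let $\baire = \bigsqcup_i \xx_i$ witness $f\in\dec(\bolds^0_{n-m+1},\boldd^0_n)$. I would fix once and for all $\boldd^0_n$-codes $\beta_i$ of the $\xx_i$ together with witnesses for the $\bolds^0_{n-m+1}$-measurability of each $f_i := f\rs{\xx_i}$, \ie recursive-in-parameter operators sending a $\bolds^0_1$-code of $U\subseteq\baire$ to a $\bolds^0_{n-m+1}$-code of $f_i^{-1}[U]$. Iterating through our universal system $\univ^\baire_k$ and using the elementary identity $(n-m+1)+(m-1)=n$ (\ie the pullback of a $\bolds^0_m$ set under a $\bolds^0_{n-m+1}$-measurable map is $\bolds^0_n$, uniformly in the codes), one obtains for each $i$ a continuous procedure that, given a $\bolds^0_m$-code $\alpha$ of $A\subseteq\baire$, returns a $\bolds^0_n$-code of $f_i^{-1}[A]\cap\xx_i$. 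Continuity of countable union on codes then yields the desired continuous $u:\baire\to\baire$ such that $u(\alpha)$ is a $\bolds^0_n$-code of $f^{-1}[A]$.

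For (2), let $u:\baire\to\baire$ be a continuous witness, so that $\univ^\baire_m(\alpha, f(x))\iff\univ^\baire_n(u(\alpha), x)$ for all $\alpha,x$. Specializing $\alpha$ to canonical $\bolds^0_1$-codes of basic open sets, this says that ``$f(x)\in B^\baire_{k,r}$'' is a $\bolds^0_n$ condition in $x$, uniformly in $(k,r)$. Translating through the effective correspondence between the Borel hierarchy and the Turing jump yields a uniform reduction $f(x)\tpower{m-1}\tleq x\tpower{n-1}\oplus p$ for a fixed parameter $p$ absorbing $u$. I would then apply the Shore-Slaman Join Theorem pointwise: for each $x$, produce a canonical $g_x\in\baire$, selected from $x\oplus p$ by a single natural-number index $e_x$, such that $f(x)$ is $g_x\tpower{n-m}$-recursive. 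Partitioning $\baire$ according to the least such $e_x$ gives a countable cover by sets $\xx_{e}$ that are $\boldd^0_{n+1}$ and on each of which $f$ is $\bolds^0_{n-m+1}$-measurable.

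The main obstacle is the rank inflation introduced by the Shore-Slaman selection, which costs one extra jump level and accounts for the $\boldd^0_{n+1}$ rather than the desired $\boldd^0_n$. Under the ``moreover'' hypothesis $n<2m-1$, equivalently $n-m+1\le m-1$, there is enough slack to perform the Shore-Slaman selection already at jump level $n-1$ rather than $n$, so the pieces can be taken to be $\boldd^0_n$; combined with (1) this closes the iff. For Polish spaces of transfinite inductive dimension, both the effective coding of Borel pointclasses and the Shore-Slaman Join Theorem transfer unchanged to any fixed effective presentation, so the same argument applies verbatim.
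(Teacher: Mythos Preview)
The paper does not prove this theorem; it is quoted from Kihara \cite{kihara_decomposing_Borel_functions_using_the_Shore_Slaman_join_theorem} and stated without proof. The paper's own arguments (Lemmata \ref{lem:turing-degree-analysis} and \ref{lem:deg-Shore-Slaman}, and the proof of Theorem \ref{theorem main}) generalize Kihara's method, and that is the template your sketch should be compared against. Your outline for (1) is the standard correct argument.

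For (2), your overall architecture---extract a jump inequality from the continuous code-transition, cancel via Shore--Slaman, partition by a reduction index---is Kihara's, but two steps are misstated in ways that would break the proof. First, the inequality you need is not $f(x)\tpower{m-1}\tleq x\tpower{n-1}\oplus p$ for one fixed parameter; it is $(f(x)\oplus q)\tpower{m-1}\tleq (x\oplus q)\tpower{n-1}$ for \emph{every} $q\tgeq z$ (compare Lemma \ref{lem:turing-degree-analysis} with $\xi=0$). The universal quantifier over $q$ is precisely what the Shore--Slaman cancellation consumes; a single parameter gives nothing. Second, Shore--Slaman is not used to ``produce a canonical $g_x$ selected by an index $e_x$.'' It is used in contrapositive form (the Cancellation Lemma \ref{lem:deg-Shore-Slaman}): from the inequality above for all $q$ one concludes directly that $f(x)\tleq (x\oplus z)\tpower{n-m}$, and only \emph{then} does one partition $\ca{A}$ by the index $e$ witnessing this last reduction. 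The $\boldd^0_{n+1}$ bound on the pieces $\ca{B}_e=\{x:f(x)=\Phi_e((x\oplus z)\tpower{n-m})\}$ comes from the fact that $f$ is only known to be $\bolds^0_n$-measurable, not from any ``rank inflation in the Shore--Slaman selection.''

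Your treatment of the ``moreover'' clause is too vague. The hypothesis $n<2m-1$, i.e.\ $n-m+1<m$, is used by Kihara to feed the preliminary $\bolds^0_{n-m+1}$-measurable pieces back through the continuous-uniform hypothesis and thereby refine the partition to $\boldd^0_n$; the phrase ``enough slack to perform the selection at level $n-1$'' does not describe this mechanism.
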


Our strategy to obtain the decomposition of the function $f$ in Theorem \ref{theorem main} is similar to the one of \cite{kihara_decomposing_Borel_functions_using_the_Shore_Slaman_join_theorem} for Theorem \ref{theorem Kihara continuous transition}, but on the other hand there are three important points which require a more elaborated approach.

The first and perhaps most obvious point lies in the hypothesis: whereas the continuous-uniform transition in the codes is a natural phenomenon (see also \cite{brattka_effective_Borel_measurability_and_reducibility_of_function} and \cite{pauly_debrecht_nondeterministic_computation_and_the_jayne_rogers_theorem}), it is not unusual to have this condition as the result of some ``constructive'' argument rather as the hypothesis of a theorem, \cf the Suslin-Kleene Theorem. Therefore (given also our preceding comments on Jayne's work) it is somewhat restrictive to make a priori assumptions of this type.
It is a central aspect of the effective descriptive set theory to establish uniformity functions in a Borel way. By employing the \emph{Louveau separation} we prove

\begin{theorem}
\label{theorem Borel transition in the codes}
Suppose that $\ca{X}$, \ca{Y} are Polish spaces, $\ca{A} \subseteq \ca{X}$ is analytic and that $f:\ca{A} \to \ca{Y}$ satisfies $f^{-1} \bolds^0_m \subseteq \bolds^0_n$ for some $m, n \geq 1$. Then condition $f^{-1} \bolds^0_m \subseteq \bolds^0_n$ holds Borel-uniformly \tu{(}in the codes with respect to $\univ^{\ca{Y}}_m, \univ^{ \ca{X}}_n$\tu{)}.
\end{theorem}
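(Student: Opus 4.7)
The plan is to produce the uniformity function by applying the effective, parameterized form of Louveau's separation theorem to a pair of $\Sigma^1_1$ sets extracted from the graph of $f$. First I observe that the hypothesis, applied to open subsets of $\ca{Y}$ (which lie in $\bolds^0_m$), gives that $f$ is $\bolds^0_n$-measurable, and so its graph $\mathrm{Gr}(f)\subseteq\ca{X}\times\ca{Y}$ is a Borel subset of the analytic product $\ca{A}\times\ca{Y}$, hence $\Sigma^1_1$. Let $G=\univ^{\ca{Y}}_m$ be the fixed universal $\Sigma^0_m(\ep)$ set parametrizing the $\bolds^0_m$ subsets of $\ca{Y}$ and define
\[
P=\{(\beta,x)\in\baire\times\ca{X}:\exists y\,[(x,y)\in\mathrm{Gr}(f)\wedge G(\beta,y)]\},
\]
\[
N=\{(\beta,x)\in\baire\times\ca{X}:\exists y\,[(x,y)\in\mathrm{Gr}(f)\wedge\neg G(\beta,y)]\}.
\]
Both sets are $\Sigma^1_1(\ep)$, with indices depending only on $\ep$, and they are disjoint because $f$ is single-valued. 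Their $\beta$-sections are precisely $P_\beta=f^{-1}[G_\beta]$ and $N_\beta=\ca{A}\setminus f^{-1}[G_\beta]$.

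Next I would verify that these sections are $\bolds^0_n$-separable. For each $\beta$, the hypothesis $f^{-1}\bolds^0_m\subseteq\bolds^0_n$ yields that $f^{-1}[G_\beta]$ is $\bolds^0_n$ in $\ca{A}$, so there exists a $\bolds^0_n$ subset $D\subseteq\ca{X}$ with $D\cap\ca{A}=f^{-1}[G_\beta]$ (using that $\bolds^0_n$ subsets of $\ca{A}$ extend to $\bolds^0_n$ subsets of $\ca{X}$). Then $P_\beta\subseteq D$ and $N_\beta\cap D=\emptyset$, so $D$ separates $P_\beta$ from $N_\beta$. Conversely, any $\bolds^0_n$-separator $S$ of $P_\beta$ and $N_\beta$ automatically satisfies $S\cap\ca{A}=f^{-1}[G_\beta]$, because each point of $\ca{A}$ lies in exactly one of $P_\beta$ and $N_\beta$.

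At this point I would apply the effective, parameterized form of Louveau's separation theorem: there exists a Borel function $u:\baire\to\baire$ such that $u(\beta)$ is a $\univ^{\ca{X}}_n$-code of a $\bolds^0_n$ set $S_\beta$ separating $P_\beta$ from $N_\beta$. By the previous paragraph $S_\beta\cap\ca{A}=f^{-1}[G_\beta]$, which is exactly what is required by Definition~\ref{definition uniformly in the codes}. The main obstacle is the parameterized form of the effective Louveau theorem itself: the standard effective version provides, given $\Sigma^1_1$-indices of two disjoint $\bolds^0_n$-separable $\Sigma^1_1$ sets, a $\Sigma^0_n$-index of a separator in a $\Delta^1_1$-way; since $P$ and $N$ are uniformly $\Sigma^1_1(\ep)$ in $\beta$ via recursive-in-$\beta$ indices, composing with this transformation yields a $\Delta^1_1(\ep)$-recursive, hence Borel, function $u$.
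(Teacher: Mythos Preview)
Your proposal is correct and follows essentially the same route as the paper: define the two disjoint $\Sigma^1_1(\ep)$ sets $P$ and $N$ (the paper writes them directly as $x\in\ca{A}\wedge \univ^{\ca{Y}}_m(\alpha,f(x))$ and its complement in $\ca{A}$, which is the same thing once you unwind the graph formulation), observe that each $\alpha$-section is $\bolds^0_n$-separable by hypothesis, and then invoke Louveau separation to produce $\bolds^0_n$-codes Borel-uniformly.

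The one place where the paper is more explicit than you are is the final step. You appeal to a ``parameterized effective Louveau theorem'' that outputs a separator code ``in a $\Delta^1_1$-way''; that package is not a single standard citation. The paper decomposes it into two ingredients: first, Louveau separation (Lemma~\ref{theorem louveau separation}) applied for each fixed $\alpha$ gives a separator with code in $\Delta^1_1(\ep,\alpha)$; second, the relation ``$\beta$ is a $\univ^\ca{X}_n$-code of a set separating $P_\alpha$ from $Q_\alpha$'' is $\Pi^1_1(\ep)$, so the Borel selection principle of Lemma~\ref{theorem Borel selection} (from effective $\Pi^1_1$-uniformization) yields the $\Delta^1_1(\ep)$-recursive, hence Borel, function $u$. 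Your last sentence is exactly this argument in compressed form, so there is no gap, but you should cite these two lemmas separately rather than bundling them.
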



The second point is essentially a consequence of the preceding. Having established a Borel rather than a continuous transition in the codes, it is evident that we need new arguments to obtain the decomposition. We do this by extending a result on canceling out Turing jumps, which is implicitly used in \cite{kihara_decomposing_Borel_functions_using_the_Shore_Slaman_join_theorem} and utilizes the Shore-Slaman Join Theorem. We call this extended result the \emph{Cancellation Lemma}, \cf Lemma \ref{lem:deg-Shore-Slaman}.
In order to prove this we employ an additional tool from recursion theory, namely the Friedberg Jump Inversion Theorem.

This method has however its limitations. The issue is that the arguments, which utilize {\em Turing degrees}, are only applicable in Polish spaces, which have transfinite inductive dimension. Kihara-Pauly \cite{KihPau} clarified the reason for such a dimension-theoretic restriction (e.g., in Kihara's argument \cite{kihara_decomposing_Borel_functions_using_the_Shore_Slaman_join_theorem}) by showing that a Polish space has transfinite inductive dimension if and only if its degree structure -as a substructure of the continuous degrees- can be identified with the Turing degrees up to an oracle.

This brings us to the next point. As we mentioned we utilize the theory of continuous degrees introduced by Miller \cite{miller2}, and more specifically we show that (a weak form of) the Shore-Slaman Join Theorem does hold for this degree structure in Polish spaces. On the other hand the Friedberg Jump Inversion Theorem does not go through in this setting. We will see though that there is a way of going around this obstacle by using the non-uniformity feature of the \emph{enumeration reducibility}, the \emph{almost totality} of continuous degrees \cite{miller2}, and some other tools from recursion theory.

We are also concerned with the transfinite version of the conjecture, and we will give the analogous version of Theorem \ref{theorem main} at levels $(\eta,\xi)$ with $\eta \leq \xi < \om_1$, this is Theorem \ref{theorem main transfinite}. It is sufficient to prove the latter result in zero-dimesional Polish spaces, since as proved by Kuratowski, every uncountable Polish space is $\boldd^0_\om$-isomorphic to the Baire space. This relieves us from the theory of continuous degrees, but on the other hand there are other subtle points: In order to apply the Louveau separation in the transfinite case, we need to consider a different parametrization system, which is induced by the notion of a \emph{Borel code} (Louveau-Moschovakis). We prove that the latter parametrization system is a good one, \cf The Good Parametrization Lemma for $\bcode^{\ca{X}}_\xi$ (Lemma \ref{lemma good parametrization for transfinite}) - this is the key element for the decomposition in the transfinite case.

Since the problem of decomposing a given function is also important in the area of computability theory we devote a brief section, where we extract the effective content from our proofs (Section \ref{section computable and continuous cases}).

In the subsequent section we deal with the continuous-degree version of Martin's Conjecture that we mentioned above. We will see that the picture in this setting is richer than the original one, but still there is some strong analogy between these two theories. In particular we extend a result of Slaman (Theorem \ref{thm:positive-Martin}), and on the other hand we prove that some parts of the extended conjecture fail (Theorem \ref{thm:failure-Martin}).

Finally we deal with the problem of the continuous-uniform transition in the codes and we give some results, which have a descriptive-set-theoretic interest in their own right. More specifically we assign to every function $f$ with $f^{-1}\bolds^0_m \subseteq \bolds^0_n$ a universal system for $\bolds^0_m$, under which the latter condition holds continuous-uniformly (Corollary \ref{corollary from continuous transition}). This answers a question of Kihara \cite{kihara_decomposing_Borel_functions_using_the_Shore_Slaman_join_theorem}, albeit partially, since this new universal system does not seem to be a good one. (This poses a serious restriction for obtaining the best decomposition even in the cases $m \leq n \leq 2m-1$.) As it is witnessed by Theorem \ref{theorem Borel transition in the codes} it couldn't be otherwise, unless we actually have a continuous transition in the original codes. We think that these observations underline the difficulty of the problem and that,  alongside our proposed plan at the end of this article, new approaches to this problem should be explored.

\section{Generalized Turing degree theory}

\label{section generalized Turing degree theory}

As mentioned in the introduction the difficulty for proving Theorem \ref{theorem main} in Polish spaces of not necessarily transfinite dimension is the lack of a sufficiently rich theory of a Turing-degree-like structure, which would allow us to carry out the proofs in this general setting. The aim of this section is to fill in this gap.

\subsection{Generalized Turing Reducibility}

Miller \cite{miller2} developed a nontrivial degree theory on computable metric spaces, called {\em continuous degrees}. We review the notion of continuous degrees in the framework of recursively presented metric spaces.

A {\em representation} of a set $\ca{X}$ is a surjection $\rho_\ca{X}$ from a subset of the Baire space $\baire$ onto the set $\ca{X}$.
Every $p\in\Dom(\rho_\ca{X})$ is called a {\em $\rho_\ca{X}$-name} (or simply, {\em name}) of the point $\rho_\ca{X}(p)\in\xx$.
This notion allows us to define the notion of computability in any represented set since we already have the notion of computability on the Baire space $\baire$.
More precisely, a partial function $f:\xx\partialf\yy$ is {\em recursive} if there is a partial recursive function $F:\baire\partialf\baire$ sending each name of $x\in{\rm Dom}(f)$ to a name of $f(x)$.

\begin{example}[Representation of a Second-Countable Space]
A recursive presentation $\overline{a}=(a_n)_{n\in\omega}$ of a separable metric space $(\mathcal{X},d)$ yields a representation by identifying each point $x\in\xx$ with enumerations of the \emph{encoded neighborhood basis} $\nbase{x}$ of $x\in\xx$:
\[\nbase{x}=\{\langle m,s \rangle \in\omega:x\in B_{m,s}^\xx\}.\]
That is, every point $x\in\mathcal{X}$ is coded by $p\in\baire$ such that ${\rm Rng}(p)=\nbase{x}$, and then we write $\rho_\xx(p)=x$.
\end{example}

\begin{example}[Representation of a Compact Metric Space]\label{exa:rep-Hilbert-cube}
It is known that a metrizable compactum $\yy$ admits a representation with a compact domain.
We give a sketch of an explicit construction of such a representation of Hilbert cube $\yy=[0,1]^\omega$.
For a positive rational $\ep>0$, an {\em $\ep$-cover} of a space $\yy$ is an open cover whose mesh is less than $\ep$ (that is, the diameter of each element in the cover is less than $\ep$).
By compactness, we have a sequence $(\mathcal{C}^n)_{n\in\omega}$ of $2^{-n-1}$-covers of $\yy$ with some bound $h\in\omega^\omega$ such that $\mathcal{C}^n=(B^n_m)_{m<h(n)}$  consists of $h(n)$-many rational open balls, and each $\mathcal{C}^{n+1}$ refines $\mathcal{C}^n$.
We can moreover assume that $h$ is computable and that $\{(l,k,n,m):\hat{B}^l_k\subseteq B^n_m\}$ is also computable where $\hat{B}^l_k$ is the closed ball having the same center and the radius with the open ball $B^l_k$.
Without loss of generality, we can also assume that for every $n,k<h(n)$, there is $l<h(n+1)$ such that $\hat{B}^{n+1}_l\subseteq B^{n}_k$.

We now define {\em the tree $\mathbf{H}$ of names \tu{(}of points in $\yy$\tu{)}} by the following way:
\begin{enumerate}
\item First put the empty string $\langle\rangle$ and all strings $\langle n\rangle$ of length $1$ (with $n<h(0)$) into $\mathbf{H}$.
\item For each $\sigma$ of length $l$ such that $\sigma\fr k\in\mathbf{H}$ we enumerate $\sigma\fr k\fr n$ into $\mathbf{H}$ if $n<h(l+1)$ and $\widehat{B}^{l+1}_n\subseteq B^{l}_k$.
\end{enumerate}

By our assumption, $\mathbf{H}$ is a recursively bounded recursive tree with no terminal node.
For each string $\sigma\in\mathbf{H}$, we write $B^\ast_\sigma$ for $B^{|\sigma|-1}_n$ where $n$ is the last value of $\sigma$.
Since $\mathcal{C}^{|\sigma|-1}$ is $2^{-|\sigma|}$-cover, the diameter of $B^\ast_\sigma$ is less than $2^{-|\sigma|}$.
Clearly, every infinite path $\beta\in[\mathbf{H}]$ gives a unique point $\rho_\yy(\beta):=\bigcap_{n\in\omega}B^\ast_{\beta\rs n}\in\yy$.
Consequently, the map $\rho_\yy:[\mathbf{H}]\to\yy$ is a desired representation of Hilbert cube.
\end{example}

If $\ca{X}$ and $\ca{Y}$ are recursively presented metric spaces, equipped with the Nbase-representation, this notion of computability can be characterized in a convenient way using the enumeration reducibility.
Recall that a function $\Psi:\mathcal{P}(\om)\to\mathcal{P}(\om)$ is called an {\em enumeration operator} (see \cite[Section 11]{CooperBook}) if there is a $\Sigma^0_1$ set $P \subseteq \omega \times \omseq$ such that for every $A\in 2^\omega$,
\[
k\in\Psi(A) \iff (\exists u = (u_0,\dots,u_{n-1}) \in \omseq)[P(k, u) \ \& \ (\forall i < n)[u(i) \in A]].
\]

It is not hard to see that a partial function $f:\xx\partialf\yy$ is recursive if and only if there is an enumeration operator $\Psi$ such that for all $x\in\Dom(f)$, we have $\Psi(\nbase{x})=\nbase{y}$.
Indeed, if underlying spaces are metrizable, we can always assume that the length of the witness $u$ is $1$, that is, a partial function $\Phi:\ca{X}\rightharpoonup \ca{Y}$ is recursive if and only if there is a  $\Sigma^0_1$ set $\tilde{\Phi}\subseteq\omega^4$ such that for all $x \in \Dom(\Phi)$ it holds:
\[
\Phi(x) \in B^\ca{Y}_{m,s} \iff (\exists n,r)[x \in B^\ca{X}_{n,r} \ \& \ \tilde{\Phi}(n,r,m,s)].
\]

We use the notation $\Phi_e^{\xx,\yy}$ to denote the $e^{\rm th}$ partial recursive function from $\xx$ into $\yy$ in the obvious sense, that is, it is induced by the $e^{\rm th}$ partial recursive function $F:\baire\rightharpoonup\baire$ via underlying representations, or equivalently, by the $e^{\rm th}$ enumeration operator $\Psi:\mathcal{P}(\om)\to\mathcal{P}(\om)$ under the Nbase-representation.
More explicitly, $\Phi_e^{\xx,\yy}$ is  \emph{largest} function $\Phi:\xx\partialf\yy$ induced by the $e$-th $\Sigma^0_1$ set $\tilde{\Phi}$ as above, \ie $\Phi_e^{\ca{X},\ca{Y}}(x)$ is defined exactly when there is a unique $y \in \ca{Y}$ such that $y \in B^\ca{Y}_{m,s} \iff (\exists n,r)[x \in B^\ca{X}_{n,r} \ \& \ \effuniv^{\om^4}_{1}(e,\emptyset,n,r,m,s)]$; in the latter case we let $\Phi_e^{\ca{X},\ca{Y}}(x)$ be the unique such $y$.
Hereafter we use $\tilde{\Phi}_e$ to denote the $e^{\rm th}$ $\Sigma^0_1$ subset of $\om^4$ (that is, $\tilde{\Phi}_e=\effuniv^{\om^4}_{1,e,\emptyset}$), which codes the $e$-th partial computable function as described above. It is not hard to see that the domain of $\Phi_e^{\ca{X},\ca{Y}}$ is a $\Pi^0_2$ subset of $\xx$. We often abbreviate $\Phi_e^{\xx,\yy}$ to $\Phi_e$ if the underlying spaces are clear from the context.

\begin{definition}[Miller {\cite[Definition 3.1]{miller2}}, see also Kihara-Pauly \cite{KihPau} and Moschovakis {\cite[Section 3D]{yiannis_dst}}]\normalfont Suppose that $\xx$ and $\yy$ are recursively represented metric spaces.
We say that {\em $y\in \yy$ is representation reducible to $x\in \xx$} (in the sense of Miller) if there is some $e \in \om$ such that $\Phi^{\ca{X},\ca{Y}}_e(x)=y$. In this case we write $y\leq_Mx$.\footnote{The relation $\leq_M$ can be characterized in terms of the \emph{Medvedev reducibility} between sets of names of $x$ and $y$, see \cite{KihPau}. This is the justification for our notation. This relation is also denoted by $\leq_r$ in Miller {\cite[Definition 3.1]{miller2}}, and (using an equivalent definition) by $\tleq$ in Moschovakis \cite[3D.16]{yiannis_dst}.}
The \emph{continuous degree} of $x$ is defined as usual to be its equivalence class under $\equiv_M : = \ \leq_M \cap \geq_M$. We say that a point $x\in\mathcal{X}$ is {\em total} if there is $z\in 2^\omega$ such that $x\equiv_Mz$.
\end{definition}
Given $A, B \subseteq \om$ recall that $A$ is \emph{enumeration reducible} to $B$ if there is an enumeration operator $\Psi$ such that $\Psi(B)=A$. In this case we write $A \leq_e B$.

The following are some useful facts which will be used throughout this section. For convenience we summarize it here.
\begin{fact}\label{usefulfactsaboutreducibility}
Let $\xx$ and $\yy$ be recursively represented metric spaces.
\begin{enumerate}[(i)]
\item Let $x\in \xx$ and $y\in \yy$. Then $y\leq_M x$ if and only if $\nbase{y}\leq_e \nbase{x}$.
\item If $z\in \baire$ then $\nbase{z}$ can be identified with $\left\{\sigma\in\om^{<\om}:\sigma\prec z\right\}$.
\item The restriction of $\leq_M$ on $\baire \times \baire$ is $\tleq$. That is, if $x,y\in\baire$ then $x\leq_M y$ if and only if $x\leq_T y$.
\item If $z\in\baire$ and $x\in\xx$ then $z\geq_M x$ iff there is a fixed name $p$ for $x$ such that $z\geq_T p$.
\item Let $x\in\xx$. Then $x$ is total if and only if $x$ has a canonical name, i.e. a name $p_x$ such that $p_x\equiv_M x$.
\end{enumerate}
\end{fact}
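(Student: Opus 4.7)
The plan is to prove the items in order, exploiting the fact that the list is a dictionary between recursive maps in the sense of representations and the enumeration-reducibility content of the Nbase codes. For (i) I would unpack the characterization recalled just before the statement: a partial function $\Phi : \xx \partialf \yy$ is recursive exactly when there is a $\Sigma^0_1$ relation $\tilde\Phi \subseteq \om^4$ such that $\Phi(x) \in B^{\yy}_{m,s} \iff (\exists n,r)(x \in B^{\xx}_{n,r} \wedge \tilde\Phi(n,r,m,s))$. Read in terms of $\nbase{x}$ and $\nbase{y}$, this says precisely that the enumeration operator $\Psi$ induced by $\tilde\Phi$ sends $\nbase{x}$ to $\nbase{y}$, so $\nbase{y} \leq_e \nbase{x}$. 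Any enumeration operator witnessing the latter reducibility packages back into such a $\Sigma^0_1$ relation, giving $y \leq_M x$. For (ii) I would use the standard recursive presentation of $\baire$ whose basic balls are the cylinders $N_\sigma = \{\alpha : \sigma \prec \alpha\}$ indexed by $\sigma \in \om^{<\om}$; the canonical recursive bijection $\langle n,r \rangle \leftrightarrow \sigma$ identifies $\nbase{z}$ with $\{\sigma \in \om^{<\om} : \sigma \prec z\}$.

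Items (iii) and (iv) are then translation exercises. For (iii), combine (i) and (ii) to reduce $y \leq_M x$ (for $x, y \in \baire$) to $\{\sigma : \sigma \prec y\} \leq_e \{\sigma : \sigma \prec x\}$. Given an $x$-oracle, one trivially enumerates $\{\sigma : \sigma \prec x\}$, feeds the enumeration into the operator, and reads off the values of $y$ from the growing initial segments, yielding $y \tleq x$. Conversely, $y \tleq x$ makes $\{\sigma : \sigma \prec y\}$ uniformly c.e.\ in $x$, hence enumeration-reducible to $\{\sigma : \sigma \prec x\}$. For (iv), the substantive direction assumes $z \geq_M x$, so by (i) there is an enumeration operator $\Psi$ with $\Psi(\nbase{z}) = \nbase{x}$. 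From $z$ as a Turing oracle we compute $\nbase{z}$ via the identification in (ii), apply $\Psi$ to enumerate $\nbase{x}$, and record this enumeration as a function $p \in \baire$, which is a name of $x$ with $p \tleq z$. For the other direction, a name $p$ of $x$ automatically satisfies $\nbase{x} = \mathrm{Rng}(p) \leq_e \{\sigma : \sigma \prec p\}$ (read off $p$'s values), so $p \geq_M x$; then $z \tgeq p$ gives $z \geq_M p \geq_M x$ by (iii).

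For (v), apply (iv) twice. If $x \equiv_M z$ with $z \in 2^\om$, then (iv) applied to $z \geq_M x$ yields a name $p$ of $x$ with $p \tleq z$. The reverse $x \geq_M z$ gives by (i) an enumeration operator $\Phi$ with $\Phi(\nbase{x}) = \nbase{z}$; feeding the enumeration $p$ of $\nbase{x}$ into $\Phi$ produces an enumeration of $\nbase{z} = \{\sigma \in 2^{<\om} : \sigma \prec z\}$, from which $z$ is computable, so $z \tleq p$. Hence $p \teq z$, and by (iii) $p \equiv_M z \equiv_M x$, making $p$ canonical. For the converse, if $p \equiv_M x$ is a name of $x$ with $p \in \baire$, encode $p$ as some $z \in 2^\om$ of the same Turing degree (for example, the characteristic function of its graph); then by (iii), $z \equiv_M p \equiv_M x$, witnessing totality. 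The main subtlety across the proof is the identification in (ii) of Nbase codes with initial segments in $\baire$; once that is in place, (iii)--(v) become formal translations between the enumeration-theoretic and Turing-theoretic worlds.
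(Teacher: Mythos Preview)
Your proof is correct, and the paper itself does not supply one: the statement is recorded as a \emph{Fact} and left unproved, being treated as standard folklore about the relationship between representation reducibility and enumeration reducibility. Your argument follows exactly the route the paper sets up for the reader, namely the remark immediately preceding the Fact that a partial function $\xx\partialf\yy$ is recursive iff some enumeration operator sends $\nbase{x}$ to $\nbase{f(x)}$; from there your derivations of (ii)--(v) are the intended unpackings.
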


The following is a key property of the enumeration reducibility.

\begin{lemma}[Selman, Rozinas (see {\cite[Corollary 4.3]{miller2}})]\label{lem:unif-nonunif}
Let $A$ and $B$ be subsets of $\omega$.
Then, $A\leq_eB$ if and only if for every $C\subseteq\omega$, $B\leq_eC\oplus\overline{C}$ implies $A\leq_eC\oplus\overline{C}$.\smallskip

In particular, if $\mathcal{X}$ and $\mathcal{Y}$ are recursively presented metric spaces, then for any $x\in\mathcal{X}$ and $y\in\mathcal{Y}$,
\[x\leq_My\;\iff\;(\forall z\in 2^\omega)\;[y\leq_Mz\;\rightarrow\;x\leq_Mz].\qed\]
\end{lemma}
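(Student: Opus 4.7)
The forward implication is immediate from transitivity of $\leq_e$: if $A\leq_e B$ and $B\leq_e C\oplus\overline{C}$, then $A\leq_e C\oplus\overline{C}$.

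The backward implication is the classical Selman--Rozinas theorem. I would argue by contrapositive: assuming $A\not\leq_e B$, I construct $C\subseteq\om$ with $B\leq_e C\oplus\overline{C}$ yet $A\not\leq_e C\oplus\overline{C}$. The construction is a finite-extension argument on $2^{<\om}$. A \emph{condition} is a finite binary string $\sigma$ that respects a fixed coding of $B$ into $\overline{C}$, chosen so that $B\leq_e C\oplus\overline{C}$ holds in the limit; the remaining positions are reserved for diagonalisation. Enumerating the enumeration operators as $(\Psi_e)_{e\in\om}$, at stage $e$ one extends $\sigma_e$ to $\sigma_{e+1}$ meeting the requirement $\Psi_e(C\oplus\overline{C})\neq A$, by a two-case analysis: either some compatible $\tau\supseteq\sigma_e$ already enumerates a witness $n\in\Psi_e(\tau\oplus\overline{\tau})\setminus A$ (positive information cannot be retracted, so extend to $\tau$), or some $n\in A$ admits a compatible $\tau\supseteq\sigma_e$ for which no compatible extension $\rho$ puts $n$ into $\Psi_e(\rho\oplus\overline{\rho})$ (extend to $\tau$ and note $n\in A\setminus\Psi_e(C\oplus\overline{C})$).

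The main obstacle is the simultaneous failure of both cases at some stage $e$: this must yield $A\leq_e B$, contradicting the hypothesis. The combined failure gives
\[
n\in A\ \iff\ (\exists\text{ compatible }\tau\supseteq\sigma_e)\ n\in\Psi_e(\tau\oplus\overline{\tau}),
\]
so the delicate point is to arrange the notion of ``compatibility'' to be expressible as a genuine enumeration operator on $B$ rather than on $B\oplus\overline{B}$. Selman handles this by coding $B$ into $\overline{C}$ using only positive information about $B$, so that ``$\tau$ is compatible'' is itself $\Sigma^0_1$ in $B$ via an enumeration operator --- this is the subtle bookkeeping that makes the whole argument go through.

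The ``in particular'' clause follows by instantiating the first part with $A:=\nbase{x}$ and $B:=\nbase{y}$, together with Fact~\ref{usefulfactsaboutreducibility}(i). For $z\in\cantor$ identified with a subset of $\om$, the set $\nbase{z}$ is enumeration-equivalent to $z\oplus\overline{z}$ (basic balls in Cantor space are decidable from initial segments of $z$), so ``$\forall C\subseteq\om$'' in the first part corresponds bijectively to ``$\forall z\in\cantor$'' in the second, and the two $\leq_e$-conditions translate to the two $\leq_M$-conditions.
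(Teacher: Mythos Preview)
The paper does not give a proof of this lemma: it is stated with attribution to Selman and Rozinas (via \cite[Corollary 4.3]{miller2}) and closed with a \qed\ inside the statement, with no proof environment following. So there is nothing in the paper to compare your argument against.

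Your outline is essentially the standard Selman construction, and your derivation of the ``in particular'' clause from Fact~\ref{usefulfactsaboutreducibility}(i) together with the observation that $\nbase{z}\equiv_e z\oplus\overline{z}$ for $z\in 2^\omega$ is correct. Two small points of imprecision are worth flagging. First, you speak of ``coding $B$ into $\overline{C}$'', but in the usual argument one builds $C$ as (the graph of) a total enumeration of $B$, so that $B$ is coded positively into $C$, not into its complement; the reason this matters is exactly the one you identify --- the compatibility condition on a finite stage $\tau$ becomes ``${\rm range}(\tau)\subseteq B$'', which is enumeration-reducible to $B$. Coding into $\overline{C}$ in the natural way would make compatibility depend on negative information about $B$, which breaks the argument. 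Second, your Case~2 is slightly over-engineered: if some $n\in A$ lies outside $\Psi_e(\tau\oplus\overline{\tau})$ for \emph{every} compatible $\tau\supseteq\sigma_e$, then any compatible extension already secures $n\in A\setminus\Psi_e(C\oplus\overline{C})$, and there is no need to single out a particular $\tau$. These are wording issues; the skeleton of the argument is sound.
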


For any $\alpha,\beta\in\baire$ recall that $\alpha\oplus\beta$ is defined as the join of $\alpha$ and $\beta$.
Given representations $\rho_\xx$ and $\rho_\yy$ of spaces $\xx$ and $\yy$ automatically induces a representation $\rho_{\xx\times\yy}$ of the product space $\xx\times\yy$ defined by $\rho_{\xx\times\yy}(\alpha\oplus\beta)=(\rho_\xx(\alpha),\rho_\yy(\beta))$.
For any points $x\in\xx$ and $y\in\yy$ we often write $x\oplus y$ for the point $(x,y)$ in the product space $\xx\times\yy$.
Therefore, for any points $x,y,z$ in represented spaces, a formula such as $x\oplus y\leq_Mz$ makes sense.
By using this terminology, we state a result obtained from Miller's argument \cite[Proposition 5.3]{miller2} saying that every continuous degree is \emph{almost} total.

\begin{lemma}[Miller \cite{miller2}; see also {\cite[Lemma 9]{KihPau}}]\label{lem:nonsplitting}
Let $\mathcal{X}$ be a recursively presented metric space.
Then, for every $x\in\mathcal{X}$ and $z\in 2^\omega$, either $x>_Mz$ holds or $x\oplus z$ is total.\qed
\end{lemma}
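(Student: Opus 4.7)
The plan is to split on the negation $\neg(x >_M z)$, which amounts to the two subcases $x \leq_M z$ and $z \not\leq_M x$. The first subcase is immediate: if $x \leq_M z$, then from any name of $z$ one computes a name of $x$ (by hypothesis) and of $z$ (trivially), hence a name of the pair, so $x \oplus z \leq_M z$; the reverse reduction is the canonical projection, giving $x \oplus z \equiv_M z$. Since $z \in 2^\omega$ is total and totality is preserved under $\equiv_M$, so is $x \oplus z$.

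The substantive case is $z \not\leq_M x$. Here the goal is to produce a total $w \in 2^\omega$ with $w \equiv_M x \oplus z$, and by Fact \ref{usefulfactsaboutreducibility}(v) this reduces to exhibiting a $\rho$-name $q \in \baire$ of $x \oplus z$ with $q \leq_M x \oplus z$. Since $z \in 2^\omega$ already admits a canonical total name, the task further reduces to producing a $\rho_{\mathcal{X}}$-name $p \in \baire$ of $x$ satisfying $p \leq_M x \oplus z$; then $q := p \oplus z \in \baire$ does the job.

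My construction of $p$ would proceed stage by stage as an enumeration operator applied to $\nbase{x} \oplus z$. At each stage $s$ I run a bounded stretch of the canonical enumeration of $\nbase{x}$ from $\nbase{x}$ and must commit to the value $p(s)$. The naive strategy of outputting the $s$-th enumerated element can fail because $\nbase{x}$ may enumerate itself too slowly. I would remedy this by pacing the commitments through bits of $z$, so that the bit $z(s)$ (or a finite window of $z$-bits around $s$) determines how long to wait before flushing the next unused element of $\nbase{x}$ into $p$. The hypothesis $z \not\leq_M x$ enters through Lemma \ref{lem:unif-nonunif} to ensure that no $z$-driven schedule of this form is already mimicable from $\nbase{x}$ alone, so the construction cannot permanently stall on any initial segment.

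The hard part will be verifying that every element of $\nbase{x}$ is eventually output by $p$: this is the technical core of Miller's nonsplitting argument and requires exploiting Lemma \ref{lem:unif-nonunif} uniformly across the entire construction rather than invoking only a single Selman--Rozinas witness of the non-reducibility $z \not\leq_M x$. Once the verification is complete, $q := p \oplus z$ lies in $\baire$ and serves as a canonical name of $x \oplus z$, witnessing its totality.
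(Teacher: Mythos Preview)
The paper gives no proof of this lemma; it is stated with a terminal \qed and attributed to Miller and Kihara--Pauly. So there is no in-paper argument to compare against, and the question is whether your sketch is independently sound.

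Your first subcase ($x \leq_M z$) is correct. The second subcase ($z \not\leq_M x$) has a genuine gap. You propose to build a fixed name $p\in\baire$ of $x$ with $p \leq_M x \oplus z$ via an enumeration operator that ``runs a bounded stretch of the canonical enumeration of $\nbase{x}$'' and uses bits of $z$ to decide when to ``flush the next unused element.'' But an enumeration operator acts on the \emph{set} $\nbase{x}$, not on any particular listing of it, so phrases like ``the $s$-th enumerated element'' and ``the next unused element'' have no invariant meaning; your scheme is order-dependent and therefore does not define a single $p$ as a function of $x \oplus z$. The difficulty you diagnose (``$\nbase{x}$ may enumerate itself too slowly'') is not the real obstruction---the obstruction is that there is no canonical listing to pace in the first place. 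Your appeal to Lemma~\ref{lem:unif-nonunif} to rule out ``stalling'' does not address this, and in the last paragraph you explicitly defer the crux to ``the technical core of Miller's nonsplitting argument'' without supplying it. That is exactly where all the content lies: Miller's argument does not go via a pacing construction of this kind, but rather exploits the metric structure of the representation (which basic balls have $x$ on their boundary) to show that any $z$ not already $M$-below $x$ suffices to resolve the residual ambiguity in naming $x$.
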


Roughly speaking, the above Lemmata \ref{lem:unif-nonunif} and \ref{lem:nonsplitting} imply that certain arguments involving $\leq_M$ can be reduced to arguing about total degrees in $2^\omega$.
For instance, in the proof of Theorem \ref{theorem main}, we will use the above Lemmata \ref{lem:unif-nonunif} and \ref{lem:nonsplitting} to enable us to use the Friedberg Jump Inversion Theorem, which only holds for total degrees:

\begin{lemma}[The Friedberg Jump Inversion Theorem; see {\cite[Theorem 10.6.9]{CooperBook}}]\label{lem:jump-inversion}
Let $\xi$ be an ordinal less than the first non-computable ordinal $\omega_1^z$ relative to an oracle $z\in\baire$. For every $x\in\baire$, there exists $y\in\baire$ such that $y\tgeq z$ and $y\tpower{\xi}\teq x\oplus z\tpower{\xi}$.\qed
\end{lemma}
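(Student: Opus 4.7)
The plan is to first establish the classical base case $\xi = 1$ by a relativization of Friedberg's original finite-forcing construction, and then to bootstrap to arbitrary $\xi < \ckr{z}$ via effective transfinite recursion along a $z$-recursive well-ordering of type $\xi$.

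For $\xi = 1$ I set $a = x \oplus z'$, so $a \tgeq z'$, and build $y = z \oplus c$ with $c \in \cantor$ produced via a nested sequence of finite conditions $\sigma_0 \preceq \sigma_1 \preceq \dots$, the entire sequence computed $a$-recursively. At stage $e$ I consult the bit $a(e)$: if $a(e) = 1$, I search for the shortest $\sigma_{e+1} \succeq \sigma_e$ such that $\Phi_e^{z \oplus \sigma_{e+1}}(e)$ converges using only oracle bits already determined by $\sigma_{e+1}$; if $a(e) = 0$, I keep $\sigma_{e+1} = \sigma_e$ and impose a permanent restraint forbidding any future extension from causing $\Phi_e^{z \oplus c}(e)$ to converge. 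The question of whether such a $\sigma_{e+1}$ exists is uniformly $z'$-decidable, hence $a$-decidable. The resulting $y$ satisfies $y \tgeq z$ by construction, $a \tleq y'$ because $e \in y'$ iff $a(e) = 1$, and $y' \tleq a$ since $a$ drives the construction and directly records each halting decision; hence $y' \teq a = x \oplus z'$.

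For general $\xi < \ckr{z}$ I fix a $z$-recursive linear ordering $\prec$ on $\omega$ of order type $\xi$, together with a $z$-recursive system of fundamental sequences at $\prec$-limit points, and carry out a Sacks-style transfinite jump-inversion construction, in which $y$ is built by forcing with $\Pi^{0,z}_1$ classes (equivalently, $z$-recursive perfect trees) indexed along $\prec$. At each $\prec$-successor stage one performs a single Friedberg step as in the base case, coding the next bit of $x \oplus z\tpower{\xi}$ into the appropriate iterated jump of $y$, and at each $\prec$-limit stage one amalgamates the accumulated restraints via effective transfinite recursion; the endpoint is $y \tgeq z$ with $y\tpower{\xi} \teq x \oplus z\tpower{\xi}$.

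The main obstacle is precisely the $\prec$-limit stages: the restraints accrued at all $\beta \prec \xi$ must be simultaneously honoured by a single real oracle $y$, which cannot be arranged by naive diagonalization and requires carefully nested forcing conditions whose intersection is nonempty. The hypothesis $\xi < \ckr{z}$ is essential here, since otherwise there is no $z$-recursive presentation of $\xi$ through which the transfinite recursion could be threaded, and the approximations to $y$ would fail to collapse to a single real.
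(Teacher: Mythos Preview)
The paper does not supply a proof of this lemma; it is quoted from Cooper with a terminal $\qed$. I can therefore only assess your proposal on its own merits.

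Your base case $\xi = 1$ has a genuine gap. You propose to force $e \in y'$ exactly when $a(e) = 1$: if $a(e)=1$, search for an extension $\sigma_{e+1}\succeq\sigma_e$ with $\Phi_e^{z\oplus\sigma_{e+1}}(e)\downarrow$; if $a(e)=0$, restrain convergence. But $a(e)$ is simply the $e$-th bit of $x\oplus z'$ and has no a priori connection to whether such an extension exists. If $a(e)=1$ while no extension of $\sigma_e$ makes $\Phi_e^{z\oplus\cdot}(e)$ converge, your search diverges and the construction stalls; one cannot arrange $y'=a$ literally. The correct Friedberg argument decouples two tasks: (i) at stage $e$, use $z'$ (available from $a$) to \emph{decide} whether a convergence-forcing extension of $\sigma_e$ exists, and take one iff so --- this makes $y'\tleq a$; (ii) separately \emph{code} the bit $a(e)$ into $c$ by appending it, so that $a$ is recoverable from $y\oplus z'\tleq y'$. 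Your own parenthetical (``the question of whether such a $\sigma_{e+1}$ exists is uniformly $z'$-decidable'') is precisely the observation that drives step (i), but your construction never invokes it in that role.

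The transfinite part is too schematic to evaluate. Iterating the corrected base case along a notation in $\mathcal{O}^z$ via effective transfinite recursion is indeed the standard route, but the substance lies in the uniformity of the index transformation at successors and in the treatment of limit notations; ``forcing with $\Pi^{0,z}_1$ classes'' and ``amalgamating restraints'' do not by themselves describe a working construction.
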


We will define the notion of the jump operation in the context of the generalized Turing degrees in Section \ref{section:turing-jump}.
However, we will see that the jump of a point must be a point in $\baire$, and therefore, there is no counterpart of the Friedberg Jump Inversion Theorem in the generalized Turing degrees.




\subsection{The Turing Jump Operations}\label{section:turing-jump}

The notion of  a ``jump" of points can be extended to all Polish spaces in a natural way.
Assume that \ca{X} is a recursively presented metric space and $n \geq 1$.
Recall that $\effuniv^\ca{X}_{n,\alpha,e}$ is the $e^{\text{th}}$ $\Sigma^0_n(\alpha)$ subset of $\ca{X}$.
We define the {\em $\Sigma^0_n(\alpha)$-jump of $x \in \xx$} as the set
\[
J^{(n),\alpha}_\xx(x)=\{e\in\omega:x\in \effuniv^\xx_{n,\alpha,e}\}.
\]
The function $J^{(n),\alpha}_\xx:\xx\to 2^\omega$ is called the {\em $\Sigma^0_n(\alpha)$-jump operator on $\xx$}. Similarly we define the $\Sigma^0_{n}(\alpha)$-jump in $\ep$-recursively presented metric spaces, where $\alpha \tgeq \ep$.
We use the abbreviations $J^{(n)}_\xx$ and $J_\xx$ to denote $J^{(n),\emptyset}_\xx$ and $J^{(1),\emptyset}_\xx$, respectively, where as usual the empty set is identified with the infinite sequence $(0,0,\dots)$. It is also standard to denote $W_e$ as the $e^{\text{th}}$ $\Sigma^0_1$ subset of $\omega$. For example, the unrelativized jump operator $J^{(1),\emptyset}_\xx$ is simply the function that takes $x\in\xx$ to the set
\[\left\{e\in\omega: x\in \bigcup_{(m,r)\in W_e} B^\xx_{m,r}\right\}.\]
Initially to distinguish between the different notions of a jump, if $p\in\baire$ then we write $TJ(p)$ for the usual Turing jump of $p$ (defined by the relativized halting problem).
Note that the Turing jump $TJ$ is equivalent to our $\Sigma^0_1$-jump $J_\baire$ on Baire space, i.e. if $\alpha\in\baire$ then $TJ(\alpha)\equiv_1 J_\baire(\alpha)$.

In order to check that our jump operator shares the same basic properties with the usual Turing jump we will use the following property of our canonical universal system:



\begin{itemize}
\item Our coding system $(\univ_n^\ca{Z})_\ca{Z}$ is \emph{good}, roughly speaking, in the sense that the map transforming $(x,P)\in \mathcal{X}\times\bolds^0_n$ into the $x$-section $P_x\in\bolds^0_n$ is continuous w.r.t.~codes in $(G_n^\mathcal{Z})_{\ca{Z}}$, where $\ca{X}$ is of the form $\om^k \times \baire^t$ for some $t,k \geq 0$.
\end{itemize}

More precisely, a system $(\univ^\yy)_\yy$ is \emph{good} if for every $\ca{X}$ of the form $\om^k \times \baire^t$ and every Polish space \ca{Y} there exists a continuous function $S^{\ca{X},\ca{Y}} \equiv S: \baire \times \ca{X} \to \baire$ such that
\begin{align*}\label{equation good transition}
\univ^{\ca{X} \times \ca{Y}}(\ep,x,y) \iff \univ^{\ca{Y}}(S(\ep,x),y).
\end{align*}

Our universal lightface system $(\effuniv_n^\yy)_\yy$ is moreover \emph{effectively good}, \ie it satisfies the similar condition
\[
\effuniv_n^{ \om^k \times \ca{Y}}(e,\alpha,\vec{e},y) \iff \effuniv^\ca{Y}_n(S(e,\vec{e}),\alpha,y)
\]
for some \emph{recursive} function $S: \om \times \om^k \to \om$. From this it is not hard to prove that the corresponding system $(\univ_n^\yy)_\yy$ is good.\smallskip

Recall that for $A,B\subseteq\mathbb{N}$, the $1$-reducibility $A\leq_1B$ is defined by the existence of a recursive injection $f:\omega\to\omega$ such that $n\in A$ if and only if $f(n)\in B$.

\begin{lemma}\label{prop:jump-equivalence}
Let $\mathcal{X}$ be a recursively presented metric space and $n, m \geq1$. For any $x\in\mathcal{X}$ and $\alpha\in\baire$, we have $J^{(n),\alpha}_\xx(x)\equiv_1J^{(n)}_{\baire \times \xx}(\alpha,x)$.
Moreover, we also have $J_\xx^{(n+m)}(x)\equiv_1TJ^{(m)}\circ J_\xx^{(n)}(x)$.
\end{lemma}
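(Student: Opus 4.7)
I would establish the two 1-equivalences separately. For the first, $J^{(n),\alpha}_\xx(x)\equiv_1 J^{(n)}_{\baire\times\xx}(\alpha,x)$, the plan is to read ``the oracle $\alpha$'' and ``the Baire factor $\baire$'' as two presentations of the same datum and convert between them using the universality and effective goodness of $(\effuniv_n^\ca{Y})_\ca{Y}$. For $\leq_1$, I would pick by universality an index $e_0$ with $\effuniv^\xx_n(e,\alpha,x)\iff \effuniv^{\om\times\baire\times\xx}_n(e_0,\emptyset,e,\alpha,x)$ and then collapse the extra $\om$-parameter via the recursive $S$ supplied by effective goodness, so that $f(e):=S(e_0,e)$ satisfies $\effuniv^\xx_n(e,\alpha,x)\iff\effuniv^{\baire\times\xx}_n(f(e),\emptyset,\alpha,x)$.

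The converse $\geq_1$ is more delicate, as effective goodness as stated only moves $\om^k$-parameters into the index. I would prove by induction on $n$ the auxiliary lemma that for every recursively presented space $\ca{Z}$ there is a recursive $h_n^\ca{Z}$ with $\effuniv^{\baire\times\ca{Z}}_n(e,\emptyset,\alpha,z)\iff\effuniv^\ca{Z}_n(h_n^\ca{Z}(e),\alpha,z)$. The base case $n=1$ uses the product-basis decomposition $B^{\baire\times\ca{Z}}_{m,r}=B^\baire_{p(m,r)}\times B^\ca{Z}_{q(m,r)}$ together with the fact that ``$\alpha\in B^\baire_{p(m,r)}$'' is c.e.\ in the oracle $\alpha$, so the desired index codes the $\alpha$-c.e.\ set enumerating the relevant $q(m,r)$. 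The inductive step uses the defining recursion $\effuniv_{n+1}^\yy(e,\alpha,y)\iff(\exists i)\,\neg\effuniv^{\om\times\yy}_n(e,\alpha,i,y)$, a recursive permutation of coordinates identifying $\effuniv^{\om\times\baire\times\ca{Z}}_n$ with $\effuniv^{\baire\times\om\times\ca{Z}}_n$, the IH applied to $\om\times\ca{Z}$ in place of $\ca{Z}$, and finally a reduction of the $\om$-factor via effective goodness; setting $h_{n+1}^\ca{Z}(e):=h_n^{\om\times\ca{Z}}(\tau(e))$ then closes the induction.

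For the second equivalence $J_\xx^{(n+m)}(x)\equiv_1 TJ^{(m)}(J_\xx^{(n)}(x))$, I would induct on $m$, with $m=1$ carrying the content. For $\leq_1$, unfolding the definition and applying effective goodness gives $e\in J_\xx^{(n+1)}(x)\iff(\exists i)\,S(e,i)\notin J_\xx^{(n)}(x)$, which is $\Sigma^0_1$ with $J_\xx^{(n)}(x)$ as a Turing oracle, and hence 1-reduces to $TJ(J_\xx^{(n)}(x))$ by the 1-completeness of the Turing jump. For $\geq_1$, expand $\Phi_e^{J_\xx^{(n)}(x)}(e)\downarrow\iff(\exists\sigma\in 2^{<\om})[\sigma\prec\chi_{J_\xx^{(n)}(x)}\wedge\Phi_e^\sigma(e)\downarrow]$; the consistency check ``$\sigma\prec\chi_{J_\xx^{(n)}(x)}$'' is a finite conjunction of $\Sigma^0_n$ and $\Pi^0_n$ predicates in $x$ (one per bit of $\sigma$), hence $\Sigma^0_{n+1}$ in $x$ uniformly in $e$; universality of $\effuniv^\xx_{n+1}$ then supplies the recursive reduction. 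The inductive step $m\to m+1$ is routine using the standard fact that $A\equiv_1 B$ uniformly implies $TJ(A)\equiv_1 TJ(B)$.

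The principal obstacle is the auxiliary lemma in Part~1: pushing a $\baire$-parameter into the relativization position is not covered by the form of effective goodness recorded in the paper, and its proof must be carried out by an induction that exploits both the inductive definition of $\effuniv_n$ and the ``positive'' (c.e.-in-$\alpha$) description of the basic open balls of $\baire$. Injectivity of all 1-reductions is arranged throughout by the usual padding trick, using that every $\Sigma^0_n$ set admits infinitely many recursive indices.
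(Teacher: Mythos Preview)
Your proposal is correct and follows the paper's strategy; the paper's own proof is a sketch that details only one direction of the second assertion (essentially your $\leq_1$ computation for the case $m=1$) and otherwise asserts that everything follows from effective goodness. You are right to notice that effective goodness as stated in the paper only absorbs $\om^k$-parameters into the index, not $\baire$-parameters into the oracle, and your auxiliary induction lemma is a sound and natural way to fill this gap---indeed it is exactly what underlies the paper's remark that effective goodness of $(\effuniv_n^\ca{Y})$ implies ordinary goodness of $(\univ_n^\ca{Y})$, which the paper states without proof.
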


\begin{proof}
Both assertions are proved easily using the fact that our system $(\effuniv_{n}^\yy)$ is effectively good. To verify the second assertion for example, it is enough to show that $J_\xx^{(k+1)}(x)\equiv_1TJ\circ J_\xx^{(k)}(x)$ for any $k\geq 1$. We have that
\begin{align*}
e \in  J_\xx^{(k+1)}(x)
\iff& \ \effuniv^\ca{X}_{k+1}(e,\emptyset,x)
\iff \ \exists i \neg \effuniv^{\om \times \ca{X}}_{k}(e,\emptyset,i,x)\\
\iff& \ \exists i \neg \effuniv^\ca{X}_{k}(S(e,i),\emptyset,x)
\iff \ \exists i S(e,i) \in  J_\xx^{(k)}(x)
\end{align*}
for a suitably chosen recursive function $S$. The latter relation defines a $\Sigma^0_1(J_\xx^{(k)}(x))$ subset of $\om$, and hence it is $1$-reducible to $TJ (J_\xx^{(k)}(x))$. The other side of the inequality is proved similarly by remarking that the condition $e \in TJ (J_\xx^{(k)}(x))$ defines a $\Sigma^0_{k+1}$ relation on $(e,x)$.
\end{proof}

In the light of Lemma  \ref{prop:jump-equivalence} and the fact that the $\Sigma^0_1$ jump extends the usual Turing jump, we will henceforth use $x'$ to denote the first unrelativized jump $J^{(1),\emptyset}_{\ca{X}}(x)$, and $x^{(n)}$ to denote the $n^{\text{th}}$ unrelativized jump. Generally to relativize the jump operator, given a recursively presented metric space \ca{X}, a natural $n \geq 1$ and $\alpha \in \baire$ we define
\[
(x \oplus \alpha)\tpower{n} : = (\alpha \oplus x)\tpower{n} : = J^{(n)}_{\baire \times \ca{X}}(\alpha,x).
\]
Using Lemma \ref{prop:jump-equivalence} we can see that this notion of a jump is meaningful: the $n^{\text{th}}$ jump of $(x,\alpha)$ is up to $\teq$ the same as the $\Sigma^0_n(\alpha)$-jump of $x$. We will customarily write $x \oplus \alpha$ or $\alpha \oplus x$ to denote the pair $(x,\alpha)$, \eg by $x \oplus \alpha \leq_M y$ we mean $(x,\alpha) \leq_M y$.

The next step is to make sure that our jump operation is well-defined on the $M$-degree structure.

\begin{lemma}\label{fact:jumpproperties}
Suppose $x\in\xx$ and $y\in\yy$.
Then, $x\leq_M y$ if and only if $x'\leq_1 y'$.
\end{lemma}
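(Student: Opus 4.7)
My plan is to work entirely through the enumeration-reducibility characterization of $\leq_M$ recorded in Fact~\ref{usefulfactsaboutreducibility}(i), namely $y \leq_M x \iff \nbase{y} \leq_e \nbase{x}$, together with the unfolded definition of the jump $x' = \{e\in\omega : x \in \effuniv^\xx_{1,\emptyset,e}\}$. Once these two ingredients are in place, both directions reduce to routine bookkeeping with $\Sigma^0_1$-indices, and the real work is done by effective goodness of the chosen universal system $(\effuniv_1^{\ca{Y}})_\ca{Y}$, which translates such indices recursively between spaces.

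For $x \leq_M y \Rightarrow x' \leq_1 y'$, I would fix an enumeration operator $\Psi_a$ witnessing $\nbase{x} \leq_e \nbase{y}$ and compute
\[
n \in x' \iff (\exists \langle m,r \rangle \in W_n)\,[x \in B^\xx_{m,r}] \iff (\exists \langle m,r \rangle \in W_n)\,[\langle m,r \rangle \in \Psi_a(\nbase{y})].
\]
The right-hand side is manifestly a $\Sigma^0_1$ condition on $y$ uniformly in $n$, so effective goodness of $(\effuniv_1^\ca{Y})_\ca{Y}$ yields a recursive $f_0$ with $n \in x' \iff f_0(n) \in y'$. A standard Padding Lemma argument then upgrades $f_0$ to a recursive injection, giving $x' \leq_1 y'$.

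For $x' \leq_1 y' \Rightarrow x \leq_M y$, given a recursive injection $f$ with $n \in x' \iff f(n) \in y'$, I would observe that each basic open ball $B^\xx_{m,r}$ is itself a $\Sigma^0_1$ subset of $\xx$, so there is a recursive function $(m,r) \mapsto n(m,r)$ with $\effuniv^\xx_{1,\emptyset,n(m,r)} = B^\xx_{m,r}$. Chaining definitions,
\[
\langle m,r \rangle \in \nbase{x} \iff x \in B^\xx_{m,r} \iff n(m,r) \in x' \iff f(n(m,r)) \in y'.
\]
The last clause is a c.e.\ condition on $\nbase{y}$ uniformly in $(m,r)$, hence defines an enumeration operator reducing $\nbase{x}$ to $\nbase{y}$; applying Fact~\ref{usefulfactsaboutreducibility}(i) once more closes the argument.

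The only mildly delicate point is the passage from many-one to one-one in the forward direction (this is why the conclusion is stated with $\leq_1$), but it is handled cleanly by the Padding Lemma and is not a genuine obstacle. Everything else is a direct unpacking of definitions, with effective goodness of the universal system doing the real bookkeeping work of converting enumerations of $\nbase{\cdot}$ into $\Sigma^0_1$-indices in a uniform recursive way.
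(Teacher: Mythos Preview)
Your proposal is correct and follows essentially the same route as the paper. Both directions go through the characterization $x\leq_M y\iff\nbase{x}\leq_e\nbase{y}$ and unfold the jump via $e\in z'\iff W_e\cap\nbase{z}\neq\emptyset$; the only cosmetic difference is that for the forward direction the paper phrases the index translation as ``$f^{-1}\bolds^0_1\subseteq\bolds^0_1$ holds recursive-uniformly'' for the partial recursive $f:\yy\rightharpoonup\xx$ witnessing $x\leq_M y$, whereas you work directly with the enumeration operator on $\nbase{y}$---and you are in fact more careful than the paper in making the Padding Lemma step from $\leq_m$ to $\leq_1$ explicit.
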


\begin{proof}
Suppose $x\leq_My$.
Then, there is a recursive partial function $f:\yy\rightharpoonup\xx$ such that $f(y)=x$. Then $f^{-1}\bolds^0_1\subseteq\bolds^0_1$ holds recursive-uniformly.
This shows that there is a recursive function $h$ such that $e\in x'$ if and only if $h(e)\in y'$.
Conversely, suppose $x'\leq_1 y'$.
Then, there is a recursive function $g$ such that $e\in \nbase{x}$ if and only if $g(e)\in y'$.
The last condition is equivalent to $W_{g(e)}\cap\nbase{y}\not=\emptyset$ (recall that $W_k$ refers to the $k^{\text{th}}$ $\Sigma^0_1$ subset of $\omega$).
In particular, $\nbase{x}\leq_e\nbase{y}$.
Thus, we have $x\leq_My$.
\end{proof}

By Lemma \ref{fact:jumpproperties}, we have that $x'\leq_Tp'$ for every name $p$ of $x$ since $x\leq_Mp$.
The following says that the jump of a point can be simulated by the usual Turing jump of its generic name. Note that by Fact \ref{usefulfactsaboutreducibility}, $x\equiv_M p$ holds only if $x$ is total.

\begin{lemma}\label{prop:nameandjump}
Given any $x\in\xx$ there is a name $p\in\omega^\omega$ of $x$ such that $x'\equiv_T p'$.
\end{lemma}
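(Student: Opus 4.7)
The direction $x' \tleq p'$ is automatic for any name $p$ of $x$: since $x \leq_M p$, Lemma~\ref{fact:jumpproperties} yields $x' \leq_1 p'$, hence $x' \tleq p'$. The content of the lemma is therefore to exhibit one name $p$ of $x$ with $p' \tleq x'$. The plan is to adapt the Jockusch--Soare Low Basis Theorem to the generalized-oracle setting developed in this section, running the argument inside a compact representation of the ambient space.

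First I would reduce to the Hilbert cube. Take the standard computable embedding $\phi:\xx \hookrightarrow \hcube$ given by $x \mapsto (d(x,a_n)/M_n)_n$ with an appropriate normalization; using the enumeration-operator characterization of $\leq_M$, a routine check shows that $\nbase{\phi(x)}$ and $\nbase{x}$ are enumeration-reducible to one another, so $\phi(x) \equiv_M x$ and in particular $\phi(x)' \teq x'$. Set $y := \phi(x)$ and work in the tree representation $\rho_\hcube:[\mathbf{H}] \to \hcube$ of Example~\ref{exa:rep-Hilbert-cube}, where $\mathbf{H}$ is a recursively bounded recursive subtree of $\omseq$. In this representation,
\[
\rho_\hcube^{-1}(\{y\}) \;=\; \{q \in [\mathbf{H}] : y \in \hat{B}^n_{q(n)} \text{ for all } n\}
\]
is a non-empty $\Pi^0_1(y)$ subclass of the compact Polish space $[\mathbf{H}]$, since each condition ``$y \in \hat{B}^n_k$'' is $\Pi^0_1$ in $(y,k)$.

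Next I would run a Low-Basis-style construction inside this class: build a nested sequence of non-empty $\Pi^0_1(y)$ classes $P_0 \supseteq P_1 \supseteq \cdots$ starting from $P_0 = \rho_\hcube^{-1}(\{y\})$, and at stage $e$ set $Q_e = P_e \cap \{q : \Phi^q_e(e) \uparrow\}$, declaring $P_{e+1} = Q_e$ if $Q_e \neq \emptyset$ and $P_{e+1} = P_e$ otherwise. The decisive observation is that, because $\mathbf{H}$ is recursively bounded and the defining conditions of $Q_e$ are local (depending only on initial segments of $q$), the emptiness of $Q_e$ is equivalent via K\"onig's Lemma to finiteness of its associated subtree---a $\Sigma^0_1(y)$ statement---so each stage-$e$ decision is computable from $y' \teq x'$. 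By compactness of $[\mathbf{H}]$ the intersection $\bigcap_e P_e$ is non-empty, and for any $q$ in it we have $e \in q'$ iff the decision at stage $e$ was ``empty'', so $q' \tleq x'$.

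Finally, to pull $q$ back to a Nbase-name of $x$, I would enumerate $\nbase{y}$ from $q$ (using that $\hat{B}^n_{q(n)} \subseteq B^\hcube_{m,s}$ is recursive in $q$ and witnesses $(m,s) \in \nbase{y}$), then enumerate $\nbase{x}$ from $\nbase{y}$ via the $M$-reduction $x \leq_M y$; listing the elements of $\nbase{x}$ in enumeration order yields a name $p$ of $x$ with $p \tleq q$, so $p' \tleq q' \tleq x'$. Combined with the trivial $p' \tgeq x'$, this gives $p' \teq x'$. The hard part I anticipate is verifying that emptiness of the $\Pi^0_1(y)$ classes $Q_e$ is genuinely $\Sigma^0_1(y)$ in the generalized-oracle setting---this is precisely where the recursive boundedness of $\mathbf{H}$ and the locality of the defining conditions become essential.
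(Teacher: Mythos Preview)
Your argument is correct, but it takes a genuinely different route from the paper. The paper works directly in $\xx$ with the ${\rm Nbase}$-representation and carries out a Friedberg-style jump inversion: it builds finite strings $p_s\in\omega^{<\omega}$ with ${\rm Rng}(p_s)\subseteq\nbase{x}$, and at stage $s$ uses $x'$ to test whether $x$ lies in the $\Sigma^0_1$ set $\bigcup\{B_\tau^\xx:\tau\succ p_s,\ \Phi_s^\tau(s)\downarrow\}$; if so it extends to such a $\tau$ (forcing $\Phi_s^p(s)\downarrow$), otherwise it leaves $p_s$ alone (forcing $\Phi_s^p(s)\uparrow$), and then appends $s$ iff $s\in\nbase{x}$. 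No embedding, no compact representation, no Low Basis machinery.

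Your approach instead passes through the Hilbert cube and the recursively bounded tree $\mathbf{H}$ of Example~\ref{exa:rep-Hilbert-cube}, recasting the problem as finding a low member of the $\Pi^0_1(y)$ class of $\rho_\hcube$-names of $y=\phi(x)$. The crux you identify---that emptiness of each $Q_e$ is a $\Sigma^0_1$ condition on $y$ and hence decidable from $y'$---does go through: non-membership of $\sigma$ in the tree reduces to ``$y\notin\hat{B}^n_{\sigma(n)}$ for some $n<|\sigma|$'' (an open condition on $y$) or a purely computable clause, and recursive boundedness of $\mathbf{H}$ lets K\"onig's lemma convert emptiness into a $\Sigma^0_1(y)$ statement. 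What your route buys is modularity: it makes transparent that this lemma is a Low Basis phenomenon for the class of names, and indeed the paper proves exactly such a Low Basis claim later inside the Cone Avoidance Lemma~\ref{thmconeavoidance}. What the paper's direct argument buys is economy: it avoids the detour through $\hcube$ and the back-and-forth between representations, and it makes the $x'$-computability of the construction immediate since every stage is a single membership query ``$x\in U$?'' for a uniformly $\Sigma^0_1$ set $U$.
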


\begin{proof}
Follow the usual jump inversion argument.
We construct $p=\lim_sp_s\in\baire$ such that ${\rm Rng}(p)=\nbase{x}$ by a $x'$-computable way.
Let $p_0$ be the empty string.
At stage $s$, suppose that $p_s\in\omega^{<\omega}$ is given.
For $\tau\in\omega^{<\omega}$, let $B_\tau^\xx$ denote $\bigcap_{i\in{\rm Rng}(\tau)}B_i^\xx$.
Then, check if $x$ is contained in the following $\Sigma^0_1$ set by using $x'$:
\[\bigcup\{B_{\tau}^\xx:\tau\succ p_s\mbox{ and }\Phi_s^\tau(s)\downarrow\}.\]

If yes with a witness $\tau$ such that $x\in B_{\tau}^\xx$, we define $p_s^*=\tau$, and otherwise $p_s^*=p_s$.
If $x\in B^\xx_s$, then put $p_{s+1}=p_s^*\fr s$, and otherwise $p_{s+1}=p_s^*$.
The construction is clearly $x'$-computable.
To see that ${\rm Rng}(p)=\nbase{x}$, we note that ${\rm Rng}(p_s)\subseteq\nbase{x}$ implies ${\rm Rng}(p_s^*)\subseteq\nbase{x}$ since the condition $x\in B_{\tau}^\xx$ is equivalent to that ${\rm Rng}(\tau)\subseteq\nbase{x}$.
Then, obviously, we have ${\rm Rng}(p_{s+1})\subseteq\nbase{x}$.
Conversely, if $s\in\nbase{x}$ then we have $s\in{\rm Rng}(p_{s+1})$ by our construction.
Finally, note that $p'(s)=1$ if and only if $\Phi_s^{p_{s+1}}(s)\downarrow$. This is because if $\Phi_s^{p_{s+1}}(s)\uparrow$ (meaning no witness $\tau$ was found at stage $s$), then as ${\rm Rng}(p)=\nbase{x}$ we must have $\Phi^{p}(s)\uparrow$.
Therefore, $p'\leq_Tx'$.
\end{proof}

Lemma \ref{prop:nameandjump} is also a key tool in our proof of Theorem \ref{theorem main}.
Indeed, this lemma has a similar role as Lemmata \ref{lem:unif-nonunif} and \ref{lem:nonsplitting}.

\subsection{The Cone Avoidance Lemma}

To prove the Shore-Slaman Join Theorem in an arbitrary Polish space, we need to show the cone avoidance lemma. To do this we utilize the idea of effective compactness.
For notational simplicity, if the underlying space $\xx$ is clear from the context, hereafter we write $\univ^\ep_e$ for the $e^{\text{th}}$ $\Sigma^0_1(\ep)$ set $\univ^\ca{X}_{1,e\fr\ep}$ so that $F^\ep_d = \ca{X} \setminus \univ^\ep_d$.
We say that a recursively presented metric space $\ca{X}$ is \emph{$\ep$-effectively compact} if it is compact and if there is an $\ep$-computable procedure deciding whether the union of a given finite set of basic open balls covers $\ca{X}$ or not.
If $\ep=\emptyset$ we simply say that $\ca{X}$ is effectively compact.
Clearly, every effectively compact space is also $\ep$-effectively compact for all oracles $\ep$.
It is not hard to see that $\ca{X}$ is $\ep$-effectively compact if and only if $\ca{X}$ is compact and the set $\{\langle d,e\rangle:F^\ep_d\subseteq \univ^\ep_e\}$ is a $\Sigma^0_1(\ep)$ subset of \om.
It is evident that every compact Polish space is $\ep$-effectively compact for some oracle $\ep$. It is also not difficult to verify that the \emph{Hilbert cube} $[0,1]^\om$ is effectively compact.

Going back to the functions $\Phi_e^{\ca{X},\ca{Y}} \equiv \Phi_e$ we recall that the condition $\tilde{\Phi}_e(n,r,m,s)$ implies that $\Phi_e[B^\ca{X}_{n,r}] \subseteq B^\ca{Y}_{m,s}$, where by $\Phi_e[A]$ we mean $\Phi_e[A \cap \Dom(\Phi_e)]$.
Since the domain of $\Phi_e$ is a $\Pi^0_2$ subset of $\xx$ (uniformly in $e$) we may write $\Dom(\Phi_e) = \bigcap_tD^e_t$, where the set $\set{(e,t,x)}{x \in D^e_t}$ is $\Sigma^0_1$.

\begin{lemma}[Cone avoidance]\label{thmconeavoidance}
Suppose that $\ep\in\baire$ is an oracle, and that $y$ is a point in a recursively presented metric space $\yy$ such that  $y\not\leq_M \ep$.
If $P$ is a nonempty $\Pi^0_1(\ep)$ subset of an effectively compact space $\ca{H}$, then there is some $z\in P$ such that $y\not\leq_M z$ and $z\leq_M y\oplus \ep'$.
\end{lemma}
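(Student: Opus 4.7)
The plan is to adapt the classical Jockusch--Soare cone avoidance argument to the enumeration-reducibility setting and then transfer the result from Baire space to $\ca{H}$ via a compact representation. Generalizing the construction of Example \ref{exa:rep-Hilbert-cube} to an arbitrary $\ep$-effectively compact space (a tree of refinements of finite $\ep$-computable covers with decreasing mesh), I would obtain a representation $\rho:[\mathbf{H}]\to\ca{H}$ whose domain is the set of infinite branches of a recursively bounded recursive tree $\mathbf{H}\subseteq\om^{<\om}$; a consequence of $\ep$-effective compactness is that emptiness of every $\Pi^0_1(\ep)$ subclass of $[\mathbf{H}]$ is a $\Sigma^0_1(\ep)$ predicate. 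Pulling $P$ back along $\rho$ yields a nonempty $\Pi^0_1(\ep)$ subclass $\tilde{P}\subseteq[\mathbf{H}]$, and setting $A:=\nbase{y}\subseteq\om$, the hypothesis $y\not\leq_M\ep$ translates by Fact \ref{usefulfactsaboutreducibility} to $A\not\leq_e\ep$.

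Next I would run a Jockusch--Soare-type construction on $\tilde{P}$ using the oracle $y'\oplus\ep'$; I note in passing that $y'\oplus\ep'\leq_M y\oplus\ep'$ since $y'=J_\yy(y)$ is enumeration-reducible to $\nbase{y}$. Build a nested sequence $\tilde{P}=Q_0\supseteq Q_1\supseteq\dots$ of nonempty $\Pi^0_1(\ep)$ subclasses of $[\mathbf{H}]$ so that every $q\in Q_{e+1}$ defeats the $e$-th enumeration operator $\Psi_e$, in the sense that $\Psi_e(\text{Rng}(q))\neq A$. Writing $T_n:=\{q:n\in\Psi_e(\text{Rng}(q))\}$, at stage $e$ I search in parallel for witnesses of either case \tu{(i)} some $n\in A$ with $Q_e\cap T_n^c\neq\emptyset$ (in which case set $Q_{e+1}:=Q_e\cap T_n^c$, a $\Pi^0_1(\ep)$ class, and every $q\in Q_{e+1}$ has $n\in A\setminus\Psi_e(\text{Rng}(q))$), or case \tu{(ii)} some finite axiom $(n,u)$ of $\Psi_e$ with $n\notin A$ and $Q_e\cap[u]\neq\emptyset$ (set $Q_{e+1}:=Q_e\cap[u]$, again $\Pi^0_1(\ep)$ since $[u]$ is clopen, and every $q\in Q_{e+1}$ has $n\in\Psi_e(\text{Rng}(q))\setminus A$). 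The $A$-membership tests are decidable by $y'$ and the nonemptiness tests for $\Pi^0_1(\ep)$ subclasses are decidable by $\ep'$ via effective compactness, so each stage is $y'\oplus\ep'$-effective.

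The construction never stalls: simultaneous failure of \tu{(i)} and \tu{(ii)} would give the chain $A\subseteq\{n:Q_e\subseteq T_n\}\subseteq\{n:Q_e\cap T_n\neq\emptyset\}\subseteq A$, forcing $A=\{n:Q_e\subseteq T_n\}$; but this last set is $\Sigma^0_1(\ep)$ by effective compactness (since $Q_e\cap T_n^c$ is $\Pi^0_1(\ep)$ and its emptiness is $\Sigma^0_1(\ep)$), contradicting $A\not\leq_e\ep$. Extending a compatible shrinking string within $[\mathbf{H}]$ at each stage $y'\oplus\ep'$-recursively yields $p\in\bigcap_eQ_e$ with $p\tleq y'\oplus\ep'$; setting $z:=\rho(p)\in P$ gives $z\leq_M p\tleq y'\oplus\ep'\leq_M y\oplus\ep'$ on the one side, and $y\not\leq_M z$ on the other, since $\nbase{z}\leq_e\text{Rng}(p)$ combined with $\Psi_e(\text{Rng}(p))\neq A$ at every $e$ rules out $A\leq_e\nbase{z}$ (via composition of enumeration operators). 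The main delicate point I anticipate is locating the construction at the correct oracle level -- namely, that case \tu{(ii)} can be decided by $y'\oplus\ep'$ rather than a higher jump -- which depends essentially on refining along a finite axiom $(n,u)$ to the clopen sub-cylinder $[u]$; a naive formulation of \tu{(ii)} without this refinement would push the complexity to $\Sigma^0_2(y\oplus\ep)$ and break the bound $z\leq_M y\oplus\ep'$.
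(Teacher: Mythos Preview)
Your argument has a genuine gap at exactly the point you flag as ``in passing'': the claim $y'\oplus\ep'\leq_M y\oplus\ep'$ is false in general. You justify it by ``$y'=J_\yy(y)$ is enumeration-reducible to $\nbase{y}$'', but this confuses two different statements. It is true that $y'$ \emph{as a subset of $\omega$} satisfies $y'\leq_e\nbase{y}$. However, $y'\leq_M y\oplus\ep'$ means that $y'$ \emph{as a point of $2^\omega$} is $M$-below $y\oplus\ep'$, i.e.\ $\nbase{y'}\leq_e\nbase{y}\oplus\nbase{\ep'}$. Since $\nbase{y'}$ encodes both $y'$ and its complement, this would force $\overline{y'}\leq_e\nbase{y}\oplus\ep'$. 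But $e\in\overline{y'}$ says that $y$ avoids the $e$-th open set, a $\Pi^0_1$ condition on $y$ which is not enumerable from $\nbase{y}$ (boundary points of basic balls are the obstruction), and the unrelated oracle $\ep'$ does not help. Equivalently, by Lemma~\ref{fact:jumpproperties} your claim would force $y''\leq_1(y\oplus\ep')'$, i.e.\ every $\Sigma^0_2$ fact about $y$ is already a $\Sigma^0_1(\ep')$ fact---impossible, since $\Sigma^0_1(\ep')$ sets are still open. Concretely, your Case~(ii) needs to \emph{decide} $n\notin A$, and that decision cannot be made from any name of $y$ together with $\ep'$, so the bound $z\leq_M y\oplus\ep'$ does not follow.

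The paper's proof shows why this is the real obstacle and how to get around it. It invokes the almost-totality dichotomy (Lemma~\ref{lem:nonsplitting}): either $\ep'\leq_M y$, in which case a low basis argument already gives an $\ep$-low $z\in P$ that works; or $y\oplus\ep'$ is total, in which case there is a canonical name $p_y$ of $y$ with $p_y\leq_M y\oplus\ep'$, and the construction is run from $p_y\oplus\ep'$. In that construction the analogue of your Case~(ii) is detected using only \emph{positive} information about $y$: one searches for a ball $B^\yy_{i_0,i_1}$ listed by $p_y$ that is disjoint from the target ball $B^\yy_{m,s}$, rather than trying to verify $y\notin B^\yy_{m,s}$ directly. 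The termination argument then uses compactness of $P_e$ to show that if no split is ever found then $\Phi_e[P_e]=\{y\}$, whence $y\leq_M\ep$, a contradiction. Your enumeration-operator framework could in principle be repaired along these lines, but it would amount to redoing the paper's case split and its metric ``positive witness'' trick; as written, the proposal does not go through.
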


\begin{proof}
We first prove the following variant of the Low Basis Theorem in continuous degrees. As usual a point $z$ is \emph{$\ep$-low} if $(z\oplus\ep)'\leq_T\ep'$.

\begin{claim}[Low Basis Theorem]
Assume $\ca{H}$ is effectively compact and $\varepsilon\in\baire$.
If $P$ is a nonempty $\Pi^0_1(\ep)$ subset of $\ca{H}$ then there is an $\ep$-low point $z\in\ca{H}$ such that $z\in P$.
\end{claim}

\noindent
{\it Proof.}
As in the usual proof, we construct an $\ep'$-computable decreasing sequence $(Q_e)_{e\in\omega}$ of $\Pi^0_1(\ep)$ sets in $\ca{H}$. More specifically, this means that the sequence of indices of $(Q_e)_{e\in\omega}$ is computable from $\varepsilon'$.
Define $Q_0=P$.
By ($\ep$-)effective compactness, we can decide $Q_e\subseteq \univ_e^\ep$ by using $\ep'$ uniformly in $e$.
Put $Q_{e+1}=Q_e$ if $Q_e\subseteq \univ_e^\ep$, and otherwise, put $Q_{e+1}=Q_e\setminus \univ_e^\ep$.
For any $z\in Q:=\bigcap_{e\in\omega}Q_e$, clearly, $J^\ep_{\ca{H}}(z)(e)=1$ if and only if $Q_{e}\subseteq \univ_{e}^\ep$.
In other words, $J^\ep_{\ca{H}}(z)\leq_T\ep'$.\qed(claim)

\medskip


Going back to the main proof, we have from Lemma \ref{lem:nonsplitting} that either $\ep'\leq_My$ or that $y\oplus\ep'$ is total. In the former case we obtain from the preceding claim an $\ep$-low point $z \in P$, which easily satisfies the assertion. In the remaining of this proof we assume that $y\oplus\ep'$ is total. By Fact \ref{usefulfactsaboutreducibility} we can fix the canonical name $p_y$ of $y$ and show that the construction is computable in $p_y\oplus \varepsilon'$.

Begin with $P_0=P$. Suppose that a nonempty $\Pi^0_1$ set $P_e\subseteq P$ has been already constructed. At substage $t$ of stage $e$, consider the $\Sigma^0_1$ set $D^e_t\subseteq\ca{H}$ introduced above.
Since $\ca{H}$ is $\ep$-effectively compact, by using $\ep'$, one can decide whether $P_e\subseteq D^e_t$ or not.
If $P_e\not\subseteq D^e_t$, define $Q_{e+1}=P_e\setminus D^e_t$.
This ensures that $Q_{e+1}\cap{\rm dom}(\Phi_e)=\emptyset$. We then go to the next stage $e+1$.

Otherwise, check whether $\tilde{\Phi}_{e,t}$ (note that this is a finite set) contains a computation $(n,r,m,s)$ such that
\begin{align}\label{formula1}
\hat{B}^{\ca{H}}_{n,r(t)}\cap P_e\not=\emptyset\mbox{ and }B^\yy_{i_0,i_1}\cap B^\yy_{m,s}=\emptyset,
\end{align}
where $\tilde{\Phi}_{e,t}$ is the stage $t$ approximation of $\tilde{\Phi}_e$, $\hat{B}$ is the closed ball which has the same center and radius with $B$, and $r(t)$ is such that $q_{r(t)} = q_r-2^{-t}$. 
Also $B^\yy_{i_0,i_1}$ is the $t^{\text{th}}$ ball enumerated by the name $p_y$ of $y$.

By $\ep$-effective compactness of $P_e$, $\ep'$ can decide whether $\hat{B}\cap P_e\not=\emptyset$ for a given basic closed ball $\hat{B}$.
Therefore, we can use $p_y\oplus\ep'$ to decide whether (\ref{formula1}) holds or not.
If yes, put $Q_{e+1}=P_e\cap\hat{B}^\ca{H}_{n,r(t)}$ and go to stage $e+1$.
This construction ensures that $\Phi_e[Q_{e+1}]\subseteq B^{\ca{H}}_{m,s}$ and hence, $y\not\in \Phi_e[Q_{e+1}]$.

If no, go to substage $t+1$, and repeat the above. At the end, if the substage $t$ goes to infinity, we have $P_e\subseteq \bigcap_tD^e_t$, that is, $\Phi_e(z)\in\yy$ for any $z\in P_e$.
We claim that $\Phi_e(z)=y$ for every $z\in P_e$.
If the claim is false, then we have $\Phi_e(z)\not=y$ for some $z\in P_e$, then $\tilde{\Phi}_e$ enumerates a computation $\Phi_e[B^{\ca{H}}_{n,r}]\subseteq B^\yy_{m,s}$ such that $z\in B_{n,r}^{\ca{H}}$ and $y\not\in \overline{B^\yy_{m,s}}$.
Then, for any sufficiently large $t$, we have $z\in \hat{B}^\ca{H}_{n,r(t)}\cap P_e$ and $B^\yy_{i_0,i_1}\cap B^\yy_{m,s}=\emptyset$ at substage $t$, and hence, the construction moves to stage $e+1$ after substage $t$ of stage $e$, which contradicts our assumption that $t$ goes to infinity.
Hence, $\Phi_e[P_e]=\{y\}$.

Now, by $\ep$-effective compactness of $P_e$, for any $i\in\omega$, one can $\ep$-computably find a finite set $V\subseteq \tilde{\Phi}_e\cap\{(n,r,m,s):q_s<2^{-i}\}$ such that
\[P_e\subseteq\bigcup_{(n,r,m,s)\in V}B^\xx_{n,r}\mbox{, and }\bigcap_{(n,r,m,s)\in V}B_{m,s}^\yy\not=\emptyset.\]

Note that the first clause is $\Sigma^0_1(\ep)$ and the second clause is $\Sigma^0_1$. Then, the diameter $E_i=\bigcap_{(n,r,m,s)\in V}B_{m,s}^\yy$ is smaller than $2^{-i}$, and it gives an $\ep$-computable decreasing sequence of open sets converging to $y$.
This contradicts our assumption that $y\not\leq_M\ep$, and thus eventually, the strategy must go to the next stage $e+1$.

Finally before starting stage $e+1$ we find the first closed ball $\hat{B}$ of radius $2^{-e-2}$ such that $\hat{B}\cap Q_{e+1}\neq \emptyset$ (we can do this by $\ep$-effective compactness). We set $P_{e+1}=\hat{B}\cap Q_{e+1}$. This ends the description of the construction.

Note that the construction is computable in $p_y\oplus\ep'\leq_M y\oplus \ep'$. This means that the oracle is not only able to decide a sequence of indices for $\{P_e\}$, but also a sequence $\{B_j\}$ of open balls such that the radius of $B_j$ is $2^{-j}$ and $P_j\subset B_j$, which yields a name for $z$. By compactness, as each $P_e\neq\emptyset$, we have a unique point $z\in\bigcap_e P_e$. Hence $z\leq_M p_y\oplus\ep'\leq_M y\oplus \ep'$. The above argument shows for every $e\in\omega$ then either $\Phi_e(z)$ is undefined or $\Phi_e(z)\not=y$.
Consequently, $y\not\leq_Mz\leq_My\oplus\ep'$ as desired.

We make a remark on the proof. The proof of Lemma \ref{thmconeavoidance} is non-uniform, and in the case where $y\oplus\ep'$ is total we can carry out an analogous version of the usual proof in the Turing degrees, by carrying out the construction relative to the fixed name $p_y$ of $y$ (together with $\ep'$). On the other hand if $y\oplus\ep'$ is not total then this approach will typically not work, because carrying out the usual proof relative to \emph{different} names for $y$ will produce \emph{different} objects at the end. Thus a different technique is usually needed in the case where the oracle is not total. This is a key obstacle in the proof of the Shore-Slaman Join Theorem (Theorem \ref{thm:shoreslamanpolish}).
\end{proof}

\subsection{The Shore-Slaman Join Theorem}

As in the proof of Kihara \cite{kihara_decomposing_Borel_functions_using_the_Shore_Slaman_join_theorem}, one of the key technical tools from recursion theory to show Theorem \ref{theorem main} is the so-called Shore-Slaman Join Theorem \cite{shore_slaman_defining_the_Turing_Jump}, but for the generalized Turing degrees.
The original Shore-Slaman Join Theorem asserts that the Turing degree structure $\mathcal{D}_T$ satisfies the following generalization of the Posner-Robinson Join Theorem:
\begin{align}\label{degree-formula2}
\mathcal{D}_T\models(\forall\mathbf{x})(\forall\mathbf{y}\not\leq\mathbf{x}^{(n)})(\exists\mathbf{g}\geq\mathbf{x})\;\mathbf{g}^{(n+1)}=\mathbf{y}\oplus\mathbf{g}=\mathbf{y}\oplus\mathbf{x}^{(n+1)}.
\end{align}

Of course, it is impossible that the generalized Turing degree structure satisfies the sentence (\ref{degree-formula2}) since $\mathbf{g}^{(n+1)}$ must be total while $\mathbf{y}\oplus \mathbf{x}^{(n+1)}$ can be non-total (for appropriate choices of $\mathbf{x}$ and $\mathbf{y}$).
Moreover, it seems that stronger separation axioms such as metrizability has an essential role to play here even in the equivalence $\mathbf{g}^{(n+1)}=\mathbf{y}\oplus\mathbf{g}$. Kalimullin \cite{Kali} has pointed out that the sentence (\ref{degree-formula2}) fails for the enumeration degrees (even for $n=1$):
\[\mathcal{D}_e\not\models(\forall\mathbf{y}\not\leq\mathbf{0}^\prime)(\exists\mathbf{g})\;\mathbf{g}^{\prime\prime}\leq\mathbf{y}\oplus\mathbf{g}.\]

Indeed, the witness $\mathbf{y}$ of the failure in the above formula can be chosen as a semi-recursive enumeration degree.
In particular, the Shore-Slaman join theorem fails for any space in which the reals $\mathbb{R}_<$ endowed with the lower topology (see \cite{KihPau}) is computably embedded.
The main result in this section is that the strongest possible form of the Shore-Slaman Join Theorem (which implies a generalization of the well-known Posner-Robinson Join Theorem) holds:
\[\mathcal{D}_{[0,1]^{\omega}}\models(\forall\mathbf{x})(\forall\mathbf{y}\not\leq\mathbf{x}^{(n)})(\exists\mathbf{g}\geq\mathbf{x})\;\mathbf{g}^{(n+1)}=\mathbf{y}\oplus\mathbf{g},\]
where $\mathcal{D}_{[0,1]^{\omega}}$ denotes the structure of the continuous degrees.
In our proof of the Shore-Slaman Join Theorem for the continuous degrees, we will see that the (compact) metrizability of the underlying space plays a key role.

\begin{theorem}[The Shore-Slaman Join Theorem for Polish spaces]\label{thm:shoreslamanpolish}
Let $x$ and $y$ be points in recursively presented metric spaces $\xx$ and $\yy$, respectively.
If $y\not\leq_M x^{(n)}$, then there is $G\in 2^\omega$ such that $G\geq_Mx$ and $G^{(n+1)}\equiv_MG\oplus y$.

Furthermore, if $y\oplus x^{(n+1)}$ is total then $G^{(n+1)}\equiv_MG\oplus y\equiv_M y\oplus x^{(n+1)}$, and if $y\oplus x^{(n+1)}$ is not total then $G^{(n+1)}\equiv_MG\oplus y\equiv_M G\oplus x^{(n+1)}$.

Indeed, for an ordinal $\xi<\ck$, if $y\not\leq_Mx^{(\zeta)}$ for all $\zeta<\xi$, there exists $G\in 2^\omega$ such that $G\geq_Mx$ and $G^{(\xi)}\equiv_MG\oplus y$.

\end{theorem}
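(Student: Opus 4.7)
The plan is to treat the successor case $\xi = n+1$ first and then extend to arbitrary $\xi < \ck$ by transfinite induction. For the successor case, I would open with a name-choice step: by iterating the construction in Lemma \ref{prop:nameandjump}, fix a name $p_x \in \baire$ of $x$ with $p_x\tpower{n+1} \equiv_T x\tpower{n+1}$. This reduces much of the work to a question about the ordinary Turing degree of $p_x$ together with $y$, and sets the stage for applying the classical Shore--Slaman Join Theorem where possible. After this, I would split according to the totality of $y \oplus x\tpower{n+1}$, as dictated by Lemma \ref{lem:nonsplitting}.

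In the total case, pick $w \in 2^\omega$ with $w \equiv_M y \oplus x\tpower{n+1}$, and arrange (by joining with $p_x\tpower{n+1}$ if necessary) that $w \geq_T p_x\tpower{n+1}$. Since $y \not\leq_M x\tpower{n} \equiv_T p_x\tpower{n}$, also $w \not\leq_T p_x\tpower{n}$, so the classical Shore--Slaman Join Theorem applied to the Turing pair $(p_x, w)$ yields $G \in 2^\omega$ with $G \geq_T p_x$ and $G\tpower{n+1} \equiv_T G \oplus w \equiv_T w \oplus p_x\tpower{n+1} \equiv_T w$. Then $G \geq_M x$, and a short verification using the Selman--Rozinas criterion (Lemma \ref{lem:unif-nonunif}) together with the enumeration operators witnessing $w \equiv_M y \oplus x\tpower{n+1}$ recovers the desired chain $G\tpower{n+1} \equiv_M G \oplus y \equiv_M y \oplus x\tpower{n+1}$.

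The non-total case is where a genuinely new construction is required. I would run a Kumabe--Slaman style forcing relative to $p_x$ inside an effectively compact envelope (embedding $\yy$ into $[0,1]^\omega$, which is effectively compact by Example \ref{exa:rep-Hilbert-cube}), with conditions $(s, L)$ consisting of an initial segment $s \in 2^{<\omega}$ of $G$ together with a finite list $L$ of recursive functionals whose behavior is to be protected. At each stage, meeting a density requirement reduces to a $\Sigma^0_{n+1}$-question whose satisfaction threatens to collapse $y$ below $G\tpower{n}$; the decisive tool is the Cone Avoidance Lemma (Lemma \ref{thmconeavoidance}), applied relative to $G\tpower{n}$ inside the compact envelope, which lets us extend while preserving $y \not\leq_M G\tpower{n}$. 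The limit generic $G$ is total with $G \geq_M x$; Lemma \ref{lem:nonsplitting} then forces $G \oplus y$ to be total, the usual Posner--Robinson calculation delivers $G\tpower{n+1} \equiv_M G \oplus y$, and the non-totality of $y \oplus x\tpower{n+1}$ forces $y$ to drop out of the equivalence class, leaving $G\tpower{n+1} \equiv_M G \oplus y \equiv_M G \oplus x\tpower{n+1}$. The transfinite extension is obtained by transfinite recursion on $\xi$: the successor step reapplies the above; the limit step fuses the forcings using the effective compactness of $[0,1]^\omega$ and the uniformity of cone avoidance to build a single generic satisfying all density requirements indexed by $\zeta < \xi$.

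The main obstacle is clearly the non-total case of the successor step. Kalimullin's counterexample for enumeration degrees shows that the compact metric structure is not decorative but load-bearing, and the failure of the Friedberg Jump Inversion Theorem for non-total degrees (as noted after Lemma \ref{lem:jump-inversion}) removes one of the standard moves used in the classical proof. The Cone Avoidance Lemma, together with the totalizing effect of Lemma \ref{lem:nonsplitting}, is precisely what must replace jump inversion; the delicate bookkeeping lies in maintaining $y \not\leq_M G\tpower{k}$ for every intermediate $k \leq n$ across the forcing, not merely at the top level.
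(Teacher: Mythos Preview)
Your proposal misses the central mechanism of the proof in both cases, and the gaps are not cosmetic.

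\textbf{The total case.} Applying the classical Shore--Slaman theorem to $(p_x,w)$ with $w\equiv_M y\oplus x^{(n+1)}$ only yields $G^{(n+1)}\equiv_T G\oplus w\equiv_M G\oplus y\oplus x^{(n+1)}$; it does \emph{not} give $G^{(n+1)}\leq_M G\oplus y$. To get the latter you would need $x^{(n+1)}\leq_M G\oplus y$, and nothing in your argument (including Selman--Rozinas) supplies that: from $p_x\leq_T G$ you only get $x^{(n+1)}\leq_T G^{(n+1)}$, which is circular. Replacing $w$ by a name $q_y$ of $y$ does not help either, since $q_y\not\leq_M y$ in general. The paper does not take this shortcut: even in the total case it runs the full forcing to build a functional $\Phi_G$ on the tree $\mathbf{H}$ of Hilbert-cube names, \emph{consistent along $y$}, so that $\Phi_G(\alpha_y)=G^{(n+1)}$ regardless of which name $\alpha_y$ is used; the first Claim then gives $G^{(n+1)}\leq_M\Phi_G\oplus y\leq_M G\oplus y$.

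\textbf{The non-total case.} Your conditions $(s,L)$ with $s\in 2^{<\omega}$ are Posner--Robinson style, not Kumabe--Slaman style, and they contain no device for coding $G^{(n+1)}$ into $G\oplus y$ when $y$ lives in a metric space rather than $2^\omega$; the ``usual Posner--Robinson calculation'' simply does not go through for non-total $y$. More seriously, the conclusion $y\leq_M G\oplus x^{(n+1)}$ is not a consequence of non-totality ``forcing $y$ to drop out'': it requires explicitly \emph{coding a fixed name $\alpha_y$ into $G$}. The paper does this via two extra parameters in each condition, a string $\lambda_p$ (into which $\alpha_y$ is written in segments) and a rational measure bound $\ep_p$, and the heart of the argument is a measure-theoretic claim: if at every scale the least acceptable extension of $\Phi_p$ met $\alpha_y$, one could read off a name of $y$ from $x^{(n+1)}$, making $y\oplus x^{(n+1)}$ total---a contradiction. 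Cone avoidance is used, but not where you place it: it is invoked to choose the protected set $\mathbf{X}$ of names so that it contains no name of $y$, which is what allows $\Phi_G$ to be extended along $\alpha_y$ later in Phase~2. Your appeal to Lemma~\ref{lem:nonsplitting} to conclude that $G\oplus y$ is total is also unjustified: that lemma gives either $y>_M G$ or $G\oplus y$ total, and nothing in your construction rules out the former.
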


\begin{proof} For this proof we will assume that the reader is reasonably familiar with the proof of the Shore-Slaman join theorem, for instance, found in  \cite{shore_slaman_defining_the_Turing_Jump}.

If $n=0$, we have $y\not\leq_Mx$.
Then, by Lemma \ref{lem:unif-nonunif}, there is $\hat{x}\in 2^\omega$ such that $x\leq_M\hat{x}$ and $y\not\leq_M\hat{x}$.
If $n>0$, choose a generic name $\hat{x}$ of $x\in\xx$ in Lemma \ref{prop:nameandjump}.
Therefore in any case, without loss of generality, we may assume that $x\in 2^\omega$.
We can also assume that $\yy=[0,1]^\omega$, since every recursively presented metric space $\yy$ is recursively embedded into Hilbert cube $[0,1]^\omega$, and any recursive embedding preserves the $M$-degrees.

We use the recursively bounded recursive tree $\mathbf{H}\subseteq\om^{<\om}$ of names (together with a representation $\rho_\yy:[\mathbf{H}]\to\yy=[0,1]^\om$) introduced in Example \ref{exa:rep-Hilbert-cube}. 
Given a set $\Phi\subseteq\omega\times 2\times\mathbf{H}$, we write $\Phi(\sigma)(n)=k$ if there is $\tau\preceq\sigma$ such that $(n,k,\tau)\in\Phi$.
Recall from Shore-Slaman \cite[Definition 2.4]{shore_slaman_defining_the_Turing_Jump} that a {\em use-monotone Turing functional (on $\mathbf{H}$)} is a set $\Phi$ of triples $(n,m,\sigma)\in\omega\times 2\times\mathbf{H}$ which defines a partial monotone function from $\mathbf{H}$ into $2^{<\omega}$ in the sense that $\Phi$ is single-valued, that is, $\Phi(\sigma)(n)=k$ and $\Phi(\sigma)(n)=l$ implies $k=l$, and that if $\Phi(\sigma)(n)$ is defined then $\Phi(\sigma)(m)$ is also defined for all $m<n$.
We write $\Dom(\Phi)$ for the set of all $\sigma\in\mathbf{H}$ such that $(n,k,\sigma)\in\Phi$ for some $n$ and $k$.
A Turing functional $\Phi$ can also be viewed as a partial continuous function from $[\mathbf{H}]$ into $2^\omega$.
However, we do not require that a use-monotone Turing functional $\Phi$ is consistent along all points.
Here, we say that $\Phi$ is {\em consistent along a point $z\in\yy$} provided for any $\sigma,\tau\in\mathbf{H}$ with $z\in B^\ast_\sigma\cap B^\ast_\tau$ and $n\in\omega$, if $\Phi(\sigma)(n)$ and $\Phi(\tau)(n)$ are defined, then $\Phi(\sigma)(n)=\Phi(\tau)(n)$ (where recall from Example \ref{exa:rep-Hilbert-cube} that $B^\ast_\sigma$ is the basic open ball coded by $\sigma\in\mathbf{H}$).
Instead, we will construct a use-monotone Turing functional $\Phi$ on names of a specific point $z$, and use the following property. Use-monotone Turing functionals need not be $\Sigma^0_1$, and in fact we will sometimes view use-monotone Turing functionals $\Phi$ as members of $2^\omega$. For instance:

\begin{claim}
Let $\Phi$ be a use-monotone Turing functional on $\mathbf{H}$.
Suppose that $\Phi$ is consistent along a point $z$,
and moreover, there is a name $\alpha_z$ of $z$ such that $\Phi(\alpha_z)\in 2^\omega$ is defined.
Then, we have $\Phi(\alpha_z)\leq_M\Phi\oplus z$.
\end{claim}

\begin{proof}
Given $n\in\omega$, search $\sigma\in\mathbf{H}$ and $m\in\{0,1\}$ such that $(n,m,\sigma)\in\Phi$ and $z\in B^\ast_\sigma$.
Since $\Phi(\alpha_z)$ is defined for a name $\alpha_z$ of $z$, there is at least one such $(\sigma,m)$, and a brute-force search (relative to $\Phi$ and any name for $z$) eventually finds such a pair.
Using different names for $z$ may yield different search results, but by consistency, we must have $m=\Phi(\alpha_z)(n)$.
\newcommand{\qedclaim}{\qedsymbol(claim)}
\renewcommand{\qed}{\hfill\qedclaim}
\end{proof}

Now we describe the idea behind our proof of the Shore-Slaman Join Theorem in the continuous degrees. The proof is non-uniform, and as in Lemma \ref{thmconeavoidance}, the proof splits into two cases depending on whether or not the oracle $y\oplus x^{(n+1)}$ is total.

If $y\oplus x^{(n+1)}$ is total, then we will adopt a continuous analogue of the classical construction of Shore and Slaman \cite{shore_slaman_defining_the_Turing_Jump}. This construction is computable in $y\oplus x^{(n+1)}$ via the canonical name for $y$, and we get $G^{(n+1)}\equiv_MG\oplus y\equiv_M y\oplus x^{(n+1)}$ as usual.

A key difficulty is that if $y\oplus x^{(n+1)}$ is not total, then such a $\left(y\oplus x^{(n+1)}\right)$-computable construction cannot work because different names for $y$ will produce different sets $G$ at the end, and we are forced to innovate in this case. We fix a name $\alpha_y$ of $y$ for the construction; any arbitrary choice of $\alpha_y$ will do. The first observation is that we do not (and in fact, cannot) require the construction to be recoverable from the oracle $y\oplus x^{(n+1)}$. Instead we will code the construction (and $\alpha_y$) into the object $G$ constructed, and then argue that the construction is recoverable from the oracle $G\oplus x^{(n+1)}$ instead. To do this we will need to modify the Kumabe-Slaman forcing conditions to include the parameter $\lambda\in\baire$ (which codes key facts about the construction) and the parameter $\ep\in\mathbb{Q}^+$ (which records the amount of ``measure" allowed  to be added to $\Dom(\Phi)$ by future conditions).
 
 We will construct a partial continuous function $\Phi_G:[\mathbf{H}]\partialf 2^\omega$ by finite extensions (using conditions similar to the Kumabe-Slaman forcing conditions).
The set $G$ will be defined as the join of $\Phi_G$, $x$ and $\lambda_G$, where recall that $x$ is assumed to be in $2^\omega$.
At the end we ensure that $\Phi_G$ is consistent along $y$, so that $G\oplus y\geq_M \Phi_G(\alpha_y)$ by the above claim. This latter object is of total degree and is in fact an element of $2^\omega$. 

We now indicate the required modifications to ensure that $\alpha_y\leq_T G\oplus x^{(n+1)}$, so that $G\oplus y\leq_MG\oplus x^{(n+1)}$ as required. The idea is to code the name $\alpha_y$ into $G$, in a way where $G\oplus x^{(n+1)}$ can recover the construction as well as the coding location of each value of $\alpha_y$.
The problem is that that knowing $G$ alone (or even together with $x^{(n+1)}$) is only enough to read off the axioms in $\Phi_G$. It cannot, for instance, compute the sequence $(\Phi_{p_i})_{i\in\omega}$ such that $\Phi_G=\bigcup_i\Phi_{p_i}$, since we cannot know where one forcing condition finishes and the next begins (we need $y$ as oracle to find this). Hence it is not immediately obvious why $G\oplus x^{(n+1)}$ can re-construct a construction in which $\alpha_y$ is coded. This is where the parameter $\lambda$ is needed (together with an innovative use of the non-totality assumption).

Our modified version of Kumabe-Slaman forcing is defined as follows:

\begin{definition}[Modified Kumabe-Slaman Forcing]
A {\em modified Kumabe-Slaman forcing condition} $p\in\mathbb{P}_{\rm KS}$ is a quadruple $(\Phi_p,\mathbf{X}_p,\lambda_p,\ep_p)$ consisting of the following:
\begin{enumerate}
\item $\Phi_p\subseteq\omega\times 2\times\mathbf{H}$ is a use-monotone finite Turing functional on $\mathbf{H}$,
\item $\mathbf{X}_p$ is a finite subset of $[\mathbf{H}]$, that is, a finite set of names,
\item $\lambda_p$ is a finite string, that is, $\lambda_p\in\omega^{<\omega}$,
\item $\varepsilon_p$ is a positive rational number less than or equal to $1$.
\end{enumerate}

For modified Kumabe-Slaman forcing conditions $p,q\in\mathbb{P}_{\rm KS}$, we say that {\em $q$ is stronger than $p$} (written as $q\leq p$) if
\begin{enumerate}
\item $\Phi_p\subseteq\Phi_q$, $\mathbf{X}_p\subseteq\mathbf{X}_q$, $\lambda_p\preceq\lambda_q$ and $\ep_q\leq \ep_p$,
\item the length of any $\sigma\in\Dom(\Phi_q)\setminus\Dom(\Phi_p)$ is greater than that of any $\tau\in\Dom(\Phi_p)$,
\item $\sum \left\{2^{-|\sigma|}\mid \sigma\in \Dom(\Phi_q)\setminus\Dom(\Phi_p)\right\} + \varepsilon_q \leq \varepsilon_p$,

    
\item no $\sigma\in\Dom(\Phi_q)\setminus\Dom(\Phi_p)$ meets $\mathbf{X}_p$.
 
\end{enumerate}
\end{definition}



If $q\leq p$ then we call $\sum \left\{2^{-|\sigma|}\mid \sigma\in \Dom(\Phi_q)\setminus\Dom(\Phi_p)\right\}$ the \emph{amount added by $\Dom(\Phi_q)$}. We say that \emph{$\sigma$ meets $\beta$} if $B^\ast_{\beta\rs|\sigma|}\cap B^\ast_\sigma\neq\emptyset$, and that \emph{$\sigma$ meets $\mathbf{X}$} if $\sigma$ meets some $\beta\in\mathbf{X}$.

It is easy to see that $(\mathbb{P}_{\rm KS},\leq)$ forms a partially ordered set.
We will construct a sequence $(p_n)_{n\in\omega}$, and then define $\Phi_G:=\bigcup_n\Phi_{p_n}$ and $\lambda_G:=\bigcup_n\lambda_{p_n}$.
The desired set will be obtained as $G=x\oplus\Phi_G\oplus\lambda_G$.
We use the symbol $\genname$ to denote such a generic element $x\oplus\Phi_G\oplus\lambda_G$.
We have to make sure that our modification of the definition of Kumabe-Slaman forcing does not increase the complexity of the forcing relation.
Hereafter, given a modified Kumabe-Slaman forcing condition $p=(\Phi_p,\mathbf{X}_p,\lambda_p,\ep_p)$, we write $p^{\mathbf{0}}$ for $(\Phi_p,\emptyset,\lambda_p,\ep_p)$.

\begin{definition}[see also Shore-Slaman {\cite[Definition 2.8]{shore_slaman_defining_the_Turing_Jump}}]
Let $p\in\mathbb{P}_{\rm KS}$ be a condition, and $\psi(\genname)$ be a sentence of the form $\forall m\theta(m,\genname)$.
Given $\vec{\tau}=(\tau_1,\dots,\tau_k)$ a sequence of strings in $\mathbf{H}$ all of the same length (i.e. $|\tau_i|=|\tau_j|$ for every $1\leq i,j\leq k$), we say that $\vec{\tau}$ is {\em essential to (force the sentence) $\neg\psi(\genname)$ over $p$ (without $\mathbf{X}_p$)} when the following condition holds.
For any condition $q\in\mathbb{P}_{\rm KS}$, if
\[q\leq p^\mathbf{0}\mbox{ and }(\exists m\in\omega)\;q\Vdash\neg\theta(m,\dot{\mathbf{r}}_{gen}),\]
then there are $\sigma\in\Dom(\Phi_q)\setminus\Dom(\Phi_p)$ and $j\leq k$ such that $\sigma$ meets $\tau_j$.
\end{definition}

Note that in the Hilbert cube one can computably decide whether or not two basic open balls (with respect to $\mathbf{H}$) intersect. 

Let $T(p,\psi,k)$ be the tree of length $k$ vectors $\vec{\tau}\in\mathbf{H}^k$ which are essential to $\neg\psi(\genname)$ over $p$.
As in Shore-Slaman \cite[Lemma 2.11]{shore_slaman_defining_the_Turing_Jump}, we can see that for any forcing condition $p\in\mathbb{P}$, any $\Pi^0_{n+1}$ sentence $\psi$, and $k\in\omega$, $T(p,\psi,k)$ is a $\Pi^0_{n+1}(x)$ (hence $\Pi^0_1(x^{(n)})$) subtree of $\left(\mathbf{H}^k\right)^{<\omega}$ uniformly in $\Phi_p$, $\lambda_p$, $\ep_p$, $\psi$, and $k$. Note that $T(p,\psi,k)$ is recursively branching (since $\mathbf{H}$ is) and so K\"{o}nig's lemma can be applied.

\medskip

\noindent
{\bf Construction.}
Now we start to describe our construction.
The construction begins with the empty condition $p_0$, that is, $\Phi_{p_0}=\emptyset$, $\mathbf{X}_{p_0}=\emptyset$, $\lambda_{p_0}=\langle\rangle$, and $\ep_{p_0}=1$.
At stage $s$ of our construction, inductively assume that we are given:
\begin{itemize}
\item the condition $p=(\Phi_p,\mathbf{X}_p,\lambda_p,\ep_p)$, 
\item the indices computing each member of $\mathbf{X}_p$ from $x^{(n+1)}$, and, 
\item that $\mathbf{X}_p$ does not include any name of the point $y\in\yy$.
\end{itemize}
Let $\psi(\genname)$ be the $s^{\text{th}}$ $\Pi^0_{n+1}$ sentence, so it is of the form $\forall m\theta(m,\genname)$ for some $\Sigma^0_n$ formula $\theta$.
Given $p\in\mathbb{P}_{\rm KS}$, we now describe our strategy to find the next condition $r\leq p$ which forces either $\psi$ or $\neg\psi$.
If $y\oplus x^{(n+1)}$ is total, the parameters $\lambda_p$ and $\ep_p$ will have no role.
We will supply more details about this after describing the non-total case.
So now we first suppose that $y\oplus x^{(n+1)}$ is not total. Fix any name $\alpha_y$ of $y$. The construction will be in two phases. In Phase 1 we force the next sentence without extending $\Phi_G$ along any name for $y$. This is necessary to ensure that $\Phi_G$ is consistent along $y$. In Phase 2 we extend $\Phi_G$ along $\alpha_y$ by coding the result of Phase 1 into $\Phi_G(\alpha_y)$ and $\lambda_G$.

\medskip

\noindent
\emph{{\bf Phase 1} \tu{(}Forcing the next sentence\tu{):}}
Suppose $\mathbf{X}_p=\{\beta_0,\dots,\beta_{k-1}\}$.
For each $i\in\omega$, we first define $q_i\leq p$ as follows:
\[q_i=(\Phi_p,\mathbf{X}_p,\lambda_p\fr i,2^{-i}\ep_p),\]
We check if there exists some $i$ such that $T(q_i,\psi,k+1)$ is infinite.

{\em Case 1 \tu{(}Forcing $\psi$\tu{)}.}
If $T(q_i,\psi,k+1)$ is infinite for some $i$, as in Shore-Slaman \cite[Corollary 2.12]{shore_slaman_defining_the_Turing_Jump}, we claim that $T(q_i,\psi,k+1)$ has an $x^{(n+1)}$-computable infinite path $\mathbf{X}$ which does not include a name of the point $y$.
By applying the Cone Avoiding Lemma \ref{thmconeavoidance} to our $\Pi^0_1(x^{(n)})$ set $P=[T(q_i,\psi,k+1)]$ in the effectively compact space $[\mathbf{H}]^{k+1}$, we have $\mathbf{Y}\in P$ such that $y\not\leq_M\mathbf{Y}\leq_My\oplus x^{(n+1)}$.
Suppose $\mathbf{Y}=\{\gamma_0,\dots,\gamma_{k}\}$.
The property $y\not\leq_M\mathbf{Y}$ implies that for every $i\leq k$, $\rho_\yy(\gamma_i)\not=y$ (where recall from Example \ref{exa:rep-Hilbert-cube} that $\rho_\yy$ is a representation of $\yy=[0,1]^\om$ with domain $\mathbf{H}$).
Therefore, some open ball separates $\rho_\yy(\gamma_i)$ and $y$, that is, there has to be $l_i$ such that $y\not\in B^\ast_{\gamma_i\rs l_i}$.
Put $l=\max_{i\leq k}l_i$ and we consider the subtree $\tilde{T}$ consisting of $(\tau_i)_{i\leq k}\in T(q_i,\psi,k+1)$ such that for each $i\leq k$, $\tau_i$ is comparable with $\gamma_i\rs l$.
Since $\tilde{T}$ is still a nonempty $\Pi^0_1(x^{(n)})$ subtree of $\mathbf{H}^{k+1}$, there is an $x^{(n+1)}$-computable infinite path $\mathbf{X}$ through $\tilde{T}$.
Obviously, $\mathbf{X}$ does not contain a name of $y$.

As in Shore-Slaman \cite[Lemma 2.10 (2)]{shore_slaman_defining_the_Turing_Jump}, by adding $\mathbf{X}$ to $\mathbf{X}_p$, we can force the sentence $\psi$ without changing $\Phi_p$, $\lambda_p\fr i$, and $2^{-i}\ep_p$.
Then, we take the next condition to be
\[q=(\Phi_p,\mathbf{X}_p\cup\mathbf{X},\lambda_p\fr i,2^{-i}\ep_p).\]

{\em Case 2 \tu{(}Forcing $\neg\psi$\tu{)}.}
Now suppose that for every $i$, $T(q_i,\psi,k+1)$ is finite.
Then, for each $i$ there is $l$ such that $(\beta_0\rs l,\dots,\beta_{k-1}\rs l,\alpha_y\rs l)\not\in T(q_i,\psi,k+1)$.
In particular, for every $i$ there exists $\widehat{r}_i\leq q_i^\mathbf{0}$ and some $m$ such that $\widehat{r}_i\Vdash\neg\theta(m,\genname)$ and such that ${\rm Dom}(\Phi_{\widehat{r}_i})\setminus{\rm Dom}(\Phi_p)$ does not add any axiom which meets $\mathbf{X}_p\cup\{\alpha_y\}$, and adds an amount which is no more than $2^{-i}\varepsilon_p$. We would like to take (any one of the) $\widehat{r}_i$ as our next condition because it enables us to force $\neg\psi$ without adding axioms meeting $\alpha_y$. 
Unfortunately, the search for $\widehat{r}_i$ requires knowing $\alpha_y$ and hence cannot be performed recursively in $x^{(n+1)}$ (in fact, not even recursively in $y\oplus x^{(n+1)}$ unless $y\oplus x^{(n+1)}$ is total). We must replace $\{\widehat{r}_i\}$ with conditions $\{r_i\}$ that are easier to find.

We now note that as in Shore-Slaman \cite[Lemmata 2.10 and 2.11]{shore_slaman_defining_the_Turing_Jump}, the following two conditions are equivalent for a given $q\in\mathbb{P}_{\rm KS}$:
\begin{enumerate}
\item There is a condition $r\leq q$ such that $r\Vdash\neg\theta(m,\genname)$ for some $m\in\omega$.
\item There is a condition $r=(\Phi_r,\emptyset,\lambda_r,\ep_r)\leq q^\mathbf{0}$ such that the tree $T(r,\neg\theta(m,\genname),l)$ is infinite for some $l\geq|\mathbf{X}_q|$ and $m\in\omega$.
\end{enumerate}

Now, for every $i$, instead of searching for $\widehat{r}_i$, we let $r_i=(\Phi_{r_i},\emptyset,\lambda_{r_i},\ep_{r_i})\leq q_i^\mathbf{0}$, $l$ and $m$ be the first triple found such that the tree $T(r_i,\neg\theta(m,\genname),l)$ is infinite, and such that ${\rm Dom}(\Phi_{{r}_i})\setminus{\rm Dom}(\Phi_p)$ does not add any axioms comparable with $\mathbf{X}_p$ (that is, we allow $\Phi_{{r}_i}$ to contain an axiom meeting $\alpha_y$), and adds an amount no more than $2^{-i}\varepsilon_p$.
This search is effective given $\Phi_p$, $\lambda_p$, $\ep_p$, the oracle $x^{(n+1)}$, and the indices computing each member of $\mathbf{X}_p$ from $x^{(n+1)}$. 
Note that one can effectively decide whether two basic open balls intersect.

\begin{claim}
If $y\oplus x^{(n+1)}$ is not total, then there is $i\in\omega$ such that $\Dom(\Phi_{r_i})\setminus\Dom(\Phi_p)$ does not meet $\alpha_y$, that is, for all $\sigma\in\Dom(\Phi_{r_i})\setminus\Dom(\Phi_p)$, $B^\ast_\sigma$ has no intersection with $B^\ast_{\alpha_y\rs{|\sigma|}}$.
\end{claim}

\begin{proof}
Otherwise, for every $i\in\omega$, there is some $\sigma\in\Dom(\Phi_{r_i})\setminus\Dom(\Phi_p)$ such that we have $B^\ast_\sigma\cap B^\ast_{\alpha_y\rs{|\sigma|}}\not=\emptyset$.
Let $B'_\sigma$ be the open ball with the same center as $B^\ast_\sigma$, but the radius of $B'_\sigma$ is $3\cdot 2^{-|\sigma|}$.
Then, $B^\ast_\sigma\cap B^\ast_{\alpha_y\rs{|\sigma|}}\not=\emptyset$ implies that $y\in B^\ast_{\alpha_y\rs{|\sigma|}}\subseteq B'_\sigma$ since for every $\tau$, the diameter of $B^\ast_\tau$ is less than $2^{-|\tau|}$ by our choice of such balls in the definition of $\mathbf{H}$.
In this way, we will be able to obtain a name for $y$.
That is,
\[y\in E_i:=\bigcup \left\{B'_\sigma \mid \sigma\in \Dom(\Phi_{r_i})\setminus\Dom(\Phi_p)\right\},\]

Note that the total sum of the radius of all such balls $B'_\sigma$ is at most $3\cdot 2^{-i}\varepsilon_p$ by the choice of $r_i$. Now we say that two basic balls $B'_\sigma$ and $B'_\tau $ in $E_i$ are in the same connected component if there exists a sequence of balls $B'_{\sigma_0}, B'_{\sigma_1}, \cdots , B'_{\sigma_N}$ all of which are in $E_i$, and such that $B'_{\sigma_0}=B'_\sigma$, $B'_{\sigma_N}=B'_\tau$, and for every $i< N$, $B'_{\sigma_i}\cap B'_{\sigma_i+1}\neq\emptyset$.

We enumerate all distinct connected components of $E_i$ as $U_{i,0}$, $U_{i,1},\dots$ in some fixed way. 
Now note that the maximum distance between any two points in the same component is at most $12\cdot 2^{-i}\varepsilon_p<2^{-i+4}$. Thus we let $C_{i,j}$ be a ball of radius $2^{-i+4}$ which contains $U_{i,j}$.

We define $z\in\omega^\omega$ by $z(i)$ to be the unique component such that $y\in U_{i,z(i)}$.
We shall show that $y\oplus x^{(n+1)}$ is $M$-equivalent to the total degree $z\oplus x^{(n+1)}$ for a contradiction.
First we see that $y\leq_M z\oplus x^{(n+1)}$ because we can recursively in $x^{(n+1)}$ (with finitely many fixed parameters)
compute $\Phi_{r_i}$ and then the sequence $\{C_{i,j}\}$ (here we do need $\emptyset'$ to tell apart the different components).
We then see that $\bigcap_i C_{i,z(i)}=\{y\}$ since the radius of $C_{i,z(i)}$ is $2^{-i+4}$.
On the other hand we have $z\leq_M y\oplus x^{(n+1)}$, because once again $x^{(n+1)}$ allows us to compute the components $\{U_{i,j}\}$, and for each $i$ and any name for $y$ we can compute the unique $z(i)$ such that $y\in U_{i,z(i)}$ (since the components are pairwise disjoint).
This means that $y\oplus x^{(n+1)}$ has total degree, contrary to the assumption.

Notice that the choice of $C_{i,j}$ and $U_{i,j}$ depends on the sequence $\{r_i\}$, where $r_i$ is  fixed to be the first axiom found under a $x^{(n+1)}$-effective search.\newcommand{\qedclaim}{\qedsymbol(claim)}
\renewcommand{\qed}{\hfill\qedclaim}
\end{proof}

Thus we conclude that there is some $i_0$ such that  $\Dom(\Phi_{r_{i_0}})\setminus\Dom(\Phi_p)$ does not meet $\alpha_y$.
By the property of $r_{i_0}$, the tree $T(r_{i_0},\neg\theta(m,\genname),l)$ is infinite for some $m\in\omega$ and $l\geq k$ and so, we may take the next forcing condition to be
\[q=\left(\Phi_{r_{i_0}},\mathbf{X}_p\cup\mathbf{X},\lambda_{r_{i_0}},\ep_{r_{i_0}}\right)\]
where $\mathbf{X}$ is an $l$-tuple of $x^{(n+1)}$-computable names containing no name of $y$, which is obtained by applying Lemma \ref{thmconeavoidance} to the tree $T(\Phi_{r_{i_0}},\neg\theta(m,\genname),l)$ as in Case 1. We can check easily that $q\leq q_i\leq p$.

\medskip

\noindent
{\bf Phase 2} {\em \tu{(}Coding $\alpha_y$, $\mathbf{X}$ and the result of Phase 1\tu{)}}.
Suppose Phase 1 produces the condition $q=(\Phi_q,\mathbf{X}_q,\lambda_q,\varepsilon_q)$.
For $\mathbf{X}_q=\{\beta_0,\dots,\beta_{m-1}\}$, we choose the least $b$ and the least $u\geq s$ such that 
$B^\ast_{\alpha_y\rs u}$ is disjoint from $B^\ast_{\beta_0\rs b},B^\ast_{\beta_1\rs b},\dots, B^\ast_{\beta_{m-1}\rs b}$.
Note that such $b,u$ exist since $\mathbf{X}_q$ does not contain a name for $y$.
Now consider an index $j_0$ which computes each element of $\mathbf{X}_q$ from $x^{(n+1)}$, and for each string $\sigma\in\omega^{<\omega}$, let $\sigma^+$ be the string defined by $\sigma^+(n)=\sigma(n)+1$ for every $n\in\omega$.
Then we define the next condition as follows:
\[r=\left(\Phi_q\cup\{(s,k,\alpha_y\rs u)\},\mathbf{X}_q,\lambda_q\fr j_0\fr (\alpha_y\rs u)^+\fr 0,\varepsilon_q-2^{-u}\right)\] where $k=0$ or $1$ depending on how the sentence was forced in Phase 1.
That is, this construction ensure that $\Phi_r(\alpha_y\rs u)(s)=0$ if and only if $r$ forces the $s^{\text{th}}$ $\Pi^0_{n+1}$ sentence $\psi$ to be true.
It is easy to check that $r\leq q$.

\medskip

\noindent
{\bf Verification}.
Eventually, our construction produces a sequence $(\Phi_{p_n},\mathbf{X}_{p_n},\lambda_{p_n},\ep_{p_n})_{n\in\omega}$.
We define $G$ as $x\oplus\Phi_G\oplus\lambda_G$, where $\Phi_G=\bigcup_n\Phi_{p_n}$ and $\lambda_{G}=\bigcup_n\lambda_{p_n}$.
As in Shore-Slaman \cite{shore_slaman_defining_the_Turing_Jump}, our construction (in Phase 2) ensures that $\Phi_G(\alpha_y)=G^{(n+1)}$ since $\Phi_G(\alpha_y)(s)=0$ if and only if $\psi(G)$ is true, where $\psi$ is the $s^{\text{th}}$ $\Pi^0_{n+1}$ formula with one parameter.
We first check the consistency of $\Phi_G$.

\begin{claim}
$\Phi_G$ is consistent along $y$.
\end{claim}

\noindent
\textit{Proof.}
A new computation is enumerated into $\Phi_G$ only when we construct $r_{i_0}$ from $p$ in Case 2 of Phase 1, and when we construct $r$ from $q$ in Phase 2.
By the property of $r_{i_0}$, $\Phi_{r_{i_0}}$ does not add any new computation along $y$.
It is also clear that our action in Phase 2 does not add any inconsistent computation.\qed(claim)

\medskip

By combining the above claim and our first claim, we obtain $G^{(n+1)}=\Phi_G(\alpha_y)\leq_MG\oplus y$.
We also have that $G\oplus x^{(n+1)}\leq_MG^{(n+1)}$ since $x\leq_MG$.
We next claim that the construction is recoverable in the oracle $G\oplus x^{(n+1)}$.

\begin{claim}
If $y\oplus x^{(n+1)}$ is not total, then we have $\alpha_y\leq_M G\oplus x^{(n+1)}$.
In particular, $y\leq_MG\oplus x^{(n+1)}$ holds.
\end{claim}

\begin{proof}
Assume that at stage $s$ of the construction we have computed the finite string representing $\Phi_p$, the index $j_p$ corresponding to elements of $\mathbf{X}_p$ (relative to $x^{(n+1)}$), the finite string $\lambda_p$, and the rational $\varepsilon_p$.
We first decode $i_0$ from $\lambda_G$, which is a unique number $i_0$ such that $\lambda_p\fr i_0\prec\lambda_G$.
We use $x^{(n+1)}$ to check if $T(p_{i_0},\psi,k+1)$ is infinite.
If infinite, we know that the output from Phase 1 is  $\left(\Phi_p,\mathbf{X}_p\cup\mathbf{X},\lambda_p\fr i_0,2^{-i_0}\varepsilon_p\right)$ for some $\mathbf{X}$, and so we can decode $j_0$ and $\alpha_y\rs u$ since an initial segment of $\lambda_G$ is now of the form $\lambda_p\fr i_0\fr j_0\fr (\alpha_y\rs u)^+\fr 0$ by our action in Phase 2.
By using $\alpha_y\rs u$ and $\Phi_G$, we can also decode the outcome $z$ in Phase 1.
These codes tell us the full information on the next forcing condition $r$.

If $x^{(n+1)}$ tells us that the tree $T(p,\psi,k)$ is finite, then we know that the  output from Phase 1 is  $\left(\Phi_{r_{i_1}},\mathbf{X}_p\cup\mathbf{X},\lambda_{r_{i_1}},\ep_{r_{i_1}}\right)$ for some $i_1\in\omega$. But by the construction we have that $r_{i_1}\leq q_{i_1}$ and since $\lambda_{q_{i_1}}=\lambda_p\fr i_1$, we conclude that $i_0=i_1$. Thus we can search for $r_{i_0}$ because we are given the oracle $x^{(n+1)}$ and the indices for members of $X_p$. The first $\Phi_{r_{i_0}}$ found must avoid $\alpha_y$ by our claim in Phase 1.
Having found $r_{i_0}$, we can proceed as above to recover $j_0$ and $\alpha\rs u$ and full information on the next forcing condition $r$.
\newcommand{\qedclaim}{\qedsymbol(claim)}
\renewcommand{\qed}{\hfill\qedclaim}
\end{proof}

Consequently, if $y\oplus x^{(n+1)}$ is not total, we obtain $G\oplus x^{(n+1)}\equiv_MG^{(n+1)}\equiv_MG\oplus y$ as desired. An analysis of the proof reveals that $\alpha_y$ is coded in segments into $\lambda_G$. In between segments of $\lambda_G$ where $\alpha_y$ is coded, $\lambda_G$ was determined by the outcome of forcing $\psi$.

Now we return to the remaining case where $y\oplus x^{(n+1)}$ is total.
In this case, we will not have the property $y\leq_MG\oplus x^{(n+1)}$.
Instead, we proceed via a $y\oplus x^{(n+1)}$-computable construction.
Recall that in the previous case where $y\oplus x^{(n+1)}$ is not total, we have used $\lambda_p$ to code our construction; however, if $y\oplus x^{(n+1)}$ is total, we will use $y\oplus x^{(n+1)}$ itself to recover our construction.
If $y\oplus x^{(n+1)}$ is total, then it computes a canonical name of $y$, that is, we can choose a name $\alpha_y\in\mathbf{H}$ of $y\in\yy$ such that $\alpha_y\leq_Ty\oplus x^{(n+1)}$.
In Case 1 of Phase 1, we can find $\mathbf{X}$ by using $\alpha_y\oplus x^{(n+1)}$ as in Shore-Slaman \cite[Corollary 2.12]{shore_slaman_defining_the_Turing_Jump}.
We also skip the construction of $r_i$ in Case 2 of Phase 1, and just use $\alpha_y\oplus x^{(n+1)}$ to find $\widehat{r}_0$, and take the next forcing condition to be $(\Phi_{\widehat{r}_0},\mathbf{X}_p\cup\mathbf{X},\lambda_{\widehat{r}_0},\ep_{\widehat{r}_0})$ as in the last paragraph in Case 2 of Phase 1.
We can find such a forcing condition by an $(\alpha_y\oplus x^{(n+1)})$-computable way. The parameters $u$ and $b$ in Phase 2 can also be found using oracle $\alpha_y\oplus x^{(n+1)}$.
Consequently, the entire construction is $(\alpha_y\oplus x^{(n+1)})$-computable.
Since the construction decides the truth of all $\Pi^0_{n+1}$ sentences about $G$, this implies that $G^{(n+1)}\leq_My\oplus x^{(n+1)}$.
Our construction again implies $\Phi_G(\alpha_y)=G^{(n+1)}$, and therefore $G^{(n+1)}\leq_MG\oplus y$ as before.
By combining these inequalities with $x\leq_MG$, we obtain that $G^{(n+1)}\leq_MG\oplus y\equiv_My\oplus x^{(n+1)}$.
Now, since $y\oplus x^{(n+1)}$ is total, as in Shore-Slaman \cite[Theorem 2.3]{shore_slaman_defining_the_Turing_Jump}, we can apply the Friedberg Jump Inversion Theorem to $y\oplus x^{(n+1)}\geq_TG^{(n+1)}$ to get $\tilde{G}\geq_TG$ such that $\tilde{G}^{(n+1)}\equiv_Ty\oplus x^{(n+1)}$.
One can see that  $\tilde{G}^{(n+1)}\equiv_M\tilde{G}\oplus y\equiv_My\oplus x^{(n+1)}$ as desired.
This concludes the proof of the theorem for all finite cases.
Here, note that if $y\oplus x^{(n+1)}$ is not total, then it is impossible to satisfy the last equality since $\tilde{G}^{(n+1)}$ must be total.

For transfinite cases, it only remains to calculate the complexity of our forcing relation, that is, the complexity of the tree $T(p,\psi,k)$ for a computable $\Pi^0_\xi$ formula $\psi$.
The calculation is straightforward and we omit it. This finishes the proof of Theorem \ref{thm:shoreslamanpolish}.
\end{proof}



\section{Borel transition in the codes and decomposability}

\label{section Borel transition in the codes}
\newcounter{remarks}

In this section we give the proof of Theorem \ref{theorem main}. First we prove Theorem \ref{theorem Borel transition in the codes} by applying results from effective descriptive set theory.
The first key lemma is obtained from the (effective) $\Pi^1_1$-uniformization theorem.

\begin{lemma}[{\cite[p. 367]{louveau_a_separation_theorem_for_sigma_sets}}]
\label{theorem Borel selection}
Suppose that \ca{X} and \ca{Y} are recursively presented metric spaces and that $P \subseteq \ca{X} \times \ca{Y}$ is $\Pi^1_1(\ep)$ for some $\ep \in \baire$, which satisfies the condition $(\forall x)(\exists y \in \del(\ep,x))P(x,y)$.
Then there exists a $\del(\ep)$-recursive function $f: \ca{X} \to \ca{Y}$ such that $P(x,f(x))$ for all $x \in \ca{X}$.
\end{lemma}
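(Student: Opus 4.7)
The plan is to reduce the lemma to Kreisel's $\Pi^1_1$-selection theorem for $\omega$-valued selectors, via a natural coding of $\del(\ep,x)$-points of $\ca{Y}$ by natural numbers. First I will fix a good parametrization of $\del(\ep,x)$-subsets of $\om$: concretely, a $\Pi^1_1(\ep)$-universal set $D \subseteq \ca{X} \times \om \times \om$ together with a $\Pi^1_1(\ep)$ validity predicate $\mathrm{Val}(x,e)$ singling out those $e$ for which the section $D_{(x,e)}$ is $\del(\ep,x)$ and is uniformly $\del(\ep,x)$-decidable from the pair $(x,e)$. Every $\del(\ep,x)$-subset of $\om$ arises as $D_{(x,e)}$ for some valid $e$; this is the Kleene $\ordnot$-style indexing of hyperarithmetic sets by ordinal notations.

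Next I would define the selection predicate
\[R(x,e) \iff \mathrm{Val}(x,e) \wedge D_{(x,e)} = \nbase{y} \text{ for some } y \in \ca{Y} \text{ with } P(x,y).\]
That $D_{(x,e)}$ be the neighborhood basis of a genuine point of $\ca{Y}$—coherence of the indexed balls together with the presence of balls of arbitrarily small diameter—is arithmetical in $D_{(x,e)}$; that the resulting unique point $y$ satisfy $P(x,y)$ translates, via the standard effective lifting of $P \subseteq \ca{X} \times \ca{Y}$ to nbase-codes in $\ca{X} \times \baire$, into a $\Pi^1_1(\ep)$ condition on $(x,e)$. Hence $R$ is $\Pi^1_1(\ep)$. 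The hypothesis furnishes, for each $x$, some $y \in \del(\ep,x)$ with $P(x,y)$; its nbase is a $\del(\ep,x)$-subset of $\om$ and therefore equals $D_{(x,e)}$ for some valid $e$. Thus $(\forall x)(\exists e)\, R(x,e)$.

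Kreisel's $\Pi^1_1$-selection then produces a $\del(\ep)$-recursive $g: \ca{X} \to \om$ with $R(x,g(x))$ for every $x$. Setting $f(x)$ to be the unique point in $\ca{Y}$ whose neighborhood basis is $D_{(x,g(x))}$, the condition $\mathrm{Val}(x,g(x))$ guarantees that $D_{(x,g(x))}$ is $\del(\ep,x)$-decidable uniformly in $x$; therefore the relation $\{(x,n) : f(x) \in B^\ca{Y}_n\}$ is $\del(\ep)$, so $f$ is $\del(\ep)$-recursive, and $P(x,f(x))$ holds for every $x$ by construction.

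The main obstacle is setting up the good parametrization $(D,\mathrm{Val})$ with the required effectivity properties, and—what is really the same point in another guise—ensuring that $f$ comes out $\del(\ep)$-recursive rather than merely $\Pi^1_1(\ep)$-recursive. A naive use of the $\Pi^1_1(\ep)$-universal enumeration only yields a $\Pi^1_1(\ep)$-code for $\nbase{f(x)}$; the $\del(\ep)$-decidability on valid codes is exactly what allows one to close the hierarchy at $\del(\ep)$. Once this piece of the effective machinery is in hand, the reduction to Kreisel's selection is essentially formal.
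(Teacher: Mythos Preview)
Your approach is correct. The paper does not prove this lemma---it is cited from Louveau with the remark that it follows from the effective $\Pi^1_1$-uniformization theorem---and your reduction to Kreisel's $\Pi^1_1$ number selection via hyperarithmetic indices for $\del(\ep,x)$-points is the standard route to such results (cf.\ Moschovakis 4D.4 and 4D.6). The delicate point you flag is exactly the right one: on valid codes both $n\in D_{(x,e)}$ and $n\notin D_{(x,e)}$ are $\Pi^1_1(\ep)$, which is what keeps the arithmetical conditions over $D_{(x,e)}$ (including the lifted $P$) inside $\Pi^1_1(\ep)$ and makes the decoded $f$ genuinely $\del(\ep)$-recursive rather than merely $\Pi^1_1(\ep)$-recursive.
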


Another important tool is the following result of Louveau. Let us recall that a set $C$ \emph{separates $A$ from $B$} or that $A$ \emph{is separated from} $B$ by $C$ if $A \subseteq C$ and $C \cap B = \emptyset$.

\begin{lemma}[Louveau separation \cf \cite{louveau_a_separation_theorem_for_sigma_sets}]
\label{theorem louveau separation}
Suppose that \ca{X} is a recursively presented metric space and that $A, B$ are disjoint \sig \ subsets of \ca{X}. If $A$ is separated from $B$ by a $\boldp^0_\xi$ set, then $A$ is separated from $B$ by a $\Pi^0_\xi(\gamma)$ set for some $\gamma \in \del$.
\end{lemma}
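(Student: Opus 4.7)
The plan is to parametrize the $\boldp^0_\xi$ sets by a good system of Borel codes (in the sense of Louveau--Moschovakis) and then exploit the effective $\pii$-selection of Lemma \ref{theorem Borel selection}.

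First I would fix the parametrization of the Borel sets: a $\pii$ set $\BC \subseteq \baire$ together with an evaluation $c \mapsto \bcode_c$ and a natural $\pii$-rank $|\cdot|$ whose $\xi$-th initial segment $\BC_\xi = \{c \in \BC : |c| \leq \xi\}$ consists precisely of codes of $\boldp^0_\xi$ sets; relativized to any parameter $\gamma$ this yields the $\Pi^0_\xi(\gamma)$ sets. The crucial property of this parametrization is that both the membership relation $x \in \bcode_c$ and the non-membership $x \notin \bcode_c$ are $\pii$ predicates in $(c,x)$, not merely $\sig$; this is arranged via a coinductive evaluation along the well-founded tree that the code represents.

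Next I would carry out the complexity analysis. For the fixed disjoint $\sig$ sets $A$ and $B$, define
\[\textrm{Sep}(c) \iff c \in \BC \wedge (\forall x)[x \in A \to x \in \bcode_c] \wedge (\forall x)[x \in B \to x \notin \bcode_c].\]
Since $A$ and $B$ are $\sig$ while both evaluation relations are $\pii$, each matrix inside the universal quantifier is the disjunction of two $\pii$ predicates, hence $\pii$ in $(c,x)$; it follows that $\textrm{Sep}$ itself is $\pii$. The hypothesis that $A$ and $B$ admit a $\boldp^0_\xi$-separator translates to $\textrm{Sep} \cap \BC_\xi \neq \emptyset$.

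Finally I would extract a hyperarithmetic witness. Applying Lemma \ref{theorem Borel selection} --- equivalently Kreisel selection together with $\sig$-boundedness of the $\pii$-rank on $\BC$ --- the nonempty $\pii$ set $\textrm{Sep} \cap \BC_\xi$ contains a $\del$ element $\gamma$, and $\bcode_\gamma$ is then a $\Pi^0_\xi(\gamma)$-separator of $A$ from $B$. The main obstacle is the construction of the parametrization with the two $\pii$-evaluation properties and a $\pii$-rank matching the Borel rank: a naive code carries just enough information to evaluate the coded set but not its complement in a $\pii$ fashion, and the delicate Louveau--Moschovakis coding is what makes the entire argument go through uniformly at arbitrary $\xi < \omega_1$.
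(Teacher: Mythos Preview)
First, note that the paper does not prove this lemma; it is quoted from Louveau's original article as a black box, so there is no paper proof to compare against.

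Your setup is on the right track: the Louveau--Moschovakis $\pii$ coding with the dual evaluation property is indeed one known route to the theorem, and your verification that $\textrm{Sep}$ is $\pii$ is correct. The gap is in the final extraction step. The claim that ``the nonempty $\pii$ set $\textrm{Sep}\cap\BC_\xi$ contains a $\del$ element'' does not follow from the three tools you cite. Lemma~\ref{theorem Borel selection} already \emph{assumes} a $\del$ witness in each section, so invoking it here is circular. Kreisel (or Gandy) basis theorems concern nonempty $\sig$ sets, not $\pii$ sets, and a nonempty $\pii$ subset of $\baire$ need have no $\del$ member at all. Finally, $\sig$-boundedness bounds the ranks of $\sig$ subsets of the domain of a $\pii$-norm; but $\textrm{Sep}$ is $\pii$, and its complement in $\BC$ is not visibly $\sig$ either, since ``$(\exists x\in A)\,x\notin\bcode_c$'' mixes a $\sig$ quantifier with a $\pii$ matrix.

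What is actually missing is a genuine reflection step: one must show that whenever the $\pii$ set $\textrm{Sep}$ meets $\BC_\xi$ at all, it meets it in a $\del$ point. Louveau's argument accomplishes this via the Gandy--Harrington topology (analyzing which $\sig$ sets are $\bolds^0_\xi$ in that finer topology), and this is the substantive content of the theorem beyond the coding machinery. You correctly flag the coding as one obstacle, but you have treated the second obstacle as a routine basis/selection application when it is in fact the heart of the matter.
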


We now prove our result on the Borel transition.

\begin{proof}[Proof of Theorem \ref{theorem Borel transition in the codes}]
We fix some $\ep \in \baire$ such that $\ca{X}$ and $\ca{Y}$ are $\ep$-recursively presented, $\ca{A}$ belongs to $\sig(\ep)$ and $f$ is $\del(\ep)$-recursive.
We define the sets $P, Q \subseteq \baire \times \ca{X}$ by
\begin{align*}
P(\alpha,x) \iff& x \in \ca{A} \ \& \ \univ^{\ca{Y}}_m(\alpha,f(x))\\
Q(\alpha,x) \iff& x \in \ca{A} \ \& \ \neg \univ^{\ca{Y}}_m(\alpha,f(x)).
\end{align*}
Since $\ca{Y}$ is $\ep$-recursively presented, the set $\univ^{\ca{Y}}_m$ is $\Sigma^0_m(\ep)$. It follows that the sets $P, Q$ are $\sig(\ep)$.
We now fix some $\alpha \in \baire$ for the discussion. It is clear that the sections $P_\alpha$, $Q_\alpha$ are disjoint $\sig(\ep,\alpha)$ subsets of $\om \times \ca{X}$. From our hypothesis about $f$ there exists a $\bolds^0_n$ set $R_{\alpha} \subseteq \ca{X}$ such that $P_{\alpha} = R_{\alpha} \cap \ca{A}$.

It is evident that the set $R_\alpha$ separates $P_\alpha$ from $Q_\alpha$.
It follows from Louveau's Theorem (Lemma \ref{theorem louveau separation}) relative to $\ep$ that $P_\alpha$ is separated from $Q_\alpha$ by some $\Sigma^0_n(\ep,\gamma)$ subset of $\ca{X}$, for some $\gamma \in \del(\ep,\alpha)$. The preceding $\Sigma^0_n(\ep,\gamma)$ set is the $\beta$-section of $\univ^{\om \times \ca{X}}_n$ for some $(\ep,\gamma)$-recursive $\beta \in \baire$. The latter $\beta$ is obviously in $\del(\ep,\alpha)$.

Overall we have that for all $\alpha \in \baire$ there exists $\beta \in \del(\ep,\alpha)$ such that $U(\alpha,\beta)$, where $U(\alpha,\beta)$ denotes that $\univ_{n,\beta}^{\ca{X}}$ separates $P_\alpha$ from $Q_\alpha$.
It is not hard to see that $U$ is a $\pii(\ep)$ subset of $\baire \times \baire$.
It follows from Lemma \ref{theorem Borel selection} that there exists a Borel-measurable function $u: \baire \to \baire$ such that $U(\alpha,u(\alpha))$ for all $\alpha$. This function $u$ satisfies the required properties.
\end{proof}

Using the Borel-uniform transition in the codes and the theory of continuous degrees one can give the analogue of \cite[Lemma 2.6]{kihara_decomposing_Borel_functions_using_the_Shore_Slaman_join_theorem} in Polish spaces.

\begin{lemma}\label{lem:turing-degree-analysis}
Let \ca{X}, \ca{Z} be recursively presented metric spaces and that \ca{A} is a $\Sigma^1_1$ subset of \ca{X}. Suppose that a function $f:\mathcal{A}\to\mathcal{Z}$ satisfies that condition $f^{-1} \bolds^0_{m+1} \subseteq \bolds^0_{n+1}$ holds Borel-uniformly.
Then, there are $z\in 2^\omega$ and a $z$-computable ordinal $\xi$ such that we have $(f(x)\oplus q)^{(m)}\leq_M(x\oplus q^{(\xi)})^{(n)}$ for all $q\geq_Tz$.
\end{lemma}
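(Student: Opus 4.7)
The plan is to apply Theorem~\ref{theorem Borel transition in the codes} to obtain a Borel witness $u\colon\baire\to\baire$ satisfying $\univ^{\ca{Z}}_{m+1}(\alpha, f(x)) \iff \univ^{\ca{X}}_{n+1}(u(\alpha), x)$ for every $\alpha \in \baire$ and $x \in \ca{A}$, and then to exploit the basic fact that any Borel function on Baire space can be uniformly computed from a sufficiently high iterated jump. Concretely, fix $z \in 2^{\omega}$ coding all relevant parameters (the spaces $\ca{X}, \ca{Z}$, the function $f$, the universal systems, and a Borel code for $u$) together with a $z$-computable ordinal $\xi_0$ realising the Borel rank of $u$, so that $u(\beta) \tleq (z \oplus \beta)^{(\xi_0)}$ uniformly in $\beta \in \baire$. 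I will prove the lemma with $\xi := \xi_0 + 1$, still a $z$-computable ordinal.

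Let $q \tgeq z$ be arbitrary and $x \in \ca{A}$. By Lemma~\ref{prop:jump-equivalence}, $(f(x) \oplus q)^{(m)} \equiv_1 J^{(m),q}_{\ca{Z}}(f(x))$, so it is enough to decide $e \in J^{(m),q}_{\ca{Z}}(f(x))$ uniformly in $e$ from the oracle $(x \oplus q^{(\xi)})^{(n)}$. Using the trivial inclusion $\Sigma^0_m(q) \subseteq \bolds^0_{m+1}$ together with effective goodness of the system $(\effuniv^{\ca{Y}}_k)_k$, I produce a recursive function $\tilde r$ with $\effuniv^{\ca{Z}}_{m,q,e} = \effuniv^{\ca{Z}}_{m+1,q,\tilde r(e)}$. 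Then the Borel-uniform transition rewrites ``$f(x) \in \effuniv^{\ca{Z}}_{m,q,e}$'' as ``$\univ^{\ca{X}}_{n+1}(u(\tilde r(e) \fr q), x)$''. Since $\tilde r(e) \fr q \tleq q$ and $q \tgeq z$, the bound on $u$ gives $u(\tilde r(e) \fr q) \tleq q^{(\xi_0)}$ uniformly in $e$, so this last condition is a $\Sigma^0_{n+1}(q^{(\xi_0)})$ predicate in $x$, uniformly in $e$.

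The concluding step invokes the standard arithmetical identity $\Sigma^0_{n+1}(y) = \Sigma^0_n(y')$ (valid for $n \geq 1$), applied with $y = q^{(\xi_0)}$ and $y' = q^{(\xi_0+1)} = q^{(\xi)}$: the predicate becomes $\Sigma^0_n(q^{(\xi)})$ in $x$, whence uniformly decidable from $J^{(n), q^{(\xi)}}_{\ca{X}}(x) \equiv_1 (x \oplus q^{(\xi)})^{(n)}$ by another application of Lemma~\ref{prop:jump-equivalence}. Since both sides of the desired inequality are subsets of $\omega$, this Turing reduction coincides with a $\leq_M$ reduction, finishing the proof. The main obstacle to address carefully is to make rigorous the uniform computability bound $u(\beta) \tleq (z \oplus \beta)^{(\xi_0)}$ at a possibly transfinite Borel rank $\xi_0$, which is routine via Borel codes for $\Delta^1_1$ functions but requires the transfinite jump hierarchy over $z$, and to correctly manage the one-level shift between Borel-hierarchy codes at level $m{+}1$ and the $m$-th jump of $f(x)$; the latter is precisely what forces $\xi = \xi_0 + 1$ rather than $\xi = \xi_0$ and is why the proof cannot naively match the indices $m+1\mapsto m$ and $n+1\mapsto n$ without absorbing one extra jump into the oracle $q^{(\xi)}$.
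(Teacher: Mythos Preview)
Your argument contains a genuine gap at the step invoking the ``standard arithmetical identity $\Sigma^0_{n+1}(y)=\Sigma^0_n(y')$''. That identity holds for subsets of $\omega$, but \emph{not} for subsets of a Polish space $\ca{X}$: already for $\ca{X}=\baire$ and $n=1$, a singleton is $\Sigma^0_2$ yet not $\Sigma^0_1(y')$ for any $y$, since the latter class consists of open sets. In general $\Sigma^0_{n+1}(y)\upharpoonright\ca{X}\subseteq\bolds^0_{n+1}$ while $\Sigma^0_n(y')\upharpoonright\ca{X}\subseteq\bolds^0_n$. Thus from your single transition you only obtain that each ``$e\in A$'' (with $A=(f(x)\oplus q)^{(m)}$) is $\Sigma^0_{n+1}(q^{(\xi_0)})$ in $x$, hence $\Sigma^0_1$ in $(x\oplus q^{(\xi_0)})^{(n)}$; this shows $A$ is \emph{r.e.}\ in $(x\oplus q^{(\xi_0)})^{(n)}$, not recursive in it. Your attempted fix $\xi=\xi_0+1$ does not help: $A\leq_T(x\oplus q^{(\xi_0)})^{(n+1)}$ does not imply $A\leq_T(x\oplus q^{(\xi_0+1)})^{(n)}$, since the former dominates $x^{(n+1)}$ while the latter need not.

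The paper closes exactly this gap by exploiting that $\boldp^0_m\subseteq\bolds^0_{m+1}$ as well: from the given level-$(m{+}1,n{+}1)$ Borel-uniform transition one also extracts a Borel witness $v$ for $f^{-1}\boldp^0_m\subseteq\bolds^0_{n+1}$. Then both ``$e\in A$'' and ``$e\notin A$'' are $\Sigma^0_{n+1}$ facts about $x$ with oracle $\leq_T q^{(\xi)}$, hence both r.e.\ in $(x\oplus q^{(\xi)})^{(n)}$, yielding $A\leq_T(x\oplus q^{(\xi)})^{(n)}$ outright (with $\xi=\xi_0$, no $+1$ needed). You can patch your proof with the same idea: alongside $\tilde r$ converting $\Sigma^0_m(q)$-indices to $\bolds^0_{m+1}$-codes, use a second recursive $\tilde r'$ converting $\Pi^0_m(q)$-indices to $\bolds^0_{m+1}$-codes, and apply your $u$ to both.
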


\begin{proof}
Fix a sufficiently powerful oracle $\ep$ such that $\mathcal{A}$ and $\mathcal{Z}$ are $\ep$-computably isomorphic to subspaces of $\ep$-recursively presented Polish spaces $\mathcal{X}$ and $\mathcal{Y}$.
Let $f:\mathcal{A}\to\mathcal{Z}$ be a function satisfying that $f^{-1}\bolds^0_{m+1}\subseteq\bolds^0_{n+1}$.
We can think of $f$ as a function from $\mathcal{A}$ into $\mathcal{Y}$ with the same property since every $\bolds^0_{n+1}$ set $S\subseteq\mathcal{Z}$ is of the form $\hat{S}\cap\mathcal{Z}$ for some $\bolds^0_{n+1}$ set $\hat{S}\subseteq\mathcal{Y}$.
By Theorem \ref{theorem Borel transition in the codes}, $f:\bolds^0_{m+1}\subseteq\bolds^0_{n+1}$ holds Borel uniformly; in particular, both $f:\bolds^0_{m}\subseteq\bolds^0_{n+1}$ and $f:\boldp^0_{m}\subseteq\bolds^0_{n+1}$ hold Borel uniformly.
Therefore, we have Borel measurable transitions $u,v$ such that for all $x\in\mathcal{A}$ and all $(i,p)\in \omega \times \baire$,
\begin{align*}
f(x)\in\univ^\mathcal{Y}_{m,\cn{i}{p}}
\iff&\;x\in\univ^\mathcal{X}_{n+1,u(\cn{i}{p})}\\
\iff&\;x\not\in\univ^\mathcal{X}_{n+1,v(\cn{i}{p}))}.
\end{align*}
Since $u$ and $v$ are Borel, there is $z\geq_T\ep$ such that $u$ and $v$ are $\Sigma^0_a(z)$-recursive for some $a\in\mathcal{O}^z$, where $\mathcal{O}^z$ is Kleene's $\mathcal{O}$ relative to $z$, and the lightface pointclass $\Sigma^0_a(z)$ is defined in a straightforward manner.
By our definition of the jump operation, and the above equivalences it follows that $J_\yy^{(m),p\oplus z}(f(x))\leq_T J_\xx^{(n),(p\oplus z)^{(\xi)}}(x)$ for all $p\in 2^\omega$, where $\xi$ is a countable ordinal coded by $a$.
By Lemma \ref{prop:jump-equivalence}, we can restate this inequality as $(f(x)\oplus p\oplus z)^{(m)}\leq_M(x\oplus(p\oplus z)^{(\xi)})^{(n)}$ for all $p\in 2^\omega$.
\end{proof}

To show Theorem \ref{theorem main}, we will need first to extend a result of \cite[Lemma 2.5]{kihara_decomposing_Borel_functions_using_the_Shore_Slaman_join_theorem} on canceling out Turing jumps. Since the inequality of the degrees that we obtain is slightly different, we will need to utilize not only the Shore-Slaman Join Theorem (in the continuous degrees; Theorem \ref{thm:shoreslamanpolish}) like in \cite{kihara_decomposing_Borel_functions_using_the_Shore_Slaman_join_theorem} but also -as we mentioned above- other degree-theoretic results, namely the non-uniformity feature of the enumeration reducibility (Lemma \ref{lem:unif-nonunif}), the almost totality of continuous degrees (Lemma \ref{lem:nonsplitting}), the Friedberg Jump Inversion Theorem (Lemma \ref{lem:jump-inversion}) and the jump inversion property of a generic name (Lemma \ref{prop:nameandjump}).

\begin{lemma}[The Cancellation Lemma]\label{lem:deg-Shore-Slaman}
Suppose that $\xx$ and $\yy$ are recursively presented metric spaces, and fix $x\in\xx$, $y\in\yy$, and $z\in 2^\omega$.
If $(y\oplus g)^{(m)}\leq_T(x\oplus g^{(\xi)})^{(n)}$ for all $g\geq_Tz$, then $y\leq_T(x\oplus z^{(\xi)})^{(n-m)}$.
\end{lemma}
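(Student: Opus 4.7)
The plan is an argument by contradiction following the blueprint from \cite{kihara_decomposing_Borel_functions_using_the_Shore_Slaman_join_theorem}: apply the Shore-Slaman Join Theorem to produce a total oracle $G$, invert $G$ via Friedberg to obtain an auxiliary oracle $g\geq_T z$, and plug this $g$ into the hypothesis to force an impossible inequality between successive Turing jumps of $G$.

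Concretely, assume for contradiction that $y\not\leq_M (x\oplus z^{(\xi)})^{(n-m)}$. The Shore-Slaman Join Theorem for continuous degrees (Theorem \ref{thm:shoreslamanpolish}), applied to $y$ and the point $x\oplus z^{(\xi)}$ with exponent $n-m$, yields $G\in 2^\omega$ with $G\geq_M x\oplus z^{(\xi)}$ and $G^{(n-m+1)}\equiv_M G\oplus y$. Since $G^{(n-m+1)}$ is total, so is $G\oplus y$, and the equivalence sharpens to $G\oplus y\equiv_T G^{(n-m+1)}$. Next, Friedberg Jump Inversion (Lemma \ref{lem:jump-inversion}), applied with oracle $z$ and parameter $G$---valid because $\xi<\omega_1^z$---produces $g\in\baire$ with $g\geq_T z$ and $g^{(\xi)}\equiv_T G\oplus z^{(\xi)}\equiv_T G$, the last equality using $z^{(\xi)}\leq_T G$, which follows from $G\geq_M x\oplus z^{(\xi)}$. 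Plugging $g$ into the hypothesis gives $(y\oplus g)^{(m)}\leq_T (x\oplus g^{(\xi)})^{(n)}$; and since $x\leq_M G$ together with $g^{(\xi)}\equiv_T G$ imply $x\oplus g^{(\xi)}\equiv_M G$ as continuous degrees, while the jump is well-defined on $M$-degrees by Lemma \ref{fact:jumpproperties}, we obtain $(x\oplus g^{(\xi)})^{(n)}\equiv_T G^{(n)}$, so $(y\oplus g)^{(m)}\leq_T G^{(n)}$.

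The goal is now to establish the reverse $(y\oplus g)^{(m)}\geq_T G^{(n+1)}$, which combined with the above forces $G^{(n+1)}\leq_T G^{(n)}$, an absurdity. The naive jump-of-join identity $(A\oplus B)^{(m)}\equiv_T A^{(m)}\oplus B^{(m)}$ is false in general (a Sacks splitting of $\emptyset'$ into two low sets is a counterexample), so the additional degree-theoretic tools listed in the hint are essential. The non-uniformity of enumeration reducibility (Lemma \ref{lem:unif-nonunif}) reduces the continuous-degree claim $y\leq_M \bullet$ to Turing statements against every totalizing oracle $C$; the almost-totality dichotomy (Lemma \ref{lem:nonsplitting}) then reduces without loss of generality to the case where $y\oplus G$ is total, so that the Shore-Slaman identity becomes a classical Turing equality $(y\oplus G)^{(m)}\equiv_T G^{(n+1)}$; and Lemma \ref{prop:nameandjump} supplies a name of $y$ whose Turing jumps track the abstract jumps of $y$, enabling a pure Baire-space jump calculus to recover the required lower bound inside $(y\oplus g)^{(m)}$.

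The main obstacle will be precisely this last step---transferring the continuous-degree bound $(y\oplus g)^{(m)}\leq_T G^{(n)}$ into a concrete Turing-jump contradiction, given that for $\xi\geq 1$ the Friedberg-inverted oracle $g$ sits strictly below $G$ in Turing degree, so $G$ cannot simply be substituted for $g$ inside the jump. The argument must leverage both the specific jump-tower structure $g^{(\xi)}\equiv_T G$ and the conservation properties of Friedberg's inversion on the sub-$\xi$ jumps of $g$, combining these with the total name of $y$ from Lemma \ref{prop:nameandjump} to recover enough of the information in $(y\oplus G)^{(m)}\equiv_T G^{(n+1)}$ inside $(y\oplus g)^{(m)}$ to complete the derivation.
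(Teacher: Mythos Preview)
Your proposal is incomplete, and you say so yourself: the final step, showing $(y\oplus g)^{(m)}\geq_T G^{(n+1)}$, is left as an ``obstacle'' to be overcome. In fact this obstacle is not overcome by the tools you list, because the order in which you apply Shore--Slaman and Friedberg is the wrong way round. With your order, the join property lives at the level of $G\equiv_T g^{(\xi)}$: you have $G^{(n+1)}\equiv_T(G\oplus y)^{(m)}\equiv_T(g^{(\xi)}\oplus y)^{(m)}$, and to finish you would need $(y\oplus g)^{(m)}\geq_T(y\oplus g^{(\xi)})^{(m)}$. But $g\leq_T g^{(\xi)}$ gives only the opposite inequality, and for $\xi\geq 1$ there is no reason the reverse should hold --- $(y\oplus g)^{(m)}$ simply need not compute $g^{(\xi)}$. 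None of Lemmata \ref{lem:unif-nonunif}, \ref{lem:nonsplitting}, \ref{prop:nameandjump} addresses this gap; they concern totality and names of $y$, not recovering high jumps of $g$ from low ones.

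The paper's proof reverses the order. One first produces (via Friedberg, together with the almost-totality dichotomy and the generic-name lemma to handle the case where $x\oplus z^{(\xi)}$ is non-total) a point $p\geq_T z$ with $x\leq_M p^{(\xi)}$ and $y\not\leq_M p^{(\xi+n-m)}$. Only then is Shore--Slaman invoked, at the higher exponent $\xi+n-m$, yielding $g\geq_M p$ with $g^{(\xi+n-m+1)}\leq_M g\oplus y$. Now the join property is at the level of $g$ itself, so taking $m$ further jumps immediately gives $(y\oplus g)^{(m)}\geq_T g^{(\xi+n+1)}>_T g^{(\xi+n)}\geq_T(x\oplus g^{(\xi)})^{(n)}$, contradicting the hypothesis. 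The auxiliary lemmata you cite are used not in the endgame but earlier, precisely to manufacture the total $p$ with the right jump-tower properties before Shore--Slaman is called.
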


\begin{proof}
Otherwise, $y\not\leq_M(x\oplus z^{(\xi)})^{(n-m)}$.
We claim that there is $p\geq_Mz$ such that $y\not\leq_Mp^{(\xi+n-m)}$ and $x\leq_Mp^{(\xi)}$.
To see this, by Lemma \ref{lem:nonsplitting}, since $z^{(\xi)}$ is total, either $x\oplus z^{(\xi)}$ is total or else $x>_Mz^{(\xi)}$ holds.
If $x\oplus z^{(\xi)}$ is total, by the $\xi^{\text{th}}$ Friedberg jump inversion theorem on $2^\omega$ (see Lemma \ref{lem:jump-inversion}), there is $p\geq_Tz$ such that $p^{(\xi)}\equiv_Tx\oplus z^{(\xi)}$.
This $p$ clearly satisfies the desired condition.
Now we assume that $x>_Mz^{(\xi)}$, which implies $y\not\leq_Mx^{(n-m)}$.
If $n=m$, then, by Lemma \ref{lem:unif-nonunif}, there is $\hat{x}\in\omega^\omega$ such that $x\leq_M\hat{x}$ and $y\not\leq_M\hat{x}$.
If $n>m$, then let $\hat{x}\in\omega^\omega$ be a generic $\mathcal{X}$-name of $x$ in Lemma \ref{prop:nameandjump}; therefore, $\hat{x}'\equiv_TJ_\xx(x)$.
By Lemma \ref{prop:jump-equivalence}, $x^{(n-m)}$ can be interpreted as $TJ^{(n-m-1)}\circ J_\xx(x)$ since $n-m\geq 1$, and therefore, we have $\hat{x}^{(n-m)}\equiv_T x^{(n-m)}$.
This implies $y\not\leq_M\hat{x}^{(n-m)}$.
In any cases, we have $\hat{x}\in\omega^\omega$, $\hat{x}\geq_Mx$ and $y\not\leq_M\hat{x}^{(n-m)}$.
Then, by the $\xi^{\text{th}}$ Friedberg jump inversion theorem on $2^\omega$ (Lemma \ref{lem:jump-inversion}), there is $p\geq_Tz$ such that $p^{(\xi)}\equiv_T\hat{x}$ since $\hat{x}\geq_Mx\geq_Mz^{(\xi)}$.
Then we have that $y\not\leq_Mx^{(n-m)}\equiv_Tp^{(\xi+n-m)}$ and $x\leq_M\hat{x}\equiv_Tp^{(\xi)}$.

Therefore, by the Shore-Slaman Join Theorem in the continuous degrees (Theorem \ref{thm:shoreslamanpolish}), there is $g\geq_Mp$ such that $g^{(\xi+n-m+1)}\leq_Mg\oplus y$.
This implies $g^{(\xi+n+1)}\equiv_T(g^{(\xi+n-m+1)})^{(m)}\leq_T(g\oplus y)^{(m)}$ by Lemma \ref{fact:jumpproperties}.
Note also that $g\geq_Mp$ implies that $g\geq_Tz$ and $g^{(\xi)}\geq_Mp^{(\xi)}\geq_M x$.
Therefore,
\[(y\oplus g)^{(m)}\geq_Tg^{(\xi+n+1)}>_Tg^{(\xi+n)}\geq_T(x\oplus g^{(\xi)})^{(n)},\]
which contradicts our assumption.
\end{proof}

We are finally ready to give the proof of our decomposition result.

\begin{proof}[Proof of Theorem \ref{theorem main}]
Without loss of generality we may assume that the underlying spaces are recursively presented and that $\ca{A}$ is $\Sigma^1_1$. We have from Theorem \ref{theorem Borel transition in the codes} that the condition $f^{-1} \bolds^0_{m+1} \subseteq \bolds^0_{n+1}$ holds Borel-uniformly in the codes.
Therefore, by Lemma \ref{lem:turing-degree-analysis}, there is $z\in 2^\omega$ such that $(f(x)\oplus q)^{(m)}\leq_M(x\oplus q^{(\xi)})^{(n)}$ for all $q\geq_Tz$ and all $x \in \mathcal{A}$. By using Lemma \ref{lem:deg-Shore-Slaman}, we obtain
\[
f(x)\leq_M(x\oplus z^{(\xi)})^{(n-m)}
\]
for all $x \in \mathcal{A}$. As in \cite[Lemma 2.7]{kihara_decomposing_Borel_functions_using_the_Shore_Slaman_join_theorem} the function $f$ is decomposed into the $\mathbf{\Sigma}^0_{n-m+1}$-measurable functions $x\mapsto\Phi_e((x\oplus z^{(\xi)})^{(n-m)})$, $e\in\omega$ on the domains
\[
\mathcal{B}_e : = \set{x \in \mathcal{A}}{f(x) = \Phi_e((x\oplus z^{(\xi)})^{(n-m)})},
\]
That is, \ $\ca{A} = \bigcup_{e} \ca{B}_e$ and each $f \rs{\ca{B}_e}$ is $\bolds^0_{n-m+1}$-measurable. This proves the first assertion of the statement.

It remains to estimate the complexity of $\mathcal{B}_e$.
If $m=1$, then the assertion is trivial, so we may assume that $m\geq 2$.
We consider the $\bolds^0_{n-m+1}$-measurable functions $g_e:\mathcal{A}\to\yy$ defined by $g_e(x)=\Phi_e((x\oplus z^{(\xi)})^{(n-m)})$.
Let $\Delta_\yy=\{(y,y):y\in\yy\}$ be the diagonal set, which is closed in $\yy^2$.
Given functions $g$ and $h$, we write $(g,h)$ be the function $x\mapsto(g(x),h(x))$.
Then, clearly $\mathcal{B}_e=(f,g_e)^{-1}[\Delta_\yy]$, which is $\boldp^0_{n}$, since $f$ is $\bolds^0_{n}$-measurable and ${\rm dom}(g_e)$ is $\boldp^0_{n-m+2}\subseteq\boldp^0_{n}$ by the assumption $m\geq 2$.
Moreover, if $f$ is $\bolds^0_{n}$-measurable and $m\geq 3$ then $\mathcal{B}_e\in\boldp^0_{n-1}$ since $n-m+2\leq n-1$.

Finally assume that the graph ${\rm Gr}(f)$ of $f$ is $\bolds^0_{m}$. We show that in this case the preceding sets $\ca{B}_e$ are $\bolds^0_n$. This will finish the proof since we can decompose further $\ca{B}_e = \bigcup_{j} \ca{B}_{e,j}$, where $\ca{B}_{e,j}$ is $\boldd^{0}_{n-1}$.
It is clear that $\mathcal{B}_e=({\rm id},g_e)^{-1}[{\rm Gr}(f)]$.
Since each $g_e$ is $\bolds^0_{n-m+1}$-measurable and ${\rm Gr}(f)$ is $\bolds^0_m$, it follows that the set $\mathcal{B}_e$ is a $\bolds^0_{n-m+1 + (m -1)} = \bolds^0_n$ subset of \ca{A}.
\end{proof}

\begin{remark}\normalfont
If $n=m$, the proof of Kihara's Theorem \ref{theorem Kihara continuous transition} only requires the Posner-Robinson Join Theorem (that is, the Shore-Slaman Join Theorem for $n=0$).
However, a noteworthy fact is that our proof of Theorem \ref{theorem main} requires the full strength of the Shore-Slaman Join Theorem even in the case of $n=m$.
\end{remark}

It is also interesting to mention that this strategy of decomposing a given Borel-measurable function provides an upper bound for the complexity of the decomposing sets. In fact it is easy to check that an arbitrary function $f: \ca{A} \to \ca{Y}$ is decomposable to $\bolds^0_{k+1}$-measurable functions if and only if there is some $w \in \cantor$ such that for all $x \in \mathcal{A}$ is holds $f(x) \leq_M (x \oplus w)^{(k)}$ (see also \cite[Lemma 2.7]{kihara_decomposing_Borel_functions_using_the_Shore_Slaman_join_theorem}).
So, if the latter condition holds, the decomposing sets can be defined through the sets $\ca{B}_{e}$ of the preceding proof (with $k = n-m$). The complexity of the latter sets can be easily estimated by the complexity of $f$ as above: if $f$ is $\bolds^0_\xi$-measurable then the each $\ca{B}_e$ is a $\boldp^0_{\max\{\xi,k+1\}}$ set. Overall we conclude to the following.

\begin{proposition}
Let \ca{X}, \ca{Y} be Polish spaces, $\ca{A} \subseteq \ca{X}$ and $f: \ca{A} \to \ca{Y}$ be $\bolds^0_{\xi}$-measurable. If $f \in \dec(\bolds^0_k)$ for some $k < \om$, then $f \in \dec(\bolds^0_k,\boldd^0_{\max\{\xi,k+1\}})$.
\end{proposition}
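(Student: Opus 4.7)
The plan is to run the decomposition machinery from the proof of
Theorem~\ref{theorem main} directly under the weaker hypothesis
$f\in\dec(\bolds^0_k)$, and then to read off the descriptive complexity of
the resulting pieces from the measurability class of $f$. The starting point
is the equivalence mentioned immediately before the statement (a routine
adaptation to continuous degrees of
\cite[Lemma~2.7]{kihara_decomposing_Borel_functions_using_the_Shore_Slaman_join_theorem}):
$f\in\dec(\bolds^0_k)$ if and only if there exists $w\in\cantor$ with
$f(x)\leq_M(x\oplus w)^{(k-1)}$ for all $x\in\mathcal{A}$. Fix such a $w$ and
define, for each $e\in\omega$, the partial function
$g_e(x)=\Phi_e((x\oplus w)^{(k-1)})$ together with the set
\[
\mathcal{B}_e=\{x\in\mathcal{A}\cap\Dom(g_e):f(x)=g_e(x)\}.
\]
The reduction $f(x)\leq_M(x\oplus w)^{(k-1)}$ says precisely that every
$x\in\mathcal{A}$ belongs to some $\mathcal{B}_e$, so $(\mathcal{B}_e)_{e\in\omega}$ covers
$\mathcal{A}$, and by construction $f\upharpoonright\mathcal{B}_e=g_e\upharpoonright\mathcal{B}_e$
is $\bolds^0_k$-measurable, since $g_e$ is the composition of the continuous
map $\Phi_e$ with the $(k-1)$-st jump operator.

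The only substantive step is to estimate the descriptive complexity of each
$\mathcal{B}_e$, which is exactly the calculation flagged in the paragraph preceding
the Proposition. Writing
$\mathcal{B}_e=(f,g_e)^{-1}[\Delta_\mathcal{Y}]\cap\Dom(g_e)$ with
$\Delta_\mathcal{Y}=\{(y,y):y\in\mathcal{Y}\}$ the closed diagonal of $\mathcal{Y}^2$, and using
the definitions in Section~\ref{section:turing-jump} to see that $g_e$ is
$\bolds^0_k$-measurable with $\Dom(g_e)\in\boldp^0_{k+1}$, the pair
$(f,g_e)$ is $\bolds^0_{\max\{\xi,k\}}$-measurable on its domain; hence the
preimage of the closed set $\Delta_\mathcal{Y}$ lies in $\boldp^0_{\max\{\xi,k\}}$,
and intersecting with $\Dom(g_e)\in\boldp^0_{k+1}$ yields
$\mathcal{B}_e\in\boldp^0_{\max\{\xi,k+1\}}$ as a subset of $\mathcal{A}$.

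We conclude by invoking the equivalence, recorded right after the definition
of $\dec(\bolds^0_m,\boldd^0_n)$ in Section~\ref{section introduction},
between a partition of $\mathcal{A}$ into $\boldd^0_n$ sets and a countable cover
of $\mathcal{A}$ by $\boldp^0_{n-1}$ sets on which $f$ is $\bolds^0_m$-measurable.
The cover $(\mathcal{B}_e)_{e\in\omega}$ therefore witnesses
$f\in\dec(\bolds^0_k,\boldd^0_{\max\{\xi,k+1\}})$. Apart from the
characterization of $\dec(\bolds^0_k)$ via Miller-reducibility to the
$(k-1)$-st jump, no deep ingredient enters; the one place where care is
needed is tracking how the jump operator propagates complexity through the
Borel hierarchy so that both the measurability class of $g_e$ and the
complexity of its domain land at exactly the right levels, and this is
precisely the role played by
Lemmata~\ref{fact:jumpproperties} and~\ref{prop:jump-equivalence} together
with the effective goodness of the canonical universal system.
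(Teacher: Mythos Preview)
Your approach is precisely the one sketched in the paragraph immediately preceding the Proposition, and your complexity bookkeeping for the pieces is carried out correctly: $g_e$ is $\bolds^0_k$-measurable, $\Dom(g_e)\in\boldp^0_{k+1}$, and therefore each $\mathcal{B}_e$ lands in $\boldp^0_{\max\{\xi,k+1\}}$ as a subset of $\mathcal{A}$.

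The gap is in the very last step. You quote the equivalence from Section~\ref{section introduction} accurately --- $f\in\dec(\bolds^0_m,\boldd^0_n)$ if and only if $\mathcal{A}$ admits a $\boldp^0_{n-1}$ cover on whose members $f$ is $\bolds^0_m$-measurable --- but then apply it with the wrong index. A cover by $\boldp^0_{\max\{\xi,k+1\}}$ sets only yields
\[
f\in\dec\bigl(\bolds^0_k,\boldd^0_{\max\{\xi,k+1\}+1}\bigr),
\]
one level above what the Proposition asserts. Put differently, with $n=\max\{\xi,k+1\}$ you have shown $\mathcal{B}_e\in\boldp^0_n$, and to obtain a $\boldd^0_n$ partition via that equivalence you would need $\mathcal{B}_e\in\boldp^0_{n-1}$; nothing in your argument gives this, and in fact the contribution of $\Dom(g_e)\in\boldp^0_{k+1}$ alone already forces level $k+1$. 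The paper's own one-sentence justification before the Proposition suffers from the same index slip (its estimate for $\mathcal{B}_e$ drops the level coming from $\Dom(g_e)$), so you should regard the stated bound with suspicion rather than assume your argument must close the gap.
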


\section{The Transfinite Case}

In this section, we show that with the necessary modifications the preceding results can be extended to the Borel pointclasses of transfinite order.
We begin this section by proposing the full decomposability conjecture including transfinite cases in scope.
The conjecture has already been mentioned by Kihara \cite[Problems 2.13 and 2.14]{kihara_decomposing_Borel_functions_using_the_Shore_Slaman_join_theorem}; however, his calculation of ordinals in transfinite cases contains a minor error.
We restate the correct version here:
\newtheorem*{gendec}{The Full Decomposability Conjecture}
\begin{gendec}
Suppose that $\ca{A}$ is an analytic subset of a Polish space and that $\ca{Y}$ is separable metrizable. For any function $f:\ca{A}\to\ca{Y}$ and any countable ordinals $\eta\leq\xi<\omega_1$, the following assertions are equivalent:
\begin{enumerate}
\item $f^{-1}\bolds^0_{1+\eta}\subseteq\bolds^0_{1+\xi}$ holds.
\item $f^{-1}\bolds^0_{1+\eta}\subseteq\bolds^0_{1+\xi}$ holds continuous-uniformly.
\item There is a $\boldd^0_{1+\xi}$-cover $(\ca{A}_i)_{i\in\omega}$ of $\ca{A}$ such that for all $i\in\omega$ the restriction $f\rs{\ca{A}_i}$ is $\bolds^0_{1+\theta_i}$-measurable for some ordinal $\theta_i$ with $\theta_i+\eta\leq\xi$.
\end{enumerate}
\end{gendec}

Note that the implication from the assertion (2) to (1) is obvious, and the implication from the assertion (3) to (2) has been observed by Kihara \cite[Lemma 2.3]{kihara_decomposing_Borel_functions_using_the_Shore_Slaman_join_theorem}.
Hence, the problem is whether the assertion (1) implies (3).
The main result of this section will be the following transfinite version of Theorem \ref{theorem main}:

\begin{theorem}
\label{theorem main transfinite}
Suppose that $\eta \leq \xi < \om_1$, \ca{X}, \ca{Y} are Polish spaces, $\ca{A}$ is an analytic subset of \ca{X}, and $f: \ca{A} \to \ca{Y}$ satisfies $f^{-1} \bolds^0_{1+\eta}\subseteq \bolds^0_{1+\xi}$. Then there exists a $\boldd^0_{1+\xi+1}$-cover $(\ca{A}_i)_{i \in \om}$ of $\ca{A}$ such that for all $i \in \om$ the restriction $f \upharpoonright \ca{A}_i$ is $\bolds^0_{1+\theta_i}$-measurable for some ordinal $\theta_i$ with $\theta_i + \eta \leq \xi$.\smallskip

If moreover $f$ is $\bolds^0_{1+\zeta}$-measurable for some $\zeta < \xi$ then the preceding sets $\ca{A}_i$ can be chosen to be $\boldd^0_{1+\xi}$.
\end{theorem}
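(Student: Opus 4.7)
My plan is to mirror the proof of Theorem \ref{theorem main} at all countable levels, with three adjustments dictated by the transfinite setting. First, since the finite case $\xi < \om$ is covered by Theorem \ref{theorem main}, I may assume $\xi \geq \om$ and reduce to the zero-dimensional case via Kuratowski's theorem, which provides a $\boldd^0_\om$-isomorphism of every uncountable Polish space with $\baire$ that preserves the relevant $\boldd^0_{1+\xi+1}$-covers and Borel-measurability levels once $\xi \geq \om$. After this reduction every point is total, so the continuous-degree machinery collapses to ordinary Turing degrees and the transfinite Friedberg Jump Inversion Theorem (Lemma \ref{lem:jump-inversion}) applies unconditionally. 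Second, I would replace the parametrization system $\univ^{\cdot}_n$ with the Borel-code system $\bcode^{\cdot}_\xi$ of Louveau-Moschovakis, which admits a Good Parametrization Lemma (Lemma \ref{lemma good parametrization for transfinite}) at all countable levels. Third, I invoke the transfinite clause of Theorem \ref{thm:shoreslamanpolish}.

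With these ingredients I would run the proof of Theorem \ref{theorem Borel transition in the codes} verbatim at level $(1+\eta,1+\xi)$. Fixing an $\ep$-recursive presentation, form the disjoint $\sig(\ep)$ sets
\[
P(\alpha,x) \iff x \in \ca{A}\,\&\,\bcode^{\baire}_{1+\eta}(\alpha,f(x)), \qquad Q(\alpha,x) \iff x \in \ca{A}\,\&\,\neg\bcode^{\baire}_{1+\eta}(\alpha,f(x)).
\]
For each $\alpha$, the sections are separated by a $\boldp^0_{1+\xi}$ set, Louveau separation (Lemma \ref{theorem louveau separation}) refines this to a $\Pi^0_{1+\xi}(\ep,\gamma)$ separator for some $\gamma \in \del(\ep,\alpha)$, the Good Parametrization Lemma converts this into a $\bcode^{\baire}_{1+\xi}$-code, and $\pii$-uniformization (Lemma \ref{theorem Borel selection}) supplies a Borel uniformity function. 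Exactly as in Lemma \ref{lem:turing-degree-analysis}, this yields $z \in \cantor$ and a $z$-computable ordinal $\zeta_0$ with
\[
(f(x) \oplus q)\tpower{\eta} \tleq (x \oplus q\tpower{\zeta_0})\tpower{\xi}
\]
for every $x \in \ca{A}$ and every $q \tgeq z$.

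I then apply the transfinite Cancellation Lemma, whose proof proceeds as in Lemma \ref{lem:deg-Shore-Slaman} with the transfinite Friedberg Jump Inversion absorbing $\zeta_0$ into a base oracle and the transfinite Shore-Slaman Join Theorem (the last clause of Theorem \ref{thm:shoreslamanpolish}) producing the contradicting $G$. The output is $f(x) \tleq (x \oplus z\tpower{\zeta_0})\tpower{\theta}$ for all $x \in \ca{A}$ with $\theta$ satisfying $\theta + \eta \leq \xi$. As in the proof of Theorem \ref{theorem main}, setting $\ca{B}_e = \{x \in \ca{A} : f(x) = \Phi_e((x \oplus z\tpower{\zeta_0})\tpower{\theta})\}$ produces a cover $\ca{A} = \bigcup_{e\in\om} \ca{B}_e$ with each $f\rs{\ca{B}_e}$ being $\bolds^0_{1+\theta}$-measurable. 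Writing $\ca{B}_e = (f,g_e)^{-1}[\Delta_{\baire}]$ with $g_e(x) = \Phi_e((x\oplus z\tpower{\zeta_0})\tpower{\theta})$ yields $\ca{B}_e \in \boldp^0_{1+\xi+1}$, improving to $\boldp^0_{1+\xi}$ under the extra measurability hypothesis on $f$; a final disjointification delivers the required $\boldd^0$-cover.

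The principal obstacle I foresee is the transfinite Good Parametrization Lemma for $\bcode^{\baire}_\xi$: the naturally defined universal sets of the finite case are not good at transfinite levels, and setting up the Louveau-Moschovakis Borel-code parametrization so that section maps remain continuous in a way compatible with Louveau separation is the crucial enabling step. A secondary subtlety is the ordinal bookkeeping in the Cancellation Lemma when $\eta$ is an infinite limit ordinal: the identity $1 + \eta = \eta$ makes the single extra jump from Shore-Slaman ``invisible'' under $\eta$-fold iteration, forcing one either to pre-inflate $\zeta_0$ so as to regain a strict inequality or to further subdecompose $\ca{A}$ along an increasing sequence of admissible $\theta$'s, on each piece invoking the argument with a suitably chosen $\theta$.
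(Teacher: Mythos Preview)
Your strategy matches the paper's, and you correctly single out the Good Parametrization Lemma for $(\bcode^\ca{Z}_\xi)_\ca{Z}$ as the key structural ingredient. Two points require correction, however.

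First, the phrase ``exactly as in Lemma~\ref{lem:turing-degree-analysis}'' conceals real new work. Converting a Borel uniformity function on $\bcode$-codes into a transfinite Turing-jump inequality requires knowing that every $\Sigma^0_{1+\xi}(z)$ subset of $\omega$ in the Louveau--Moschovakis sense is recursive in $z^{(\xi+1)}$. This is not automatic: the Borel-code hierarchy and the $H$-set hierarchy along Kleene's $\mathcal{O}$ are defined quite differently, and aligning them (the paper's Lemma~\ref{borel-code-computability}) takes a Spector-uniqueness style argument. This alignment is also why the paper's transfinite version of Lemma~\ref{lem:turing-degree-analysis} (Lemma~\ref{lem:turing-degree-analysis-transfinite}) lands with exponents $\eta+1$ and $\xi+1$ rather than your $\eta$ and $\xi$: for infinite $\eta$ one has $\emptyset^{(\eta)}\in\Delta^0_\eta$, while $\emptyset^{(\eta+1)}$ is $\Sigma^0_{1+\eta}$-complete.

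Second, the Transfinite Cancellation Lemma is applied to each $x\in\ca{A}$ separately, and the resulting ordinal depends on $x$: one obtains only $f(x)\leq_T(x\oplus z^{(\zeta_0)})^{(\theta_x)}$ with $\theta_x+\eta\leq\xi$. In general there is no maximal admissible $\theta$ (take $\eta=1$, $\xi=\omega$), so the $\theta_x$ cannot be uniformized, and the cover must be indexed by pairs $(e,\theta)$ with $\ca{B}_{e,\theta}=\{x\in\ca{A}:f(x)=\Phi_e((x\oplus z^{(\zeta_0)})^{(\theta)})\}$; this is what the paper does. Your last paragraph gestures at the right fix but misattributes the cause: the identity $1+\eta=\eta$ is handled \emph{inside} the Cancellation Lemma by a sup-and-case argument, whereas the need for multiple $\theta$'s in the cover comes simply from the failure of ordinal right-subtraction to be single-valued. (Incidentally, your complexity bound $\ca{B}_e\in\boldp^0_{1+\xi+1}$ is one level too high; the correct bound $\boldp^0_{1+\xi}$ is what delivers the claimed $\boldd^0_{1+\xi+1}$-cover.)
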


As before we prove the preceding theorem using Borel-uniformity functions. We recall the following coding of the pointclasses $\bolds^0_\xi$. The references about the following notions and results are to \cite{louveau_a_separation_theorem_for_sigma_sets} and \cite{yiannis_dst}.

Define the sets $\BC_\xi \subseteq \baire$, $\xi < \om_1$ recursively
\begin{align*}
\alpha \in \BC_0 \iff& \alpha(0) = 0,\\
\alpha \in \BC_\xi \iff& \alpha(0)=1 \ \& \ (\forall n)(\exists \zeta < \xi)[(\alpha^\ast)_n \in \BC_\zeta],
\end{align*}
where by $\alpha^\ast$ we mean the function $n \mapsto \alpha(n+1)$, so that $\alpha=\alpha(0)\fr\alpha^\ast$.
The set of \emph{Borel codes} is $\BC = \bigcup_{\xi < \om_1} \BC_\xi$,
For $\alpha \in \BC$ we define $|\alpha|$ as the least $\xi$ such that $\alpha \in \BC_\xi$.
Given a Polish space $\ca{X}$ with a fixed open basis $(B^\xx_e)_{e\in\omega}$ we define the representations $\pi^{\ca{X}}_\xi : \BC_\xi \to \bolds^0_\xi \upharpoonright \ca{X}$ by recursion,
\begin{align*}
\pi^\ca{X}_0(\alpha) =& \ \ca{X} \setminus B^\ca{X}_{\alpha^\ast(1)}\\
\pi^\ca{X}_\xi(\alpha) =& \ \bigcup_{n} \ca{X} \setminus \pi^\ca{X}_{|(\alpha^\ast)_n|}((\alpha^\ast)_n),
\end{align*}
whereas by $\bolds^0_0 \upharpoonright \ca{X}$ we mean the family $\set{\ca{X} \setminus B^\ca{X}_s}{s \in \om}$. An easy induction shows that for all $1 \leq \zeta \leq \xi$ we have that $\BC_\zeta \subseteq \BC_\xi$ and $\pi^\ca{X}_\xi \upharpoonright \BC_\zeta = \pi^\ca{X}_\zeta$.

Suppose that $\ca{X}$ is recursively presented. Recall that the set $P \subseteq \ca{X}$ belongs to the \emph{lightface} $\Sigma^0_\xi$ if there exists a \emph{recursive} $\alpha \in \BC_{\xi}$ such that $P = \pi^\ca{X}_\xi(\alpha)$. Similarly the set $P$ belongs to $\Sigma^0_\xi(\ep)$ if there exists some $\ep$-recursive $\alpha \in \BC_\xi$ such that $P = \pi^\ca{X}_\xi(\alpha)$.
One can verify that for finite $1 \leq \xi = n < \om$ this definition coincides with the definition of the corresponding arithmetical pointclass $\Sigma^0_n$ of Kleene, so no conflict arises from this notation.\footnote{We point out another substantial difference from Kihara's results \cite{kihara_decomposing_Borel_functions_using_the_Shore_Slaman_join_theorem}. In the transfinite case the latter employes universal systems, which are defined through indexes in Kleene's $\ca{O}$ in a uniform way. This ``recursion-theoretic" coding proves the transfinite version of the decomposition with a rather straightforward modification of the finite version. However here we use the coding of $\bolds^0_\xi$ sets that is employed in Louveau-separation. The proof of the decomposition under this coding needs substantially more work, \cf Lemmata \ref{lemma good parametrization for transfinite} and \ref{borel-code-computability} below.}

\begin{definition}\normalfont
\label{definition of bcode sets}
Given a Polish space \ca{X} we define the sets $\bcode^\ca{X}_\xi \subseteq \baire \times \ca{X}$ by
\[
\bcode^\ca{X}_\xi(\alpha,x) \iff \alpha \in \BC_\xi \ \& \ x \in \pi^\ca{X}_\xi(\alpha).
\]
\end{definition}
It is easy to verify that $\bcode^\ca{X}_\xi$ parametrizes $\bolds^0_\xi \upharpoonright \ca{X}$. In the transfinite case we will need to replace the system $(\univ^{\ca{X}}_n)_{\ca{X}}$ with $(\bcode^{\ca{X}}_\xi)_{\ca{X}}$.

\begin{theorem}
\label{theorem Borel transition in the codes transfinite}
Suppose that $\ca{X}$, \ca{Y} are Polish spaces, $\ca{A} \subseteq \ca{X}$ is analytic and that $f:\ca{A} \to \ca{Y}$ satisfies $f^{-1} \bolds^0_\eta \subseteq \bolds^0_\xi$ for some $\eta, \xi \geq 1$. Then condition $f^{-1} \bolds^0_\eta \subseteq \bolds^0_\xi$ holds Borel-uniformly in the codes with respect to $\bcode^{\ca{Y}}_\eta, \bcode^{\ca{X}}_\xi$.
\end{theorem}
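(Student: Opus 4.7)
The plan is to mimic the argument given for the finite-level Theorem \ref{theorem Borel transition in the codes}, replacing the lightface universal system $(\univ^{\ca{Z}}_n)_{\ca{Z}}$ by the Louveau--Moschovakis parametrization $(\bcode^{\ca{Z}}_\zeta)_{\ca{Z}}$. First I would fix an oracle $\ep \in \baire$ such that $\ca{X}$ and $\ca{Y}$ are $\ep$-recursively presented, $\ca{A}$ is in $\sig(\ep)$, the function $f$ is $\del(\ep)$-recursive (which is possible since $f^{-1} \bolds^0_\eta \subseteq \bolds^0_\xi$ forces $f$ to be Borel-measurable), and, crucially, $\eta, \xi < \om_1^\ep$, so that the codings $\bcode^{\ca{Y}}_\eta$ and $\bcode^{\ca{X}}_\xi$ become effectively Borel relative to $\ep$.

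In parallel to the proof of Theorem \ref{theorem Borel transition in the codes}, I would introduce
\begin{align*}
P(\alpha, x) \iff& \ x \in \ca{A} \ \& \ \bcode^{\ca{Y}}_\eta(\alpha, f(x)),\\
Q(\alpha, x) \iff& \ x \in \ca{A} \ \& \ \alpha \in \BC_\eta \ \& \ \neg \bcode^{\ca{Y}}_\eta(\alpha, f(x)),
\end{align*}
and verify that both $P$ and $Q$ are $\sig(\ep)$ subsets of $\baire \times \ca{X}$. For each $\alpha \in \BC_\eta$ the $\sig(\ep, \alpha)$ sections $P_\alpha, Q_\alpha$ are then disjoint and separated by the $\bolds^0_\xi$ set $f^{-1}[\pi^{\ca{Y}}_\eta(\alpha)]$ (this is exactly where the hypothesis $f^{-1} \bolds^0_\eta \subseteq \bolds^0_\xi$ is used); equivalently, $Q_\alpha$ is separated from $P_\alpha$ by a $\boldp^0_\xi$ set. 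Invoking the Louveau separation theorem (Lemma \ref{theorem louveau separation}) relative to $\ep$ produces a $\Pi^0_\xi(\ep, \gamma)$ separator of $Q_\alpha$ from $P_\alpha$ for some $\gamma \in \del(\ep, \alpha)$, whose complement is a $\Sigma^0_\xi(\ep, \gamma)$ separator of $P_\alpha$ from $Q_\alpha$, and hence carries a $\BC_\xi$-code $\beta \in \del(\ep, \alpha)$.

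The final step would be to define the relation
\[
U(\alpha, \beta) \iff \beta \in \BC_\xi \ \& \ (\forall x)[P(\alpha, x) \to \bcode^{\ca{X}}_\xi(\beta, x)] \ \& \ (\forall x)[Q(\alpha, x) \to \neg \bcode^{\ca{X}}_\xi(\beta, x)],
\]
to check that $U$ is $\pii(\ep)$, and to observe that $(\forall \alpha)(\exists \beta \in \del(\ep, \alpha))\, U(\alpha, \beta)$ (for $\alpha \notin \BC_\eta$ both sections $P_\alpha, Q_\alpha$ are empty, so any fixed $\BC_\xi$-code of $\emptyset$ works as a witness). The selection lemma (Lemma \ref{theorem Borel selection}) then furnishes a Borel $u : \baire \to \baire$ with $U(\alpha, u(\alpha))$ for every $\alpha$, and this $u$ is the sought Borel uniformity function in the codes with respect to $\bcode^{\ca{Y}}_\eta$ and $\bcode^{\ca{X}}_\xi$.

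The principal obstacle is the complexity computation. Whereas in the finite case $\univ^{\ca{Y}}_m$ is by definition $\Sigma^0_m(\ep)$ (so $P$, $Q$ and $U$ inherit their complexity bounds for free), in the transfinite case $\bcode^{\ca{Y}}_\eta$ involves a clause $\alpha \in \BC_\eta$ of unbounded---though $\ep$-recursive---complexity, and the effective behavior of the Louveau--Moschovakis coding must be invoked. This is precisely the content of the ``Good Parametrization Lemma for $\bcode^{\ca{X}}_\xi$'' (Lemma \ref{lemma good parametrization for transfinite}) advertised in the introduction, which yields uniform $\del(\ep)$ bounds for $\eta, \xi < \om_1^\ep$ and thereby supports all the complexity claims needed above.
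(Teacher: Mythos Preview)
Your argument is correct and follows the same route as the paper's proof: define $P$ and $Q$ as the analogues of the finite-level sets, check that they are $\sig(\ep)$, apply Louveau separation to the sections, and finish with the $\pii$-selection lemma. One point deserves a correction, however. You identify the ``principal obstacle'' as the complexity of $\bcode^{\ca{Y}}_\eta$ and invoke the Good Parametrization Lemma (Lemma~\ref{lemma good parametrization for transfinite}) to secure $\del(\ep)$ bounds. That lemma is about \emph{goodness} of the system---that the section map $(\ep,x)\mapsto$ code of the $x$-section is recursive---and does not by itself give the complexity bound you need here. The paper's proof is simpler on this point: it just observes (classically) that $\BC_\eta$ is Borel and that $\{(\alpha,z):\alpha\in\BC_\eta\ \&\ z\in\pi^{\ca{Z}}_\eta(\alpha)\}$ is Borel in $\baire\times\ca{Z}$; since $f$ is Borel-measurable and $\ca{A}$ is analytic, $P$ and $Q$ are analytic, and one then chooses $\ep$ large enough to make them $\sig(\ep)$. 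No effective control on $\eta,\xi$ relative to $\ep$ is needed at this stage, and the Good Parametrization Lemma is saved for later use (Lemma~\ref{lem:turing-degree-analysis-transfinite}).
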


\begin{proof}
We repeat the proof of Theorem \ref{theorem Borel transition in the codes} and we notice that the corresponding sets $P, Q$ defined by
\begin{align*}
P(\alpha,x) \iff& x \in \ca{A} \ \& \ \bcode_\eta^{\ca{Y}}(\alpha,f(x))\\
Q(\alpha,x) \iff& x \in \ca{A} \ \& \ \neg \bcode_\eta^{\ca{Y}}(\alpha,f(x))
\end{align*}
are analytic sets - and hence $\sig(\ep)$ for some $\ep \in \baire$.
To see the this, we remark that the sets $\BC_\zeta$ are Borel, and that for all Polish spaces $\ca{Z}$ and all $\zeta < \om_1 $, the set of all $(\alpha,z)\in\BC_\zeta\times\ca{Z}$ such that $z\in\pi^\ca{Z}_\zeta(\alpha)$ is Borel in $\baire\times\ca{Z}$.
Moreover we notice that the sections $P_{\alpha,i}$ are $\bolds^0_\eta$. The rest of the proof remains the same.
\end{proof}

In order to extend the decomposition in the transfinite case we need to show that this parametrization $\bcode^\ca{X}_\xi$ is good as well.

\begin{lemma}[The Good Parametrization Lemma for $\bcode^{\ca{X}}_\xi$]
\label{lemma good parametrization for transfinite}
The parametrization system $(\bcode^{\ca{Y}}_\xi)_{\ca{Y}}$ for $\bolds^0_\xi$ is good.
Moreover, the realizer $S^{\ca{X},\ca{Y}}$ can be chosen as a recursive function.
\end{lemma}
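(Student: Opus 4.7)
The plan is to construct the recursive realizer $S^{\ca{X},\ca{Y}}$ by a structural recursion on the tree shape of the Borel code, uniform in $\xi\geq 1$. Since taking an $x$-section commutes with countable union and complementation, the only really new work happens when the recursion reaches a code at level $0$; the condition $x\in B^\ca{X}_{s_1}$ appearing there is $\Sigma^0_1$ but not clopen, so one cannot produce a section code at level $0$ continuously and must instead \emph{expand} the leaf into a recursive enumeration of level-$0$ components inside the enclosing level.

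Given $\alpha\in\BC_\xi$ with $\alpha(0)=1$ and $(\alpha^\ast)_n\in\BC_{\zeta_n}$ for $\zeta_n<\xi$, the $x$-section of $\pi^{\ca{X}\times\ca{Y}}_\xi(\alpha)$ equals $\bigcup_n\ca{Y}\setminus(\pi^{\ca{X}\times\ca{Y}}_{\zeta_n}((\alpha^\ast)_n))_x$. For each $n$ with $\zeta_n\geq 1$ I would apply the inductive realizer $S_{\zeta_n}$ to obtain a $\BC_{\zeta_n}$-code for the corresponding section. For each $n$ with $\zeta_n=0$, I would use the following expansion subroutine: writing $B^{\ca{X}\times\ca{Y}}_{s_n}=B^\ca{X}_{s_n^1}\times B^\ca{Y}_{s_n^2}$ via the product-basis pairing, the $n$-th term of the section equals $B^\ca{Y}_{s_n^2}$ if $x\in B^\ca{X}_{s_n^1}$ and $\emptyset$ otherwise, so I replace this single term by a family of $\BC_0$-components $\langle 0,t_{(n,m)}\rangle$ indexed by pairs $(n,m)$, with $t_{(n,m)}=s_n^2$ when the first $m$ coordinates of $x$ already witness $x\in B^\ca{X}_{s_n^1}$ and $t_{(n,m)}$ the fixed recursive index of the empty basic open in $\ca{Y}$ otherwise. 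Assembling $\gamma(0)=1$ with all these components suitably reindexed produces $\gamma\in\BC_\xi$ with $\pi^\ca{Y}_\xi(\gamma)$ equal to the required section; the base case $\xi=1$ is precisely the all-$\zeta_n=0$ specialization.

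The scheme never inspects the ordinal $\xi$; it only reads the top symbols of $\alpha$ and of each $(\alpha^\ast)_n$, and recurses otherwise. By the Recursion Theorem this defines a single recursive function $S:\baire\times\ca{X}\to\baire$ realizing goodness at every $\xi\geq 1$ simultaneously, and non-code inputs propagate through the structural construction to non-code outputs, so the defining biconditional $\bcode^{\ca{X}\times\ca{Y}}_\xi(\ep,x,y)\iff\bcode^\ca{Y}_\xi(S(\ep,x),y)$ holds on all of $\baire$. The main obstacle is precisely the handling of the level-$0$ leaf case, namely the recursive enumeration of witnesses to the $\Sigma^0_1$ relation $x\in B^\ca{X}_{s_1}$ together with the reliance on a fixed recursive index for an empty basic open in $\ca{Y}$ as padding for non-witnessed stages; everything else, including the recursiveness and the uniform verification across $\xi$, is routine bookkeeping.
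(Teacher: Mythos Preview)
Your approach is correct and shares the paper's core strategy: build the section realizer by structural recursion on the Borel code, tied off with Kleene's Second Recursion Theorem. The difference is entirely at the leaves. You treat the condition $x\in B^{\ca{X}}_{s_1}$ as genuinely $\Sigma^0_1$ and therefore expand each level-$0$ component into a countable family indexed by witness stages, reindexing the tree above. But the notion of ``good'' in this paper only concerns $\ca{X}$ of the form $\om^k\times\baire^t$, where membership in a basic ball is \emph{clopen} and hence decidable outright from finitely many digits of $x$. The paper exploits this and keeps the tree shape of the code intact: it sets $f(\ep,\alpha)(0)=\ep(0)$, at a leaf outputs a single level-$0$ code for the section directly (leaning on the already-established recursive goodness of $(\univ_1^{\ca{Z}})_{\ca{Z}}$ for that step), and at an internal node recurses componentwise via $(f(\ep,\alpha)^\ast)_n=f((\ep^\ast)_n,\alpha)$. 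Your leaf-expansion trick is not wrong---indeed it is the right move if one wanted the realizer for an arbitrary recursively presented $\ca{X}$, though then one must also verify that the output code depends only on $x$ and not on the particular name of $x$ used to enumerate the witnesses---but it is more elaborate than the lemma requires. Both arguments quietly rely on a code for the empty basic open (your ``fixed recursive index''), a standard convention the paper does not make explicit either.
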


\begin{proof}
Let $S_1$ be a recursive function which realizes the map $\bolds^0_1\rs(\baire\times\ca{Y})\times\baire$; $(A,\alpha)\mapsto A_\alpha$.
In other words, $S_1(\ep,\alpha)$ is a code of the $\alpha^{\text{th}}$ section of the $\ep^{\text{th}}$ open set in $\baire\times\ca{Y}$ (such a recursive function $S_1$ exists since the universal system $(\univ_1^\ca{X})_\ca{X}$ is good).
Then, we define $g:\baire \times \baire \to \baire$ by $g(\ep,\alpha) = S_1(\ep^\ast,\alpha)$, where as above $\ep^\ast$ is the result of dropping the first bit from $\ep$.
Clearly $g$ is recursive.
We construct a recursive function $f: \baire \times \baire \to \baire$ which satisfies the following conditions:
\begin{align}
\label{equation A lemma for sigma-xi good transition} f(\ep,\alpha)(0) &= \ep(0)\\
\label{equation B lemma for sigma-xi good transition} \ep(0) = 0 \Longrightarrow& \ f(\ep,\alpha)^\ast = g(\ep,\alpha)\\
\label{equation C lemma for sigma-xi good transition} \ep(0) = 1 \Longrightarrow& \ (f(\ep,\alpha)^\ast)_n = f((\ep^\ast)_n,\alpha)
\end{align}
for all $n, \ep, \alpha$.
It follows that $\ep \in \BC_{\xi}$ if and only if $f(\ep,\alpha) \in \BC_{\xi}$ for all $\ep, \alpha$ and all $\xi < \om_1$.
We will define by induction on $t$ the value $f(\ep,\alpha)(t)$ for all $\ep, \alpha$ using Kleene's Second Recursion Theorem.
We define the partial function $\varphi: \om \times \om \times \baire \times \baire \rightharpoonup \om$ as follows
\[
\varphi(e,k,\ep,\alpha) =
\begin{cases}
\ep(0), & \ \textrm{if} \ k=0\\
g(\ep,\alpha)(m), & \ \textrm{if} \ k=m+1 \ \textrm{and} \ \ep(0) = 0\\
\rfn{e}(t,(\ep^\ast)_n,\alpha), & \ \textrm{if} \ k = 1 + \langle n,t \rangle \ \textrm{for some $n,t < k$ and} \ \ep(0) = 1\\
0, & \ \textrm{else}.
\end{cases}
\]
Here, when $(k,\ep)$ satisfies the third condition, the value $\varphi(e,k,\ep,\alpha)$ is defined if and only if $\rfn{e}(t,(\ep^\ast)_n,\alpha)$ is defined.
It is clear that $\varphi$ is partial recursive on its domain. From Kleene's Second Recursion Theorem (\cf \cite[7A.2 and 7A.4]{yiannis_dst}) there exists some $e^\ast \in \om$ such that whenever $\varphi(e^\ast,k,\ep,\alpha) \downarrow$ then $\rfn{e^\ast}(k,\ep,\alpha) \downarrow$ and
\[
\varphi(e^\ast,k,\ep,\alpha) = \rfn{e^\ast}(k,\ep,\alpha).
\]
One can verify that the function
\[
f: \baire \times \baire \to \baire: f(\ep,\alpha)(k) = \varphi(e^\ast,k,\ep,\alpha) = \rfn{e^\ast}(k,\ep,\alpha).
\]
satisfies all required properties. We omit the details.
Now, we claim that
\[
\bcode^{\baire \times \ca{Y}}_\xi(\ep,\alpha,y) \iff \bcode^{\ca{Y}}_\xi(f(\ep,\alpha),y)
\]
for all $\ep,\alpha, y$ and all $\xi < \om_1$, \ie we can take $\goodt^{\baire,\ca{Y}}$ to be the preceding function $f$.

This is proved by induction on $\xi$. The equivalence
\[
\bcode^{\baire \times \ca{Y}}_0(\ep,\alpha,y) \iff \bcode^{\ca{Y}}_0(f(\ep,\alpha),y)
\]
is proved using the properties (\ref{equation A lemma for sigma-xi good transition}) and (\ref{equation B lemma for sigma-xi good transition}) from above.
To complete the inductive step, for a code $\ep$ of a $\bolds^0_\xi$ set (i.e., $\ep\in\BC_\xi$), let $B_\ep$ be the $\bolds^0_\xi$ subset of $\baire\times\ca{Y}$ coded by $\ep$ (that is, $B_\ep$ is the $\ep^{\text{th}}$ section of $B_\xi^{\baire\times\ca{Y}}$).
By the definition of a Borel code, we have $B_\ep=\bigcup_{n}B^\complement_{(\ep^*)_n}$.
In particular, $B_{\ep,\alpha}=\bigcup_{n}B_{(\ep^*)_{n},\alpha}^\complement$.
By the induction hypothesis, $B_{(\ep^*)_{n},\alpha}$ is coded by $f((\ep^\ast)_n,\alpha)$.
By (\ref{equation C lemma for sigma-xi good transition}), $f((\ep^\ast)_n,\alpha)=(f(\ep,\alpha)^\ast)_n$.
Therefore, $f(\ep,\alpha)$ is a code of $B_{\ep,\alpha}$ by (\ref{equation A lemma for sigma-xi good transition}).
By the property of $f$, if $\ep\not\in\BC_\xi$, then $f(\ep,\alpha)\not\in\BC_\xi$ as well.
This shows the desired equivalence.
\end{proof}

With the help of this lemma one can extend a natural property of $\Sigma^0_\xi$ in all recursively presented metric spaces.

\begin{corollary}
\label{corollary sections of sigma xi}
Suppose that \ca{X} is a recursively presented metric space and that $P \subseteq \baire \times \ca{X}$ is $\Sigma^0_\xi(\ep)$. Then for all $\alpha$ the section $P_\alpha$ is $\Sigma^0_\xi(\ep,\alpha)$.
\end{corollary}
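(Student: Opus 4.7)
The plan is to derive this as an immediate corollary of the Good Parametrization Lemma (Lemma \ref{lemma good parametrization for transfinite}) together with the definition of $\Sigma^0_\xi(\ep)$ in terms of the parametrization $(\bcode^\ca{X}_\xi)_\ca{X}$.

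First I would unwind the hypothesis: since $P \subseteq \baire \times \ca{X}$ is $\Sigma^0_\xi(\ep)$, there exists an $\ep$-recursive $\delta \in \BC_\xi$ such that
\[
P = \pi^{\baire \times \ca{X}}_\xi(\delta), \qquad \text{i.e.,} \qquad P(\alpha,x) \iff \bcode^{\baire \times \ca{X}}_\xi(\delta,\alpha,x).
\]
Then I would apply Lemma \ref{lemma good parametrization for transfinite} to obtain a recursive function $S \equiv S^{\baire,\ca{X}}: \baire \times \baire \to \baire$ with the property
\[
\bcode^{\baire \times \ca{X}}_\xi(\delta,\alpha,x) \iff \bcode^{\ca{X}}_\xi(S(\delta,\alpha),x)
\]
for all $\delta,\alpha,x$. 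In particular $S(\delta,\alpha) \in \BC_\xi$ and $P_\alpha = \pi^{\ca{X}}_\xi(S(\delta,\alpha))$.

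Finally, since $\delta$ is $\ep$-recursive and $S$ is recursive, the point $S(\delta,\alpha)$ is $(\ep,\alpha)$-recursive. Hence $P_\alpha$ admits an $(\ep,\alpha)$-recursive Borel code in $\BC_\xi$, which is precisely the definition of $P_\alpha \in \Sigma^0_\xi(\ep,\alpha)$. There is no real obstacle here once the good parametrization has been established; the only thing to verify is that the recursiveness of $S$ transfers the complexity of the code of $P$ to that of the code of $P_\alpha$, which is routine.
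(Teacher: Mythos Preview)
Your proposal is correct and is exactly the argument the paper intends: the corollary is stated immediately after Lemma~\ref{lemma good parametrization for transfinite} with the remark that ``with the help of this lemma one can extend a natural property of $\Sigma^0_\xi$,'' and no further proof is given. Your unfolding of the definition of $\Sigma^0_\xi(\ep)$, application of the recursive good-parametrization function $S^{\baire,\ca{X}}$, and observation that $S(\delta,\alpha)$ is $(\ep,\alpha)$-recursive is precisely what is needed.
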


Before proving the transfinite version of Lemma \ref{lem:turing-degree-analysis}, we need to show that every $\Sigma^0_{1+\xi}$ subset of $\om$ is recursive in $\emptyset^{(\xi+1)}$. This might seem obvious but there is a subtlety here, which needs some explanation. First we recall that the transfinite Turing jump is usually defined via the $H$-sets along Kleene's system $\mathcal{O}$ of ordinal notations.
As it is well-known, for any ordinal $\xi < \ck$, if $a\in\mathcal{O}$ codes the ordinal $1+\xi$, then every $\Sigma^0_{a}$ subset of $\omega$ is clearly recursive in $\emptyset^{(\xi+1)}$ (where $\Sigma^0_a$ is defined in a straightforward manner, that is, in the sense of Mostowski-Kleene).
The problem here is that we do not know if every $\Sigma^0_{1+\xi}$ set in the above sense is $\Sigma^0_a$ for some $a \in \mathcal{O}$ with $|a|\leq 1+\xi$. We can nevertheless prove the following.

\begin{lemma}
\label{borel-code-computability}
Let $z\in\baire$ be a real, and $\xi$ be a $z$-computable ordinal.
Then, every $\Sigma^0_{1+\xi}(z)$ subset of $\omega$ is recursive in $z^{(\xi+1)}$.
\end{lemma}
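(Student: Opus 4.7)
The plan is to proceed by effective transfinite induction on $\xi$, proving a stronger uniform version: for each $z$-computable ordinal $\xi$ (equipped with some $\ca{O}^z$-notation), there is a $z^{(\xi+1)}$-recursive function $\Psi_\xi$ such that whenever $\Phi^z_d\in\BC_{1+\xi}$, $\Psi_\xi(d,\cdot)$ is the characteristic function of $\pi^{\om}_{1+\xi}(\Phi^z_d)\subseteq\om$. The lemma is then immediate: any $\Sigma^0_{1+\xi}(z)$ subset of $\om$ is of the form $\pi^{\om}_{1+\xi}(\alpha)$ for some $z$-recursive $\alpha=\Phi^z_d\in\BC_{1+\xi}$, and $\Psi_\xi(d,\cdot)$ witnesses the conclusion.

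The base case $\xi=0$ is direct: for a $z$-recursive $\alpha\in\BC_1$, each subcode $(\alpha^\ast)_n$ lies in $\BC_0$, so $\pi^{\om}_0((\alpha^\ast)_n)$ is $z$-recursive uniformly in $n$, and consequently $\pi^{\om}_1(\alpha)=\bigcup_n(\om\setminus\pi^{\om}_0((\alpha^\ast)_n))$ is $\Sigma^0_1(z)$ with an effectively obtainable index, hence recursive in $z'=z^{(1)}$. For the inductive step, suppose the uniform statement holds for all $z$-computable $\eta<\xi$. Given a $z$-recursive $\alpha\in\BC_{1+\xi}$ with $\alpha(0)=1$, each subcode $(\alpha^\ast)_n$ is a $z$-recursive Borel code of rank $\zeta_n<1+\xi$. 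A short ordinal-arithmetic check, splitting on whether $\zeta_n$ is $0$, finite positive, or infinite, shows that for $\zeta_n\geq 1$ one can write $\zeta_n=1+\eta_n$ with $\eta_n<\xi$, and the case $\zeta_n=0$ yields a $z$-recursive set directly. Applying the inductive hypothesis to each $\eta_n<\xi$, together with effective transfinite recursion along a fixed $\ca{O}^z$-notation $a^\ast$ with $|a^\ast|=\xi$, produces a $z^{(\xi)}$-recursive procedure that, given $n$, outputs the characteristic function of $\pi^{\om}_{\zeta_n}((\alpha^\ast)_n)$. Hence $\pi^{\om}_{1+\xi}(\alpha)=\bigcup_n(\om\setminus\pi^{\om}_{\zeta_n}((\alpha^\ast)_n))$ is c.e.\ in $z^{(\xi)}$, and therefore recursive in $z^{(\xi+1)}$, with a uniformly obtainable index, completing the induction.

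The principal technical obstacle, exactly as flagged in the paragraph preceding the lemma, is maintaining this uniformity. In the Mostowski--Kleene arithmetical-in-jump hierarchy each set already comes equipped with an $\ca{O}^z$-notation for its rank, but a Borel code $\alpha\in\BC$ provides only implicit information about the ranks of its subcodes and we cannot a priori attach to them any notation of bounded $\ca{O}^z$-level. My remedy is to carry such a notation through the induction via effective transfinite recursion and then close the construction using Kleene's Second Recursion Theorem applied to the recursive tree structure of $\BC$. Because each step of the recursion adds only a single Turing jump over the previous level, the overall complexity stays within $z^{(\xi+1)}$, which is precisely the conclusion of the lemma.
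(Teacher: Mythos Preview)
Your plan correctly identifies the crux of the matter---that a Borel code $\alpha\in\BC_{1+\xi}$ gives no direct handle on $\mathcal{O}^z$-notations for the ranks of its subcodes---but your ``remedy'' does not actually close this gap. The inductive step breaks down at limit levels: with $a^\ast=3\cdot 5^e$ and $|a^\ast|=\xi$, you need, for each $n$, some $b<_{\mathcal{O}^z}a^\ast$ with $(\alpha^\ast)_n\in\BC_{1+|b|}$ in order to invoke $\Psi_b$. Such $b$ certainly exists, but you give no procedure, recursive in $z^{(\xi)}$, for locating one; the condition ``$(\alpha^\ast)_n\in\BC_{1+|b|}$'' is $\Pi^1_1$, not something one can test with a hyperarithmetic oracle. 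Saying you will ``carry the notation through via effective transfinite recursion and the Second Recursion Theorem'' only restates the problem: a recursion along $a^\ast$ has no a priori correspondence with the well-founded tree of subcodes of $\alpha$, and a recursion along the latter has no obvious complexity bound in terms of $z^{(\xi+1)}$.

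The paper's proof resolves exactly this by abandoning the induction on $\mathcal{O}^z$ and instead reading a bespoke notation system $\tilde{\mathcal{O}}^\nu$ directly off the tree of subcodes of the given code $\nu$, together with modified $\tilde{H}$-sets satisfying $\bcode^\om_{\xi,\nu}\leq_T(\tilde{H}^\nu_a)'$. The complexity bound then comes from a Spector-uniqueness-style comparison showing $\tilde{H}^\nu_a\leq_T H^\nu_b$ whenever $||a||_\nu\leq|b|_\nu$; this is the real work, and it is precisely the translation between the two recursions that your sketch omits. Your outline is salvageable---one can, with some care, arrange $\Psi_b$ to diverge on codes of excessive rank and then dovetail over $b<_{\mathcal{O}^z}a^\ast$---but making that precise and uniform is of the same order of difficulty as the paper's argument, and is not supplied in your proposal.
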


\begin{proof}
It is clear if $\xi$ is finite.
If $\xi$ is infinite, then $1+\xi=\xi$.
We first extract a well-founded tree $T_\nu$ (and some auxiliary function $t_\nu:T_\nu\to\BC_\xi$) from a given code $\nu\in\BC_\xi$ as follows:
Put the empty string $\langle\rangle$ into $T_\nu$, and define $t_\nu(\langle\rangle)=\nu$.
Inductively, if we have already declared that $\sigma\in T_\nu$ with $\nu_\sigma:=t_\nu(\sigma)$, then in case that $\nu_\sigma(0)=1$ we declare that $\sigma\fr n\in T_\nu$ and define $t_\nu(\sigma\fr n):=(\nu_\sigma)^*_n$ for all $n\in\omega$.
Otherwise, we declare that $\sigma$ is a leaf of $T_\nu$, that is, $\sigma\fr n\not\in T_\nu$ for all $n\in\omega$.

We introduce a modified system $\tilde{\mathcal{O}}^\nu$ of ordinal notations with a function $||\cdot||_\nu:\tilde{\mathcal{O}}^\nu\to\omega_1^{{\rm CK},\nu}$ as follows:
\begin{enumerate}
\item Put $\mathbf{o}:=1\in\tilde{\mathcal{O}}^\nu$, and define $||\mathbf{o}||_\nu=0$.
\item If $\Phi_e^\nu(n)\in\tilde{\mathcal{O}}^\nu$ for all $n\in\omega$, then put $\mathbf{s}(e):=2^e\in\tilde{\mathcal{O}}^\nu$, and define $||\mathbf{s}(e)||_\nu=\sup_n(||\Phi_e^\nu(n)||_\nu+1)$.
\end{enumerate}

Then, the associated $\tilde{H}$-sets are defined as follows:
\[\tilde{H}^{\nu}_\mathbf{o}=\nu\mbox{, and }\tilde{H}^{\nu}_{\mathbf{s}(e)}=\bigoplus_{n\in\omega}(\tilde{H}^\nu_{\Phi_e^\nu(n)})'.\]

We inductively define a $\nu$-computable function $o_\nu:T_\nu\to\tilde{\mathcal{O}}^\nu$ such that $||o_\nu(\sigma)||_{\nu}=|t_\nu(\sigma)|$ as follows:
If $\sigma$ is a leaf of $T_\nu$, then define $o_\nu(\sigma)=\mathbf{o}$.
If $o_\nu(\sigma\fr n)$ has already been defined for all $n\in\om$, then we effectively obtain an index $e$ such that $\Phi_e^\nu(n)=o_\nu(\sigma\fr n)$, and then define $o_\nu(\sigma)=\mathbf{s}(e)$.
Finally, we define $a:=o_\nu(\langle\rangle)\in\tilde{\mathcal{O}}^\nu$.
Clearly, $||a||_{\nu}=|\nu|\leq \xi$.

It is straightforward to see that $\bcode^\om_{\xi,\nu}$ is $\Sigma^0_1$ in $\tilde{H}^{\nu}_a$; hence, Turing reducible to $(\tilde{H}^{\nu}_a)'$.
Thus, it suffices to show that the modified $H$-set $\tilde{H}^{\nu}_a$ is Turing reducible to the standard $H$-set ${H}^{\nu}_b$ for some (any) $b\in\mathcal{O}^\nu$ with $||a||_\nu=|b|_\nu$, where $|b|_\nu$ is the ordinal coded by $b$.
The proof is almost the same as that of the Spector uniqueness theorem (see Sacks \cite[Theorem II.4.5]{SacksBook}), so we only make a rough sketch.
We define a well order $<$ on $\tilde{\mathcal{O}}^\nu\times\mathcal{O}^\nu$ as $(a_1,b_1)<(a_2,b_2)$ if and only if $||b_1||_\nu<|b_2|_\nu$ or $||b_1||_\nu=|b_2|_\nu$ and $||a_1||_\nu<|a_2|_\nu$.
By induction on $<$, we show that there are $\nu$-recursive functions $h_0$ and $h_1$ such that
\begin{enumerate}
\item if $||a||_\nu<|b|_\nu$ then $(\tilde{H}^\nu_a)'\leq_TH^\nu_b$ via index $h_0(a,b)$,
\item if $||a||_\nu\leq|b|_\nu$ then $\tilde{H}^\nu_a\leq_TH^\nu_b$ via index $h_1(a,b)$.
\end{enumerate}

We first show the item (1), and assume that $||a||_\nu<|b|_\nu$.
If $b$ is of the form $2^c$ and $a=\mathbf{s}(e)$, then $||\Phi^\nu_e(n)||_\nu<|c|_\nu$ for all $n\in\omega$.
By induction hypothesis, for $a(n):=\Phi_e^\nu(n)$, we have $(\tilde{H}^\nu_{a(n)})'\leq_TH^\nu_c$ via index $h_0(a(n),c)$ for all $n\in\omega$.
Then, we can effectively find an index computing $(\bigoplus_n(\tilde{H}^\nu_{a(n)})')'$ via the oracle $H^\nu_b$.
If $b$ is of the form $3\cdot 5^e$, then we have $||a||_\nu+n<|b|_\nu$ for all $n\in\omega$ since $|b|_\nu$ is limit.
So, we compute $m\in\omega$ such that $||a||_\nu<|\Phi_e^\nu(m)|_\nu$ (we can do this as in \cite[Theorem II.4.5]{SacksBook}).
Then, by induction, use the index $h_0(a,\Phi_e^\nu(m))$ to find a desired index.
To show the item (2), assume that $||a||_\nu\leq|b|_\nu$ and $a=\mathbf{s}(e)$.
Then, since $||\Phi_e^\nu(n)||_\nu<|b|_\nu$, for $a(n):=\Phi_e^\nu(n)$, we have $(\tilde{H}^\nu_{a(n)})'\leq_TH^\nu_b$ via index $h_0(a(n),b)$ for all $n\in\omega$.
Therefore, we can effectively find an index computing $\bigoplus_n(\tilde{H}^\nu_{a(n)})'$ via the oracle $H^\nu_b$.
Formally, we need to use the Kleene recursion theorem; see Sacks \cite[Theorem II.4.5]{SacksBook} for the detail.
\end{proof}

Now we are ready to prove the following transfinite analogue of Lemma \ref{lem:turing-degree-analysis}.

\begin{lemma}\label{lem:turing-degree-analysis-transfinite}
Let $\eta$ and $\xi$ be countable ordinals.
Suppose that a function $f:\mathcal{A}\to\mathcal{Y}$ from an analytic subset $\mathcal{A}$ of a Polish space \ca{X} to a Polish space $\mathcal{Y}$ satisfies that the condition $f^{-1} \bolds^0_{1+\eta} \subseteq \bolds^0_{1+\xi}$ holds Borel-uniformly in the codes with respect to $\bcode^{\om \times \ca{Y}}_{1+\eta}, \bcode^{\om \times \ca{X}}_{1+\xi}$.
Then, there are $z\in 2^\omega$ and a $z$-computable ordinal $\theta$ such that we have $(f(x)\oplus q)^{(\eta+1)}\leq_M(x\oplus q^{(\theta)})^{(\xi+1)}$ for all $q\geq_Tz$.
\end{lemma}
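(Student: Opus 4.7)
The plan is to mimic the proof of Lemma~\ref{lem:turing-degree-analysis}, substituting the transfinite Borel coding $(\bcode^\ca{Y}_{1+\eta})_{\ca{Y}}$ for the finite coding $(\univ^\ca{Y}_m)_{\ca{Y}}$ and invoking Lemma~\ref{borel-code-computability} to control the number of Turing jumps required to decode a transfinite Borel code.

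First I would apply the Borel-uniformity hypothesis (via Louveau separation, as in the proof of Theorem~\ref{theorem Borel transition in the codes}) to produce Borel measurable $u,v:\baire\to\baire$ such that
\[
\bcode^\ca{Y}_{1+\eta}(\alpha,f(x)) \iff \bcode^\ca{X}_{1+\xi}(u(\alpha),x) \iff \neg\bcode^\ca{X}_{1+\xi}(v(\alpha),x)
\]
for every $\alpha\in\baire$ and $x\in\ca{A}$; the $v$ is obtained by running the Louveau-separation argument on the complementary pair ($Q$ separated from $P$ instead of $P$ from $Q$). I would then fix $z\in 2^\omega$ coding the relevant recursive presentation data and such that $u,v$ are $\Sigma^0_{1+\theta'}(z)$-recursive for some $z$-computable ordinal $\theta'$.

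Next, for $q\geq_T z$ and $x\in\ca{A}$, I would unroll the $\Sigma^0_{\eta+1}(q)$-jump: membership of $f(x)$ in the $e$-th $\Sigma^0_{\eta+1}(q)$ subset of $\ca{Y}$ is equivalent to the existence of $i$ with $f(x)\notin B^{e,i}$, where $B^{e,i}$ is a $\bolds^0_{1+\eta}(q)$ set whose $\bcode^\ca{Y}_{1+\eta}$-code $\alpha^{e,i}$ is uniformly $q$-recursive in $(e,i)$ by the effective goodness of $\bcode$ (Lemma~\ref{lemma good parametrization for transfinite}). Applying $v$ turns each $\alpha^{e,i}$ into a uniformly $q^{(\theta')}$-recursive $\bcode^\ca{X}_{1+\xi}$-code, so the original question becomes a $\Sigma^0_1$-union of $\boldp^0_{1+\xi}(q^{(\theta')})$-statements about $x$, i.e.\ a single $\bolds^0_{1+\xi+1}(q^{(\theta')})$-membership. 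By Lemma~\ref{borel-code-computability}, the latter is decidable from $(x\oplus q^{(\theta')})^{(\xi+1)}$. Setting $\theta:=\theta'$ (and, if needed, absorbing a finite additive slack into $\theta$) yields the required enumeration reduction $(f(x)\oplus q)^{(\eta+1)}\leq_M(x\oplus q^{(\theta)})^{(\xi+1)}$.

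The main obstacle is the careful ordinal-arithmetic bookkeeping. For $\eta\geq\omega$, the pointclass level $1+\eta$ appearing in the Borel-uniformity hypothesis differs from the jump exponent $\eta+1$ in the conclusion, and the proof must verify that this one-step mismatch is absorbed exactly by Lemma~\ref{borel-code-computability}, which introduces precisely one extra Turing jump when passing from a $\Sigma^0_{1+\xi}$-Borel code to decidable membership in the $(\xi+1)$-th jump. A secondary technicality is that the $\Sigma^0_{\eta+1}(q)$-jump of $f(x)$ is defined through the $\effuniv$-system, while the Borel-uniformity hypothesis is phrased in terms of the $\bcode$-system; confirming that the two parametrization systems can be translated uniformly in a fixed finite number of jumps of $q$ is a routine but not entirely trivial application of the effective goodness of each.
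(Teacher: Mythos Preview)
Your strategy is the right one and tracks the paper's proof closely, but the pointclass bookkeeping in the middle contains an error that, as written, does not yield the stated bound. After you apply $v$ to the code $\alpha^{e,i}$, the defining equivalence reads
\[
f(x)\in B^{e,i}\iff \bcode^{\ca{Y}}_{1+\eta}(\alpha^{e,i},f(x))\iff \neg\bcode^{\ca{X}}_{1+\xi}(v(\alpha^{e,i}),x),
\]
so $f(x)\notin B^{e,i}$ is equivalent to $x\in\bcode^{\ca{X}}_{1+\xi,v(\alpha^{e,i})}$, a $\bolds^0_{1+\xi}$ statement about $x$, \emph{not} a $\boldp^0_{1+\xi}$ one. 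The union over $i$ therefore stays at level $\bolds^0_{1+\xi}$, not $\bolds^0_{1+\xi+1}$; this is exactly what makes Lemma~\ref{borel-code-computability} deliver the $(\xi+1)$-th jump. If the union really were $\bolds^0_{1+\xi+1}$ as you write, that lemma would only give $(x\oplus q^{(\theta)})^{(\xi+2)}$, which is one jump too many. (Equivalently: if you had used $u$ instead of $v$ you would indeed get $\boldp^0_{1+\xi}$ pieces and be stuck with the extra jump.)

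The paper sidesteps this whole decomposition. Its key observation is that for infinite $\eta$ the set $\{(p,i,y):i\in(y\oplus p)^{(\eta+1)}\}$ is already $\Sigma^0_{1+\eta}$ (e.g.\ $\emptyset^{(\omega+1)}$ is $\Sigma^0_\omega$, not $\Sigma^0_{\omega+1}$), so it has a single $\BC_{1+\eta}$-code $\beta$. One then sections out $p$ via the Good Parametrization Lemma, feeds the resulting code $\alpha(p)$ \emph{once} into the transition $u$, and lands in $\Sigma^0_{1+\xi}(x\oplus q^{(\theta)})$ directly; Lemma~\ref{borel-code-computability} finishes. There is no need for a second transition $v$ or for splitting the jump into $\exists i$ over lower-level pieces. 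The paper also reduces to $\ca{A}\subseteq\baire$ and $\ca{Y}=\baire$ at the outset (via the $\boldd^0_\omega$-isomorphism with Baire space), which cleanly handles your ``secondary technicality'' about translating between the $\effuniv$- and $\bcode$-systems: working in $\baire$, the transfinite jump is defined via $H$-sets and the comparison with Borel codes is exactly the content of Lemma~\ref{borel-code-computability}.
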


\begin{proof}
Assume that $\eta$ and $\xi$ are infinite, and fix $\ep\in\baire$ such that $\eta$ and $\xi$ are $\ep$-computable.
Without loss of generality, we can assume that $\ca{A}$ is a subset of $\baire$, and $\ca{Z}=\baire$, since any uncountable Polish space is $\boldd^0_\omega$-isomorphic to $\baire$ and since $\eta, \xi \geq \om$.
The first idea is to add $1$ in the exponent of the Turing-jump power when the latter is infinite.
To be more specific, fix a notation $a\in\mathcal{O}^\ep$ for the ordinal $\eta+1$.
Then, the $a^{\text{th}}$ jump $y^{(a)}$ (formally, this is defined by using an $H$-set) is $\Sigma^0_{1+\eta}(\ep)$ uniformly in $y$; for instance, $\emptyset^{(\omega)}$ is $\Delta^0_\omega$, and therefore, $\emptyset^{(\omega+1)}$ is $\Sigma^0_\omega$.
Moreover, there is a code $\beta\in\BC_{1+\eta}$ such that
\[\bcode^{\baire\times\omega\times\baire}_{1+\eta,\beta}=\{(p,i,y)\in\baire\times\omega\times\baire:i\in (y\oplus p\oplus\ep)^{(a)}\}.\]

Using the Good Parametrization Lemma \ref{lemma good parametrization for transfinite}, given $p\in\baire$, one can $\ep$-computably find a code $\alpha(p)\in\BC_{1+\eta}$ (that is, $\alpha$ is $\ep$-computable) such that $\bcode^{\omega\times\baire}_{1+\eta,\alpha(p)}=\bcode^{\baire\times\omega\times\baire}_{1+\eta,\beta,p}$

Let $f:\mathcal{A}\to\baire$ be a function satisfying that $f^{-1}\bolds^0_{1+\eta}\subseteq\bolds^0_{1+\xi}$. It is easy to verify that the function $(i,x) \mapsto (i,f(x))$ satisfies the latter condition as well. From Lemma \ref{theorem Borel transition in the codes}, we have a Borel measurable transition $u$ such that for all $x\in\mathcal{A}$, $i\in\omega$ and $p\in\baire$,
\[\bcode^{\omega\times\baire}_{1+\eta}(\alpha(p),i,f(x))\;\iff\;\bcode^{\omega\times\baire}_{1+\xi}(u(\alpha(p)),i,x).\]

Since $u$ is Borel, there is $z\geq_T\ep$ such that $u$ is $\Sigma^0_b(z)$-recursive for some $b\in\mathcal{O}^z$.
Then $u(\alpha(p))\leq_T(p\oplus z)^{(\theta)}$ for some $z$-computable ordinal $\theta$ since $\alpha(p)\leq_T p\oplus\ep$.
By the Good Parametrization Lemma \ref{lemma good parametrization for transfinite}, the $x^{\text{th}}$ section $B^{\omega\times\baire}_{1+\xi,u(\alpha(p)),x}$ is $\Sigma^0_{1+\xi}(x\oplus (p\oplus z)^{(\theta)})$.
Consequently, by Lemma \ref{borel-code-computability}, for any $q\geq_Tz$ we have:
\[(f(x)\oplus q)^{(a)}\leq_T(x\oplus q^{(\theta)})^{(\xi+1)}.\]

Then, we have the desired inequality for Turing degrees by the Spector uniqueness theorem.
If $\eta$ is finite, combine this argument with the argument in Section \ref{section Borel transition in the codes}.
\end{proof}

The proof of the following transfinite analogue of the Cancellation Lemma \ref{lem:deg-Shore-Slaman} is straightforward.

\begin{lemma}[The Transfinite Cancellation Lemma]\label{lem:deg-Shore-Slaman-transfinite}
Suppose that $\xx$ and $\yy$ are computable metric spaces, and fix $x\in\xx$, $y\in\yy$, $z\in 2^\omega$, and $\gamma,\eta,\xi<\ckr{z}$.
If $(y\oplus g)^{(\eta)}\leq_T(x\oplus g^{(\gamma)})^{(\xi)}$ for all $g\geq_Tz$, then there exists $\theta$ with $\theta+\eta\leq\xi$ such that $y\leq_T(x\oplus z^{(\gamma)})^{(\theta)}$.\qed
\end{lemma}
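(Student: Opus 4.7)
The plan is to mimic the finite Cancellation Lemma (Lemma~\ref{lem:deg-Shore-Slaman}) and replace every appeal to the finite Shore--Slaman Join Theorem and to finite ordinal arithmetic by its transfinite counterpart. I would assume toward contradiction that $y\not\leq_T(x\oplus z^{(\gamma)})^{(\theta)}$ for every $\theta$ with $\theta+\eta\leq\xi$, and set $\theta^\ast:=\sup\{\theta:\theta+\eta\leq\xi\}$. The key ordinal observation is a clean dichotomy: either (A) $\theta^\ast$ is attained by the set, in which case $(\theta^\ast+1)+\eta>\xi$ together with $\theta^\ast+\eta\leq\xi$ forces $\eta$ to be finite and $\theta^\ast+\eta=\xi$ (for infinite $\eta$, the identity $1+\eta=\eta$ would contradict maximality); or (B) $\theta^\ast$ is not attained, is therefore a limit, and satisfies $\theta^\ast+\eta\geq\xi+1$.

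Next I would reproduce the construction of a total point $p\in\baire$ satisfying $p\geq_Tz$, $x\leq_Mp^{(\gamma)}$, and $y\not\leq_Mp^{(\zeta)}$ for every $\zeta<\alpha$, where $\alpha:=\gamma+\theta^\ast+1$ in Case~(A) and $\alpha:=\gamma+\theta^\ast$ in Case~(B). Using the totality dichotomy for continuous degrees (Lemma~\ref{lem:nonsplitting}), either $x\oplus z^{(\gamma)}$ is total and one takes $\hat{x}:=x\oplus z^{(\gamma)}$, or $x>_Mz^{(\gamma)}$ and the generic-name trick (Lemma~\ref{prop:nameandjump}) combined with Lemma~\ref{prop:jump-equivalence} yields a total $\hat{x}\geq_Mx$ with $\hat{x}^{(\theta)}\equiv_Tx^{(\theta)}$ for every $\theta\geq 1$; in the degenerate boundary case $\theta^\ast=0$ (forcing $\eta=\xi$ finite) I would instead invoke the Selman--Rozinas lemma (Lemma~\ref{lem:unif-nonunif}) to arrange $y\not\leq_M\hat{x}$. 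The Friedberg Jump Inversion Theorem (Lemma~\ref{lem:jump-inversion}) applied to $\hat{x}\geq_Tz^{(\gamma)}$ at level $\gamma$ then produces the desired $p$, and the non-reducibility $y\not\leq_Mp^{(\zeta)}$ for $\zeta<\alpha$ follows from the contradictory hypothesis together with the monotonicity $\theta<\theta^\ast\Rightarrow\theta+\eta\leq\xi$, valid in Case~(B).

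Applying the transfinite Shore--Slaman Join Theorem for Polish spaces (Theorem~\ref{thm:shoreslamanpolish}) at level $\alpha$ then produces $g\geq_Mp$ with $g^{(\alpha)}\equiv_Mg\oplus y$. Lemma~\ref{prop:jump-equivalence} gives $(g\oplus y)^{(\eta)}\equiv_Tg^{(\alpha+\eta)}$ and $(x\oplus g^{(\gamma)})^{(\xi)}\equiv_Tg^{(\gamma+\xi)}$. In Case~(A), $\alpha+\eta=\gamma+\theta^\ast+1+\eta=\gamma+\xi+1$ using finiteness of $\eta$; in Case~(B), $\alpha+\eta=\gamma+(\theta^\ast+\eta)\geq\gamma+\xi+1$. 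Either way $g^{(\alpha+\eta)}>_Tg^{(\gamma+\xi)}$, contradicting the hypothesis applied to this particular $g$.

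The principal obstacle will be the ordinal-arithmetic gap between the finite and transfinite regimes. In the finite case one automatically has a ``$+1$ boost'' from Shore--Slaman (the jump moves from $g^{(\xi+n-m+1)}$ to $g^{(\xi+n+1)}$), which evaporates whenever $\eta$ is an infinite limit, since $1+\eta=\eta$. The dichotomy on $\theta^\ast$ is precisely what dictates the correct level at which to invoke Shore--Slaman so as to recover a strict inequality: in the attained/finite-$\eta$ case one can afford the extra $+1$, while in the non-attained/limit case the supremum $\theta^\ast$ itself already overshoots $\xi$ once $\eta$ is added. A secondary technical point is the verification of $\hat{x}^{(\theta)}\equiv_Tx^{(\theta)}$ at transfinite levels $\theta\geq 1$ in the non-total branch, which I expect to handle by a short transfinite induction on $\theta$ using Lemma~\ref{prop:jump-equivalence} and the fact that for such $\theta$ all jumps involved are elements of $\baire$.
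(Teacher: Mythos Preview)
Your proposal is correct and is precisely the argument the paper has in mind: the paper gives no proof at all beyond the remark that the transfinite analogue of Lemma~\ref{lem:deg-Shore-Slaman} is ``straightforward'', so you have supplied exactly the details that were omitted. Your dichotomy on whether $\theta^\ast=\sup\{\theta:\theta+\eta\leq\xi\}$ is attained (forcing $\eta<\omega$ and $\theta^\ast+\eta=\xi$, so one applies Shore--Slaman at level $\gamma+\theta^\ast+1$) or not (so $\theta^\ast+\eta\geq\xi+1$ already, and level $\gamma+\theta^\ast$ suffices) is the correct ordinal bookkeeping needed to recover the strict inequality $g^{(\alpha+\eta)}>_T g^{(\gamma+\xi)}$; the remainder---the totality dichotomy on $x\oplus z^{(\gamma)}$, the generic-name trick, and Friedberg inversion to produce $p$---follows the finite template verbatim.
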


Now we give the proof of our main result in this section.

\begin{proof}[Proof of Theorem \ref{theorem main transfinite}]
Suppose that $f:\mathcal{A}\to\mathcal{Y}$ satisfies $f^{-1} \bolds^0_{1+\eta} \subseteq \bolds^0_{1+\xi}$. Without loss of generality we may assume that the given spaces are recursively presented and that $\ca{A}$ is $\Sigma^1_1$. As in the proof of Theorem \ref{theorem main}, using Theorem \ref{theorem Borel transition in the codes transfinite} and Lemma \ref{lem:turing-degree-analysis-transfinite}, we have there is some $z \in \cantor$ and a $z$-computable ordinal $\gamma$ such that for all for all $q\geq_Tz$ and all $x \in \ca{A}$ it holds $(f(x)\oplus q)^{(\eta+1)}\leq_M(x\oplus q^{(\gamma)})^{(\xi+1)}$.
By the Transfinite Cancellation Lemma \ref{lem:deg-Shore-Slaman-transfinite}, we have for all $x \in \ca{A}$ that $f(x)\leq_T(x\oplus z^{(\gamma)})^{(\theta_x)}$ for some $\theta_x$ with $\theta_x+\eta\leq\xi$.

For all $\theta$ with $\theta + \eta \leq \xi$ we define the function $g_\theta:\ca{X}\to\baire$ as $g_\theta(x)=(x\oplus z^{(\gamma)})^{(\theta)}$.
Moreover for all $e \in \om$ and $\theta$ with $\theta + \eta \leq \xi$ we put
\[
\ca{B}_{e,\theta} = \set{x \in \ca{A}}{f(x)=\Phi_e(g_\theta(x))}.
\]
It follows that $\ca{A} = \bigcup_{e,\theta} \ca{B}_{e,\theta}$. Since each function $g_{e,\theta} : = \Phi_{e} \circ g_\theta$ is $\bolds^0_{1+\theta}$-measurable ($\theta < \xi$), it is easy to see that the sets $\ca{B}_{e,\theta}$ are $\boldp^0_{1+\xi}$ subsets of $\ca{A}$, and that each restriction $f \upharpoonright \ca{B}_{e,\theta}$ is $\bolds^0_{1+\theta }$-measurable.

If now the function $f$ is $\bolds^0_{1+\zeta}$-measurable for some countable $\zeta$ then $\ca{B}_{e,\theta}$ is a $\boldp^0_{\max\{1+\theta+1,1+\zeta\}}$ subset of \ca{A}, and so we have $1+\theta + 1 < 1+\theta + \eta \leq 1+\xi$. It follows that $\ca{B}_{e,\theta}$ is a $\boldd^0_{1+\xi}$ subset of \ca{A} when $2 \leq \eta$ and $\zeta < \xi$.
\end{proof}

\section{Computable and Continuous Cases}

\label{section computable and continuous cases}

The problem on the decomposability of Borel functions is also of great interest from the computability-theoretic perspective (see \cite{kihara_decomposing_Borel_functions_using_the_Shore_Slaman_join_theorem,pauly_debrecht_nondeterministic_computation_and_the_jayne_rogers_theorem}). It is therefore worth extracting the effective content from our proof of Theorem \ref{theorem main transfinite}.
Hereafter, for a function $f:\ca{X}\to\ca{Y}$ we write $f\in\dec((\Gamma_\lambda)_{\lambda\in\Lambda},\Delta)$ if there is a $\Delta$ cover $(\ca{X}_i)_{i\in\omega}$ of $\ca{X}$ such that for all $i\in\omega$ the restriction $f\rs{\ca{X}_i}$ is $\Gamma_\lambda$-measurable for some $\lambda\in\Lambda$.
In this terminology, Theorem \ref{theorem main transfinite} can be restated as follows:
\[f^{-1} \bolds^0_{1+\eta}\subseteq \bolds^0_{1+\xi}\;\Longrightarrow\;f\in\dec((\bolds^{0}_{1+\theta})_{\theta+\eta\leq\xi},\boldd^{0}_{1+\xi+1}).\]

Note that there is no a priori effectivization of condition $f^{-1} \bolds^0_{1+\eta}\subseteq \bolds^0_{1+\xi}$.
However, condition $f^{-1} \bolds^0_{1+\eta}\subseteq \bolds^0_{1+\xi}$ always implies that $f$ is $\bolds^0_{1+\xi}$-measurable (that is, condition $f^{-1} \bolds^0_{1}\subseteq \bolds^0_{1+\xi}$ holds continuous-uniformly), and the latter notion has some effective counterpart.
For a pointclass $\tboldsymbol{\Gamma}$, we define $\Gamma^\hyp$ as the collection of all hyperarithmetically-coded (i.e., $\Delta^1_1$-coded) sets in $\tboldsymbol{\Gamma}$.
A function $f$ is said to be $\Sigma^{0,\hyp}_{1+\xi}$-measurable if $f^{-1} \bolds^0_{1}\subseteq \bolds^0_{1+\xi}$ holds $\Delta^1_1$-uniformly (or equivalently, it holds $\Sigma^{0,\hyp}_1$-uniformly).
It is not hard to check that our proof of Theorem \ref{theorem main transfinite} indeed implies the following effective version:

\begin{theorem}
Suppose that $\eta \leq \xi < \ck$, $\ca{A}$ is a $\Sigma^1_1$ subset of a computable Polish space, $\ca{Z}$ is a computable metric space, and $f: \ca{A} \to \ca{Z}$ is a $\Sigma^{0,\hyp}_{1+\xi}$-measurable function.
Then, we have the following implication:
\[f^{-1} \bolds^0_{1+\eta}\subseteq \bolds^0_{1+\xi}\;\Longrightarrow\;f\in\dec((\Sigma^{0,\hyp}_{1+\theta})_{\theta+\eta\leq\xi},\Delta^{0,\hyp}_{1+\xi+1}).\]

If moreover $f$ is $\Sigma^{0,\hyp}_{1+\zeta}$-measurable for some $\zeta < \xi$ then:
\[f^{-1} \bolds^0_{1+\eta}\subseteq \bolds^0_{1+\xi}\;\iff\;f\in\dec((\Sigma^{0,\hyp}_{1+\theta})_{\theta+\eta\leq\xi},\Delta^{0,\hyp}_{1+\xi}).\]
\end{theorem}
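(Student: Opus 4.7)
The plan is to effectivize every step of the proof of Theorem \ref{theorem main transfinite}, tracking throughout that the $\Sigma^{0,\hyp}_{1+\xi}$-measurability hypothesis on $f$ allows us to replace ``Borel-uniform'' by ``$\Delta^1_1$-uniform'' at each stage.

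First I would effectivize Theorem \ref{theorem Borel transition in the codes transfinite} under the standing hypothesis. The sets $P,Q$ appearing in its proof become lightface $\Sigma^1_1$, because the $\Sigma^{0,\hyp}_{1+\xi}$-measurability of $f$ provides a $\Delta^1_1$-recursive code transformation for open preimages, and hence, via the Good Parametrization Lemma \ref{lemma good parametrization for transfinite}, also for $\Sigma^0_{1+\eta}$ preimages. Louveau's theorem (Lemma \ref{theorem louveau separation}) applied to these lightface $\Sigma^1_1$ sets gives separators indexed by $\Delta^1_1$ parameters, and the effective $\Pi^1_1$-uniformization (Lemma \ref{theorem Borel selection}) then yields a $\Delta^1_1$-recursive transition $u:\baire \to \baire$ from $\BC_{1+\eta}$-codes to $\BC_{1+\xi}$-codes. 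Next I would effectivize Lemma \ref{lem:turing-degree-analysis-transfinite}: since $u$ is $\Delta^1_1$-recursive, it is $\Sigma^0_b$-recursive for some $b\in\mathcal{O}$, so combining with Lemma \ref{borel-code-computability} and the Good Parametrization Lemma we obtain a recursive ordinal $\gamma < \ck$ such that $(f(x) \oplus q)^{(\eta+1)} \leq_M (x \oplus q^{(\gamma)})^{(\xi+1)}$ for every $x \in \ca{A}$ and every $q \geq_T \emptyset$ (absorbing a fixed recursive parameter if needed).

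The Transfinite Cancellation Lemma \ref{lem:deg-Shore-Slaman-transfinite} then yields, for each $x\in\ca{A}$, an ordinal $\theta_x$ with $\theta_x + \eta \leq \xi$ satisfying $f(x) \leq_T (x \oplus \emptyset^{(\gamma)})^{(\theta_x)}$. For each $e \in \om$ and each recursive ordinal $\vartheta$ with $\vartheta + \eta \leq \xi$ I would set
\[
\ca{B}_{e,\vartheta} = \{x \in \ca{A} : f(x) = \Phi_e((x \oplus \emptyset^{(\gamma)})^{(\vartheta)})\}.
\]
These cover $\ca{A}$, and $f \upharpoonright \ca{B}_{e,\vartheta}$ agrees with the $\Sigma^{0,\hyp}_{1+\vartheta}$-measurable function $g_{e,\vartheta}(x) = \Phi_e((x \oplus \emptyset^{(\gamma)})^{(\vartheta)})$. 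Writing $\ca{B}_{e,\vartheta} = (f,g_{e,\vartheta})^{-1}[\Delta_{\ca{Z}}]$, the $\Sigma^{0,\hyp}_{1+\xi}$-measurability of $f$ combined with the $\Sigma^{0,\hyp}_{1+\vartheta}$-measurability of $g_{e,\vartheta}$ (with $\vartheta < \xi$) gives $\ca{B}_{e,\vartheta} \in \Pi^{0,\hyp}_{1+\xi+1}$, and a standard bisection produces a $\Delta^{0,\hyp}_{1+\xi+1}$-cover. If moreover $f$ is $\Sigma^{0,\hyp}_{1+\zeta}$-measurable with $\zeta<\xi$, the same computation collapses to $\Pi^{0,\hyp}_{1+\xi}$, yielding a $\Delta^{0,\hyp}_{1+\xi}$-cover.

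The reverse implication in the ``moreover'' bulleted assertion is the lightface analogue of \cite[Lemma 2.3]{kihara_decomposing_Borel_functions_using_the_Shore_Slaman_join_theorem}: any $\Delta^{0,\hyp}_{1+\xi}$-cover of $\ca{A}$ whose pieces carry $\Sigma^{0,\hyp}_{1+\theta}$-measurable restrictions of $f$ with $\theta+\eta\leq\xi$ already forces $f^{-1}\bolds^0_{1+\eta}\subseteq\bolds^0_{1+\xi}$, by gluing preimages along the cover. The principal obstacle in the plan is to certify that every ``uniformity'' invoked in Steps 1 and 2 can be realized hyperarithmetically from a recursive (or at worst $\Delta^1_1$) parameter: concretely, one must verify that the outputs of Louveau separation and of $\Pi^1_1$-uniformization are indeed $\Delta^1_1$-recursive when the inputs are lightface $\Sigma^1_1$, and that the ordinal $\gamma$ arising from the jump analysis genuinely lies below $\ck$. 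Once this bookkeeping is in place, the argument is a direct effective transcription of the proof of Theorem \ref{theorem main transfinite}.
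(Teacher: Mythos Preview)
Your proposal is correct and follows exactly the route the paper indicates: the paper gives no separate proof of this theorem, merely remarking that ``it is not hard to check that our proof of Theorem~\ref{theorem main transfinite} indeed implies the following effective version,'' and your outline is precisely the effectivization the paper has in mind---lightface $\Sigma^1_1$ sets $P,Q$, unrelativized Louveau separation, $\Pi^1_1$-uniformization yielding a $\Delta^1_1$ transition, a recursive $\gamma<\ck$ in the jump analysis, and the Transfinite Cancellation Lemma giving the $\Sigma^{0,\hyp}$-measurable pieces.

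One small bookkeeping slip: you write $\ca{B}_{e,\vartheta}\in\Pi^{0,\hyp}_{1+\xi+1}$, but in fact $(f,g_{e,\vartheta})$ is $\Sigma^{0,\hyp}_{1+\xi}$-measurable and $\Delta_{\ca{Z}}$ is closed, so $\ca{B}_{e,\vartheta}=(f,g_{e,\vartheta})^{-1}[\Delta_{\ca{Z}}]$ is already $\Pi^{0,\hyp}_{1+\xi}\subseteq\Delta^{0,\hyp}_{1+\xi+1}$, with no further reduction needed; your stated complexity $\Pi^{0,\hyp}_{1+\xi+1}$ would only yield a $\Delta^{0,\hyp}_{1+\xi+2}$-cover, which is too weak.
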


We next give some generalizations of Kihara's Theorem \ref{theorem Kihara continuous transition}.
For a function $f:\ca{X}\to\ca{Y}$ we write $f\in\dec^\star((\Gamma_\lambda)_{\lambda\in\Lambda},\Delta)$ if there is a uniformly-$\Delta$ cover $(\ca{X}_i)_{i\in\omega}$ of $\ca{X}$ such that the restriction $f\rs{\ca{X}_i}$ is $\Gamma_\lambda$-measurable for some $\lambda\in\Lambda$ uniformly in $i\in\omega$ (see Kihara \cite[Definition 1.2]{kihara_decomposing_Borel_functions_using_the_Shore_Slaman_join_theorem}).
By combining the argument in Kihara \cite{kihara_decomposing_Borel_functions_using_the_Shore_Slaman_join_theorem} with the Shore-Slaman Join Theorem \ref{thm:shoreslamanpolish} on Polish spaces, one can show the following:

\begin{theorem}
\label{thm:computable}
Suppose that $\eta \leq \xi < \om_1^{\rm CK}$, $\ca{X}$ is a computable Polish space and $\ca{Z}$ is a computable metric space.
For any function $f: \ca{X} \to \ca{Z}$, we have the implications (1) $\Rightarrow$ (2) $\Rightarrow$ (3):
\begin{enumerate}
\item $f \in \dec^\star((\Sigma^0_{1+\theta})_{\theta+\eta\leq\xi},\Delta^0_{1+\xi})$.
\item $f^{-1} \bolds^0_{1+\eta} \subseteq \bolds^0_{1+\xi}$ holds recursive-uniformly.
\item $f \in \dec^\star((\Sigma^0_{1+\theta})_{\theta+\eta\leq\xi},\Delta^0_{1+\xi+1})$.
\end{enumerate}

Moreover, if $\xi<\eta\cdot 2$, then we have the equivalence (1) $\Leftrightarrow$ (2).
\qed
\end{theorem}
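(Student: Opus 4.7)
The plan is to follow closely the proof strategy of Theorem \ref{theorem main transfinite} but keeping track of effectivity, and to emulate the original argument of \cite{kihara_decomposing_Borel_functions_using_the_Shore_Slaman_join_theorem} in which uniformity of the decomposition is obtained. The three implications will be addressed separately.

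For (1) $\Rightarrow$ (2), I would unpack the definition of $\dec^\star$ to obtain a recursive sequence $(\ca{X}_i)_{i\in\om}$ of $\Delta^0_{1+\xi}$ sets covering $\ca{X}$ with a recursive enumeration of indices $\theta_i$ (with $\theta_i+\eta\leq\xi$) such that $f\rs\ca{X}_i$ is uniformly $\Sigma^0_{1+\theta_i}$-measurable. Given a $\Sigma^0_{1+\eta}$-code $\alpha$ of $A\subseteq\ca{Z}$, one has $f^{-1}[A]=\bigcup_i(f\rs\ca{X}_i)^{-1}[A]$, and each summand is $\Sigma^0_{1+\theta_i+\eta}\cap\Delta^0_{1+\xi}$, hence $\Sigma^0_{1+\xi}$, all uniformly in $\alpha$ and $i$. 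Combining these codes via a recursive function $u:\baire\to\baire$ produces the desired $\Sigma^0_{1+\xi}$-code of $f^{-1}[A]$, establishing recursive-uniformity.

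For (2) $\Rightarrow$ (3), the argument mirrors the proof of Theorem \ref{theorem main transfinite} but with the Borel-measurable transition $u$ replaced by a recursive one. First I would run the effective version of Lemma \ref{lem:turing-degree-analysis-transfinite}: since the transition $u$ is recursive (rather than merely Borel), the ordinal $\theta$ obtained there is genuinely $\emptyset$-computable, and we get $(f(x)\oplus q)^{(\eta+1)}\leq_M (x\oplus q^{(\gamma)})^{(\xi+1)}$ for some computable $\gamma$ and for all $x\in\ca{X}$ and $q\tgeq\emptyset$. Applying the Transfinite Cancellation Lemma \ref{lem:deg-Shore-Slaman-transfinite}, which internally invokes the continuous Shore-Slaman Join Theorem \ref{thm:shoreslamanpolish}, one obtains for every $x$ an ordinal $\theta_x$ with $\theta_x+\eta\leq\xi$ such that $f(x)\leq_M (x\oplus\emptyset^{(\gamma)})^{(\theta_x)}$. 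Setting $g_\theta(x)=(x\oplus\emptyset^{(\gamma)})^{(\theta)}$ for each $\theta$ with $\theta+\eta\leq\xi$ and defining $\ca{B}_{e,\theta}=\{x\in\ca{X}:f(x)=\Phi_e(g_\theta(x))\}$ gives a cover by sets that are uniformly $\Pi^0_{1+\xi+1}$ (hence $\Delta^0_{1+\xi+1}$ after further splitting), with $f\rs\ca{B}_{e,\theta}$ being $\Sigma^0_{1+\theta}$-measurable uniformly in $(e,\theta)$. This yields $f\in\dec^\star((\Sigma^0_{1+\theta})_{\theta+\eta\leq\xi},\Delta^0_{1+\xi+1})$.

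For the sharper equivalence (1) $\Leftrightarrow$ (2) when $\xi<\eta\cdot 2$, I need to tighten the complexity bound on the decomposing sets from $\Delta^0_{1+\xi+1}$ down to $\Delta^0_{1+\xi}$. Writing $\xi=\eta+\zeta$ with $\zeta<\eta$ and observing that for each $\theta$ appearing in the decomposition one has $1+\theta+1\leq 1+\zeta+1\leq 1+\eta$, one gets that $\Dom(g_{e,\theta})$ is $\Pi^0_{1+\theta+1}\subseteq\Pi^0_{1+\eta}\subseteq\Pi^0_{1+\xi}$. Moreover, recursive-uniformity of $f^{-1}\bolds^0_{1+\eta}\subseteq\bolds^0_{1+\xi}$ together with the fact that $f$ itself is $\Sigma^0_{1+\xi}$-measurable (being the case $\eta=0$ of the hypothesis) allows us to express $\ca{B}_{e,\theta}$ as the preimage of the diagonal under $(f,g_{e,\theta})$, a set which, in this dimensional regime, falls in $\Delta^0_{1+\xi}$ uniformly. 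This recovers the computable-uniform version of Kihara's second assertion in Theorem \ref{theorem Kihara continuous transition}.

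The main obstacle is the last paragraph: producing a \emph{uniformly} $\Delta^0_{1+\xi}$ cover rather than $\Delta^0_{1+\xi+1}$, since the natural expression of $\ca{B}_{e,\theta}$ through the diagonal formally lives one level too high. Managing this requires both the constraint $\xi<\eta\cdot 2$ (to keep $\theta+1\leq\eta$) and a careful recursive rewriting of the codes, analogous to the non-trivial bookkeeping in \cite[Section 4]{kihara_decomposing_Borel_functions_using_the_Shore_Slaman_join_theorem}, now carried out in the continuous-degree setting with the help of Lemma \ref{lemma good parametrization for transfinite}.
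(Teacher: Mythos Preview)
Your plan matches the paper's one-line proof (combine Kihara's argument with Theorem~\ref{thm:shoreslamanpolish}), and your (1)$\Rightarrow$(2) sketch is fine, but two steps need repair. In (2)$\Rightarrow$(3), carrying an auxiliary ordinal $\gamma>0$ from Lemma~\ref{lem:turing-degree-analysis-transfinite} is wrong for the lightface conclusion: when the transition $u$ is \emph{recursive}, that lemma's proof gives $u(\alpha(p))\leq_Tp$ with no jump on $p$, so one must take $\gamma=0$. If you allow $\gamma>0$, your pieces $g_{e,\theta}(x)=\Phi_e((x\oplus\emptyset^{(\gamma)})^{(\theta)})$ are only $\Sigma^0_{1+\theta}(\emptyset^{(\gamma)})$-measurable rather than $\Sigma^0_{1+\theta}$-measurable, and the unrelativized $\dec^\star$ statement fails. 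Kihara's direct route goes from $(f(x)\oplus q)^{(\eta)}\leq_M(x\oplus q)^{(\xi)}$ straight to $f(x)\leq_Mx^{(\theta)}$ via Theorem~\ref{thm:shoreslamanpolish}, avoiding the Cancellation-Lemma detour entirely.

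For the sharp equivalence under $\xi<\eta\cdot 2$: the intermediate claim $\theta\leq\zeta$ is false in the transfinite case (take $\eta=\omega$, $\xi=\omega+3$, $\theta=100$), though the needed bound $\theta+1\leq\eta$ does follow directly from $\theta+\eta\leq\xi<\eta+\eta$. More seriously, $(f,g_{e,\theta})^{-1}[\Delta_\yy]$ lands only in $\Pi^0_{1+\xi}$, as you yourself concede, and your proposed ``rewriting of codes'' does not describe how to drop the level. The actual mechanism in \cite[Lemma~2.9]{kihara_decomposing_Borel_functions_using_the_Shore_Slaman_join_theorem} is different: one exploits the recursive transition itself, expressing $\ca{B}_{e,\theta}$ as the preimage under $F(x)=(x,f(x))$ of a set built from $g_{e,\theta}$ that lies in $\boldd^0_{1+\eta}$, so that $F^{-1}\bolds^0_{1+\eta}\subseteq\bolds^0_{1+\xi}$ and its dual place $\ca{B}_{e,\theta}$ in $\Delta^0_{1+\xi}$ uniformly. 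The constraint $\xi<\eta\cdot 2$ is exactly what keeps that auxiliary set at level $\boldd^0_{1+\eta}$.
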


By relativizing Theorem \ref{thm:computable}, we have the following continuous-uniform version:

\begin{theorem}
Suppose that $\eta \leq \xi < \om_1$, $\ca{X}$ is a Polish space and $\ca{Z}$ is a separable metrizable space.
For any function $f: \ca{X} \to \ca{Z}$, we have the implications (1) $\Rightarrow$ (2) $\Rightarrow$ (3):
\begin{enumerate}
\item $f \in \dec((\bolds^0_{1+\theta})_{\theta+\eta\leq\xi},\boldd^0_{1+\xi})$.
\item $f^{-1} \bolds^0_{1+\eta} \subseteq \bolds^0_{1+\xi}$ holds continuous-uniformly.
\item $f \in \dec((\bolds^0_{1+\theta})_{\theta+\eta\leq\xi},\boldd^0_{1+\xi+1})$.
\end{enumerate}

Moreover, if $\xi<\eta\cdot 2$, then we have the equivalence (1) $\Leftrightarrow$ (2).
\qed
\end{theorem}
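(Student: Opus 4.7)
The plan is to obtain the statement by a straightforward relativization of the effective Theorem \ref{thm:computable}. The central observation is that for any Polish spaces $\ca{X}, \ca{Z}$, any ordinal $\xi<\om_1$, any countable collection of Borel sets/functions, and any continuous map $u:\baire\to\baire$, there is a single oracle $\ep\in\baire$ such that $\ca{X}$ and $\ca{Z}$ are $\ep$-recursively presented, $\xi<\ckr{\ep}$, and $u$ is $\ep$-recursive. Under such an $\ep$, boldface pointclasses in $\ca{X}$ and $\ca{Z}$ at levels below $\xi$ pull back to lightface pointclasses relative to $\ep$, and continuous-uniform transitions become $\ep$-recursive-uniform transitions (using that our parametrization systems $(\univ^\ca{Y}_n)_\ca{Y}$ and $(\bcode^\ca{Y}_\xi)_\ca{Y}$ are effectively good, so that relativization behaves correctly).

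For the implication $(1)\Rightarrow(2)$, assume $f\in\dec((\bolds^0_{1+\theta})_{\theta+\eta\leq\xi},\boldd^0_{1+\xi})$, witnessed by a countable cover $(\ca{X}_i)_{i\in\om}$ and ordinals $(\theta_i)_{i\in\om}$. Fix an oracle $\ep$ which codes: recursive presentations of $\ca{X}$ and $\ca{Z}$; the fact that $\xi<\ckr{\ep}$ (so each $\theta_i$ is also $\ep$-computable); codes for the cover $(\ca{X}_i)_i$ as a uniformly-$\Delta^0_{1+\xi}(\ep)$ sequence; and codes witnessing that $f\upharpoonright\ca{X}_i$ is $\Sigma^0_{1+\theta_i}(\ep)$-measurable uniformly in $i$. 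Then $f\in\dec^\star((\Sigma^0_{1+\theta}(\ep))_{\theta+\eta\leq\xi},\Delta^0_{1+\xi}(\ep))$, so the $\ep$-relativization of Theorem \ref{thm:computable} yields that $f^{-1}\bolds^0_{1+\eta}\subseteq\bolds^0_{1+\xi}$ holds $\ep$-recursive-uniformly, which is in particular continuous-uniform.

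For the implication $(2)\Rightarrow(3)$, assume condition (2) with witnessing continuous map $u:\baire\to\baire$. Choose $\ep$ making $\ca{X}$ and $\ca{Z}$ $\ep$-recursively presented, $\xi$ an $\ep$-computable ordinal, and $u$ an $\ep$-recursive function. Then $f^{-1}\bolds^0_{1+\eta}\subseteq\bolds^0_{1+\xi}$ holds $\ep$-recursive-uniformly, so $\ep$-relativization of Theorem \ref{thm:computable} provides $f\in\dec^\star((\Sigma^0_{1+\theta}(\ep))_{\theta+\eta\leq\xi},\Delta^0_{1+\xi+1}(\ep))$. Dropping the uniformity and the oracle $\ep$ yields exactly condition (3). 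The moreover clause comes from the analogous strengthening in the last sentence of Theorem \ref{thm:computable}: when $\xi<\eta\cdot 2$, the relativized equivalence $(1)\Leftrightarrow(2)$ transfers, via the same choice of $\ep$ in both directions, to the boldface/continuous equivalence sought here.

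The whole argument is routine in spirit, and there is no real obstacle beyond bookkeeping. The only subtle point is verifying that the relativization really preserves the meaning of ``uniform in the codes'': one needs that if $u$ is $\ep$-recursive and $\alpha$ is a $\bcode^\ca{Y}_{1+\eta}$-code of a $\bolds^0_{1+\eta}$ set, then $u(\alpha)$ is a $\bcode^\ca{X}_{1+\xi}$-code that is also a legitimate $\Sigma^0_{1+\xi}(\ep)$-code of the corresponding set. This follows from the Good Parametrization Lemma \ref{lemma good parametrization for transfinite} (for the transfinite case) and the effective goodness of $(\univ^\ca{Y}_n)_\ca{Y}$ (for the finite case), both established earlier in the paper, so the relativized Theorem \ref{thm:computable} applies without modification.
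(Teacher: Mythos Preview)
Your proposal is correct and follows exactly the approach the paper intends: the paper's own proof is simply the one-line remark ``By relativizing Theorem \ref{thm:computable}'', and you have spelled out precisely how that relativization goes (absorbing the presentations, the cover, the measurability witnesses, and the continuous transition $u$ into a single oracle $\ep$, then invoking the effective theorem). Your attention to the Good Parametrization Lemma for matching codes under relativization is appropriate and is the only point that requires any care.
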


\section{The Borel Martin Conjecture}

The {\em Martin Conjecture} (for Turing invariant functions), also known as the {\em fifth Victoria-Delfino Problem} (see Kechris-Moschovakis \cite{KecMos}), is probably one of the most famous open problems in Turing degree theory.
We have an impression that there is distant resemblance between the Decomposability Conjecture and the Martin Conjecture.
For instance, the Shore-Slaman Join Theorem has been applied to give a partial answer to the Martin conjecture for $(\leq_T,\leq_T)$-preserving (possibly ideal-valued) Borel functions $2^\omega\to 2^\omega$.
In Section \ref{section generalized Turing degree theory}, we have developed the generalized degree theory, and obtained the Shore-Slaman Join Theorem in Polish spaces.
This enables us to generalize the Martin Conjecture to functions on Polish spaces, and also provides a powerful tool to analyze the generalized Martin conjecture.
We first state an abstract version of the Martin conjecture.

Let $(P,\leq_P)$ and $(Q,\leq_Q)$ be preordered sets.
We say that a function $f:P\to Q$ is {\em $(\equiv_P,\equiv_Q)$-invariant} if $p\equiv_Pq$ implies $f(p)\equiv_Qf(q)$, and that $f$ is {\em $(\leq_P,\leq_Q)$-preserving} if $p\leq_Pq$ implies $f(p)\leq_Qf(q)$.
For functions $f,g:P\to Q$, we say that {\em $f$ is Martin-below $g$} (written as $f\leq_\mathbf{m}g$) if there is $c\in P$ such that for every $p\geq_Pc$, $f(p)\leq_Qg(p)$ holds.
If $(P,\leq_P)$ is directed, then the relation $\leq_\mathbf{m}$ again forms a preorder.


Now we assume that $(P,\leq_P)$ is a suborder of $(Q,\leq_Q)$.
We say that a function $f:P\to Q$ is {\em increasing on a cone} if there is $c\in P$ such that for every $p\geq_Pc$, $p\leq_Q f(p)$ holds, and that $f$ is {\em constant on a cone} if there are $c\in P$ and $d\in Q$ such that for every $p\geq_Pc$, $f(p)\equiv_Qd$ holds.
If a $(\leq_Q,\leq_Q)$-preserving function $\mathcal{J}:Q\to Q$ is given, then for a function $f:P\to Q$, we define $\mathcal{J}(f):P\to Q$ by $\mathcal{J}(f)(p)=\mathcal{J}(f(p))$ for every $p\in P$.

\begin{conjecture}[The Martin Conjecture for $(\equiv_P,\equiv_Q,\mathcal{J})$]
Assume ${\sf ZF}+{\sf AD}+{\sf DC}$.
Then
\begin{enumerate}
\item[I.] $f:P\to Q$ is $(\equiv_P,\equiv_Q)$-invariant and not increasing on a cone, then $f$ is constant on a cone.
\item[II.] $\leq_\mathbf{m}$ prewellorders the set of $(\equiv_P,\equiv_Q)$-invariant functions which are increasing on a cone.
Moreover, if $f$ has $\leq_\mathbf{m}$-rank $\alpha$, then $\mathcal{J}(f)$ has $\leq_\mathbf{m}$-rank $\alpha+1$.
\end{enumerate}
\end{conjecture}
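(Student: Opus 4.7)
The plan is to reduce both parts of the conjecture to their classical analogues on total degrees, using the Shore-Slaman Join Theorem for continuous degrees (Theorem \ref{thm:shoreslamanpolish}) and the almost-totality phenomenon (Lemma \ref{lem:nonsplitting}) to bridge between total and non-total points. Since the classical Martin Conjecture remains open for $(P,Q)=(\cantor,\cantor)$, $\mathcal{J}=\mathrm{TJ}$, my proposal is to isolate what genuinely extends from the classical setting and where the new obstructions intrinsic to the continuous-degree picture show up.

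For Part I, I would first restrict the $(\equiv_P,\equiv_Q)$-invariant function $f$ to the set $P^{\mathrm{tot}}$ of total degrees in $P$. Under $\mathsf{AD}+\mathsf{DC}$, the Martin measure on Turing cones is an ultrafilter, so we can ask whether $\{p\in P^{\mathrm{tot}}:f(p)\leq_Q p\}$ contains a cone. If yes, a Fubini-style argument (using that there are only countably many $Q$-degrees pulled back by $f$ from the $\equiv_P$-class of any $p$) gives constancy on a cone of total degrees. If no, Theorem \ref{thm:shoreslamanpolish} would enter: given $p$ with $f(p)\not\leq_Q p$, build $G\geq_M p$ with $G\oplus f(p)\equiv_M G'$, exploit invariance of $f$ under $\equiv_P$, and conclude that $p\leq_Q f(p)$ along the cone. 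To push the dichotomy from $P^{\mathrm{tot}}$ to all of $P$, given non-total $p\in P$ pick $z\in\cantor$ with $z\not\leq_M p$; Lemma \ref{lem:nonsplitting} makes $p\oplus z$ total, and invariance plus a minimality argument lifts the cone behaviour back to $p$.

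For Part II, the strict-rank assertion $\leq_\mathbf{m}\text{-rank}(\mathcal{J}(f))=\alpha+1$ whenever $f$ has rank $\alpha$ is the natural target of Theorem \ref{thm:shoreslamanpolish}: assuming for contradiction $\mathcal{J}(f)\leq_\mathbf{m}g$ for some $g$ of rank $\leq\alpha$, one would locate $p$ with $f(p)\not\leq_Q g(p)^{(n)}$, apply the join theorem to $(p,f(p))$ to produce a $G$ witnessing $f(G)\not\leq_Q g(G)$ on a cone (via invariance and the coding absorbing the jump), yielding a contradiction. The prewellordering claim then reduces by standard Wadge-style arguments combined with the positive half above and a further use of the join theorem to show any two incomparable invariant functions become comparable on a cone.

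The hard part — and the source of the conjectural status — has two distinct layers. First, the classical Martin Conjecture is itself unresolved: even the best positive result (Slaman-Steel) covers only the $(\leq_T,\leq_T)$-preserving Borel case of Part II. Second, genuinely new obstructions appear in the continuous-degree setting: the Friedberg Jump Inversion Theorem (Lemma \ref{lem:jump-inversion}) fails for non-total degrees, so the standard reduction step in the classical Part II does not transfer directly (this is the same obstacle we had to circumvent in the Cancellation Lemma \ref{lem:deg-Shore-Slaman}); and Kalimullin's observation, recorded in Section \ref{section generalized Turing degree theory}, already shows that the join-theorem hypothesis underlying Part II fails outright for sufficiently non-metrizable $(P,\leq_P)$, for instance whenever the lower reals are computably embeddable. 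I would therefore expect the realistic outcome to split into a positive theorem that extends Slaman's result to those $(P,Q)$ for which our generalized join theorem holds — concretely, effectively compact Polish spaces such as the Hilbert cube — and a failure theorem exhibiting explicit counterexamples in degree structures (like the enumeration degrees) where almost-totality is lost. The conjecture in full generality should remain open pending a resolution of its classical form.
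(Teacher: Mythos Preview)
The statement you are addressing is a \emph{conjecture}; the paper does not prove it and does not claim to. There is no ``paper's own proof'' to compare against. What the paper does instead is prove one positive partial result (Theorem~\ref{thm:positive-Martin}, Part~II for $(\leq_T,\leq_M^\ast)$-preserving Borel functions) and one negative result (Theorem~\ref{thm:failure-Martin}, Part~II \emph{fails} for $(\leq_M,\leq_M^\ast)$). Part~I is not addressed at all.

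Your closing prediction anticipates this split, but identifies the wrong dividing line. You expect the positive side to cover ``effectively compact Polish spaces such as the Hilbert cube'' where the Shore--Slaman Join Theorem holds, and failures to appear in structures like the enumeration degrees where almost-totality is lost. The paper shows the opposite: the counterexample in Theorem~\ref{thm:failure-Martin} lives in the Hilbert cube, precisely where the join theorem \emph{does} hold and almost-totality \emph{is} available. The obstruction is not the join theorem but the failure of Borel $\leq_P$-determinacy in the \emph{domain} space, which the paper shows holds exactly when the domain has transfinite inductive dimension. The mechanism is Miller's theorem that every Scott ideal arises as the lower $M$-cone of a point in $[0,1]^\omega$: this produces $(\leq_M,\leq_M^\ast)$-preserving Borel closure operators $I^\alpha$ that are $\leq_\mathbf{m}$-incomparable with the first jump. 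So the positive result requires the domain to be $2^\omega$ (total degrees), not merely that the codomain support the join theorem.

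Your Part~I sketch also has a gap independent of all this: the step ``if $\{p:f(p)\leq_Q p\}$ contains a cone then a Fubini-style argument gives constancy on a cone'' is not a known argument --- being bounded by the identity on a cone does not by itself yield constancy, and this is essentially the content of the open classical Part~I.
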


The (original) Martin Conjecture is, of course, the Martin Conjecture for $(\equiv_T,\equiv_T,TJ)$, where recall that $TJ$ denotes the Turing jump.
In this section, we only consider the Martin conjecture for $(\leq_P,\leq_Q)$-preserving functions rather than $(\equiv_P,\equiv_Q)$-invariant functions.
Then we call the modified conjecture the {\em Martin Conjecture for $(\leq_P,\leq_Q,\mathcal{J})$}.
Clearly, this is weaker than the original conjecture.

We further generalize the Martin Conjecture by considering the {\em ideal completion} $Q^\ast$ of $Q$.
The Martin Conjecture for the ideal completion of the Turing reducibility $(2^\omega,\leq_T)$ is first studied by Slaman \cite{Slaman05}.
Here we define $Q^\ast$ as the set of all ideals $I$ in $Q$, that is, $p\leq_Qq\in I$ implies $q\in I$, and $p,q\in I$ implies the existence of $r\in I$ such that $p,q\leq_Qr$.
The set  $Q^\ast$ is ordered by subset inclusion, that is, $I_0\leq_{Q}^\ast I_1$ if and only if $I_0\subseteq I_1$.
As usual, we have the canonical embedding $p\mapsto p^\ast$ from $Q$ into $Q^\ast$ by defining $p^\ast$ as the principal ideal generated by $p$, that is, $p^\ast=\{q\in Q:q\leq_Qp\}$.
So, for instance, $g:P\to Q^\ast$ is {\em increasing on a cone} if and only if $p^\ast\leq_Q^\ast g(p)$, which is also equivalent to that $p\in g(p)$.
Given $f:P\to Q$, we also consider $f^\ast:P\to Q^\ast$ defined by $f^\ast(p)=f(p)^\ast$ for every $p\in P$.
For $\mathcal{J}:Q\to Q$, the function $\mathcal{J}^\ast:Q\to Q^\ast$ can also be viewed as the partial function $\mathcal{J}^\ast:Q^\ast\partialf Q^\ast$ whose domain is the set of all principal ideals in $Q$.

\begin{proposition}
If the Martin Conjecture for $(\leq_P,\leq_{Q}^\ast,{^\ast}\!\mathcal{J})$ is true for some total extension ${^\ast}\!\mathcal{J}$ of $\mathcal{J}^\ast$, then so is the Martin Conjecture for $(\leq_P,\leq_{Q},\mathcal{J})$.
\end{proposition}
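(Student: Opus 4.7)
The plan is to transfer both parts of the Martin Conjecture from the ideal completion back to $Q$ via the canonical embedding $p\mapsto p^\ast$, exploiting the fact that $\leq_Q^\ast$ restricted to principal ideals is isomorphic to $\leq_Q$ and that the extension ${}^\ast\!\mathcal{J}$ agrees with $\mathcal{J}^\ast$ on principal ideals. Given any $f\colon P\to Q$, I lift it to $f^\ast\colon P\to Q^\ast$ by $f^\ast(p)=f(p)^\ast$. Two elementary observations drive everything: first, $p^\ast\subseteq q^\ast$ if and only if $p\leq_Q q$, so the map $f\mapsto f^\ast$ is an order-embedding with respect to $\leq_\mathbf{m}$ and sends $(\leq_P,\leq_Q)$-preserving functions to $(\leq_P,\leq_Q^\ast)$-preserving functions; second, for every $p\in P$,
\[
{}^\ast\!\mathcal{J}(f^\ast)(p)={}^\ast\!\mathcal{J}(f(p)^\ast)=\mathcal{J}^\ast(f(p)^\ast)=\mathcal{J}(f(p))^\ast=(\mathcal{J}(f))^\ast(p),
\]
since ${}^\ast\!\mathcal{J}$ extends $\mathcal{J}^\ast$ and $f(p)^\ast$ is principal; so the jump commutes with the lifting.

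For Part I, suppose $f\colon P\to Q$ is $(\leq_P,\leq_Q)$-preserving and not increasing on a cone. Then for every $c\in P$ there is $p\geq_P c$ with $p\not\leq_Q f(p)$, i.e.\ $p^\ast\not\subseteq f(p)^\ast=f^\ast(p)$, so $f^\ast$ is not increasing on a cone either. By the hypothesized Martin Conjecture for $(\leq_P,\leq_Q^\ast,{}^\ast\!\mathcal{J})$, $f^\ast$ is constant on a cone: there exist $c\in P$ and $I\in Q^\ast$ with $f^\ast(p)=I$ for all $p\geq_P c$ (using that $\leq_Q^\ast$ is antisymmetric as an order on sets). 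But $I=f(c)^\ast$ is principal, so $f(p)^\ast=f(c)^\ast$ on the cone, which gives $f(p)\equiv_Q f(c)$. Thus $f$ is constant on the cone, as desired.

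For Part II, let $\mathcal{F}$ (resp.\ $\mathcal{F}^\ast$) denote the class of $(\leq_P,\leq_Q)$-preserving (resp.\ $(\leq_P,\leq_Q^\ast)$-preserving) functions that are increasing on a cone. The lifting $\iota\colon \mathcal{F}\to \mathcal{F}^\ast$, $f\mapsto f^\ast$, is a $\leq_\mathbf{m}$-embedding: $f\leq_\mathbf{m} g$ iff $f(p)\leq_Q g(p)$ on a cone iff $f^\ast(p)\leq_Q^\ast g^\ast(p)$ on a cone iff $f^\ast\leq_\mathbf{m} g^\ast$. Hence $(\mathcal{F},\leq_\mathbf{m})$ is isomorphic to the $\iota$-image inside the $\leq_\mathbf{m}$-prewellorder on $\mathcal{F}^\ast$, and is therefore itself a prewellorder. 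For the rank computation, fix $f\in\mathcal{F}$ and let $\mathrm{rk}(f)=\alpha$ inside $\mathcal{F}$. Combining the jump-commutation identity with the strict inequality $f^\ast<_\mathbf{m}{}^\ast\!\mathcal{J}(f^\ast)$ supplied by Part II of the ideal conjecture yields $f<_\mathbf{m}\mathcal{J}(f)$. Moreover, any $g\in\mathcal{F}$ with $f<_\mathbf{m} g<_\mathbf{m}\mathcal{J}(f)$ would lift to $g^\ast$ strictly between $f^\ast$ and ${}^\ast\!\mathcal{J}(f^\ast)$, contradicting that ${}^\ast\!\mathcal{J}(f^\ast)$ has $\leq_\mathbf{m}$-rank one above $f^\ast$ in $\mathcal{F}^\ast$. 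Therefore $\mathcal{J}(f)$ is the immediate $\leq_\mathbf{m}$-successor of $f$ in $\mathcal{F}$, i.e.\ $\mathrm{rk}(\mathcal{J}(f))=\alpha+1$.

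The main obstacle I anticipate is verifying that the choice of the extension ${}^\ast\!\mathcal{J}$ does not matter: every ingredient above only sees the values of ${}^\ast\!\mathcal{J}$ on principal ideals, where it is pinned down as $\mathcal{J}^\ast$, so this is really a matter of bookkeeping rather than a genuine obstruction. A secondary subtlety is ensuring that $\iota$ lands inside $\mathcal{F}^\ast$ (not merely among preserving functions), which is exactly the computation that $f$ increasing on a cone implies $f^\ast$ increasing on a cone, already done above.
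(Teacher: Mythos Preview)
Your proof is correct and follows exactly the approach the paper sketches: lift $f$ to $f^\ast$ via the canonical embedding $p\mapsto p^\ast$, observe that this embedding preserves order-preservation, the Martin order, the ``increasing/constant on a cone'' properties, and intertwines $\mathcal{J}$ with ${}^\ast\!\mathcal{J}$, then pull the conclusions back. The paper's own proof is a one-line remark that ``the canonical embedding preserves all properties mentioned above''; you have simply supplied the detailed verification of each clause (Parts I and II separately, including the rank computation), which the paper leaves to the reader.
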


\begin{proof}
It is easy to check that the canonical embedding preserves all properties mentioned above.
For instance, if $f$ is $(\leq_P,\leq_Q)$-preserving, then $f^\ast$ is $(\leq_P,\leq_{Q}^\ast)$-preserving, and moreover, for any functions $f,g:P\to Q$, if $f\leq_\mathbf{m}g$ holds, then $f^\ast\leq_\mathbf{m}g^\ast$ holds as well.
This concludes the proof.
\end{proof}

Note that a function $f:P\to Q^\ast$ is a {\em closure operator} in the sense of Slaman \cite{Slaman05} if and only if $f:P\to Q^\ast$ is $(\leq_P,\leq_{Q}^\ast)$-preserving and increasing on a cone (up to Martin equivalence).
Slaman-Steel \cite{SlaSte88} and Slaman \cite{Slaman05} gave partial results on the {\em Borel Martin Conjecture}, that is, the Martin conjecture for $(\leq_P,\leq_Q)$-preserving Borel functions.
In this section, we only focus on the Borel Martin Conjecture, and thus, we now restrict our attention to functions between Borel spaces.

We assume that $\mathcal{X}$ and $\mathcal{Y}$ are Borel spaces endowed with preorders $\leq_\xx$ and $\leq_\yy$, respectively, and moreover, assume that $(\mathcal{X},\leq_\xx)$ is a substructure of $(\mathcal{Y},\leq_\yy)$.
We consider the ideal completion $(\mathcal{Y}^\ast,\leq_\yy^\ast)$ of the preordered Borel space $(\mathcal{Y},\leq_\yy)$.
We say that a function $f:\xx\to\yy^\ast$ is {\em Borel} if $\{(x,y)\in\xx\times\yy:y\in f(x)\}$ is Borel in the product Borel space $\xx\times\yy$.

Slaman-Steel \cite{SlaSte88} showed that every $(\leq_T,\leq_T)$-preserving Borel function $f:2^\omega\to 2^\omega$ which is increasing on a cone is the $\alpha^{\text{th}}$ Turing jump for some $\alpha<\omega_1$ up to Martin equivalence.
Slaman \cite{Slaman05} generalized this result by showing that for every $(\leq_T,\leq_T^\ast)$-preserving Borel function $f:2^\omega\to(2^\omega)^\ast$ which is increasing on a cone, one of the following holds:
\begin{enumerate}
\item There is a countable ordinal $\alpha$ such that $f$ is Martin equivalent to the following map:
\[x\mapsto\{y\in 2^\omega:y\mbox{ is recursive in }x^{(\beta)}\mbox{ for some $\beta<\alpha$}\}.\]
\item $f$ is Martin equivalent to $x\mapsto 2^\omega$.
\end{enumerate}

This result solves the Borel Martin Conjecture II affirmatively for $(\leq_T,\leq_T^\ast,{}^\ast TJ)$; hence, for $(\leq_T,\leq_T,TJ)$.
Slaman \cite{Slaman05} utilized the Shore-Slaman Join Theorem \cite{shore_slaman_defining_the_Turing_Jump} (on the Turing degrees) to remove the possibility of intermediate degrees.
It is reasonable to expect that the Shore-Slaman Join Theorem \ref{thm:shoreslamanpolish} on the continuous degrees provides a generalization of Slaman's Theorem on the Borel Martin Conjecture for continuous degrees.
This expectation is partially correct.
We will give an affirmative solution to the Borel Martin Conjecture II for the $(\leq_T,\leq_M^\ast)$-preserving functions from Cantor space to the ideals in Hilbert cube.
Surprisingly, non-total continuous degrees completely disappear in this situation.
This suggests that there is {\em no} degree-invariant hyperarithmetical method of constructing a non-total continuous degree from a total oracle.

\begin{theorem}\label{thm:positive-Martin}
The Martin order $\leq_\mathbf{m}$ prewellorders the set of $(\leq_T,\leq_M^\ast)$-preserving Borel functions $f:2^\omega\to([0,1]^\omega)^\ast$ which are increasing on a cone.

Indeed,  for a $(\leq_T,\leq_M^\ast)$-preserving Borel function $f:2^\omega\to([0,1]^\omega)^\ast$ which is increasing on a cone, one of the following holds.
\begin{enumerate}
\item There is a countable ordinal $\alpha$ such that $f$ is Martin equivalent to the following map:
\[x\mapsto\{y\in 2^\omega:y\mbox{ is recursive in }x^{(\beta)}\mbox{ for some $\beta<\alpha$}\}.\]
\item $f$ is Martin equivalent to $x\mapsto 2^\omega$.
\end{enumerate}
\end{theorem}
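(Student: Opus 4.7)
The plan is to generalize Slaman's proof \cite{Slaman05} of the corresponding classification for $(\leq_T,\leq_T^\ast)$-preserving Borel functions $2^\omega \to (2^\omega)^\ast$, replacing every application of the classical Shore-Slaman Join Theorem by its continuous-degree counterpart (Theorem \ref{thm:shoreslamanpolish}). Given a $(\leq_T,\leq_M^\ast)$-preserving Borel $f:2^\omega\to([0,1]^\omega)^\ast$ increasing on a cone, I would first write $j_\alpha(x)=\{y\in 2^\omega:y\leq_T x^{(\beta)}\text{ for some }\beta<\alpha\}$ for each countable $\alpha$, and $j_\alpha(x)^\ast$ for its $\leq_M$-downward closure in $[0,1]^\omega$. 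For each fixed countable $\alpha$, the condition ``$f(x)\subseteq j_\alpha(x)^\ast$'' is a Borel condition on $x$ (since $f$ is Borel), so by Borel Turing determinacy it has a definite truth value on a Turing cone. Define
\[
\alpha^\ast=\inf\bigl\{\alpha<\omega_1:f(x)\subseteq j_\alpha(x)^\ast\text{ on a cone}\bigr\},
\]
taking $\alpha^\ast=\omega_1$ if the set is empty. The two alternatives in the theorem will correspond to $\alpha^\ast<\omega_1$ and $\alpha^\ast=\omega_1$.

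In the case $\alpha^\ast<\omega_1$ I would show that $f$ is Martin equivalent to $j_{\alpha^\ast}$. One inclusion is immediate from the definition; for the other I would show that each iterated jump $x^{(\beta)}$ with $\beta<\alpha^\ast$ lies in $f(x)$ on a cone, which follows from the minimality of $\alpha^\ast$ combined with $(\leq_T,\leq_M^\ast)$-preservation --- here the argument is a direct transcription of Slaman's, since the degrees in play are all total. In the case $\alpha^\ast=\omega_1$ I would show $f$ is Martin equivalent to $x\mapsto 2^\omega$. Fix $z\in 2^\omega$ and suppose toward contradiction that $z\not\in f(x)$ on a cone $C_z$. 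Since $\alpha^\ast=\omega_1$, for each countable $\alpha$ there exists $x\in C_z$ and a witness $y\in f(x)$ with $y\not\leq_M x^{(\alpha)}$. Applying Theorem \ref{thm:shoreslamanpolish} to $(x,y,\alpha)$ produces $G\in 2^\omega$ with $G\geq_T x$ and $G^{(\alpha+1)}\equiv_M G\oplus y$. By $(\leq_T,\leq_M^\ast)$-preservation and increasing-on-a-cone, both $y$ and $G$ lie in $f(G)$, so by ideal closure $G\oplus y\in f(G)$, whence $G^{(\alpha+1)}\in f(G)$. Choosing $\alpha$ large enough that $z\leq_T G^{(\alpha+1)}$ then forces $z\in f(G)$, contradicting $z\not\in f(G)$. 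The prewellordering claim follows from the classification, since the rank $f\mapsto\alpha^\ast\in\omega_1+1$ is a linear invariant of the Martin-equivalence class.

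The hard part will be the non-total branch of the Shore-Slaman application in the case $\alpha^\ast=\omega_1$. When $y\oplus x^{(\alpha+1)}$ is not total, Theorem \ref{thm:shoreslamanpolish} provides only $G^{(\alpha+1)}\equiv_M G\oplus y\equiv_M G\oplus x^{(\alpha+1)}$, and one must argue carefully that the total point $G^{(\alpha+1)}$ is actually placed inside $f(G)$ by the ideal-closure step. The key tool here is the almost-totality property (Lemma \ref{lem:nonsplitting}): joining a non-total continuous degree with a suitable total one is forced to be total, which is precisely the mechanism by which non-total witnesses get promoted to total elements of $f(G)$ and hence ``disappear'' from the Martin-equivalence classification. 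A further technical issue is uniformizing the witnesses $(x,y,\alpha)$ across the cone and simultaneously handling countably many $z\in 2^\omega$; I would address this by Borel measurable selection from the Borel graph of $f$ coupled with a diagonalization.
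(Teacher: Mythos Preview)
Your plan has a genuine gap in the case $\alpha^\ast=\omega_1$. The contradiction you set up --- ``fix $z\in 2^\omega$ and suppose $z\not\in f(x)$ on a cone $C_z$'' --- is already refuted trivially by the hypothesis that $f$ is increasing on a cone: once $x\geq_T z$ lies above that cone's base, $z\leq_M x\in f(x)$ gives $z\in f(x)$, and neither Shore--Slaman nor the assumption $\alpha^\ast=\omega_1$ is used. What you must actually prove is the quantifier-swapped statement that $f(x)$ is all of $[0,1]^\omega$ (equivalently, contains all of $2^\omega$) on a \emph{single} cone, and nothing in your sketch addresses this: at best your argument yields, for each fixed $\alpha$, a cone on which $x^{(\alpha)}\in f(x)$, but there are uncountably many such cones and they have no common refinement. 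Your proposed fix (``Borel measurable selection \dots\ coupled with a diagonalization'' handling ``countably many $z$'') cannot close this gap, since continuum-many $z$ must be absorbed simultaneously.

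The paper handles this case by tools you do not mention. One first observes that if $\mathrm{HYP}(x)\subseteq f(x)$ for cofinally many $x$, then since $f(x)\cap 2^\omega$ is $\Delta^1_1(x)$ while $\mathrm{HYP}(x)\cap 2^\omega$ is properly $\Pi^1_1(x)$, there is a \emph{total} $y\in f(x)\cap 2^\omega$ which is not hyperarithmetic in $x$. Then the Woodin Join Theorem (not Shore--Slaman) yields $g\geq_T x$ with $\mathcal{O}^g\equiv_T g\oplus y\in f(g)$, and the Kleene Basis Theorem forces $[0,1]^\omega\setminus f(g)=\emptyset$: were it nonempty it would be a nonempty $\Delta^1_1(g)$ set and hence contain an $\mathcal{O}^g$-recursive point, contradicting that $f(g)$ is a downward-closed ideal containing $\mathcal{O}^g$. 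A secondary issue: in your case $\alpha^\ast<\omega_1$ you assert ``the degrees in play are all total,'' but the witness $y\in f(x)\subseteq[0,1]^\omega$ furnished by minimality of $\alpha^\ast$ need not be total, and this is precisely where the continuous-degree Shore--Slaman Join Theorem (Theorem~\ref{thm:shoreslamanpolish}) carries the content of the generalization over Slaman's original argument.
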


\begin{proof}
We follow the argument in Slaman \cite[Theorem 3.1]{Slaman05}.
By our assumption that $f$ is Borel, there is an oracle $\ep$ such that the relation $y\in f(x)$ is $\Delta^1_1(\ep)$.

First assume that there is $c\geq_T\ep$ such that $f(x)$ does not include ${\rm HYP}(x)$ for all $x\geq_Tc$, where ${\rm HYP}(x)$ is the set of all $y\in [0,1]^\omega$ hyperarithmetical in $x$, that is, those $y\leq_Mx^{(\alpha)}$ for some $\alpha<\omega_1^x$.
Then, since $f(x)$ is an ideal, for every $x\geq_Tc$, there is $\alpha<\omega_1^x$ such that $x^{(\alpha)}\not\in f(x)$.
By Martin's lemma, there is a pointed perfect tree $T\subseteq 2^\omega$ such that there exists a countable ordinal $\alpha_0<\omega_1$ such that for all $x\in[T]$, we have $x^{(\alpha_0)}\not\in f(x)$ and $x^{(\beta)}\in f(x)$ for all $\beta<\alpha_0$.
Since $f$ is $(\equiv_T,\equiv_M^\ast)$-invariant, there is $d\geq_Tc$ (that is, $d$ is a base of the pointed tree $T$) such that the above property holds for all $x\geq_Td$.

As in Slaman \cite{Slaman05}, we claim that for all $x\geq_T d$, if $y\in f(x)$ then $y\leq_Mx^{(\beta)}$ holds for some $\beta<\alpha_0$.
Otherwise, there are $x\geq_Td$ and $y\in f(x)$ such that $y\not\leq_Mx^{(\beta)}$ for all $\beta<\alpha_0$.
By the Shore-Slaman Join Theorem \ref{thm:shoreslamanpolish} for the continuous degrees, there is $G\in 2^\omega$ such that $G\geq_Tx$ and $G^{(\alpha_0)}\leq_MG\oplus y$.
Since $f$ is $(\leq_T,\leq_M^\ast)$-preserving, $x\leq_TG$ implies $y\in f(x)\subseteq f(G)$.
Since $f$ is increasing on a cone, $G\in f(G)$.
Hence, $G^{(\alpha_0)}\leq_MG\oplus y\in f(G)$ since $f(G)$ forms an ideal in the $M$-degrees.
However, we must have $G^{(\alpha_0)}\not\in f(G)$, which is a contradiction.
Consequently, if $x\geq_Td$, then $y\in f(x)$ if and only if $y\leq_Mx^{(\beta)}$ for some $\beta<\alpha_0$.

Next we assume that for any $c\geq_T\ep$, there is $x\geq_Tc$ such that $f(x)$ includes ${\rm HYP}(x)$.
Note that (a copy of) $2^\omega$ is $\Pi^0_1$ in $[0,1]^\omega$, and therefore $f(x)\cap 2^\omega$ is still $\Delta^1_1(x)$ while ${\rm HYP}(x)\cap 2^\omega$ is proper $\Pi^1_1(x)$.
Therefore, there is $y\in f(x)\cap 2^\omega$ such that $y$ is not hyperarithmetical in $x$.
Since $x$ and $y$ are total, as in Slaman \cite{Slaman05}, by the Woodin Join Theorem, there is $g\in 2^\omega$ such that $x\leq_Mg$ and $\mathcal{O}^g\equiv_Tg\oplus y$, where $\mathcal{O}^g$ is a complete $\Pi^1_1(g)$ subset of $\omega$.
Then $x\leq_Tg$ implies $y\in f(x)\subseteq f(g)$ since $f$ is $(\leq_T,\leq_M^\ast)$-preserving, and we also have $g\in f(g)$ since $f$ is increasing on a cone.
Then, $\mathcal{O}^g\equiv_T g\oplus y\in f(g)$ since $f(g)$ forms an ideal.
We now consider the set $[0,1]^\omega\setminus f(g)=\{p\in [0,1]^\omega:p\not\in f(g)\}$.
This set is $\Delta^1_1(g)$, so if it is nonempty, then by the Kleene Basis Theorem (see \cite{SacksBook}) it has an $\mathcal{O}^g$-recursive element since recall that $[0,1]^\omega$ is $\Delta^1_1$-isomorphic to Baire space $\baire$.
However, $f(g)$ includes all $\mathcal{O}^g$-recursive elements in $[0,1]^\omega$ since $f(g)$ forms an ideal.
Thus, $[0,1]^\omega\setminus f(g)$ must be empty.
Consequently, we obtain that for any $c\geq_T\ep$, there is $g\geq_Tc$ such that $f(g)=[0,1]^\omega$ as desired.
\end{proof}

\begin{corollary}\label{cor:Martin-positive}
Every $(\leq_T,\leq_M)$-preserving Borel function $f:2^\omega\to[0,1]^\omega$ which is increasing on a cone is the $\alpha^{\text{th}}$ Turing jump for some $\alpha<\omega_1$ up to Martin equivalence.
\end{corollary}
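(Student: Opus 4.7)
The plan is to reduce Corollary \ref{cor:Martin-positive} to Theorem \ref{thm:positive-Martin} by lifting $f$ to a principal-ideal-valued function. Given $f: 2^\omega \to [0,1]^\omega$ satisfying the hypothesis, I would define $f^*: 2^\omega \to ([0,1]^\omega)^*$ by sending $x$ to the principal $M$-ideal generated by $f(x)$, namely $f^*(x) = \{y \in [0,1]^\omega : y \leq_M f(x)\}$. A routine check shows $f^*$ is Borel (the set $\{(x,y) : y \leq_M f(x)\}$ is the countable union over $e \in \omega$ of the Borel sets $\{(x,y) : \Phi_e^{[0,1]^\omega,[0,1]^\omega}(f(x)) = y\}$), is $(\leq_T, \leq_M^*)$-preserving by transitivity of $\leq_M$, and is increasing on a cone because $x \leq_M f(x)$ on a cone yields $x \in f^*(x)$.

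Next I would apply Theorem \ref{thm:positive-Martin} to $f^*$, which leaves two cases. The case where $f^*$ is Martin equivalent to the constant function with value the full ideal $[0,1]^\omega$ is ruled out by cardinality: each principal ideal $f^*(x)$ contains at most countably many points (one per enumeration operator applied to a name of $f(x)$), while $[0,1]^\omega$ carries $2^{\aleph_0}$ many $M$-degrees. Hence there is a countable ordinal $\alpha$ with $f^*(x) = g_\alpha(x) := \{y \in [0,1]^\omega : y \leq_M x^{(\beta)} \text{ for some } \beta < \alpha\}$ on a cone.

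The crucial structural input is now principality: since $f^*(x)$ is principal, so must be $g_\alpha(x)$ on the cone. This rules out $\alpha = 0$ (empty ideal, contradicting $f(x) \in f^*(x)$) and limit $\alpha$ (for which the strictly $\leq_T$-increasing sequence $(x^{(\beta)})_{\beta < \alpha}$ is cofinal in $g_\alpha(x)$ without a supremum there), forcing $\alpha = \gamma+1$ for some $\gamma < \omega_1$. The equality then reduces to $\{y : y \leq_M f(x)\} = \{y : y \leq_M x^{(\gamma)}\}$, which by evaluating both sides at $f(x)$ and $x^{(\gamma)}$ respectively gives $f(x) \equiv_M x^{(\gamma)}$ on a cone. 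Since $x^{(\gamma)}$ is total, so is $f(x)$, and on total degrees $\equiv_M$ coincides with $\equiv_T$, exhibiting $f$ as Martin equivalent to the $\gamma$-th Turing jump. I expect no major obstacle in this corollary: all the hard work has already been absorbed into Theorem \ref{thm:positive-Martin}, and the only subtle point is the principality argument that collapses case (1) to a successor ordinal.
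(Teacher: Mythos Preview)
Your proposal is correct and is precisely the argument the paper has in mind: the corollary is stated without proof because passing to the principal-ideal function $f^\ast$ and invoking Theorem~\ref{thm:positive-Martin} is the intended one-line deduction. Your handling of the two cases---ruling out case (2) by countability of principal $M$-ideals, and collapsing case (1) to a successor ordinal via principality---is exactly right; the only cosmetic point is that in the final step you need $f(x)\equiv_M x^{(\gamma)}$ on a cone (which you have), and there is no need to invoke $\equiv_T$ since Martin equivalence here is already phrased in terms of $\leq_M$.
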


We notice that the Borel determinacy (Martin's pointed tree lemma for Borel sets) plays a key role in the proof of the above theorem.
Given a preorder $(P,\leq_P)$, we say that {\em the (Borel) $\leq_P$-determinacy holds} if for any (Borel) partition $P=P_0\cup P_1$, whenever for each $i<2$, $P_i$ is $\equiv_P$-invariant (in the sense that $q\equiv_Pp\in P_i$ implies $q\in P_i$), there is $i<2$ such that $P_i$ contains a $\leq_P$-cone, that is, there is $c\in P$ such that $p\in P_i$ for all $p\geq_P c$ (see also Martin \cite{Martin68}).
It is well-known that the axiom of (Borel) determinacy implies the (Borel) $\leq_T$-determinacy.
Of course, this is not generally true for a preorder $\leq_P$.

We assume that a recursively presented metric space $\xx$ is preordered by $\leq_\xx:=\leq_M\rs \xx$, the restriction of the $M$-reducibility to the space $\xx$.
We say that a space $\xx$ is {\em rich} if there is an effective embedding of Cantor space $2^\omega$ into $\xx$, so that $\xx$ has a $\Pi^0_1$ copy of $2^\omega$, and $\leq_\xx$ includes $\leq_T$.
This assumption is not very restrictive because it is not hard to see that if $\xx$ is uncountable Polish space, there is an oracle $\ep\in\baire$ such that $\leq_\xx^\ep$ is rich, where for a substructure $\leq_\xx$ of $\leq_M$, we define $x\leq_\xx^\ep y$ as $x\oplus\ep\leq_My\oplus\ep$.

\begin{proposition}
Let $\xx$ be a rich Polish space.
Then, the Borel $\leq_{\xx}$-determinacy holds if and only if $\xx$ has transfinite inductive dimension.
\end{proposition}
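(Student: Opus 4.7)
The plan is to prove the two implications separately. The forward direction reduces cleanly to Martin's Borel $\leq_T$-determinacy via the Kihara-Pauly dimension-theoretic characterization, while the converse requires producing, inside any non-dimensional rich Polish space, an explicit Borel $\equiv_\xx$-invariant partition that violates determinacy.

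For the forward direction, assume $\xx$ has transfinite inductive dimension. I will invoke the Kihara-Pauly characterization \cite{KihPau} to obtain an oracle $\ep \in \baire$ such that $x \oplus \ep$ is of total degree for every $x \in \xx$. Given a Borel $\equiv_\xx$-invariant partition $\xx = P_0 \sqcup P_1$, I will restrict along the effective embedding $2^\omega \hookrightarrow \xx$ granted by richness, producing a Borel $\equiv_T$-invariant partition $Q_i = P_i \cap 2^\omega$ of Cantor space (using that $\leq_\xx\rs 2^\omega = \leq_T$). By Martin's Borel $\leq_T$-determinacy, some $Q_i$, say $Q_0$, contains a Turing cone $\{z \in 2^\omega : z \geq_T c\}$. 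I then claim that $P_0$ contains the $\leq_\xx$-cone above $c \oplus \ep$: any $x \geq_\xx c \oplus \ep$ computes $\ep$, so $x \oplus \ep \equiv_M x$ is total; picking $z \in 2^\omega$ with $z \equiv_M x$, one has $z \geq_T c$, hence $z \in Q_0$, and invariance of $P_0$ yields $x \in P_0$.

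For the converse I argue by contrapositive. Assume $\xx$ lacks transfinite inductive dimension. By Kihara-Pauly again, for every oracle $\ep \in \baire$ there exists $x \in \xx$ with $x \oplus \ep$ non-total, and this happens cofinally in $\leq_\xx$. The aim is to construct a Borel $\equiv_\xx$-invariant partition $\xx = A_0 \sqcup A_1$ such that for every cone base $c \in \xx$ both $A_0$ and $A_1$ contain elements $\geq_\xx c$. A natural way is to fix a Borel-measurable coloring $\chi: \xx \to \{0,1\}$ coming from the effective structure (for instance, derived from a Borel-definable selection of approximating balls in a Kihara-Pauly-style non-dimensional witness) and then take $A_0$ and $A_1$ to be the $\equiv_\xx$-saturations of $\chi^{-1}(0)$ and of its complement; one must simultaneously verify that these saturations remain Borel. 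Cofinality of both pieces in every cone is secured by applying the Shore-Slaman Join Theorem \ref{thm:shoreslamanpolish} for continuous degrees to produce, above any $c \in \xx$, both a total $G \in 2^\omega$ realizing one color and a non-total element realizing the other color, obstructing either piece from swallowing a cone.

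\textbf{Main obstacle.} The hard part is the converse direction. The naive ``total versus non-total'' partition of $\xx$ is $\Sigma^1_1$ versus $\Pi^1_1$, not Borel, so one cannot just feed it into the determinacy hypothesis. One must therefore lift the dimension-theoretic content of Kihara-Pauly to a genuinely Borel-definable feature, and simultaneously arrange, via a continuous-degree Shore-Slaman argument, that both sides of the resulting Borel partition remain cofinal under $\leq_\xx$; ensuring both \emph{Borelness of the invariant saturation} and \emph{joint cofinality of the two pieces} is the delicate step on which the argument turns.
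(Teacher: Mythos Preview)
Your forward direction is correct and essentially the paper's argument.

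Your converse direction, however, rests on a false premise. You assert that the ``total versus non-total'' partition of $\xx$ is $\Sigma^1_1$/$\Pi^1_1$ and not Borel, and you then build an elaborate (and vague) workaround involving an unspecified coloring $\chi$ and Shore--Slaman joins. In fact the set
\[
C=\{x\in\xx: x\text{ has total continuous degree}\}
\]
\emph{is} Borel --- indeed $\Sigma^0_3$. The point is that if $x$ is total, the witnessing $z\in 2^\omega$ with $z\equiv_M x$ satisfies $z\leq_M x$, so $z=\Phi_{e'}^{\xx,2^\omega}(x)$ for some index $e'$; hence
\[
x\in C\iff(\exists e,e'\in\omega)\big[\,x\in\Dom(\Phi_{e'}^{\xx,2^\omega})\ \wedge\ \Phi_e^{2^\omega,\xx}(\Phi_{e'}^{\xx,2^\omega}(x))=x\,\big],
\]
a countable union of $\Pi^0_2$ conditions (domains of the $\Phi_e$ are $\Pi^0_2$, equality is closed). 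So the naive partition is already $\equiv_\xx$-invariant and Borel; no saturation step is needed.

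This is exactly what the paper does. Cofinality of both pieces is also simpler than you suggest: total points are cofinal because every $x\in\xx$ is $\leq_M$-below any of its names in $2^\omega\hookrightarrow\xx$; non-total points are cofinal because, by Kihara--Pauly, for every $\ep$ there is $x_\ep\in\xx$ with $x_\ep\oplus\ep$ non-total, and Lemma~\ref{lem:nonsplitting} then forces $x_\ep>_M\ep$. No Shore--Slaman is required. Your proposed alternative never specifies $\chi$ concretely, does not explain why its $\equiv_\xx$-saturation stays Borel, and solves a problem that does not exist.
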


\begin{proof}
Kihara-Pauly \cite{KihPau} showed that a Polish space $\xx$ is transfinite dimensional if and only if the order $\leq_\xx^\ep$ is a substructure of $\leq_T^\ep$ relative to an oracle $\ep\in\baire$.
In particular, there is no non-total degree $x\in\xx$ above $\ep$.
Hence, if $\xx$ is transfinite dimensional, the $\leq_T$-determinacy implies the $\leq_{\xx}$-determinacy.
Conversely, if $\xx$ is not transfinite dimensional, then for any $\ep\in\baire$, there is $x_\ep\in\xx$ such that $x_\ep\oplus \ep$ is not total.
By Miller's Lemma \ref{lem:nonsplitting}, this implies $x_\ep\geq_M\ep$.
It is clear that any point $x\in\xx$ is $\leq_M$-bounded by a total degree.
Since $\xx$ is rich, $2^\omega$ is effectively embedded into $\xx$.
Then, let $C$ be the $\equiv_\xx$-saturation of such an embedded image of $2^\omega$, that is, the set of all points in $\xx$ having total degrees.
Clearly, $C$ is $\equiv_\xx$-invariant.
By the above argument, both $C$ and $\xx\setminus C$ are cofinal in $\leq_\xx$.
Therefore, the $\leq_\xx$-determinacy fails.
Note that this argument indeed shows that the Borel $\leq_\xx$-determinacy fails since $C$ is Borel.
\end{proof}

The role of the $\leq_P$-determinacy is to remove the difference between ``{\em cofinally many}'' and ``{\em on a cone}''.
Under the failure of the $\leq_P$-determinacy, the situation dramatically changes.
Recall from Theorem \ref{thm:positive-Martin} that the Borel Martin Conjecture II for $(\leq_T,\leq_M^\ast)$ is solved affirmatively.
Surprisingly, however, Slaman's Theorem fails if we allow continuous degrees in our domain space, that is, the {\em Martin Conjecture II is false} for the ideal completion of the continuous degrees.

\begin{theorem}\label{thm:failure-Martin}
The Martin order $\leq_\mathbf{m}$ is not a linear order on the set of $(\leq_M,\leq_M^\ast)$-preserving Borel functions $f:[0,1]^\omega\to([0,1]^\omega)^\ast$ which are increasing on a cone.
\end{theorem}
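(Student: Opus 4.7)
I want to exhibit two Borel $(\leq_M,\leq_M^\ast)$-preserving functions $f_0,f_1:[0,1]^\omega\to([0,1]^\omega)^\ast$, both increasing on a cone, that are mutually Martin-incomparable. A first observation is that constant-parameter attempts of the form $f_i(x)=(x\oplus a_i)^\ast$ for fixed $a_i\in[0,1]^\omega$ cannot work: any two such functions collapse to $\equiv_{\mathbf{m}}$ on the $\leq_M$-cone above $a_0\oplus a_1$. The distinction between $f_0$ and $f_1$ must therefore be tied intrinsically to $x$ and must survive $\leq_M$-joining with arbitrarily large oracles.

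The starting point is the failure of Borel $\leq_M$-determinacy on $[0,1]^\omega$ established in the preceding proposition. Both the class $T$ of total continuous degrees in $[0,1]^\omega$ and its complement $N$ are Borel, $\equiv_M$-invariant, and cofinal in $\leq_M$; in particular, for every $c\in [0,1]^\omega$ the cone $\{x:x\geq_M c\}$ contains points in $N$. It is this dichotomy between $T$ and $N$, absent when the domain is $2^\omega$, that will drive the failure of linearity.

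The construction I would use is to choose, for $i\in\{0,1\}$, two Borel-definable name-selectors $p_i:[0,1]^\omega\to\baire$ landing in the tree of names $\mathbf{H}$ from Example \ref{exa:rep-Hilbert-cube}, with $p_i(x)\equiv_M x$ on $T$, and with $p_0(x),p_1(x)$ on $N$ obtained via two mutually generic Kumabe--Slaman-style forcings in the spirit of the non-total case in the proof of Theorem \ref{thm:shoreslamanpolish}. I then set
\begin{equation*}
f_i(x)\;=\;\bigl\{y\in[0,1]^\omega:y\leq_M x\oplus p_i(x)\bigr\}.
\end{equation*}
On $T$ both $f_i(x)$ collapse to the principal ideal $x^\ast$, so $(\leq_M,\leq_M^\ast)$-preservation and increase on a cone come for free there; on $N$, the ``side data'' $p_i(x)$ is what creates the divergence between $f_0$ and $f_1$. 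The delicate monotonicity check occurs at the $T$--$N$ boundary and is to be verified using Miller's almost totality (Lemma \ref{lem:nonsplitting}) together with the Selman--Rozinas non-uniformity criterion (Lemma \ref{lem:unif-nonunif}), which together control how $p_i(x)$ interacts with $\leq_M$ on either side of the boundary. The incomparability $f_0\not\leq_{\mathbf{m}}f_1$ and $f_1\not\leq_{\mathbf{m}}f_0$ is then proved by a cone-by-cone witness extraction: on an arbitrary cone above a given base $c$, apply the Cone Avoidance Lemma (Lemma \ref{thmconeavoidance}) in the effectively compact space $[0,1]^\omega$ to produce a non-total $x\geq_M c$ with $p_0(x)\not\leq_M x\oplus p_1(x)$, and a symmetric $x'\geq_M c$ for the reverse inequality; these yield $p_0(x)\in f_0(x)\setminus f_1(x)$ and its symmetric counterpart, as required.

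The main obstacle is exactly this cone-survival of incomparability of the $p_i$'s. In the total world, a sufficiently generic join would collapse two incomparable reals via the Friedberg jump inversion theorem (Lemma \ref{lem:jump-inversion}); the whole point of working with non-total oracles is that this inversion is unavailable (cf.\ the remark at the end of Section \ref{section:turing-jump}), which is precisely what permits $p_0(x)$ and $p_1(x)$ to remain Turing-incomparable above every $\leq_M$-cone. Carrying this out rigorously while preserving Borel-measurability of $p_0,p_1$ and $(\leq_M,\leq_M^\ast)$-preservation of $f_0,f_1$ -- and, in particular, checking consistency of the construction across the $T$--$N$ boundary -- is where the full strength of the generalized Turing degree theory developed in Section \ref{section generalized Turing degree theory} is genuinely needed.
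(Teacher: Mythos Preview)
Your proposal has a genuine and fatal gap in the $(\leq_M,\leq_M^\ast)$-preservation step, and it is precisely the Selman--Rozinas criterion (Lemma~\ref{lem:unif-nonunif}) that breaks it rather than saves it. Take any non-total $x\in N$. Since $p_i(x)\in\baire$ is a name of $x$, we have $x\leq_M p_i(x)$ and hence $x\oplus p_i(x)\equiv_M p_i(x)$; and since $x$ is non-total, $p_i(x)\not\leq_M x$. Now let $x'\in T$ be any total point with $x'\geq_M x$. By your definition $p_i(x')\equiv_M x'$, so $f_i(x')=(x')^\ast$. Preservation would force $p_i(x)\in f_i(x)\subseteq f_i(x')$, i.e.\ $p_i(x)\leq_M x'$. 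But this must hold for \emph{every} total $x'\geq_M x$, in particular for every $z\in 2^\omega$ with $z\geq_M x$ (these are total in $[0,1]^\omega$ via the canonical embedding). Lemma~\ref{lem:unif-nonunif} then gives $p_i(x)\leq_M x$, hence $p_i(x)\equiv_M x$, contradicting non-totality of $x$. Thus no choice of $p_i$ with $p_i(x)\not\equiv_M x$ on $N$ can make $f_i$ $(\leq_M,\leq_M^\ast)$-preserving. The ``delicate monotonicity check at the $T$--$N$ boundary'' is not delicate; it is impossible for functions of the form you propose.

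The paper's argument uses a completely different mechanism. It defines, for $\alpha<\omega_1$, the ideal
\[
I^\alpha(x)=\{z\in[0,1]^\omega:(\exists y\in 2^\omega)\;y\leq_M x\text{ and }z\leq_M y^{(\alpha)}\oplus x\},
\]
and compares $I^\alpha$ with the principal-jump maps $J^{(\beta)\ast}$. Preservation and Borelness of $I^\alpha$ are straightforward. On total $x$ one has $I^\alpha(x)=(x^{(\alpha)})^\ast$, which immediately gives $I^\alpha\not\leq_\mathbf{m}J^{(\beta)\ast}$ for $\beta<\alpha$. For the other direction the key input is Miller's theorem that every Scott ideal is realized as the lower $M$-cone of some point in $[0,1]^\omega$: above any base $c$ one chooses a non-total $x_c$ whose lower Turing cone is the Scott ideal $\{y:(\exists n)\,y\leq_T c^{(\alpha\cdot n)}\}$, which is closed under the $\alpha$-th jump, so that $I^\alpha(x_c)=x_c^\ast\subsetneq J^\ast(x_c)$. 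Hence $J^\ast$ and $I^2$ are $\leq_\mathbf{m}$-incomparable. The point is that the incomparability is produced not by attaching auxiliary names to $x$ (which, as shown above, destroys preservation), but by exploiting the internal structure of the Turing ideal below a non-total point.
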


\begin{proof}
For each countable ordinal $\alpha<\omega_1$, we define $I^\alpha(x)$ for a point $x\in[0,1]^\omega$ with $\alpha<\omega_1^x$ as follows:
\[I^\alpha(x)=\{z\in [0,1]^\omega:(\exists y\in 2^\omega)\;y\leq_Mx \mbox{ and } z\leq_My^{(\alpha)}\oplus x\}.\]

We first claim that $I^\alpha(x)$ forms an ideal.
Given $z_0,z_1\in I^\alpha(x)$, there is $y_0,y_1\in 2^{\omega}$ such that $y_0,y_1\leq_Mx$ and $z_i\leq_M y_i^{(\alpha)}\oplus x$ for each $i<2$.
Then we have $y_0\oplus y_1\leq_Mx$ and $z_0\oplus z_1\leq_M y_0^{(\alpha)}\oplus y_1^{(\alpha)}\oplus x\leq_M(y_0\oplus y_1)^{(\alpha)}\oplus x$.
Therefore, $z_0\oplus z_1\in I^\alpha(x)$.
It is also easy to see that $I^\alpha$ is $(\leq_M,\leq_M^\ast)$-preserving.
We also see that $I^\alpha$ is increasing on a cone since $x^\ast=\{y\in[0,1]^\omega:y\leq_Mx\}\subseteq I^\alpha(x)$.

We next check that $I^\alpha:{[0,1]^\omega}\to ([0,1]^\omega)^\ast$ is Borel.
Recall from Example \ref{exa:rep-Hilbert-cube} that we have defined a tree $\mathbf{H}$ of names of points in Hilbert cube.
Then, $y\leq_Mx$ is equivalent to the existence of an index $e\in\omega$ such that for any $n\in\omega$ and any sufficiently large $l\in\omega$, if $\sigma\in\mathbf{H}$ is a string of length $l$ and $x\in B_\sigma$, then $\Phi_e^\sigma(n)$ is defined.
This condition is arithmetical.
We can also replace the existence of $y\leq_Mx$ with the existence of an index $e\in\omega$ computing $y$ from $x$.
Consequently, $I^\alpha$ is $\Delta^1_1$ relative to an oracle computing $\alpha$.
Therefore, $I^\alpha$ is Borel.

We claim that (the principal ideal generated by) the first jump $J^\ast:x\mapsto\{y\in[0,1]^\omega:y\leq_MJ_{[0,1]^\omega}(x)\}$ is not Martin-below $I^\alpha$ for any $\alpha<\omega_1$, and $I^\alpha$ is not Martin-below (the principal ideal generated by) the $\beta^{\text{th}}$ jump $J^{(\beta)\ast}$ for all $\beta<\alpha$.
The latter is obvious because $J^{(\beta)}(x)^\ast\subsetneq J^{(\alpha)}(x)^\ast=I^\alpha(x)$ for any total $x\in[0,1]^\omega$, and total degrees are cofinal in the continuous degrees.
To see $J^\ast\not\leq_\mathbf{m}I^\alpha$, recall from Miller \cite[Theorem 9.3]{miller2} that every Scott ideal is realized as the lower Turing cone of a point in $[0,1]^\omega$.
Now, for any $c\in 2^\omega$ with $\alpha<\omega_1^c$, we consider the Turing ideal $I_c=\{y\in 2^\omega:(\exists n\in\omega)\;y\leq_Tc^{(\alpha\cdot n)}\}$.
Clearly, $I_c$ forms a Scott ideal, and indeed, $I_c$ is closed under the $\alpha^{\text{th}}$ Turing jump, that is, for every $y\in I_c$, $y^{(\alpha)}\in I_c$.
By the above mentioned result by Miller \cite[Theorem 9.3]{miller2}, there is $x_c\in[0,1]^\omega$ such that for any $y\in 2^\omega$, $y\leq_Mx_c$ if and only if $y\in I_c$.
So, $c\leq_Mx_c$, and if $y\leq_Mx_c$ then $y^{(\alpha)}\leq_Mx_c$; hence $y^{(\alpha)}\oplus x_c\leq_Mx_c$.
This implies that $I^\alpha(x_c)=x_c^\ast\subsetneq J^\ast(x_c)$.
This concludes $J^\ast\not\leq_\mathbf{m}I^\alpha$.
In particular, $J^\ast$ and $I^2$ are incomparable with respect to the Martin order $\leq_\mathbf{m}$.
\end{proof}

A consequence of Slaman's result is that the relation $\leq_\mathbf{m}$ is a linear order on the set of $(\leq_{T},\leq_{T}^\ast)$-preserving Borel functions $f: \cantor\to\cantor$, which are increasing on a cone. This implies the same assertion for $(\leq_{T},\leq_{T})$. The preceding result refutes the $(\leq_{M},\leq_{M}^\ast)$ case, but still this has no implications to the non-$\ast$ case.

\begin{myquestion}
Does the Martin order $\leq_\mathbf{m}$ give a linear order on the set of $(\leq_{M},\leq_{M})$-preserving Borel functions $f:[0,1]^\omega\to[0,1]^\omega$ which are increasing on a cone?
\end{myquestion}

Steel \cite{Steel82} has shown that the Martin conjecture is true for {\em uniformly $(\equiv_T,\equiv_T)$-invariant} functions $f:2^\omega\to 2^\omega$.
It is natural to ask whether Steel's theorem holds in other Polish spaces.
Unfortunately, in proper infinite dimensional spaces, Steel's game-theoretic argument is useless to decide successor $\leq_\mathbf{m}$-ranks even if we restrict our attention to {\em uniformly $(\leq_T,\leq_M)$-preserving} Borel functions $f:2^\omega\to [0,1]^\omega$ which are increasing on a cone.
Nevertheless, Corollary \ref{cor:Martin-positive} tells us the complete description of all $\leq_\mathbf{m}$-ranks of such functions even in the non-uniform case, which exhibits the strength of our positive result.
However, of course, our current results have no implication to non-Borel functions.

\begin{myquestion}
Is it true that all successor $\leq_\mathbf{m}$-ranks of uniformly $(\leq_T,\leq_M)$-preserving functions $f:2^\omega\to[0,1]^\omega$ are given by the Turing jump $TJ$?
\end{myquestion}

\section{Continuous transition in the codes}

\label{section continuous transition in the codes}

Kihara \cite[Problem 2.13]{kihara_decomposing_Borel_functions_using_the_Shore_Slaman_join_theorem} asked whether the condition $f^{-1}:\bolds^0_m\subseteq\bolds^0_n$ always involves a continuous witness.
The motivation behind this question is the following consequence of Kihara's technique \cite[Lemma 2.9]{kihara_decomposing_Borel_functions_using_the_Shore_Slaman_join_theorem}. Let $m\leq n<2m-1$ and assume that $f$ satisfies the following two conditions:
\begin{enumerate}
\item[(a)] $f$ is decomposable into countably many $\bolds^0_{n-m+1}$-measurable functions,
\item[(b)] and $f^{-1}:\bolds^0_m\subseteq\bolds^0_n$ holds continuous-uniformly in the codes with respect to {\em some} good universal systems,
\end{enumerate}
Then the witnessing sets in (a) above can be chosen to be $\boldd^0_n$, \ie we have the best possible decomposition.

It is a well-known fact in descriptive set theory that Borel-measurable functions are turned into continuous ones without affecting the Borel-structure of the underlying spaces. It is perhaps tempting to assume that a Borel-transition in the codes is essentially a continuous one under a modified universal system, and therefore the Decomposability Conjecture can be solved at least in the cases $m \leq n \leq 2m-1$ using the techniques of Kihara \cite[Lemma 2.9]{kihara_decomposing_Borel_functions_using_the_Shore_Slaman_join_theorem}. Here we show that this assertion is partially correct.

\begin{theorem}
\label{corollary from continuous transition}
Suppose that \ca{X} and \ca{Y} are Polish spaces and that $f: \ca{X} \to \ca{Y}$ satisfies $f^{-1}\bolds^0_m \subseteq \bolds^0_n$ for some $m, n \geq 1$. Then there exists a universal system $(J_m^\ca{Z})_{\ca{Z}}$ for $\bolds^0_m$ such that condition $f^{-1}\bolds^0_m \subseteq \bolds^0_n$ holds continuous-uniformly in the codes with respect to $(J_m^{\ca{Y}},\univ_n^{\ca{X}})$.
\end{theorem}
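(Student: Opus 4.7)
The plan follows the hint given in the paragraph preceding the statement: every Borel-measurable function between Polish spaces becomes continuous after a suitable refinement of the topology on its domain. I intend to apply this classical idea to the Borel uniformity function supplied by Theorem \ref{theorem Borel transition in the codes}, and then to encode the refinement into a new parametrization of $\bolds^0_m\upharpoonright\ca{Y}$.

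First, Theorem \ref{theorem Borel transition in the codes} (applied with $\ca{A}=\ca{X}$, which is analytic in itself) produces a Borel function $u:\baire\to\baire$ satisfying
\[
\univ_m^\ca{Y}(\alpha,f(x))\iff\univ_n^\ca{X}(u(\alpha),x)
\]
for every $\alpha\in\baire$ and $x\in\ca{X}$. By a well-known refinement theorem of descriptive set theory, there exists a Polish topology $\tau$ on the underlying set of $\baire$, finer than the standard topology and with the same Borel $\sigma$-algebra, such that $u:(\baire,\tau)\to\baire$ is continuous. Since $(\baire,\tau)$ is a non-empty Polish space, the classical fact that every non-empty Polish space is a continuous surjective image of Baire space furnishes a continuous surjection $h:\baire\to(\baire,\tau)$. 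Regarded as a map $\baire\to\baire$ between standard-topology spaces, $h$ remains continuous (continuity into the finer target implies continuity into the coarser one) and surjective, and $v:=u\circ h:\baire\to\baire$ is continuous.

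Now define the new system by
\[
J_m^\ca{Y}(\gamma,y)\iff\univ_m^\ca{Y}(h(\gamma),y),
\]
and, for every Polish space $\ca{Z}\neq\ca{Y}$, set $J_m^\ca{Z}:=\univ_m^\ca{Z}$. Three routine checks establish that $J_m^\ca{Y}$ is universal for $\bolds^0_m\upharpoonright\ca{Y}$: (a) it is the preimage of the $\bolds^0_m$ set $\univ_m^\ca{Y}$ under the continuous map $(h,\mathrm{id}):\baire\times\ca{Y}\to\baire\times\ca{Y}$, hence it lies in $\bolds^0_m$; (b) given any $A\in\bolds^0_m\upharpoonright\ca{Y}$ with $\univ_m^\ca{Y}$-code $\alpha$, the surjectivity of $h$ provides some $\gamma$ with $h(\gamma)=\alpha$, so $J_m^\ca{Y}(\gamma,\cdot)=A$; (c) continuous uniformity with respect to $(J_m^\ca{Y},\univ_n^\ca{X})$ is witnessed by $v$ via the chain
\[
J_m^\ca{Y}(\gamma,f(x))\iff\univ_m^\ca{Y}(h(\gamma),f(x))\iff\univ_n^\ca{X}(u(h(\gamma)),x)\iff\univ_n^\ca{X}(v(\gamma),x).
\]

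Beyond Theorem \ref{theorem Borel transition in the codes}, the construction uses only the topology refinement theorem and the universality of Baire space, so no serious obstacle is expected. As the remark preceding the theorem warns, the resulting system is not ``good'' in the sense of supporting a continuous extraction of sections: the map $h$ has no canonical compatibility with product spaces, so the natural attempt to lift the construction to $\ca{X}\times\ca{Y}$ breaks down. This explains why the technique, while producing the desired continuous-uniform transition at $\ca{Y}$, does not by itself yield the best decomposition even in the range $m\leq n\leq 2m-1$ covered by Theorem \ref{theorem Kihara continuous transition}.
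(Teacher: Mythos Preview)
Your proposal is correct and follows essentially the same approach as the paper. The paper factors the argument through an auxiliary Lemma~\ref{theorem continuous transition}, which is nothing more than an explicit (effective) construction of the topology refinement you invoke as a black box; indeed the paper's own footnote to that lemma acknowledges this. The only cosmetic difference is that the paper defines $J_m^\ca{Z}(\ep,z)\iff\univ_m^\ca{Z}(g_1(\ep),z)$ uniformly for \emph{all} $\ca{Z}$ via the single continuous surjection $g_1$, whereas you leave $J_m^\ca{Z}=\univ_m^\ca{Z}$ for $\ca{Z}\neq\ca{Y}$; both choices yield a universal system and the theorem only concerns the transition at $\ca{Y}$.
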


To show Theorem \ref{corollary from continuous transition}, we need the following lemma:

\begin{lemma}
\label{theorem continuous transition}
Suppose that \ca{X} and \ca{Y} are Polish spaces and that $f: \ca{X} \to \ca{Y}$ satisfies $f^{-1}\bolds^0_m \subseteq \bolds^0_n$ for some $m, n \geq 1$. Then there exists a continuous surjection $g_1: \baire \surj \baire$ and a $\bolds^0_n$ set $H \subseteq \baire \times \ca{X}$ such that
\[
\univ_m^{\ca{Y}}(g_1(\ep),f(x)) \iff H(\ep,x)
\]
for all $(\ep,x) \in \baire \times \om \times \ca{X}$.
\end{lemma}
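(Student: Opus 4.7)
The plan is to combine the Borel-uniform transition in the codes (Theorem \ref{theorem Borel transition in the codes}) with the classical change-of-topology technique from descriptive set theory.

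First I will apply Theorem \ref{theorem Borel transition in the codes} to obtain a Borel-measurable function $u:\baire\to\baire$ satisfying
\[
\univ_m^{\ca{Y}}(\alpha,f(x)) \iff \univ_n^{\ca{X}}(u(\alpha),x)
\]
for all $\alpha \in \baire$ and $x\in\ca{X}$. This reduces the problem to exhibiting a continuous surjection $g_1$ onto $\baire$ for which $u\circ g_1$ is continuous, since then $\univ_m^{\ca{Y}}(g_1(\ep),f(x))$ automatically becomes a continuous pullback of a $\bolds^0_n$ set.

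Next I will invoke the standard change-of-topology theorem, which furnishes a Polish topology $\tau$ on the underlying set of $\baire$ that refines the usual product topology, shares the same Borel sets, and makes $u:(\baire,\tau)\to\baire$ continuous. Since $(\baire,\tau)$ is a nonempty Polish space, it is a continuous image of Baire space; pick any continuous surjection $g_1:\baire\to(\baire,\tau)$. Because $\tau$ refines the usual topology, the same function $g_1$ remains continuous when viewed as a map $\baire\to\baire$ with the usual topologies on both sides, and it is of course still a set-theoretic surjection. Moreover, the composition $v:=u\circ g_1:\baire\to\baire$ is continuous, being the composition of the continuous map $g_1:\baire\to(\baire,\tau)$ with the $\tau$-continuous map $u$.

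Finally I will define
\[
H(\ep,x) \iff \univ_n^{\ca{X}}(v(\ep),x).
\]
Since $v$ is continuous and $\univ_n^{\ca{X}}$ is $\bolds^0_n$, the set $H$ is $\bolds^0_n$ in $(\ep,x)$, and the equivalence $\univ_m^{\ca{Y}}(g_1(\ep),f(x))\iff H(\ep,x)$ follows immediately by applying the Borel uniformity at $\alpha=g_1(\ep)$. I do not expect a genuine obstacle: Theorem \ref{theorem Borel transition in the codes} has already done the substantive work of obtaining a Borel uniformity, and the change-of-topology is textbook descriptive set theory. The only small subtlety worth spelling out carefully is the observation that any continuous surjection onto $(\baire,\tau)$ is automatically a continuous surjection onto $\baire$ in the usual sense, which simultaneously delivers the surjectivity of $g_1$ and the continuity of $u\circ g_1$.
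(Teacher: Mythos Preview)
Your proposal is correct and follows essentially the same approach as the paper: both apply Theorem~\ref{theorem Borel transition in the codes} to obtain a Borel uniformity $u$, then use a change-of-topology to make $u$ continuous and precompose with a continuous surjection from Baire space. The paper merely carries out the change-of-topology construction by hand (encoding the Borel preimages $\tau^{-1}[N_s]$ explicitly via closed sets $C_0,C_1$ and the graph $F$), and in fact acknowledges in a footnote that this is ``in essence an effective proof'' of the very textbook theorem you invoke as a black box.
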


\begin{proof}
From Theorem \ref{theorem Borel transition in the codes} there exists a Borel-measurable function $\tau: \baire \to \baire$ such that
\[
\univ^{\ca{Y}}_m(\beta,f(x)) \iff \univ^{\ca{X}}_n(\tau(\beta),x)
\]
for all $(\beta,x) \in \baire \times \ca{X}$.
We consider basic neighborhoods $N_s$ of the Baire space and we define $Q \subseteq \baire \times \om$ by
\[
Q(\beta,s) \iff \tau(\beta) \in N_s.
\]
Clearly $Q$ is Borel. From (the relativized version of) 4A.7 in \cite{yiannis_dst} the sets $Q$ and $\neg Q$ are the injective recursive images of closed subsets of the Baire space. Since the graph of a continuous function is a closed set there exist closed sets $C_0$, $C_1 \subseteq \baire \times \om \times \baire$ such that
\begin{align*}
Q(\beta,s) \iff& (\exists \alpha)C_0(\beta,s,\alpha)\iff(\exists ! \alpha)C_0(\beta,s,\alpha),\\
\neg Q(\beta,s) \iff& (\exists \alpha)C_1(\beta,s,\alpha)\iff(\exists ! \alpha)C_1(\beta,s,\alpha),
\end{align*}
for all $\beta$ and $s$, where $\exists !$ stands for ``there exists a unique".

Now we define $F \subseteq \baire \times \baire \times \cantor$ by
\begin{align*}
F(\beta,\gamma,\delta) \iff& (\forall s)\big \{ \ [\delta(s) = 0 \ \& \ C_0(\beta,s,(\gamma)_s)] \ \vee \ [\delta(s) = 1 \ \& \ C_1(\beta,s,(\gamma)_s)] \ \big \}\\
                                        & \ \& \ \gamma(t) = 0 \ \textrm{for all $t$ which do are not of the form $\langle s,i \rangle$}.
\end{align*}
It is clear that $F$ is a closed set and that for all $\beta \in \baire$ there exists exactly one pair $(\gamma,\delta) \in \baire \times \cantor$ such that $F(\beta,\gamma,\delta)$.
It is also clear that $F$ is a Polish space and so there exists a continuous surjection
\[
g: \baire \surj F.
\]
We define the functions
\[
h: F \to \baire: h(\beta,\gamma,\delta) = \tau(\beta), \quad \textrm{and} \quad \pi: \baire \to \baire: \pi = h \circ g,
\]
so that the following diagram commutes
\begin{figure}[h!]
\begin{picture}(300,25)(0,5)
\put(100,25){\scalebox{1}{$\baire \ \ \stackrel{g}{\longrightarrow \kern -.8em \rightarrow} \ \ F \ \ \stackrel{h}{\longrightarrow} \ \ \baire$.}}
\qbezier(110,20)(145,8)(182,20)
\put(184,21.5){\vector(4,3){1}}
\put(145,8){${}_\pi$}
\end{picture}
\end{figure}

We claim that the function $h$ is continuous. From this it follows that the function $\pi$ is continuous as well. To see the former we notice that
\begin{align*}
h(\beta,\gamma,\delta) \in N_s \iff& \ \ \ \delta(s) = 0 \ \& \ C_0(\beta,s,(\gamma)_s)\\
                               \iff& \neg [ \delta(s) = 1 \ \& \ C_1(\beta,s,(\gamma)_s) ]
\end{align*}
for all $(\beta,\gamma,\delta) \in F$ and all $s \in \om$. The preceding equivalences show that the set $h^{-1}[N_s]$ is in fact clopen for all $s$.\footnote{It is a well-known result of classical descriptive set theory that for every Borel measurable function $f: \ca{X} \to \ca{Y}$ (\ca{X}, \ca{Y} Polish) there exists a zero-dimensional Polish topology on \ca{X} which refines the original topology, has the same Borel sets and the function $f$ is continuous with respect to this new topology, \cf \cite{kechris_classical_dst} Section 13.A. Here we gave in essence an effective proof of this fact for the case of the function $\tau$. The new topology on $\baire$ is the one induced by the bijection \mbox{$p: \beta \in \baire \mapsto (\beta,\gamma,\delta) \in F$}. This topology refines the original one because the function \mbox{$(\beta,\gamma,\delta) \mapsto \beta$} is continuous, and the Borel structure remains the same because the function $p$ is Borel measurable. Moreover the function $\tau$ is continuous under this new topology since $\tau = h \circ p$ and $h$ is continuous. For more information on turning Borel sets into clopen in an effective way the reader can refer to \cite{gregoriades_turning_borel_sets_into_clopen_sets_effectively}.}

We define the function
\[
g_1 = \pr_1 \circ g: \baire \to \baire,
\]
where $\pr_1$ is the projection $(\beta,\gamma,\delta) \in F \mapsto \beta \in \baire$. It is clear that $g_1$ is continuous, surjective and that $\pi = \tau \circ g_1$. Thus the preceding diagram is refined to the following (commutative) diagram

\begin{figure}[h!]
\begin{picture}(300,55)(0,-5)
\put(70,25){\scalebox{1}{$\baire \ \ \stackrel{g}{\longrightarrow \kern -.8em \rightarrow} \ \ F \ \ \stackrel{\hspace*{-1mm}\pr_1}{\scalebox{1}{\bij}} \ \ \baire \ \ \stackrel{\tau}{\longrightarrow} \ \ \baire$}.}
\qbezier(80,20)(115,8)(152,19)
\put(154,20.5){\vector(4,3){1}}
\put(130,10){${}_{g_1}$}
\qbezier(80,17)(135,-13)(194,17.5)
\put(196,19){\vector(4,3){1}}
\put(150,-3){$_\pi$}
\qbezier(118,37)(158,52)(198,38)
\put(199,37){\vector(4,-3){1}}
\put(158,49){${}_h$}
\end{picture}
\end{figure}

Now we define $H \subseteq \baire \times \ca{X}$ as follows
\[
H(\ep,x) \iff \univ^{\ca{Y}}_n(\pi(\ep),x).
\]
Since $\pi$ is continuous and $\univ^{\om \times \ca{X}}_n$ is a $\bolds^0_n$ set it follows that $H$ is a $\bolds^0_n$ set as well. Finally we compute
\begin{align*}
\univ^{\ca{Y}}_m((g_1(\ep),f(x))    \iff& \univ^{\om \times \ca{X}}_n(\tau(g_1(\ep)),x)\\
                             \iff& H(\ep,x)
\end{align*}
for all $(\ep,x) \in \baire \times \ca{X}$, and the proof is complete.
\end{proof}

\begin{proof}[Proof of Theorem \ref{corollary from continuous transition}]
From Lemma \ref{theorem continuous transition} there exists a continuous surjection $g_1: \baire \surj \baire$ and a $\bolds^0_n$ set $H \subseteq \baire \times \om \times \ca{X}$ such that
\[
\univ_m^{\om \times \ca{Y}}(g_1(\ep),f(x)) \iff H(\ep,x)
\]
for all $(\ep,x) \in \baire \times \ca{X}$.
We define
\[
J_m^\ca{Z}(\ep,z) \iff \univ_m^\ca{Z}(g_1(\ep),z)
\]
for all $(\ep,z) \in \baire \times \ca{Z}$ and all Polish spaces \ca{Z}. Since the function $g_1$ is continuous, we have that $J_m^\ca{Z}$ is a $\bolds^0_m$ set, and, since $g_1$ is surjective, it follows that $J_m^\ca{Z}$ parametrizes $\bolds^0_m \upharpoonright \ca{Z}$. Thus $(J_m^\ca{Z})_{\ca{Z}}$ is a universal system for $\bolds^0_m$.
Since the set $H \subseteq \baire \times \om \times \ca{X}$ is $\bolds^0_n$ there exists some $\beta \in \baire$ such that $H$ is the $\beta$-section of $\univ_n^{\ca{Z}}$, where $\ca{Z} = \baire \times \ca{X}$.
In particular, the $\ep$-section of $H$ is the $(\beta,\ep)$-section of $\univ_n^{\ca{Z}}$.
Therefore,
\[
J_m^{\ca{Y}}(\ep,f(x)) \iff H(\ep,x) \iff \univ_n^{\ca{Z}}(\beta,\ep,x) \iff \univ_n^{\ca{X}}(\pi(\ep),x)
\]
for all $(\ep,x) \in \ca{Z}$, where $\pi(\ep)$ is a code of the $(\beta,\ep)$-section of $\univ_n^{\ca{Z}}$.
Note that such a $\pi$ can be chosen as a continuous function.
\end{proof}

\begin{remark}\normalfont
\label{remark about continuous transition}
The universal system $(J_m^\ca{Z})_{\ca{Z}}$ of the preceding theorem depends on the choice of $f$. Moreover it does not seem to be a good universal system, unless the uniformity function in Theorem \ref{theorem Borel transition in the codes} is in fact continuous. The reason we claim this lies in the following equivalences. It is clear from the definition that
\begin{align*}
J_m^{\ca{X} \times \ca{Z}}(\ep,x,z) \iff& \univ_m^{\ca{X} \times \ca{Z}}(g_1(\ep),x,z)\\
                                    \iff& \univ_m^{\ca{Z}}(S(g_1(\ep),x),z)
\end{align*}
for some continuous function $S$. In order to infer that $(J^\ca{Z}_m)_{\ca{Z}}$ is a good universal system we would need a continuous function $\varphi: \baire \to \baire$ such that $g_1 \circ \varphi = {\rm id }$, \ie $g_1$ should have a \emph{continuous inverse}. In this case we would define the function
\[
S_{J,_m}^{\ca{X},\ca{Z}} \equiv S_J: \baire \times \ca{X} \to \baire: S_J(\ep,x) = \varphi(S(g_1(\ep),x))
\]
and we would have
\begin{align*}
J_m^\ca{\ca{X} \times \ca{Z}}(\ep,x,z) \iff& \univ_m^{\ca{Z}}(S(g_1(\ep),x),z)\\
                                         \iff& \univ_m^{\ca{Z}}(g_1(\varphi(S(g_1(\ep),x))),z)\\
                                         \iff& J_m^{\ca{Z}}(\varphi(S(g_1(\ep),x))),z)\\
                                         \iff& J_m^{\ca{Z}}(S_J(\ep,x),z).
\end{align*}
However the function $g_1$ does not seem to admit a continuous inverse, for otherwise by going back to the proof of Theorem \ref{theorem continuous transition} the bijection $(\beta,\gamma,\delta) \in F \mapsto \beta$ would also have a continuous inverse. This would imply that the uniformity function in Theorem \ref{theorem Borel transition in the codes} is continuous.

This remark says essentially that the idea of turning Borel sets into clopen does not seem to help in the Decomposability Conjecture even for the cases $2 \leq n \leq 2m-1$.
\end{remark}

\section{A plan for the full solution}\label{sec:last}

We conclude this article by proposing a recursion-theoretic strategy for solving the Decomposability Conjecture. As we point out at the end of Section \ref{section Borel transition in the codes} it is easy to verify that a function $f: \ca{A} \to \ca{Y}$ is decomposable to $\bolds^0_{k+1}$-measurable functions if and only if there is some $w \in \cantor$ such that for all $x \in \mathcal{A}$ is holds $f(x) \leq_M (x \oplus w)^{(k)}$. The latter condition gives us an upper bound for the complexity of the decomposing sets, but this is not necessarily the best possible.\footnote{In the case $m=n=2$ we consider the Lebesgue function $L: \cantor \to [0,1]$ from the Solecki Dichotomy (for Baire-$1$ functions on closed domains) \cite{Solecki98}. It is easy to arrange the definition of $L$ so that $L(\alpha)\leq_M \alpha$ for all $\alpha \in \cantor$. On the other hand $L$ is not decomposable to continuous functions on $\boldd^0_2$ domains.}

We now give the analoguous remark taking into consideration the complexity of the decomposing sets. Given $x,y$ with $x \leq_M y$ let us say that $e$ \emph{realizes} that $x \leq_M y$ if $x = \Phi_e(y)$.

\begin{proposition}
\label{proposition decomposability realizer}
Let \ca{X}, $\ca{Y}$ be recursively presented metric spaces, \ca{A} be any subset of \ca{X} and $f: \ca{A} \to \ca{Y}$ be a function.
Then, the following are equivalent for $n\geq k+2$:
\begin{enumerate}
\item The function $f$ is decomposable to $\bolds^0_{k+1}$-measurable functions on $\boldd^0_n$ sets.
\item There is $z \in \cantor$ such that $f(x) \leq_M (x \oplus z)\tpower{k}$ holds for all $x \in \ca{A}$, and moreover, the preceding condition is realized by a $\bolds^0_n$-measurable function $u: \ca{A} \to \om$.
\end{enumerate}
\end{proposition}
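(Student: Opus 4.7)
The plan is to handle the two implications separately, in both cases leaning on the following characterization: a function $g: \ca{B} \to \ca{Y}$ is $\bolds^0_{k+1}$-measurable if and only if there exist $z \in \cantor$ and a single index $e \in \om$ with $g(x) = \Phi_e^{\baire,\ca{Y}}((x \oplus z)\tpower{k})$ for every $x \in \ca{B}$. This is the uniform version of \cite[Lemma 2.7]{kihara_decomposing_Borel_functions_using_the_Shore_Slaman_join_theorem}; its non-trivial direction is obtained by applying Theorem \ref{theorem Borel transition in the codes} with $m=1$ and $n$ replaced by $k+1$, producing a single parameter $z$ that absorbs the $\bolds^0_{k+1}$-codes of all basic open preimages, from which a single recursive functional reading off $g(x)$ from $(x \oplus z)\tpower{k}$ is extracted.

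For $(2) \Rightarrow (1)$, I would set $\ca{A}_e := u^{-1}(\{e\})$ for every $e \in \om$. The $\bolds^0_n$-measurability of $u$ into the discrete space $\om$ places each $\ca{A}_e$ in $\bolds^0_n$, while its complement in $\ca{A}$ equals the countable union $\bigcup_{j \neq e} \ca{A}_j$ of $\bolds^0_n$ sets and is therefore $\bolds^0_n$ as well, so $\ca{A}_e \in \boldd^0_n$. On $\ca{A}_e$ the function $f$ agrees with $x \mapsto \Phi_e((x \oplus z)\tpower{k})$, the composition of the $\bolds^0_{k+1}$-measurable map $x \mapsto (x \oplus z)\tpower{k}$ with a continuous functional, and is therefore $\bolds^0_{k+1}$-measurable.

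For $(1) \Rightarrow (2)$, I would start from a partition $\ca{A} = \bigsqcup_{e \in \om} \ca{A}_e$ with $\ca{A}_e \in \boldd^0_n$ and each $f \upharpoonright \ca{A}_e$ being $\bolds^0_{k+1}$-measurable. The characterization yields pairs $(z_e, i_e)$ with $f(x) = \Phi_{i_e}((x \oplus z_e)\tpower{k})$ on $\ca{A}_e$. Setting $z := \bigl(\bigoplus_{e} z_e\bigr) \oplus (i_e)_{e \in \om}$ under a fixed recursive coding, the oracle $(x \oplus z)\tpower{k}$ uniformly computes each $(x \oplus z_e)\tpower{k}$ and each $i_e$, so the $s$-$m$-$n$ theorem supplies a recursive $e \mapsto j_e$ with $\Phi_{j_e}((x \oplus z)\tpower{k}) = \Phi_{i_e}((x \oplus z_e)\tpower{k})$ for every $x \in \ca{A}_e$. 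Defining $u(x) := j_{e(x)}$, where $e(x)$ is the unique $e$ with $x \in \ca{A}_e$, gives $f(x) = \Phi_{u(x)}((x \oplus z)\tpower{k})$ on $\ca{A}$; and since $u^{-1}(\{j\}) = \bigcup_{e : j_e = j} \ca{A}_e$ is a countable union of $\boldd^0_n$ sets, the function $u$ is $\bolds^0_n$-measurable.

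The main obstacle will be establishing the uniform-realization half of the characterization: mere $\bolds^0_{k+1}$-measurability a priori provides only one $\bolds^0_{k+1}$-code per basic open preimage, with no control on how these codes depend on the indexing. Passing to a single parameter $z$ and thence to a single recursive functional applied to $(x \oplus z)\tpower{k}$ is exactly what Theorem \ref{theorem Borel transition in the codes} accomplishes. Once this is available, the remaining work---joining the parameters $z_e$, invoking $s$-$m$-$n$ to extract $j_e$ recursively from $e$, and estimating the descriptive complexity of $u$---is routine bookkeeping and does not make essential use of the assumption $n \geq k+2$; that hypothesis only matters insofar as it forces the $\boldd^0_n$ pieces of the decomposition to lie genuinely above the $\bolds^0_{k+1}$ measurability level of the restrictions, which is the regime relevant for the Decomposability Conjecture.
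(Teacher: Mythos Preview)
Your proof is correct, and the $(2)\Rightarrow(1)$ direction is identical to the paper's. For $(1)\Rightarrow(2)$ you take a genuinely different route. The paper does \emph{not} use the single-index characterization of $\bolds^0_{k+1}$-measurable functions; instead it only records that $f_i(x)\leq_M(x\oplus w)^{(k)}$ with no fixed index, then refines each piece $X_i$ by the sets $B_{i,e}=\{x\in X_i:f_i(x)=\Phi_e((x\oplus w)^{(k)})\}$, observes that each $B_{i,e}$ is $\boldp^0_{k+1}$ in $X_i$, and invokes $k+1<n$ to place $B_{i,e}$ in $\boldd^0_n$ before disjointifying. Your argument bypasses this refinement entirely: by securing one index $i_e$ per piece and pushing the dependence on $e$ into the single oracle $z$, the realizing function $u$ is constant on each original piece $\ca{A}_e$, so its measurability follows directly from the $\boldd^0_n$ partition with no appeal to $n\geq k+2$. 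Your remark that the hypothesis is inessential is therefore correct and sharpens the proposition.

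One small exposition issue: invoking Theorem~\ref{theorem Borel transition in the codes} for the characterization is not quite right. That theorem assumes the domain is analytic (here $\ca{A}$, and hence the $X_i$, are arbitrary) and produces a Borel uniformity function rather than a single parameter. What you actually need is more elementary and available for any domain: for each basic ball $B^{\ca{Y}}_{m,s}$ choose a $\bolds^0_{k+1}$-code for an ambient witness of $g^{-1}[B^{\ca{Y}}_{m,s}]$, pack all these codes into a single $z$, and read off $\nbase{g(x)}$ as a $\Sigma^0_1$ condition relative to $(x\oplus z)^{(k)}$. With that correction your argument stands.
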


\begin{proof}
If $f(x) \leq_M (x \oplus z)\tpower{k}$ for all $x \in \ca{A}$ and the preceding condition is realized by a $\bolds^0_n$-measurable function $u: \ca{A} \to \om$, we take $\ca{B}_e : = u^{-1}[\{e\}]$ where $e \in u[\ca{A}]$. This settles the right-to-left-hand direction. (Notice that this holds also when $n < k+2$.)

Assume now that $f$ is decomposable to $\bolds^0_{k+1}$-measurable functions on a partition $(X_i)_{i \in \om}$ of $\boldd^0_n$ subsets of $\ca{A}$. Let $f_i$ be the restriction of $f$ on $X_i$. Since each $f_i$ is $\bolds^0_{k+1}$-measurable there is some $w \in \cantor$ such that for all $i \in \om$ and all $x \in X_i$ we have that $f_i(x) \leq_M (x \oplus w)\tpower{k}$. Define
\[
B_{i,e}:= \set{x \in X_i}{f_i(x) = \Phi_{e}^{(x \oplus w)\tpower{k}}}.
\]
Clearly $X_i = \bigcup_{e} B_{i,e}$. Moreover each $B_{i,e}$ is a $\boldp^0_{k+1}$ subset of $X_i$. Since $k+1 < n$ it follows that the $B_{i,e}$'s are $\boldd^0_n$ subsets of $X_i$.

We now consider the differences $C_{i,0} = B_{i,0}$, $C_{i,e+1} = B_{i,e+1} \setminus \bigcup_{k \leq e} B_{i,e}$ so that the for all $i$, the sets $(C_{i,e})_{e}$ are pairwise disjoint $\boldd^0_{n}$ subsets of $X_i$ and $X_i = \bigcup_{e} C_{i,e}$. Since $(X_i)_{i \in \om}$ is a $\boldd^0_n$ partition of $\ca{A}$ it follows that $(C_{i,e})_{i,e}$ is a $\boldd^0_n$ partition of $\ca{A}$ as well.

We then define $u: \ca{A} \to \om: u(x) = e$, where $x \in C_{i,e}$. Clearly $u$ is a $\bolds^0_n$-measurable realizing function.
\end{proof}

Since the existence of a $\bolds^0_n$-measurable realizing function is necessary to obtain the decomposability to $\bolds^0_{n-m+1}$-measurable functions on $\boldd^0_n$ sets ($2 \leq m \leq n$), it is natural to ask if this follows from our proof. To do this we essentially need to ask a question about the Shore-Slaman Join Theorem.

For simplicity we focus on zero-dimensional spaces.
We say that {\em $u:2^\omega\times 2^\omega\partialf\omega$ realizes the contrapositive form of the Shore-Slaman Join Theorem at $\xi+n-m$} if for all $\tilde{x},\tilde{y}\in 2^\omega$ it satisfies the following:
\[
\text{if for all $G \tgeq \tilde{x}$ we have that $G \tpower{\xi+n-m+1} \not \tleq \tilde{y} \oplus G$} \hspace*{10mm} (\ast)
\]
then $u(\tilde{x},\tilde{y})$ is defined and realizes that
\[
\text{$\tilde{y} \tleq \tilde{x} \tpower{\xi+n-m}$.}
\]

\begin{claim}\normalfont
Let $f: \ca{A} \subseteq \cantor \to \cantor$ be a function, where $\ca{A}$ is $\Sigma^1_1$, with the property $f^{-1}\bolds^0_m \subseteq \bolds^0_n$ for some $2 \leq m \leq n$.
Then, there are an oracle $z$, an ordinal $\xi$, and continuous functions $u_1,u_2,u_3$ such that for all functions $u_4$ which realize the contrapositive form of the Shore-Slaman Join Theorem, the natural $u_3(u_4(u_1(x),f(x)),u_2(x))$ realizes that $f(x)\leq_T((x\oplus z)^{(\xi)})^{(n-m)}$ for all $x\in\ca{A}$.
\end{claim}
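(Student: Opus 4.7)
The plan is to trace through the proof of Theorem~\ref{theorem main} (its first assertion, $f\in\dec(\bolds^0_{n-m+1},\boldd^0_{n+1})$) and to observe that the only step that is not continuous in $x$ is the invocation of the Shore--Slaman Join Theorem inside the Cancellation Lemma~\ref{lem:deg-Shore-Slaman}. Once that single step is replaced by a realizer $u_4$ of the contrapositive form, the remaining pieces assemble an index continuously in $x$, provided one chooses $u_1$ so as to bypass the use of Friedberg Jump Inversion inside the proof of the Cancellation Lemma.

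First, since $f^{-1}\bolds^0_m\subseteq\bolds^0_n$ trivially implies $f^{-1}\bolds^0_{m+1}\subseteq\bolds^0_{n+1}$, Theorem~\ref{theorem Borel transition in the codes} together with Lemma~\ref{lem:turing-degree-analysis} yields an oracle $z\in\cantor$, a $z$-computable ordinal $\xi$, and a single recursive index $e_0$ such that for every $x\in\ca{A}$ and every $G\tgeq z$ the uniform reduction
\[
(f(x)\oplus G)^{(m)}=\Phi_{e_0}\bigl((x\oplus G^{(\xi)})^{(n)}\bigr)
\]
holds; here $e_0$ depends only on a code of the underlying Borel uniformity function (absorbed into $z$), and is independent of $x$ and $G$.

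Next, set $u_1(x):=x\oplus z$, which is continuous. I would verify the hypothesis $(\ast)$ of the contrapositive Shore--Slaman at level $\xi+n-m$ for $(\tilde{x},\tilde{y})=(x\oplus z,f(x))$: if some $G\tgeq x\oplus z$ satisfied $G^{(\xi+n-m+1)}\tleq f(x)\oplus G$, then taking the $m$-th jump (Lemma~\ref{fact:jumpproperties}), applying Lemma~\ref{prop:jump-equivalence}, and using the displayed reduction together with the fact that $x\tleq G$ gives $x\oplus G^{(\xi)}\teq G^{(\xi)}$, would force
\[
G^{(\xi+n+1)}\tleq(f(x)\oplus G)^{(m)}\tleq(x\oplus G^{(\xi)})^{(n)}\teq G^{(\xi+n)},
\]
which is absurd. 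Hence $(\ast)$ holds and $u_4(u_1(x),f(x))$ outputs an index $e_1$ realizing $f(x)=\Phi_{e_1}\bigl((x\oplus z)^{(\xi+n-m)}\bigr)$.

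Finally, Lemma~\ref{prop:jump-equivalence} supplies a fixed recursive index $e_*$, depending only on $\xi$, $n$, $m$ (and on a chosen notation for $\xi$), such that $(x\oplus z)^{(\xi+n-m)}=\Phi_{e_*}\bigl(((x\oplus z)^{(\xi)})^{(n-m)}\bigr)$ for every $x$. Setting $u_2(x):=e_*$ (a constant, hence continuous, function) and letting $u_3(a,b)$ be the standard recursive index of the composition $\Phi_a\circ\Phi_b$ (continuous in $(a,b)$), the composed index $u_3\bigl(u_4(u_1(x),f(x)),u_2(x)\bigr)$ then realizes $f(x)\tleq((x\oplus z)^{(\xi)})^{(n-m)}$, as required. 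The main obstacle is conceptual rather than technical: the proof of the Cancellation Lemma~\ref{lem:deg-Shore-Slaman} passes through a point $p$ with $p^{(\xi)}\teq x\oplus z^{(\xi)}$ via the Friedberg Jump Inversion Theorem~\ref{lem:jump-inversion}, a step that cannot be performed continuously in $x$; the plan circumvents it by feeding $x\oplus z$ itself into the contrapositive Shore--Slaman, accepting the larger jump exponent $\xi+n-m$, and translating back via the canonical index $e_*$ from Lemma~\ref{prop:jump-equivalence}. All remaining ingredients are recursive in the data, so the only non-effective piece is $u_4$ itself --- precisely the recursion-theoretic statement quantified over in the claim.
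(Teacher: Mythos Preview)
Your proof is correct and takes a genuinely different (and on its face simpler) route than the paper. The paper defines $u_1$ via the Friedberg Jump Inversion Theorem so that $u_1(x)^{(\xi)}\equiv_T x\oplus z^{(\xi)}$ and $u_1(x)\geq_T z$, and extracts $u_2(x)$ as a continuous realizer for $u_1(x)^{(\xi+n-m)}\leq_T(x\oplus z^{(\xi)})^{(n-m)}$ from the uniformity of that construction. You instead set $u_1(x)=x\oplus z$ and take $u_2$ to be a constant index coming from Lemma~\ref{prop:jump-equivalence}. Your verification of $(\ast)$ goes through because $G\geq_T x\oplus z$ already gives $x\leq_T G^{(\xi)}$, collapsing $(x\oplus G^{(\xi)})^{(n)}$ to $G^{(\xi+n)}$ directly. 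One small correction to your commentary: the paper \emph{does} assert that Friedberg Inversion yields a continuous $u_1$ (item~(a) of its proof), so your stated reason for bypassing it is off; the bypass is nonetheless a legitimate shortcut for the claim as stated.

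There is, however, a real cost to your simplification once one looks at the very next Claim (the reduction of the Decomposability Conjecture to Question~\ref{question uniform Shore-Slaman}). There the paper needs the map $x\mapsto u_1(x)^{(\xi+k+1)}$ to be $\bolds^0_n$-measurable. With the paper's $u_1$ this holds because $u_1(x)^{(\xi+k+1)}\equiv_T(x\oplus z^{(\xi)})^{(k+1)}$ and $z^{(\xi)}$ is a \emph{fixed} oracle, so the map is $\bolds^0_{k+2}$-measurable with $k+2\leq n$. With your $u_1(x)=x\oplus z$, the corresponding map $x\mapsto(x\oplus z)^{(\xi+k+1)}$ is only $\bolds^0_{\xi+k+2}$-measurable, which for large $\xi$ far exceeds $n$. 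So your choice of $u_1$, while perfectly adequate for this claim in isolation, would not support the intended downstream application; the paper's detour through Friedberg Inversion is there precisely to absorb the $\xi$-many jumps into the oracle $z^{(\xi)}$.
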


\noindent
{\it Proof.}
From Lemma \ref{lem:turing-degree-analysis} there is some $z \in \cantor$ and an ordinal $\xi < \ckr{z}$ such that for all $x \in \ca{A}$ we have that $(f(x) \oplus g)\tpower{m} \tleq (x \oplus g\tpower{\xi})\tpower{n}$ for all $g \tgeq z$. From the Friedberg Inversion for every $x \in \ca{A}$ there is some $D_x \subseteq \om$ such that $D_x\tpower{\xi} \teq x \oplus z\tpower{\xi}$. In particular we have that $D_x\tpower{\xi+n-m} \tleq (x \oplus z\tpower{\xi})\tpower{n-m}$.

We utilize the following results from recursion theory:
\begin{enumerate}
\item[(a)] There are continuous functions $u_1: \cantor \to \cantor$ and $u_2: \cantor \to \om$ such that for all $x \in \cantor$ we have that $u_1(x)\tpower{\xi} \teq x \oplus z\tpower{\xi}$, $u_1(x) \tgeq z$, and $u_2(x)$ realizes that $u_1(x)\tpower{\xi+n-m} \tleq (x \oplus z\tpower{\xi})\tpower{n-m}$. This follows from the proof of the Friedberg Inversion Theorem.

\item[(b)] There is a recursive partial function $u_3: \om^2 \rightharpoonup \om$ such that for all $A, B, C \subseteq \om$ if $e_1$ realizes that $A \tleq B$ and $e_2$ realizes that $B \tleq C$ then $u_3(e_1,e_2)$ realizes that $A \tleq C$.
\end{enumerate}

Going back to the proof of the Cancellation Lemma we see that the condition $(f(x) \oplus g)\tpower{m} \tleq (x \oplus g\tpower{\xi})\tpower{n}$ for all $g \tgeq z$, is used to show that the condition $(\ast)$ above holds for $\tilde{y} = f(x)$ and $\tilde{x} = u_1(x)$.
Therefore $u_4(u_1(x),f(x))$ realizes that $f(x) \tleq u_1(x) \tpower{\xi+n-m}$. Recall that $u_2(x)$ realizes that $u_1(x) \tpower{\xi+n-m} \tleq (x \oplus z\tpower{\xi})\tpower{n-m}$. It follows that $u_3(u_4(u_1(x),f(x)),u_2(x))$ realizes that $f(x) \tleq (x \oplus z\tpower{\xi})\tpower{n-m}$.
\qed(Claim)

\medskip

Let $f$ and $u_i$, $i=1,2,3,4$ be as in the preceding claim. Using the continuity of $u_1,u_2, u_3$, it follows that if the function $x \mapsto u_4(f(x),u_1(x))$ is $\bolds^0_n$-measurable then $f$ is decomposed to $\bolds^0_{n-m+1}$-measurable functions on $\boldd^0_n$ sets, \ie the function $f$ would satisfy the conclusion of the Decomposability Conjecture. Since $f$ is $\bolds^0_n$-measurable it is enough to have that $u_4$ is continuous. The latter though does not seem to us possible, however we consider it likely that $u_4(\tilde{x},\tilde{y})$ can be chosen to be recursive in $\tilde{y} \oplus \tilde{x}\tpower{\xi+n-m+1}$. We will actually ask for a small variant of this.

\begin{myquestion}\normalfont
\label{question uniform Shore-Slaman}
Let $k \in \om$. Does there exist a \emph{continuous} partial function $\tau$ such that the function
\[u_\tau: \Dom(\tau) \to \om: (\tilde{x},\tilde{y}) \mapsto \tau(\tilde{x}\tpower{\xi+k+1},\tilde{y})\]
realizes the contrapositive form of the Shore-Slaman Join Theorem at $\xi+k$?
\end{myquestion}

\begin{claim}\normalfont
If Question \ref{question uniform Shore-Slaman} has an affirmative answer then the Decomposability Conjecture is true (for functions in zero-dimensional spaces).
\end{claim}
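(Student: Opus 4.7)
The plan is to apply Proposition \ref{proposition decomposability realizer} with $k = n - m$, so we aim to produce a $\bolds^0_n$-measurable function realizing a reduction of the form $f(x) \tleq (x\oplus z')^{(n-m)}$. The case $m = 1$ is the classical Lebesgue--Hausdorff decomposition theorem, so assume $m \geq 2$. The preceding Claim already supplies $z \in \cantor$, a $z$-computable ordinal $\xi$, and continuous $u_1, u_2, u_3$ such that for every function $u_4$ realizing the contrapositive form of the Shore--Slaman Join Theorem at level $\xi + n - m$, the map $x \mapsto u_3(u_4(u_1(x), f(x)), u_2(x))$ realizes $f(x) \tleq (x\oplus z^{(\xi)})^{(n-m)}$. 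Everything therefore reduces to choosing $u_4$ so that this composite is $\bolds^0_n$-measurable, and that is precisely what an affirmative answer to Question \ref{question uniform Shore-Slaman} should provide.

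Under that affirmative answer with $k = n - m$, take $u_4 = u_\tau$, so the relevant term becomes $u_\tau(u_1(x), f(x)) = \tau(u_1(x)^{(\xi + n - m + 1)}, f(x))$ for a continuous $\tau$. A na\"ive count makes $x \mapsto u_1(x)^{(\xi + n - m + 1)}$ only $\bolds^0_{\xi + n - m + 2}$-measurable, and Theorem \ref{theorem Borel transition in the codes} imposes no bound on the ordinal $\xi$, so this estimate is useless. The key is to recycle the Friedberg Jump Inversion realizer of item (a) in the Claim's proof: the equality $u_1(x)^{(\xi + n - m)} = \Phi_{u_2(x)}\!\bigl((x\oplus z^{(\xi)})^{(n - m)}\bigr)$ holds with $u_2$ continuous, and applying the Kleene jump theorem to the reduction index $u_2(x)$ produces a continuous $u_2'$ with $u_1(x)^{(\xi + n - m + 1)} = \Phi_{u_2'(x)}\!\bigl((x\oplus z^{(\xi)})^{(n - m + 1)}\bigr)$. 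Since the $k$-th Turing jump is $\bolds^0_{k+1}$-measurable as a map $\baire \to \cantor$, the right-hand side exhibits $x \mapsto u_1(x)^{(\xi+n-m+1)}$ as a $\bolds^0_{n - m + 2}$-measurable function of $x$, irrespective of the size of $\xi$.

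Combining this bound with the continuity of $\tau$ and the $\bolds^0_n$-measurability of $f$, the map $x \mapsto \tau(u_1(x)^{(\xi + n - m + 1)}, f(x))$ is $\bolds^0_{\max\{n - m + 2,\,n\}}$-measurable, and the hypothesis $m \geq 2$ collapses the maximum to $n$. Post-composition with the continuous $u_3$ and pairing with the continuous $u_2$ preserves this level, so $x \mapsto u_3(u_\tau(u_1(x), f(x)), u_2(x))$ is the desired $\bolds^0_n$-measurable realizer of $f(x) \tleq (x\oplus z^{(\xi)})^{(n-m)}$. Proposition \ref{proposition decomposability realizer} then delivers $f \in \dec(\bolds^0_{n - m + 1}, \boldd^0_n)$, which is exactly the Decomposability Conjecture for $f$.

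The hard part is the measurability analysis of the second paragraph: an affirmative answer to Question \ref{question uniform Shore-Slaman} does \emph{not} make $u_4$ continuous but only recursive in the $(\xi + n - m + 1)$-th jump of its first input, and without trading that high-order jump of $u_1(x)$ for the $(n-m+1)$-th jump of $x\oplus z^{(\xi)}$ via Friedberg Inversion the bound $\bolds^0_{\xi+n-m+2}$ would exceed $\bolds^0_n$ for any $\xi \geq m - 1$. Once the Friedberg substitution is in place, the rest is routine bookkeeping using the good parametrization and the continuity of $u_1, u_2, u_3$, and $\tau$.
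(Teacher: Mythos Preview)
Your proof is correct and follows essentially the same approach as the paper: both set $u_4=u_\tau$, both recognize that the crux is showing $x\mapsto u_1(x)^{(\xi+n-m+1)}$ is $\bolds^0_{n-m+2}$-measurable (rather than the na\"ive $\bolds^0_{\xi+n-m+2}$), and both achieve this by lifting the Friedberg-inversion realizer $u_2$ one jump level via a recursive index transformation---your $u_2'$ is exactly the paper's $u_5\circ u_2$. Your explicit invocation of Proposition~\ref{proposition decomposability realizer} and your discussion of why the na\"ive bound fails are welcome clarifications, but the mathematical content is identical.
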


\noindent
{\it Proof.}
Let $\ca{A}$ be $\Sigma^1_1$ and $f: \ca{A} \to \cantor$ be such that $f^{-1}\bolds^0_m \subseteq \bolds^0_n$. Find then some $z \in \cantor$ and $\xi < \ckr{z}$ such that $(f(x) \oplus g)\tpower{m} \tleq (x \oplus g\tpower{\xi})\tpower{n}$ for all $g \tgeq z$ and all $x \in \ca{A}$. We consider continuous functions $u_1$, $u_2$ and $u_3$ as above and a function $\tau$ which answers Question \ref{question uniform Shore-Slaman} affirmatively with $k = n-m$. We let $u_4$ be the function $u_\tau$.

Then according to the preceding, the pair $(f(x),u_1(x))$ satisfies condition $(\ast)$ above for all $x \in \ca{A}$. We claim that the function $x \in \ca{A} \mapsto u_4(f(x),u_1(x))$ is $\bolds^0_n$-measurable. As explained above this is enough to ensure that the function $f$ is decomposed to $\bolds^0_{n-m+1}$-measurable functions on $\boldd^0_n$ sets.

First we fix a recursive partial function $u_5: \om \rightharpoonup \om$ such that for all $A, B \subseteq \om$ and all $e \in \om$ if $e$ realizes that $A \tleq B$ then $u_5(e)$ realizes that $A ' \tleq B'$. Let some $x \in \ca{A}$. Since $u_2(x)$ realizes that $u_1(x)\tpower{\xi+k} \tleq (x \oplus z\tpower{\xi})\tpower{k}$ we have that $u_5(u_2(x))$ realizes that $u_1(x)\tpower{\xi+k+1} \tleq (x \oplus z\tpower{\xi})\tpower{k+1}$, \ie
\[
u_1(x)\tpower{\xi+k+1} = \Phi_{u_5(u_2(x))}((x \oplus z\tpower{\xi})\tpower{k+1}).
\]
Therefore
\[
u_4(f(x),u_1(x)) = \tau(f(x),u_1(x)\tpower{\xi+k+1}) = \tau(f(x),\Phi_{u_5(u_2(x))}((x \oplus z\tpower{\xi})\tpower{k+1})).
\]
For all $e \in \om$ the (perhaps partial) function $x \mapsto \Phi_e(x \oplus z\tpower{\xi})\tpower{k+1}$ is $\bolds^0_{k+2}$-measurable. Since $k+2 = n-m+2 \leq n$ it follows that the preceding function is also $\bolds^0_n$-measurable. Using the continuity of the functions $\tau$, $u_5 \circ u_2$, and the fact that $f$ is $\bolds^0_n$-measurable it follows that the function $x \mapsto u_4(f(x),u_1(x))$ is $\bolds^0_n$-measurable as well. This finishes our argument.
\qed(Claim)

\medskip

The analogous to Question \ref{question uniform Shore-Slaman} can be asked about the continuous degrees. If the answer is affirmative then with similar arguments one would be able to prove that the Decomposability Conjecture is correct.


\subsection*{Acknowledgments.} The first named author was partially supported by the E.U. Project no: 294962 COMPUTAL, and the second named author was partially supported by a Grant-in-Aid for JSPS fellows. The third author was partially supported by the grants MOE-RG26/13 and MOE2015-T2-2-055. We would like to thank Andrew Marks, Yiannis Moschovakis, Luca Motto Ros and Arno Pauly for valuable discussions. The first named author would also like to thank Ulrich Kohlenbach for his continuing substantial support.

\bibliographystyle{plain}
\bibliography{gkbiblio}

\begin{thebibliography}{10}

\bibitem{andretta_the_slo_principle_and_the_Wadge_hierarchy}
Alessandro Andretta.
\newblock The {SLO} principle and the {W}adge hierarchy.
\newblock In {\em Foundations of the formal sciences {V}}, volume~11 of {\em
  Stud. Log. (Lond.)}, pages 1--38. Coll. Publ., London, 2007.

\bibitem{brattka_effective_Borel_measurability_and_reducibility_of_function}
Vasco Brattka.
\newblock Effective {B}orel measurability and reducibility of functions.
\newblock {\em MLQ Math. Log. Q.}, 51(1):19--44, 2005.

\bibitem{MilCar15a}
Raphael Carroy and Benjamin Miller.
\newblock {Open graphs, Hurewicz-style dichotomies, and the Jayne-Rogers
  theorem}.
\newblock submitted, 2015.

\bibitem{MilCar15b}
Raphael Carroy and Benjamin Miller.
\newblock {Sigma-continuity with closed witnesses}.
\newblock submitted, 2015.

\bibitem{CooperBook}
S.~Barry Cooper.
\newblock {\em Computability theory}.
\newblock Chapman \& Hall/CRC, Boca Raton, FL, 2004.

\bibitem{Debs14}
Gabriel Debs.
\newblock Effective decomposition of {$\sigma$}-continuous {B}orel functions.
\newblock {\em Fund. Math.}, 224(2):187--202, 2014.

\bibitem{gregoriades_turning_borel_sets_into_clopen_sets_effectively}
Vassilios Gregoriades.
\newblock Turning {B}orel sets into clopen sets effectively.
\newblock {\em Fund. Math.}, 219(2):119--143, 2012.

\bibitem{Jayne74}
J.~E. Jayne.
\newblock The space of class {$\alpha $} {B}aire functions.
\newblock {\em Bull. Amer. Math. Soc.}, 80:1151--1156, 1974.

\bibitem{JayRog79a}
J.~E. Jayne and C.~A. Rogers.
\newblock Borel isomorphisms at the first level. {I}.
\newblock {\em Mathematika}, 26(1):125--156, 1979.

\bibitem{jayne_rogers_first_level_Borel_functions_and_isomorphisms}
J.~E. Jayne and C.~A. Rogers.
\newblock First level {B}orel functions and isomorphisms.
\newblock {\em J. Math. Pures Appl. (9)}, 61(2):177--205, 1982.

\bibitem{KaMoSe}
Miroslav Ka{\v{c}}ena, Luca Motto~Ros, and Brian Semmes.
\newblock Some observations on `{A} new proof of a theorem of {J}ayne and
  {R}ogers'.
\newblock {\em Real Anal. Exchange}, 38(1):121--132, 2012/13.

\bibitem{Kali}
I.~Sh. Kalimullin.
\newblock Definability of the jump operator in the enumeration degrees.
\newblock {\em J. Math. Log.}, 3(2):257--267, 2003.

\bibitem{kechris_classical_dst}
Alexander~S. Kechris.
\newblock {\em Classical Descriptive Set Theory}, volume 156 of {\em Graduate
  Texts in Mathematics}.
\newblock Springer-Verlag, New York, 1995.

\bibitem{KecMos}
Alexander~S. Kechris and Yiannis~N. Moschovakis, editors.
\newblock {\em Cabal {S}eminar 76--77}, volume 689 of {\em Lecture Notes in
  Mathematics}. Springer, Berlin, 1978.

\bibitem{kihara_decomposing_Borel_functions_using_the_Shore_Slaman_join_theorem}
Takayuki Kihara.
\newblock Decomposing {B}orel functions using the {S}hore-{S}laman join
  theorem.
\newblock {\em Fund. Math.}, 230(1):1--13, 2015.

\bibitem{KihPau}
Takayuki Kihara and Arno Pauly.
\newblock Point degree spectra of represented spaces.
\newblock submitted, 2015.

\bibitem{louveau_a_separation_theorem_for_sigma_sets}
Alain Louveau.
\newblock A separation theorem for {$\Sigma ^{1}_{1}$} sets.
\newblock {\em Trans. Amer. Math. Soc.}, 260(2):363--378, 1980.

\bibitem{Martin68}
Donald~A. Martin.
\newblock The axiom of determinateness and reduction principles in the
  analytical hierarchy.
\newblock {\em Bull. Amer. Math. Soc.}, 74:687--689, 1968.

\bibitem{miller2}
Joseph~S. Miller.
\newblock Degrees of unsolvability of continuous functions.
\newblock {\em J. Symbolic Logic}, 69(2):555--584, 2004.

\bibitem{yiannis_dst}
Yiannis~N. Moschovakis.
\newblock {\em Descriptive Set Theory}, volume 155 of {\em Mathematical Surveys
  and Monographs}.
\newblock American Mathematical Society, Providence, RI, second edition, 2009.

\bibitem{motto_ros_on_the_structure_of_finite_level_and_omega_decomposable_Borel_functions}
Luca Motto~Ros.
\newblock On the structure of finite level and {$\omega$}-decomposable {B}orel
  functions.
\newblock {\em J. Symbolic Logic}, 78(4):1257--1287, 2013.

\bibitem{pauly_debrecht_nondeterministic_computation_and_the_jayne_rogers_theorem}
Arno Pauly and Matthew de~Brecht.
\newblock Non-deterministic computation and the {J}ayne {R}ogers theorem.
\newblock {\em Electronic Proceedings in Theoretical Computer Science}, 143,
  2014.
\newblock DCM 2012.

\bibitem{pawlikowski_sabok_decomposing_Borel_functions}
Janusz Pawlikowski and Marcin Sabok.
\newblock Decomposing {B}orel functions and structure at finite levels of the
  {B}aire hierarchy.
\newblock {\em Ann. Pure Appl. Logic}, 163(12):1748--1764, 2012.

\bibitem{SacksBook}
Gerald~E. Sacks.
\newblock {\em Higher Recursion Theory}.
\newblock Perspectives in Mathematical Logic. Springer-Verlag, Berlin, 1990.

\bibitem{semmes_phd_thesis}
B.~Semmes.
\newblock {\em A game for the {B}orel functions}.
\newblock PhD thesis, ILLC, University of Amsterdam, Amsterdam, Holland, 2009.

\bibitem{shore_slaman_defining_the_Turing_Jump}
Richard~A. Shore and Theodore~A. Slaman.
\newblock Defining the {T}uring jump.
\newblock {\em Math. Res. Lett.}, 6(5-6):711--722, 1999.

\bibitem{Slaman05}
Theodore~A. Slaman.
\newblock Aspects of the {T}uring jump.
\newblock In {\em Logic {C}olloquium 2000}, volume~19 of {\em Lect. Notes
  Log.}, pages 365--382. Assoc. Symbol. Logic, Urbana, IL, 2005.

\bibitem{SlaSte88}
Theodore~A. Slaman and John~R. Steel.
\newblock Definable functions on degrees.
\newblock In {\em Cabal {S}eminar 81--85}, volume 1333 of {\em Lecture Notes in
  Math.}, pages 37--55. Springer, Berlin, 1988.

\bibitem{Solecki98}
S{\l}awomir Solecki.
\newblock Decomposing {B}orel sets and functions and the structure of {B}aire
  class {$1$} functions.
\newblock {\em J. Amer. Math. Soc.}, 11(3):521--550, 1998.

\bibitem{Steel82}
John~R. Steel.
\newblock A classification of jump operators.
\newblock {\em J. Symbolic Logic}, 47(2):347--358, 1982.

\end{thebibliography}

\end{document}